\let\old@setaddresses\@setaddresses
\def\@setaddresses{\bigskip{\parindent 0pt\let\scshape\relax\let\ttfamily\relax\old@setaddresses}}
\newtheorem{theorem}{Theorem}[section]
\newtheorem{proposition}[theorem]{Proposition}
\newtheorem{lemma}[theorem]{Lemma}
\theoremstyle{remark}
\newtheorem{remark}{Remark}
\newcommand{\torsten}[1]{\todo[inline,color=red!40]{\textbf{Torsten:} #1}}
\title{The Hamilton compression of highly symmetric graphs}
\author{Petr Gregor}
\address[Petr Gregor]{Department of Theoretical Computer Science and Mathematical Logic, Charles University, Prague, Czech Republic}
\email{gregor@ktiml.mff.cuni.cz}
\author{Arturo Merino}
\address[Arturo Merino]{Department of Computer Science, University of Saarland, Germany \& Max Planck Institute for Informatics, Germany}
\email{merino@cs.uni-saarland.de}
\author{Torsten M\"utze}
\address[Torsten M\"utze]{Department of Computer Science, University of Warwick, United Kingdom \& Department of Theoretical Computer Science and Mathematical Logic, Charles University, Prague, Czech Republic}
\email{torsten.mutze@warwick.ac.uk}
\thanks{An extended abstract of this work appeared in the Proceedings of the 47th International Symposium on Mathematical Foundations of Computer Science (MFCS~2022).}
\thanks{This work was supported by Czech Science Foundation grant GA~22-15272S, and by German Science Foundation grant~413902284.
Furthermore, Petr Gregor was partially supported by ARRS project P1-0383, and Arturo Merino was supported by ANID Becas Chile 2019-72200522.}
\begin{document}
\begin{abstract}
We say that a Hamilton cycle~$C=(x_1,\ldots,x_n)$ in a graph~$G$ is $k$-symmetric, if the mapping $x_i\mapsto x_{i+n/k}$ for all $i=1,\ldots,n$, where indices are considered modulo~$n$, is an automorphism of~$G$.
In other words, if we lay out the vertices $x_1,\ldots,x_n$ equidistantly on a circle and draw the edges of~$G$ as straight lines, then the drawing of~$G$ has $k$-fold rotational symmetry, i.e., all information about the graph is compressed into a $360^\circ/k$ wedge of the drawing.
The maximum~$k$ for which there exists a $k$-symmetric Hamilton cycle in~$G$ is referred to as the \emph{Hamilton compression of~$G$}.
We investigate the Hamilton compression of four different families of vertex-transitive graphs, namely hypercubes, Johnson graphs, permutahedra and Cayley graphs of abelian groups.
In several cases we determine their Hamilton compression exactly, and in other cases we provide close lower and upper bounds.
The constructed cycles have a much higher compression than several classical Gray codes known from the literature.
Our constructions also yield Gray codes for bitstrings, combinations and permutations that have few tracks and/or that are balanced.
\end{abstract}

\keywords{Hamilton cycle, Gray code, hypercube, permutahedron, Johnson graph, Cayley graph, abelian group, vertex-transitive}

\maketitle

\vspace{-5mm}

\section{Introduction}
\label{sec:intro}

\begin{wrapfigure}{r}{0.35\textwidth}
\flushright
\vspace{-10mm}
\includegraphics[scale=0.8]{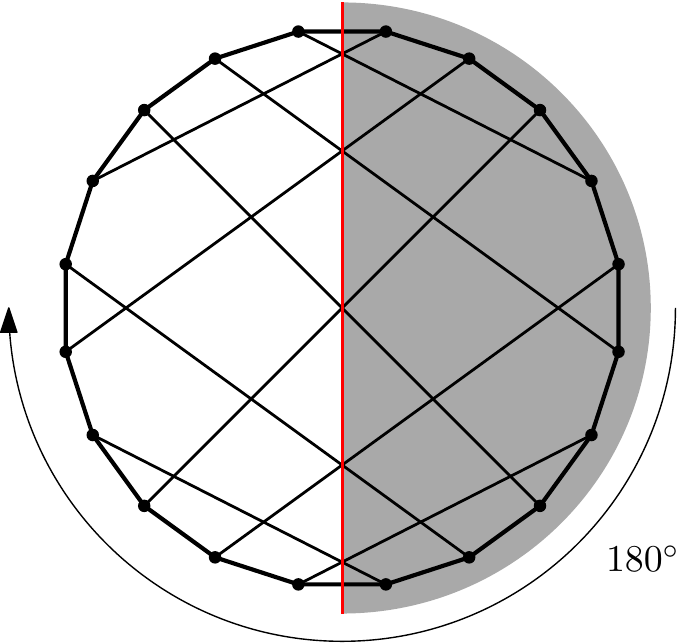}
\end{wrapfigure}
A \emph{Hamilton cycle} in a graph is a cycle that visits every vertex of the graph exactly once.
This concept is named after the Irish mathematician and astronomer Sir William Rowan Hamilton (1805--1865), who invented the Icosian game, in which the objective is to find a Hamilton cycle along the edges of the dodecahedron.
The figure on the right shows the dodecahedron with a Hamilton cycle on the circumference.
Hamilton cycles have been studied intensively from various different angles, such as graph theory (necessary/sufficient conditions, packing and covering etc.~\cite{MR1106528,MR1974368,MR3143857,MR2889513}), optimization (shortest tours, approximation~\cite{tsp_book}), algorithms (complexity~\cite{MR519066}, exhaustive generation~\cite{MR1491049,muetze:22}) and algebra (Cayley graphs~\cite{MR762322,MR1405010,MR2548568,MR2548567}). 
In this work we introduce a new graph parameter that quantifies how symmetric a Hamilton cycle in a graph can be.
For example, the cycle in the dodecahedron shown on the right is 2-symmetric, as the drawing has 2-fold (i.e., $360^\circ/2=180^\circ$) rotational symmetry.

\subsection{Hamilton cycles with rotational symmetry}
\label{sec:introsym}

Formally, let $G=(V,E)$ be a graph with $n$ vertices.
We say that a Hamilton cycle $C=(x_1,\ldots,x_n)$ is \emph{$k$-symmetric} if the mapping $f:V\rightarrow V$ defined by $x_i\mapsto x_{i+n/k}$ for all $i=1,\ldots,n$, where indices are considered modulo~$n$, is an automorphism of~$G$.
In this case we have
\begin{equation}
\label{eq:CP}
C=P,f(P),f^2(P),\ldots,f^{k-1}(P) \quad \text{for the path} \quad P:=(x_1,\ldots,x_{n/k}).
\end{equation}
The idea is that the entire cycle~$C$ can be reconstructed from the path~$P$, which contains only a $1/k$-fraction of all vertices, by repeatedly applying the automorphism~$f$ to it.
In other words, if we lay out the vertices $x_1,\ldots,x_n$ equidistantly on a circle, and draw edges of~$G$ as straight lines, then we obtain a drawing of~$G$ with $k$-fold rotational symmetry, i.e., $f$ is a rotation by $360^\circ/k$; see Figure~\ref{fig:p4} for more examples.
We refer to the maximum~$k$ for which the Hamilton cycle~$C$ of~$G$ is $k$-symmetric as the \emph{compression factor of~$C$}, and we denote it by $\kappa(G,C)$.

\begin{figure}
\makebox[0cm]{ 
\begin{tabular}{cc}
\includegraphics[page=1]{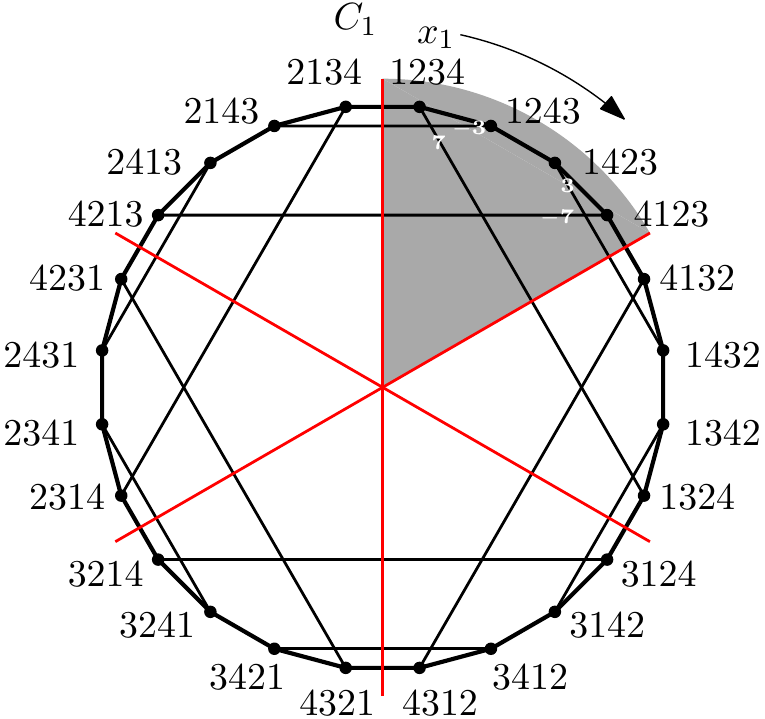} &
\includegraphics[page=2]{p4} \\
(a) $\kappa(\Pi_4,C_1)=6$, $d=(7,-3,3,-7)^6$ & (b) $\kappa(\Pi_4,C_2)=6$, $d=(-7,7,5,-5)^6$ \\
& \\
\includegraphics[page=3]{p4} &
\includegraphics[page=4]{p4} \\
\begin{minipage}{8.5cm}(c) $\kappa(\Pi_4,C_3)=2$, \\ \hspace*{5mm} $d=(-7,-11,11,-5,-7,7,5,-11,11,7,5,-5)^2$\end{minipage} &
\begin{minipage}{8cm}(d) $\kappa(\Pi_4,C_4)=1$, \\ \hspace*{5mm} $d=(-9,9,7,-5,-7,3,-11,5,-3,-7,-9,3,$ \\ \hspace*{14mm} $-5,5,-3,9,7,3,-5,11,-3,7,5,-7)$\end{minipage}
\end{tabular}
}
\caption{Hamilton cycles $C_1,\ldots,C_4$ in the 4-permutahedron~$\Pi_4$ with different LCF sequences and compression factors.}
\label{fig:p4}
\end{figure}

\subsection{Connection to LCF notation}

There is yet another interesting interpretation of the compression factor in terms of the LCF notation of a graph, which was introduced by Lederberg as a concise method to describe 3-regular Hamiltonian graphs (such as the dodecahedron).
It was later improved by Coxeter and Frucht (see~\cite{MR463029}), and dubbed LCF notation, using the initials of the three inventors.
The idea is to describe a 3-regular Hamiltonian graph by considering one of its Hamilton cycles~$C=(x_1,\ldots,x_n)$.
Each vertex~$x_i$ has the neighbors $x_{i-1}$ and~$x_{i+1}$ (modulo~$n$) in the graph, plus a third neighbor~$x_j$, which is $d_i:=j-i$ (modulo~$n$) steps away from~$x_i$ along the cycle.
The \emph{LCF sequence} of~$G$ is the sequence $d=(d_1,\ldots,d_n)$, where each $d_i$ is chosen so that $-n/2<d_i\le n/2$.
Clearly, we also have $d_i\notin\{-1,0,+1\}$.
Note that if $C$ is $k$-symmetric, then the LCF sequence~$d$ of~$G$ is $k$-periodic, i.e., it has the form $d=(d_1,\ldots,d_{n/k})^k$, where the $k$ in the exponent denotes $k$-fold repetition; see Figure~\ref{fig:p4}.
While LCF notation is only defined for 3-regular graphs, we can easily extend it to arbitrary graphs with a Hamilton cycle~$C=(x_1,\ldots,x_n)$, by considering a sequence of sets $D=(D_1,\ldots,D_n)$, where $D_i$ is the set of distances to all neighbors of~$x_i$ along the cycle except $x_{i-1}$ and~$x_{i+1}$; see Figure~\ref{fig:comp4}~(a)+(d).
As before, if $C$ is $k$-symmetric, then the corresponding sequence~$D$ is $k$-periodic, i.e., it has the form $D=(D_1,\ldots,D_{n/k})^k$.
Frucht~\cite{MR463029} writes:
\setlength{\leftmargini}{6mm}
\begin{quote}
`What happens with the LCF notation if we replace one hamiltonian circuit by another one? The answer is: nearly everything can happen! Indeed the LCF notation for a graph can remain unaltered or it can change completely [...] In such cases we should choose of course the shortest of the existing LCF notations.'
\end{quote}
This observation is illustrated in Figure~\ref{fig:p4}, which shows four different Hamilton cycles of the same graph~$G$ that have different LCF sequences and compression factors.

\subsection{Hamilton compression}

Frucht's suggestion is to search for a Hamilton cycle~$C$ in~$G$ whose compression factor~$\kappa(G,C)$ is as large as possible.
Formally, for any graph~$G$ we define
\begin{equation}
\label{eq:kappa}
\kappa(G):=\max\big\{\kappa(G,C)\mid \text{$C$ is a Hamilton cycle in~$G$}\big\},
\end{equation}
and we refer to this quantity as the \emph{Hamilton compression of~$G$}.
If $G$ has no Hamilton cycle, then we define $\kappa(G):=0$.
While the maximization in~\eqref{eq:kappa} is simply over all Hamilton cycles in~$G$, and the automorphisms arise as possible rotations of those cycles, this definition is somewhat impractical to work with.
In our arguments, we rather consider all automorphisms of~$G$, and then search for a Hamilton cycle that is $k$-symmetric under the chosen automorphism.
Specifically, proving a lower bound of~$\kappa(G)\ge k$ amounts to finding an automorphism~$f$ of~$G$ and a $k$-symmetric Hamilton cycle under this~$f$.
To prove an upper bound of~$\kappa(G)<k$, we need to argue that there is no $k$-symmetric Hamilton cycle in~$G$, for any choice of automorphism~$f$.

By what we said in the beginning, the quantity $\kappa(G)$ can be seen as a measure for the nicest (i.e., most symmetric) way of drawing the graph~$G$ on a circle.
Thus, our paper contains many illustrations that convey the aesthetic appeal of this problem.

\begin{figure}[b]
\makebox[0cm]{ 
\begin{tabular}{cc}
\includegraphics{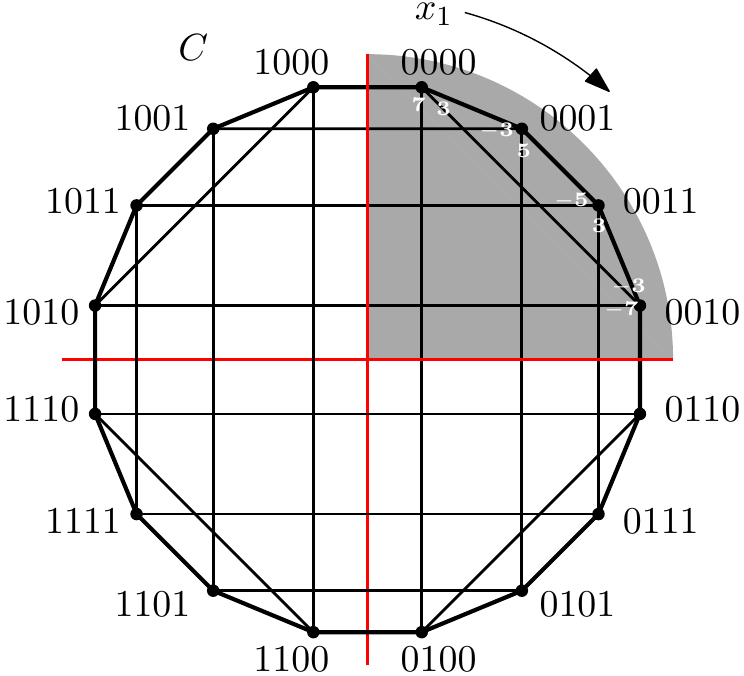} &
\includegraphics{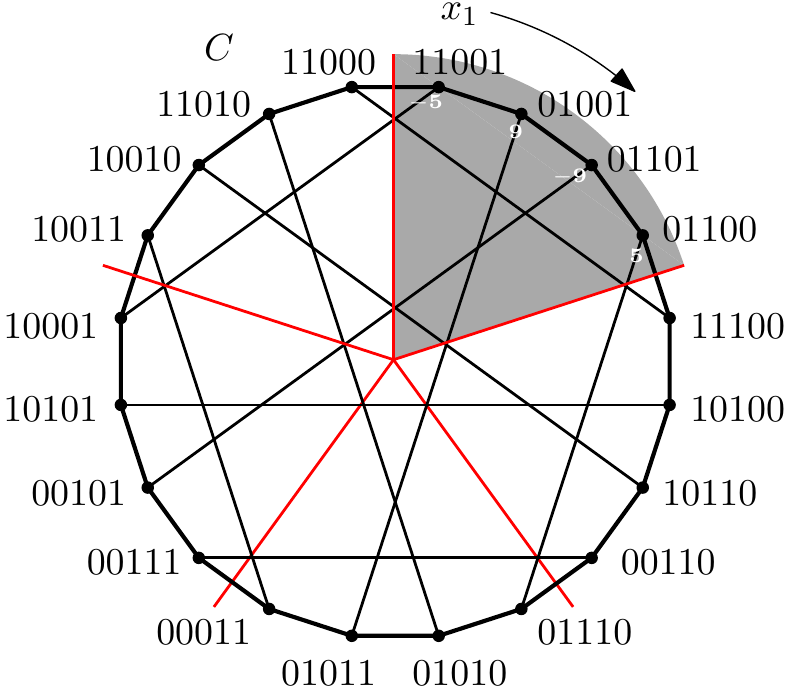} \\
\begin{minipage}{8cm}(a) $\kappa(Q_4,C)=4$, \\ \hspace*{5mm} $D=(\{3,7\},\{-3,5\},\{-5,3\},\{-3,-7\})^4$ \end{minipage}
 & (b) $\kappa(M_5,C)=5$, $d=(-5,9,-9,5)^5$ \\
 & \\
\includegraphics{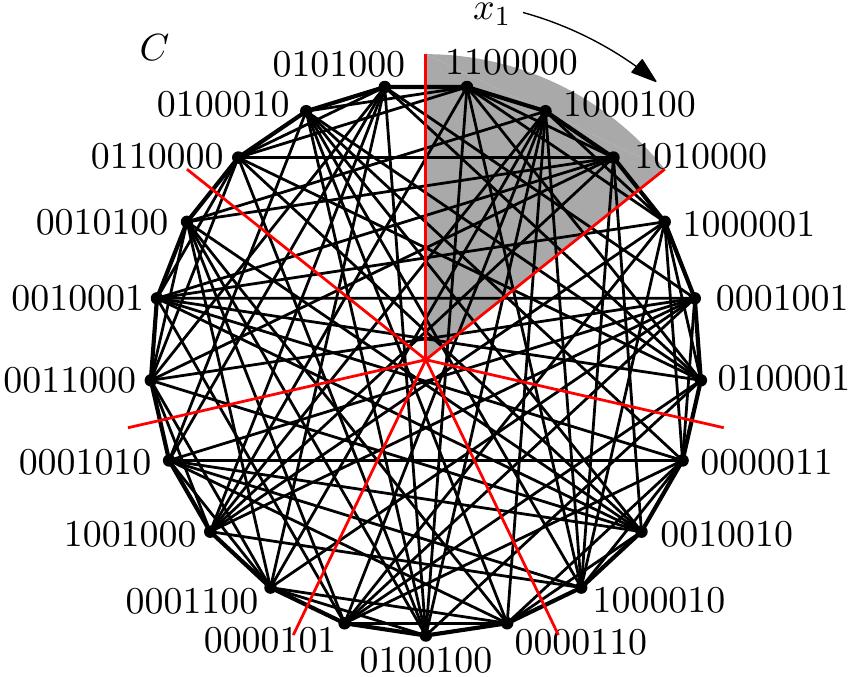} &
\includegraphics{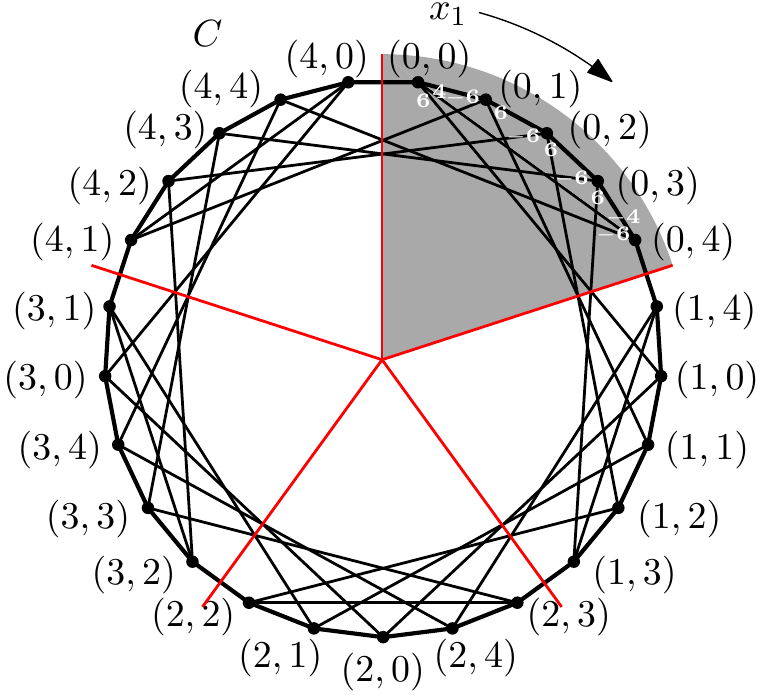} \\
(c) $\kappa(J_{7,2},C)=7$ &
\begin{minipage}{8cm}(d) $\kappa(\mathbb{Z}_5^2,C)=5$, \\ \hspace*{5mm} $D=(\{4,6\},\{-6,6\},\{-6,6\},\{-6,6\},\{-4,-6\})^5$ \end{minipage}
\end{tabular}
}
\caption{Symmetric Hamilton cycles in the (a)~4-cube; (b)~middle levels of the 5-cube; (c)~Johnson graph~$J_{7,2}$; (d)~abelian Cayley graph~$(\mathbb{Z}_5^2,\{(0,1),(1,0)\})$.}
\label{fig:comp4}
\end{figure}

\subsection{Easy observations and bounds}
\label{sec:easy}

We start to collect a few basic observations about the quantity~$\kappa(G)$.
Trivially, we have $0\le \kappa(G)\le n$, where $n$ is the number of vertices of~$G$.
The upper bound~$n$ can be improved to
\begin{equation}
\label{eq:kappa-UB}
\kappa(G)\le \max_{f\in \Aut(G)}\ord(f),
\end{equation}
where $\Aut(G)$ is the automorphism group of~$G$, and $\ord(f)$ is the order of~$f$.
An immediate consequence of~\eqref{eq:CP} is that all orbits of the automorphism~$f$ must have the same size~$n/k$, and the path $P=(x_1,\ldots,x_{n/k})$ visits every orbit exactly once.
This can be used to improve~\eqref{eq:kappa-UB} further by restricting the maximization to automorphisms from~$\Aut(G)$ whose orbits all have the same size.
Furthermore, as $k$ must divide~$n$, we obtain that $\kappa(G)\in\{0,1,n\}$ for prime~$n$.

Clearly, every Hamilton cycle of a graph~$G$ is 1-symmetric, by taking the identity mapping $f=\ide$ as automorphism.
Consequently, we have $\kappa(G)\ge 1$ for any Hamiltonian graph.
On the other hand, if~$G$ is Hamiltonian and highly symmetric, i.e., if it has a rich automorphism group, then intuitively $G$ should have a large value of~$\kappa(G)$, i.e., it should admit highly symmetric Hamilton cycles.
For example, for the cycle~$C_n$ on $n$ vertices and the complete graph~$K_n$ on $n$ vertices we have $\kappa(C_n)=\kappa(K_n)=n$.
More generally, note that $\kappa(G)=n$ if and only if~$G$ is a special circulant graph, namely a Cayley graph of a cyclic group $\mathbb{Z}_n$ for which the generating set contains at least one element coprime with $n$.

\subsection{Our results}
\label{sec:results}

Vertex-transitive graphs are a prime example of highly symmetric graphs.
A graph is \emph{vertex-transitive} if for any two vertices there is an automorphism that maps the first vertex to the second one.
In other words, the automorphism group of the graph acts transitively on the vertices.
In this paper we investigate the Hamilton compression~$\kappa(G)$ of four families of vertex-transitive graphs~$G$, namely hypercubes, Johnson graphs, permutahedra, and Cayley graphs of abelian groups.
Note that in the following definitions and the rest of the paper we use the letter $n$ to denote a graph parameter instead of the number of vertices used in Sections~\ref{sec:introsym}-\ref{sec:easy}.
The \emph{$n$-dimensional hypercube~$Q_n$}, or \emph{$n$-cube} for short, has as vertices all bitstrings of length~$n$, and an edge between any two strings that differ in a single bit; see Figure~\ref{fig:comp4}~(a).
The \emph{Johnson graph~$J_{n,k}$} has as vertices all bitstrings of length~$n$ with fixed Hamming weight~$k$, and an edge between any two strings that differ in a transposition of a 0 and~1; see Figure~\ref{fig:comp4}~(c).
The \emph{$n$-permutahedron~$\Pi_n$}, has as vertices all permutations of~$[n]:=\{1,\ldots,n\}$, and an edge between any two permutations that differ in an adjacent transposition, i.e., a swap of two neighboring entries of the permutations in one-line notation; see Figure~\ref{fig:p4}.
Cayley graphs of abelian groups will be introduced formally in Section~\ref{sec:prelim-cayley}; see Figure~\ref{fig:comp4}~(d).
Note that the hypercube is isomorphic to a Cayley graph of the abelian group~$\mathbb{Z}_2^n$.

Hamilton cycles with various additional properties in the aforementioned families of graphs have been the subject of a long line of previous research under the name of \emph{combinatorial Gray codes}~\cite{MR1491049,muetze:22}.
We will see that some classical constructions of such cycles have a non-trivial small compression factor, and we construct cycles with much higher compression factor that we show to be optimal or near-optimal.
Along the way, many interesting number-theoretic and algebraic phenomena arise.

\subsubsection{Hypercubes}
\label{sec:results-cubes}

One of the classical constructions of a Hamilton cycle in~$Q_n$ is the well-known \emph{binary reflected Gray code (BRGC)}~\cite{gray:patent}.
This cycle in~$Q_n$ is defined inductively by $\Gamma_0:=\varepsilon$ and $\Gamma_n:=0\Gamma_{n-1},1\lvec{\Gamma_{n-1}}$ for all~$n\ge 1$, where $\varepsilon$ is the empty sequence and $\lvec{\Gamma_{n-1}}$ denotes the reversal of the sequence~$\Gamma_{n-1}$; see Figure~\ref{fig:comp4}~(a) and Figure~\ref{fig:q8}~(a).
In words, the cycle $\Gamma_n$ is obtained by concatenating the vertices of~$\Gamma_{n-1}$ prefixed by~0 with the vertices of~$\Gamma_{n-1}$ in reverse order prefixed by~1.
It turns out that the BRGC~$\Gamma_n$ has only compression~$\kappa(Q_n,\Gamma_n)=4$ for $n\ge 2$, which is not optimal (Proposition~\ref{prop:brgc}).
We construct new Hamilton cycles in~$Q_n$ with compression~$\kappa(Q_n)=2^{\lceil \log_2 n\rceil}$ for $n\ge 3$, which is the optimal value (Theorem~\ref{thm:kappa-Qn}); see Figure~\ref{fig:q8}~(b).
Note that $n\le \kappa(Q_n)<2n$, in particular $\kappa(Q_n)=\Theta(n)$, i.e., the optimal compression grows linearly with~$n$.

\subsubsection{Johnson graphs and relatives}
\label{sec:results-johnson}

Our definition of Hamilton compression is inspired by a variant of the well-known middle levels problem raised by Knuth in Problem~56 in Section~7.2.1.3 of his book~\cite{MR3444818}.
Let $M_{2n+1}$ denote the subgraph of~$Q_{2n+1}$ induced by all bitstrings with Hamming weight~$n$ or~$n+1$.
In other words, $M_{2n+1}$ is the subgraph of the cover graph of the Boolean lattice of dimension~$2n+1$ induced by the middle two levels.
There is a natural automorphism of~$M_{2n+1}$ all of whose orbits have the same size, namely cyclic left-shift of the bitstrings by one position.
Knuth asked whether $M_{2n+1}$ admits a $(2n+1)$-symmetric Hamilton cycle under this automorphism, and he rated this the hardest open problem in his book, with a difficulty rating of 49/50.
Such cycles are shown in Figure~\ref{fig:comp4}~(b) and Figure~\ref{fig:mlc7}~(a) for the graphs $M_5$ and $M_7$, respectively.
Knuth's problem was answered affirmatively in full generality in~\cite{MR4262479}, which establishes the lower bound $\kappa(M_{2n+1})\ge 2n+1$.
We show that this is at most a factor of~2 away from optimality, by proving the upper bound $\kappa(M_{2n+1})\le 2(2n+1)$ (Theorem~\ref{thm:kappa-middle}).
Interestingly, it seems that both bounds can be improved.

For the Johnson graph~$J_{n,k}$, we show that $\kappa(J_{n,k})=n$ if $n$ and $k$ are coprime (Theorem~\ref{thm:kappa-Jnk}).
In the other cases we establish bounds for~$\kappa(J_{n,k})$ that are at most by a factor of~4 apart, and for large~$n$ we obtain $\kappa(J_{n,k})=(1-o(1))n$ for $n\neq 2k$.

\subsubsection{Permutahedra}
\label{sec:results-perm}

Another classical Gray code is produced by the \emph{Steinhaus-Johnson-Trotter (SJT) algorithm}, which generates permutations by adjacent transpositions.
This algorithm computes a Hamilton cycle in~$\Pi_n$, which can be described inductively as follows: $\Lambda_1:=1$ and for all $n\ge 2$ the cycle $\Lambda_n$ is obtained from~$\Lambda_{n-1}$ by replacing each permutation of length~$n-1$ by the $n$ permutations given by inserting $n$ in every possible position, alternatingly from right to left or vice versa; see Figure~\ref{fig:p4}~(a) and Figure~\ref{fig:p5}~(a).
It turns out that the SJT cycle~$\Lambda_n$ has only compression~$\kappa(\Pi_n,\Lambda_n)=3$ for $n\ge 5$, which is not optimal (Proposition~\ref{prop:perm}).
We construct new Hamilton cycles in~$\Pi_n$ whose compression is at most a factor of~2 away from the optimum compression~$\kappa(\Pi_n)=e^{(1+o(1))\sqrt{n\ln n}}$ (Theorem~\ref{thm:kappa-Pin}); see Figure~\ref{fig:p5}~(b)+(c).
The growth of the optimum compression is determined by \emph{Landau's function}, and it is mildly exponential.
Moreover, we achieve the optimal compression in infinitely many cases, in particular for the following values of~$n\le 100$: $n=3,4,5,15,22,46,49,51,52,53,55,68,69,72,73,74,75,80,82,85,87,88,89,91,92,93,96,97,99,100$.

\subsubsection{Abelian Cayley graphs}
\label{sec:results-cayley}

A classical folklore result asserts that every Cayley graph of an abelian group has a Hamilton cycle.
The Chen-Quimpo theorem~\cite{MR641233} asserts that in fact much stronger Hamiltonicity properties hold.
It is thus natural to ask whether Cayley graphs of abelian groups have highly symmetric Hamilton cycles.
It turns out that not all abelian Cayley graphs admit a Hamilton cycle with non-trivial compression.
In particular, we show that toroidal grids~$\mathbb{Z}_p\times\mathbb{Z}_q$ for two distinct odd primes~$p,q$ have only compression~1 (Theorem~\ref{thm:odd-order}~(i)).
In contrast to that, we prove that if the order of the abelian group is even or divisible by a square greater than~1, then the Cayley graph admits a Hamilton cycle with compression at least~2 (Theorems~\ref{thm:odd-order}~(ii) and~\ref{thm:comp2}).

\subsection{Related problems}

We proceed to discuss some applications of our results to closely related problems.

\subsubsection{Lov\'asz' conjecture}

A well-known question of Lov\'asz'~\cite{MR0263646} asks whether there are infinitely many vertex-transitive graphs that do not admit a Hamilton cycle.
So far only five such graphs are known, namely $K_2$, the Petersen graph, the Coxeter graph, and the graphs obtained from the latter two by replacing every vertex by a triangle.
Vertex-transitive graphs have a lot of automorphisms, and we may take the quantity $\kappa(G)$ as a measure of how strongly $G$ is Hamiltonian.
In particular, Lov\'asz' question may be rephrased as `Are there infinitely many vertex-transitive graphs~$G$ with~$\kappa(G)=0$?'
More generally, we may ask: `Are there infinitely many vertex-transitive graphs~$G$ with~$\kappa(G)=k$, for each fixed integer~$k$?'
We may ask the same question more restrictively for Cayley graphs or non-Cayley graphs.
From our results mentioned in Section~\ref{sec:results-cayley} we obtain an infinite family of Cayley graphs~$G$ with $\kappa(G)=1$.
In a follow-up work to this paper, Kutnar, Maru{\v{s}}i{\v{c}}, and Razafimahatratra~\cite{kutnar-marusic-raza:23} answered this question affirmatively for Cayley graphs and any fixed integer~$k\geq 2$, and also for non-Cayley graphs and~$k=1$.

\begin{wrapfigure}{r}{0.4\textwidth}
\centering
\vspace{-5mm}
\includegraphics[scale=0.8]{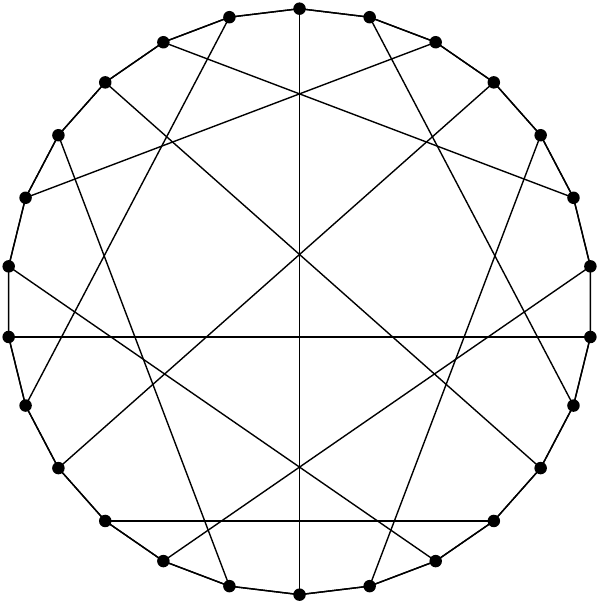}
\caption{One of the smallest vertex-transitive non-Cayley graphs~$G$ with~$\kappa(G)=1$.}
\label{fig:g26}
\end{wrapfigure}
Computer experiments show that the smallest vertex-transitive non-Cayley graphs~$G$ with~$\kappa(G)=1$ have 26 vertices, and one of them is shown in Figure~\ref{fig:g26} (its ID in the House of Graphs database is~36346).

The path~$P$ in~\eqref{eq:CP} is a Hamilton path in the quotient graph~$G/f$ obtained by collapsing each orbit of~$f$ into a single vertex.
The idea of constructing a Hamilton cycle in~$G$ by constructing a Hamilton cycle in the much smaller graph~$G/f$ that is then `lifted' to the full graph is well known in the literature, and has been used to solve some special cases of Lov\'asz' problem affirmatively; see e.g.~\cite{MR1020643,MR2548541,MR2548567,MR4328721,MR4262479}.
It is particularly useful for computer searches, as it reduces the search space dramatically.

\subsubsection{$t$-track and balanced Gray codes}
\label{sec:track}

We say that a sequence~$C$ of strings of length~$n$ consists of $t$ \emph{tracks} if in the $|C|\times n$ matrix corresponding to $C$ there are $t$ columns such that every other column is a cyclically shifted copy of one of these columns.
For example, the Gray code shown in Figure~\ref{fig:q8}~(c) has two tracks, each consisting of four cyclically shifted columns of bits.
This property is relevant for applications, as it saves hardware when implementing Gray-coded rotary encoders.
Instead of using $n$ tracks and $n$ reading heads aligned at the same angle (each reading one track), one can use only $t$ tracks, and place some of the $n$ reading heads at appropriately rotated positions.

Hiltgen, Paterson, and Brandestini~\cite{DBLP:journals/tit/HiltgenPB96} showed that the length of any 1-track cycle in~$Q_n$ must be a multiple of~$2n$.
In particular, such a cycle cannot be a Hamilton cycle unless $n$ is a power of~2.
For the case $n=2^r$, $r\ge 3$, Etzion and Paterson~\cite{MR1445874} showed that there is 1-track cycle of length~$2^n-2n$, and Schwartz and Etzion~\cite{MR1725126} subsequently showed that the length~$2^n-2n$ is best possible.
Taken together, these results show that there is no 1-track Hamilton cycle in~$Q_n$ for any $n\ge 3$.

We complement this negative result by constructing a 2-track Hamilton cycle in~$Q_n$, for every $n$ that is a sum of two powers of~2 (Theorem~\ref{thm:2track}); see Figure~\ref{fig:q8}~(c).
More generally, we obtain $t$-track Hamilton cycles in~$Q_n$ for every $n$ that is a sum of $t\ge 2$ powers of~2 (Theorem~\ref{thm:ttrack}).
In particular, $Q_n$ admits a Hamilton cycle with at most logarithmically many tracks for all~$n$.

From our construction in the Johnson graph~$J_{n,k}$ when $n$ and $k$ are coprime, we obtain 1-track Hamilton cycles that are also \emph{balanced} (Theorem~\ref{thm:johnson-coprime}), i.e., each bit is flipped equally often (cf.~\cite{MR1410880,MR4046775}).

We also construct a 1-track Hamilton cycle in~$\Pi_n^+$, for every odd~$n$, where $\Pi_n^+$ is obtained from~$\Pi_n$ by adding edges that correspond to transpositions of the first and last entry of a permutation (i.e., cyclically adjacent transpositions).
This cycle has the additional property that every transposition appears equally often.
In other words, we obtain a balanced 1-track Gray code for permutations of odd length that uses cyclically adjacent transpositions (Theorem~\ref{thm:1track}).

\subsection{Outline of this paper}

In Section~\ref{sec:prelim} we provide definitions and auxiliary results that will be used later in the paper.
In Sections~\ref{sec:cubes}--\ref{sec:cayley} we prove our results about the Hamilton compression of hypercubes, Johnson graphs and relatives, permutahedra, and abelian Cayley graphs, in that order.
The Hamilton compression~$\kappa(G)$ is a newly introduced graph parameter, so many natural follow-up questions arise.
We collect some of those open problems in Section~\ref{sec:open}.
The proofs of some technical lemmas that interfere with the main exposition are deferred to the appendix.

\section{Preliminaries}
\label{sec:prelim}

We collect definitions, preliminary observations and various results from the literature that we will use in our proofs later.

\subsection{Graphs, groups and permutations}
\label{sec:prelim-graphs}

For a graph~$G$, we write $V(G)$ and $E(G)$ for the vertex set and edge set of~$G$, respectively.
For a sequence~$\Gamma$, we write~$\lvec{\Gamma}$ for the reversed sequence.

All groups considered in this paper are finite.
For standard terminology regarding groups, we refer to the textbook~\cite{MR2286236}.
We denote the group operation of a general group~$G$ multiplicatively, and we write~$e$ for its identity element.
On the other hand, we denote abelian groups additively, writing~$0$ for the identity element and for $k \in \NN, g\in G$ we define $kg:=\underbrace{g+\cdots+g}_{k \text{ times}}$.

Let $G$ be a finite group acting on a set~$X$.
For $f \in G$ let $\langle f \rangle$ denote the (cyclic) subgroup of~$G$ generated by~$f$ and let $\ord(f):=|\langle f \rangle|$ denote the order of~$f$.
The \emph{orbit of an element} $x\in X$ under~$f$ is denoted by $\langle x \rangle_f:=\{f^i(x) \mid i=0,1,\ldots\}$ where the index~$f$ may be omitted whenever it is clear from the context.
We write $O(f)$ for the set of all orbits of~$f$.

Abelian groups are direct sums of cyclic groups~$\mathbb{Z}_n$, which is captured by the following well-known structure theorem.

\begin{theorem}[{\cite[Theorem 5.2]{MR2286236}}]
\label{thm:abelian-struct}
For any finite abelian group~$G$, there are primes $p_1,\ldots, p_\ell\geq 2$ and integers $e_1,\ldots,e_\ell\geq 1$ such that $G \cong \mathbb{Z}_{p_1^{e_1}} \oplus \cdots \oplus \mathbb{Z}_{p_\ell^{e_\ell}}$.
\end{theorem}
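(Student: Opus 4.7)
The plan is to prove this in two stages, both by induction on $|G|$ and both exploiting commutativity in an essential way: first, decompose $G$ into its primary components, then decompose each primary component into cyclic factors.

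For the primary decomposition, for each prime $p$ dividing $|G|$ set $G_p := \{g \in G : p^k g = 0 \text{ for some } k \geq 0\}$, which is a subgroup because $G$ is abelian. I would show $G = \bigoplus_p G_p$, where the sum ranges over primes dividing $|G|$. The spanning part follows from Bezout's identity: if $g \in G$ has order $n = \prod p_i^{e_i}$, then writing $1 = \sum a_i (n/p_i^{e_i})$ with integer coefficients $a_i$ yields $g = \sum a_i (n/p_i^{e_i}) g$, where each summand lies in $G_{p_i}$ since its order is a power of $p_i$. Directness follows because a nontrivial relation among elements of distinct $G_p$'s would force an element to have order that is simultaneously a power of one prime and a product of powers of others, hence equal to $1$.

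For the second stage, let $G$ be a finite abelian $p$-group. Pick $g \in G$ of maximal order $p^e$ and set $H := \langle g \rangle \cong \mathbb{Z}_{p^e}$. The crux is the splitting lemma: there exists a subgroup $K \leq G$ with $G = H \oplus K$. Granting this, the induction hypothesis applied to the strictly smaller group $K$ yields a cyclic decomposition of $K$, and combining with $H$ completes the step for this prime. Applying both stages to a general $G$ then gives the claimed factorization.

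The main obstacle is the splitting lemma, and my approach would be to choose $K$ maximal under inclusion subject to $K \cap H = 0$. Then $H + K = H \oplus K$ automatically, and it remains to show $H + K = G$. Suppose for contradiction that $x \in G \setminus (H+K)$, and set $y := p^{j-1} x$, where $j \geq 1$ is the smallest integer with $p^j x \in H + K$, so that $py \in H + K$ but $y \notin H + K$. Writing $py = mg + u$ with $u \in K$, a short order-counting argument that uses $\ord(y) \leq p^e$ together with $H \cap K = 0$ forces $p \mid m$; hence $y' := y - (m/p)g$ satisfies $py' = u \in K$ while $y' \notin H + K$. Then $\langle K, y' \rangle$ properly contains $K$ yet still meets $H$ trivially (because a nontrivial intersection would, since $\gcd$ with $p$ can be inverted, place $y'$ itself into $H + K$), contradicting the maximality of $K$. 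This delicate divisibility step, which is precisely where the maximality of $\ord(g)$ among all elements of $G$ is spent, is the real content of the theorem; everything surrounding it is organizational bookkeeping.
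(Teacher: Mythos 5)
The paper does not prove this result itself---it is quoted as a black box from a standard textbook---so there is no in-paper argument to compare against. Your proof is correct and is one of the standard proofs of the elementary-divisor form of the classification: first the primary decomposition $G=\bigoplus_p G_p$ via the Bezout/partial-fraction trick, then induction on $p$-groups via the ``split off a maximal cyclic subgroup'' lemma. All the delicate points are handled correctly: the Bezout argument does land each summand in the right $G_{p_i}$ (since $p_i^{e_i}$ annihilates $a_i(n/p_i^{e_i})g$); directness follows from coprimality of orders; the existence of the minimal $j$ with $p^j x\in H+K$ holds because $G$ is a $p$-group; the order-counting step $p^{e-1}(py)=0\Rightarrow p^{e-1}mg=0\Rightarrow p\mid m$ uses $\ord(y)\le p^e$ and $H\cap K=0$ exactly as you say; and the final contradiction with the maximality of $K$ correctly splits into the cases $p\nmid n$ (invert $n$ modulo $\ord(y')$) and $p\mid n$ (then $ny'\in K$). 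Nothing is missing.
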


For any ordered set~$X$ let $S_X$ denote the symmetry group on~$X$.
For the set $[n]=\{1,\ldots,n\}$ we write $S_n:=S_{[n]}$.
Permutations $x\in S_X$, $X=\{i_1<\cdots<i_n\}$, are denoted in one-line notation $x=x_{i_1}\cdots x_{i_n}$ without commas and parentheses, and in cycle notation with commas and parentheses, for example $342165=(1,3,2,4)(5,6)\in S_6$.

The \emph{parity} of a permutation~$x=x_{i_1}\cdots x_{i_n}$ is the number of its inversions, i.e., the number of pairs $(x_a,x_b)$ with $a<b$ and $x_a>x_b$.
Equivalently, the parity of~$x$ is given by the parity of the number of inversions whose product is~$x$.
As odd cycles are products of an even number of inversions, the parity is also equal to the parity of the number of even cycles in~$x$.
A permutation is \emph{even} or \emph{odd}, if its parity is even or odd, respectively.

\subsection{Hypercubes and Cartesian products}
\label{sec:prelim-cubes}

Given two graphs $G$ and $H$, the \emph{Cartesian product} $G\boxprod H$ has the vertex set $V(G)\times V(H)$ and edges between pairs~$(u,v)$ and~$(u',v')$ if and only if $u=u'$ and $vv'\in E(H)$, or $uu'\in E(G)$ and $v=v'$.

We define a `zigzag' path $P\sqcup Q$ in a Cartesian product~$G\boxprod H$ as follows.
For a path~$P=(u_1,\ldots,u_r)$ in~$G$ and a vertex~$v$ in~$H$ we define
\begin{subequations}
\label{eq:zigzag}
\begin{equation}
(P,v):=\big((u_1,v),(u_2,v),\ldots,(u_r,v)\big).
\end{equation}
Furthermore, for a path $Q=(v_1,\ldots,v_s)$ in~$H$, where $s$ is even, we define
\begin{equation}
P\sqcup Q:=(P,v_1),(\lvec P,v_2), (P,v_3),(\lvec P,v_4),\ldots,(P,v_{s-1}),(\lvec P,v_s).
\end{equation}
\end{subequations}
Clearly, the path $P\sqcup Q$ is a subgraph of~$G\boxprod H$.
Furthermore, it starts at~$(u_1,v_1)$, ends at~$(u_1,v_s)$, and it visits all vertices of~$P\boxprod Q$.

The hypercube~$Q_{n+m}$ defined in Section~\ref{sec:results} can be viewed as the Cartesian product~$Q_n\boxprod Q_m$.

It is well known that for every automorphism~$f$ of~$Q_n$ there is a unique $\pi\in S_n$ and a unique $z\in\mathbb{Z}_2^n$ such that
\begin{equation}
\label{eq:Qaut}
f(x_1\cdots x_n)=x_{\pi(1)}\cdots x_{\pi(n)}+z
\end{equation}
for every $x\in \{0,1\}^n$, where $+$ is the addition from~$\mathbb{Z}_2^n$ (bitwise XOR).
In fact, the automorphism group of the hypercube is $\Aut(Q_n)\cong S_n \ltimes \mathbb{Z}_2^n$, the hyperoctahedral group ($\ltimes$ denotes the inner semidirect product).

We will also need the following result about the automorphism group of certain Cartesian products of graphs.
This result is a special case of Theorem~6.13 from~\cite{MR2817074}.

\begin{lemma}
\label{lem:product-auto}
Let $G_1,\ldots,G_\ell$ be graphs such that $|V(G_1)|,\ldots, |V(G_\ell)|$ are all distinct primes, then $\Aut(G_1 \boxprod \cdots \boxprod G_\ell) \cong \bigtimes_{i=1}^\ell \Aut(G_i)$, where the multiplication on the right-hand side denotes the direct product.
\end{lemma}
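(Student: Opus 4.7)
The plan is to reduce the statement to the classical Imrich theorem on the automorphism group of a Cartesian product (Theorem~6.13 of~\cite{MR2817074}), which gives a description of $\Aut(G)$ whenever $G$ is a connected graph with prime factorization $G = H_1 \boxprod \cdots \boxprod H_r$ into Cartesian-prime connected factors. That theorem asserts that every automorphism of~$G$ arises as the composition of (i) automorphisms of the individual factors and (ii) a permutation of the factors that maps each $H_i$ to an isomorphic copy. Consequently, $\Aut(G)$ is a semidirect product of $\bigtimes_{i=1}^r \Aut(H_i)$ with the group of admissible factor permutations. My task is therefore to verify that both Imrich hypotheses hold in our setting and to show that the factor-permutation component is trivial.

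First, I would check that each $G_i$ is itself Cartesian-prime. If $G_i = A \boxprod B$ with $|V(A)|, |V(B)| \ge 2$, then $|V(G_i)| = |V(A)| \cdot |V(B)|$ would be composite, contradicting the assumption that $|V(G_i)|$ is prime. Hence $G_1, \ldots, G_\ell$ is already the prime factorization of the product $G := G_1 \boxprod \cdots \boxprod G_\ell$. Second, I would observe that since the primes $|V(G_1)|, \ldots, |V(G_\ell)|$ are pairwise distinct, no two factors $G_i$ and $G_j$ with $i \ne j$ can even be isomorphic, as they have different numbers of vertices. Thus the only factor-permutation compatible with the prime decomposition is the identity, the semidirect product degenerates to a plain direct product, and Imrich's theorem yields $\Aut(G) \cong \bigtimes_{i=1}^\ell \Aut(G_i)$, exactly as claimed.

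The only mild obstacle I anticipate is the connectivity hypothesis built into Imrich's theorem: if any $G_i$ is disconnected, then unique prime factorization of $G$ with respect to~$\boxprod$ is no longer guaranteed, and the cited theorem does not apply verbatim. The intended downstream applications of the lemma use connected factors (such as cycles and other Hamiltonian graphs arising in the analyses of hypercubes and abelian Cayley graphs), so the cleanest fix is simply to read ``connected'' as an implicit hypothesis on the $G_i$. Beyond this bookkeeping point, the proof is essentially a direct specialization of a known structural theorem, so there is no real technical core to grind through.
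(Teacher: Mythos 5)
Your argument takes the same route as the paper, which offers no proof of its own but simply declares the lemma a special case of Theorem~6.13 of~\cite{MR2817074}; you supply exactly the verification that makes that citation work, namely that a graph on a prime number of vertices is necessarily Cartesian-prime, and that pairwise-distinct vertex counts forbid any nontrivial permutation of the factors, so the semidirect product in Imrich's theorem collapses to a direct product.

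Your worry about connectivity is correct and worth flagging explicitly. Theorem~6.13 is stated for connected graphs, and the lemma as written is genuinely false without that hypothesis: take $G_1$ and $G_2$ to be the edgeless graphs on $2$ and $3$ vertices. Then $G_1 \boxprod G_2$ is the edgeless graph on $6$ vertices, whose automorphism group $S_6$ has order $720$, whereas $\Aut(G_1)\times\Aut(G_2)\cong S_2\times S_3$ has order only $12$. In the paper's application (the proof of Theorem~\ref{thm:odd-order}) the factors are cycles $C_{p_i}$, so connectivity is present implicitly, but the lemma statement should carry the hypothesis.
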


\subsection{Johnson graphs and relatives}
\label{sec:prelim-johnson}

For integers $n>k>0$, an \emph{$(n,k)$-combination} is a bitstring of length~$n$ with Hamming weight~$k$.
Recall that the \emph{Johnson graph~$J_{n,k}$} has as vertices all $(n,k)$-combinations, and an edge between any two strings that differ in a transposition of a~0 and~1.
We defined the \emph{middle levels graph~$M_{2n+1}$} as the subgraph of~$Q_{2n+1}$ induced by all bitstrings with Hamming weight~$n$ or~$n+1$, so these are all $(2n+1,n)$-combinations and $(2n+1,n+1)$-combinations.

If $n\neq 2k$ the automorphism group of~$J_{n,k}$ is the symmetric group~$\Aut(J_{n,k})\cong S_n$; see~\cite{MR2116180,MR2801228}.
Every automorphism~$f=\pi$ of~$J_{n,k}$ permutes the entries of a vertex~$x$, i.e., $f(x_1\cdots x_n)=x_{\pi(1)}\cdots x_{\pi(n)}$.

If $n=2k$ the automorphism group of~$J_{n,k}$ is $\Aut(J_{n,k})\cong S_n\times \mathbb{Z}_2$ with $\mathbb{Z}_2\cong\{\ide,\cpl\}$, where $\ide$ is the identity map and $\cpl$ complements all bits; see~\cite{MR2116180,MR3791054}.
Every automorphism of~$J_{n,k}$ is a pair~$f=(\pi,\alpha)$ with $\pi \in S_n$ and $\alpha\in\{\ide,\cpl\}$, and $f$ acts on a vertex $x$ by $f(x_1\cdots x_n)=\alpha(x_{\pi(1)}\cdots x_{\pi(n)})$.

The automorphism group of the middle levels graph~$M_{2n+1}$ is also~$\Aut(M_{2n+1})\cong S_{2n+1}\times \mathbb{Z}_2$ with $\mathbb{Z}_2\cong\{\ide,\cpl\}$; see~\cite{MR2801228}.

To construct symmetric Hamilton cycles in Johnson graphs, we will use the automorphism~$f=(\pi,\ide)$ that cyclically shifts all bits to the left by one position (no complementation is applied).
Formally, $f$ maps $x=x_1\cdots x_n$ to $f(x)=x_2 x_3\cdots x_n x_1$.
The orbits of~$f$ are known as \emph{necklaces}.
Note that if~$n$ and~$k$ are coprime, then every necklace has the same size~$n$.

We will use the following result due to Wang and Savage~\cite{MR1413286} (see also~\cite{MR1761724}).

\begin{theorem}[\cite{MR1413286}]
\label{thm:necklace}
For all $n-1>k>1$, there is a path in~$J_{n,k}$ from~$1^k0^{n-k}$ to~$1^{k-1}010^{n-k-1}$ that visits every necklace exactly once.
\end{theorem}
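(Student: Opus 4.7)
My plan is to prove the theorem by strong induction on $n$, with a likely strengthening of the inductive hypothesis to allow for more flexible pairs of endpoints. The base case is $n=4,k=2$: there are only two necklaces of $(4,2)$-combinations, namely $\{1100,0110,0011,1001\}$ and $\{1010,0101\}$, and the single edge from $1100$ to $1010$ is the required path; slightly larger base cases can be verified by direct enumeration.

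For the inductive step, the idea is to partition the set of necklaces of $J_{n,k}$ into groups according to some structural feature of a canonical representative — for example, the length $a$ of the leading run of zeros of the lex-smallest rotation $0^a 1 w$. Within each group, the suffix $w$ ranges over (a subset of) $(n-a-1,k-1)$-combinations, so the inductive hypothesis applied to a smaller Johnson graph yields a sub-path that visits every necklace in that group exactly once. The sub-paths are then concatenated using single $0$-$1$ transpositions that take the last representative of one group to some rotation of the first representative of the next group. Equivalently, one can describe the construction more explicitly by a cool-lex-style successor rule on canonical representatives, where each application performs one swap (possibly followed by a rotation, which is free modulo the necklace equivalence); iterating the successor from $1^k0^{n-k}$ then visits every necklace exactly once before terminating at $1^{k-1}010^{n-k-1}$.

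The main obstacle is twofold. First, one must identify the right canonical form and the right partition so that each sub-list can be extracted from the inductive hypothesis; this typically forces a strengthening of the statement to allow several alternative endpoint pairs rather than just $(1^k0^{n-k},\,1^{k-1}010^{n-k-1})$, and one then has to show that with this added flexibility each group's inductive instance is a legal one (i.e.\ satisfies $n'-1>k'>1$ for the smaller parameters). Second, one must verify that every gluing transition between two consecutive groups is realized by a single $0$-$1$ swap of some rotations of the combinations being joined, and that the outermost endpoints of the concatenated list coincide with $1^k0^{n-k}$ and $1^{k-1}010^{n-k-1}$. Separate bookkeeping may also be needed for necklaces of shorter period (orbit size strictly less than $n$), which arise only when $\gcd(n,k)>1$ and whose canonical representatives behave differently from the generic case.
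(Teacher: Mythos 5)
This theorem is not proved in the paper at all: it is imported verbatim from Wang and Savage (with a pointer to a second reference), and the paper uses it as a black box in the constructions of Section~\ref{sec:johnson}. So there is no ``paper's own proof'' to compare against, only a citation.

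Your submission is a plan, not a proof, and you say so yourself. The base case ($n=4,k=2$: two necklaces, one edge from $1100$ to $1010$) is correct, and the broad inductive shape --- canonical representatives of necklaces, a partition by the length of a leading zero-run, recursion on a smaller Johnson graph for each block, and gluing consecutive blocks by a single $0$--$1$ swap up to rotation --- is the right kind of idea for this family of results. But every one of the obstacles you list is where the actual work lives, and none of them is resolved: you do not specify the strengthened induction hypothesis that makes the recursive calls legal and supplies the right endpoints; you do not exhibit the gluing transpositions or verify they are valid edges of $J_{n,k}$ after accounting for rotations; and you explicitly defer the treatment of short-period necklaces when $\gcd(n,k)>1$, which is precisely the case that breaks naive recursions (and which your own base case $n=4,k=2$ already contains). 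The mention of a ``cool-lex-style successor rule'' is also a different mechanism from the Wang--Savage construction, so you are mixing two distinct design patterns without committing to either. As written, this is a reasonable research sketch but it does not establish the theorem, and it cannot be assessed as a proof against the paper because the paper supplies none --- to close the gap you would need to actually carry out one of the known constructions (Wang--Savage's recursive one, or a cool-lex/bucket-style one) and verify the endpoint and transition claims in full.
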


Note that the end vertices of this path are adjacent, so the path can be completed to a cycle.

\subsection{Permutahedra}
\label{sec:prelim-perm}

As mentioned before, an \emph{adjacent transposition} in~$S_n$ is a permutation that flips two adjacent positions, in cycle notation it is $t_i:=(i,i+1)$ for some $1\le i <n$.
Two permutations $x,y \in S_n$ differ by an adjacent transposition if $x=t_iy$ (equivalently $y=t_ix$) for some $1\le i<n$ where the composed permutations are applied from left to right, i.e., $x_j=y_{t_i(j)}$; see Figure~\ref{fig:p4}.
The \emph{permutahedron} of order~$n$ is the graph~$\Pi_n$ with vertex set~$S_n$ and edge set~$\{\{x,t_ix\} \mid x\in S_n, 1\le i <n\}$; that is, the vertices are all permutations of~$[n]$ and the edges are between any two permutations that differ by an adjacent transposition.
Equivalently, $\Pi_n$ is the Cayley graph of~$S_n$ generated by adjacent transpositions.
Note that our definition of~$\Pi_n$ is equivalent to another definition of the permutahedron sometimes used in geometry where edges connect permutations that differ in a transposition of adjacent values (consider the inverse permutations).
Our definition of~$\Pi_n$ extends straightforwardly to the symmetric group~$S_X$ on any ordered ground set~$X$, and we write~$\Pi_X$ for this graph with vertex set~$S_X$ and edges between pairs of permutations on~$X$ that differ by an adjacent transposition.

Clearly, the graph~$\Pi_n$ is bipartite with partition classes given by the parity of permutations, into sets of equal size for all $n\ge 2$.
Tchuente established the following strong Hamiltonicity property of~$\Pi_n$.

\begin{theorem}[\cite{MR683982}]
\label{thm:lace}
For any $n\ge 4$ or $n=2$, the permutahedron~$\Pi_n$ has a Hamilton path between any two permutations of opposite parity (i.e., it is Hamilton-laceable).
For $n=3$ it has a Hamilton path between any two permutations that differ by an adjacent transposition.
\end{theorem}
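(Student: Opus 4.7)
The plan is to proceed by induction on $n$, with $n\in\{2,3,4\}$ as base cases. For $n=2$, $\Pi_2$ is a single edge and the statement is immediate. For $n=3$, $\Pi_3$ is the $6$-cycle, so any two permutations differing by an adjacent transposition are endpoints of a path of length $5$ obtained by traversing the cycle ``the long way around'' (the three antipodal opposite-parity pairs admit no Hamilton path, which is why $n=3$ requires the weaker formulation). For $n=4$, the graph $\Pi_4$ is the $24$-vertex truncated octahedron, and Hamilton-laceability can be verified by a direct case analysis, exploiting vertex-transitivity to fix $x=1234$ and the remaining automorphisms of $\Pi_4$ to reduce to a handful of orbit representatives for the opposite-parity target~$y$.

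For the inductive step with $n\ge 5$, decompose $V(\Pi_n)=L_1\cup\cdots\cup L_n$, where $L_v:=\{z\in S_n:z_n=v\}$ is the set of permutations fixing the value $v$ in the last position. The subgraph induced by each layer $L_v$ is isomorphic to $\Pi_{[n]\setminus\{v\}}\cong \Pi_{n-1}$, since the only adjacent transpositions acting within $L_v$ are $t_1,\ldots,t_{n-2}$. The edges between two distinct layers $L_u$ and $L_v$ all come from~$t_{n-1}$, and they form a perfect matching of $(n-2)!$ edges between the permutations of $L_u$ with $v$ in position $n-1$ and those of $L_v$ with $u$ in position $n-1$.

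Given opposite-parity endpoints $x,y\in S_n$ with $a:=x_n$ and $b:=y_n$, the strategy is to concatenate Hamilton paths inside each layer through carefully chosen transition edges. If $a\ne b$, fix an ordering $a=v_1,v_2,\ldots,v_n=b$ of $[n]$, select a transition edge $e_i$ between $L_{v_{i-1}}$ and $L_{v_i}$ for each $i=2,\ldots,n$, and within each layer $L_{v_i}$ invoke the inductive hypothesis to produce a Hamilton path joining the ``incoming'' endpoint of $e_i$ (or $x$, if $i=1$) to the ``outgoing'' endpoint of $e_{i+1}$ (or $y$, if $i=n$). If $a=b$, adapt this construction by leaving $L_a$ at some vertex $x'$, making a round trip $L_{v_2},\ldots,L_{v_n},L_a$ covering all other layers, re-entering $L_a$ at some vertex $y'$, and finally gluing together the two pieces $x\leadsto x'$ and $y'\leadsto y$ inside $L_a$ into a Hamilton path covering the layer (which again reduces to an application of the inductive hypothesis to $L_a\cong\Pi_{n-1}$).

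The main obstacle is parity bookkeeping: the inductive hypothesis requires that the two chosen endpoints within each $L_{v_i}$ have opposite parity as elements of $S_{[n]\setminus\{v_i\}}\cong S_{n-1}$, while the global parity constraint on $x,y$ in $S_n$ must be maintained. The $S_n$-parity of a permutation in $L_v$ differs from its $S_{n-1}$-parity by the constant $n-v \pmod 2$, so ``opposite parity'' transfers cleanly between levels. However, because each transition edge $e_i$ flips the $S_n$-parity and traversing a Hamilton path in a layer of size $(n-1)!$ also toggles the $S_n$-parity of its endpoints (since $(n-1)!$ is even for $n\ge 3$), the parities along the full path must be balanced globally. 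The required flexibility to select transition vertices in the correct parity class on both sides of each $e_i$ is provided by having at least $(n-2)!\ge 2$ candidates of each parity on each side when $n\ge 5$, which is precisely why $n=4$ has to be handled as a base case rather than being absorbed into the induction.
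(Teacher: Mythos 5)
This result is not proved in the paper; it is cited from Tchuente~\cite{MR683982}, so there is no in-text proof to compare against. Your layer decomposition of $\Pi_n$ into $L_v := \{z \in S_n : z_n = v\}$, each a copy of $\Pi_{n-1}$, joined by perfect matchings of $(n-2)!$ $t_{n-1}$-edges between any two layers, is a standard and correct device, and your parity bookkeeping for the inductive step in the case $a \neq b$ works: with $n-1$ Hamilton paths inside layers and $n-1$ transition edges between them, $x$ and $y$ are separated by an odd number of parity flips, consistent with the hypothesis, and at each layer one has $(n-2)!/2 \geq 1$ exit candidates of the required parity for $n\geq 4$.

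The case $a=b$ is where the argument has a genuine gap. What you actually need there is a \emph{2-path spanning cover} of $L_a$ by vertex-disjoint paths $x \leadsto x'$ and $y' \leadsto y$, where moreover $x'$ and $y'$ have distinct prescribed values in position $n-1$ so that the departure and re-entry transition edges lead to two different layers. This does \emph{not} ``reduce to an application of the inductive hypothesis,'' which only gives Hamilton-laceability of $L_a \cong \Pi_{n-1}$. The standard repair is: take a Hamilton path $H$ in $L_a$ from $x$ to $y$ (inductive hypothesis), note that $H$ must contain at least one $t_{n-2}$-edge (otherwise position $n-1$ would be constant along $H$, impossible for a spanning path of $L_a$), delete such an edge $\{x',y'\}$, and \emph{then} choose the ordering of the remaining layers so that the tour leaves $L_a$ via $L_{x'_{n-1}}$ and returns via $L_{y'_{n-1}}$. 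This conflicts with your write-up, which fixes the layer ordering $v_1,\ldots,v_n$ in advance; you need to choose the ordering after $x',y'$ are determined.

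Your justification of why $n=4$ must be a base case is also off. The phrase ``at least $(n-2)!\geq 2$ candidates of each parity on each side when $n\geq 5$'' miscounts (there are $(n-2)!/2$ of each parity) and in any case one candidate per parity suffices for the choices you describe. The real obstruction at $n=4$ is that the layers are copies of $\Pi_3$, which is \emph{not} Hamilton-laceable; the inductive hypothesis therefore does not supply the Hamilton paths within layers (and in particular the Hamilton-path-splitting repair for $a=b$ has nothing to split). Stating this correctly matters, because it is also why the induction must start at $n=5$.
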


It is known~\cite{MR2185979} that $\Aut(\Pi_n)\cong \mathbb{Z}_2\ltimes S_n$ with $\mathbb{Z}_2\cong\{\ide,\rev\}$ where $\ide$ is the identity permutation and $\rev(x_1\cdots x_n):=x_n\cdots x_1$ is the \emph{reversal permutation}.
So any $f \in \Aut(\Pi_n)$ can be uniquely written as a pair $f=(\alpha,\pi)$ of $\alpha\in \{\ide,\rev\}$ and $\pi \in S_n$.
However, for our purposes we let $\alpha$ act on positions and $\pi$ on values, formally
\begin{equation}
\label{eq:f-alpha-pi}
(\alpha, \pi)(x_1\cdots x_n)=\pi(x_{\alpha(1)})\cdots \pi(x_{\alpha(n)}),
\end{equation}
that is, $(\alpha,\pi)(x)=\alpha x\pi$ under composition (applied in the order from left to right) where $\alpha$ is the identity or the reversal of positions and $\pi$ is a permutation of values.
The automorphism group $\Aut(\Pi_n)$  with this action is therefore a direct product of $\mathbb{Z}_2$ and $S_n$ since
$$(\alpha,\pi)(\beta,\rho)(x)=\beta(\alpha x\pi)\rho=(\alpha\beta)x(\pi\rho)=(\alpha\beta,\pi\rho)(x),$$
using the commutativity of $\mathbb{Z}_2$.

\subsection{Multiset permutations}
\label{sec:mperm}

A \emph{composition of an integer} $n\ge 1$ is a sequence $\ba=(a_1,\ldots,a_m)$ of positive integers with $\sum_{i=1}^m a_i=n$.
The \emph{partition of the set~$[n]$ associated} to~$\ba$ is $A_1\cup \cdots \cup A_m=[n]$ with $|A_i|=a_i$ and $x<y$ if $x\in A_i$ and $y\in A_j$ with~$i<j$.
For example, $A_1=\{1,2,3\}$ and $A_2=\{4,5\}$ for $\ba=(3,2)$.
A \emph{multiset permutation with frequencies~$\ba$}, or \emph{$\ba$-permutation} for short, is a sequence $u=u_1\cdots u_n$ of values from~$[m]$ with exactly $a_i$ occurrences of the value~$i$ for all $1\le i \le m$.
The set of all $\ba$-permutations is denoted by $\binom{[n]}{\ba}$, their number is the multinomial coefficient $\binom{n}{a_1,\ldots,a_m}$.
The lexicographically smallest $\ba$-permutation is called the \emph{identity $\ba$-permutation} and is denoted by $\ide(\ba)$.
For example, for $\ba=(3,2)$ we have $\ide(\ba)=11122$.
Let $u=u_1\cdots u_n$ be an $\ba$-permutation and for each $1\le i \le m$, let $x^i=x^i_1\cdots x^i_{a_i}$ be a permutation of~$A_i$, where the sets~$A_i$ form the partition of~$[n]$ associated to~$\ba$.
The \emph{mix} of~$u$ with $x^1,\ldots,x^m$ is the permutation of~$[n]$ denoted by $u \otimes (x^1, \ldots, x^m)$ obtained from~$u$ by replacing the $j$th occurrence of the value~$i$ with~$x^i_j$.
For example, $12211 \otimes (213,54)=25413$.

Let $G(\ba)$ denote the graph with vertex set~$\binom{[n]}{\ba}$ and edges between any two $\ba$-permutations that differ by an adjacent transposition of distinct values.
We will use the following two results on Hamilton paths and cycles in~$G(\ba)$.

\begin{theorem}[\cite{MR737262,MR821383,MR936104}]
\label{thm:comb-adj}
Let $\ba=(a_1,a_2)$ be a composition of~$n$ such that both~$a_1$ and~$a_2$ are odd.
Then the graph $G(\ba)$ has a Hamilton path between $1^{a_1}2^{a_2}$ and $2^{a_2}1^{a_1}$.
\end{theorem}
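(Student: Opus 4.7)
The natural approach is induction on $n=a_1+a_2$. For the base case $\ba=(1,1)$, the graph $G((1,1))$ has just the two vertices $12,21$ joined by a single edge, which is the required path. More generally, when $\min(a_1,a_2)=1$, say $a_2=1$ with $a_1$ odd, the graph $G((a_1,1))$ is itself the path $1^{a_1}2,\ 1^{a_1-1}21,\ \ldots,\ 21^{a_1}$, so the conclusion is immediate; the case $a_1=1$ is symmetric.

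Before attempting the inductive step, I would record a parity check to guide the construction. The graph $G((a_1,a_2))$ is bipartite with classes distinguished by the parity of the number of inversions, i.e.\ of position pairs $i<j$ carrying a $2$ at~$i$ and a $1$ at~$j$, since each adjacent transposition $12\leftrightarrow 21$ changes this count by exactly~$1$. Now $1^{a_1}2^{a_2}$ has $0$ inversions while $2^{a_2}1^{a_1}$ has $a_1a_2$ inversions, so the odd--odd hypothesis places these endpoints in opposite bipartition classes; moreover Kummer's theorem shows that $|V(G(\ba))|=\binom{a_1+a_2}{a_2}$ is even under this hypothesis, so the two classes have equal size, which is consistent with a Hamilton path between the two endpoints.

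For the inductive step with $a_1,a_2\ge 3$ both odd, I would partition $\binom{[n]}{\ba}$ by the last letter: let $V_1$ (resp.\ $V_2$) be the set of $\ba$-permutations ending in~$1$ (resp.\ $2$). Then $V_1$ induces a copy of $G((a_1-1,a_2))$ and $V_2$ a copy of $G((a_1,a_2-1))$, with edges between the two sides corresponding to swapping the final two positions $12\leftrightarrow 21$. Since the desired start vertex lies in~$V_2$ and the end vertex in~$V_1$, the plan is to trace a Hamilton path of~$V_2$ from $1^{a_1}2^{a_2}$ to some $w\cdot 12$, cross to $w\cdot 21\in V_1$, and then trace a Hamilton path of~$V_1$ from $w\cdot 21$ to $2^{a_2}1^{a_1}$.

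The main obstacle is that the two subproblems involve compositions $(a_1-1,a_2)$ and $(a_1,a_2-1)$ with one even and one odd entry, so they fall outside the inductive hypothesis. I would resolve this by strengthening the statement to a simultaneous claim covering all composition parities, parametrised over a family of canonical endpoint pairs compatible with the bipartition described above. With such a strengthening in hand, the delicate step is to choose the crossing prefix~$w$ so that both half-paths belong to the strengthened hypothesis; the odd--odd assumption on $(a_1,a_2)$ supplies enough parity slack to locate a valid~$w$. This is precisely the type of recursive scaffolding used in the Eades--McKay style Gray code constructions in the cited references, and I expect the induction to close cleanly once the strengthened inductive statement is formulated.
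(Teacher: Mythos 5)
The paper does not prove this statement; it is cited from the literature (Eades--McKay, Buck--Wiedemann, Ruskey), so there is no ``paper proof'' to compare against. I am therefore evaluating the proposal on its own merits.

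Your bipartiteness observation is correct and useful, but it actually rules out the inductive step you propose rather than supporting it. The difference between the sizes of the two inversion-parity classes of $G((b_1,b_2))$ equals the Gaussian binomial $\binom{b_1+b_2}{b_1}_{q}$ evaluated at $q=-1$, which is $0$ when $b_1$ and $b_2$ are both odd and equals $\binom{\lfloor (b_1+b_2)/2\rfloor}{\lfloor b_1/2\rfloor}$ otherwise. A bipartite graph can only have a Hamilton path when this imbalance is $0$ or $1$. For the subproblems you produce by fixing the last symbol, namely $(a_1-1,a_2)$ and $(a_1,a_2-1)$ with $a_1,a_2\ge 3$ odd, one part is even and both parts are $\ge 2$, so the imbalance is $\binom{\lfloor\cdot\rfloor}{\lfloor\cdot\rfloor}\ge 2$. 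Hence $G((a_1-1,a_2))$ and $G((a_1,a_2-1))$ have \emph{no} Hamilton path between any pair of vertices whatsoever. (Concretely, $G((2,3))$ has $6$ vertices in one class and $4$ in the other.) So no amount of ``strengthening the statement to all composition parities'' or choosing a clever crossing prefix $w$ can make the two half-paths exist; the objects you are inducting on simply do not have Hamilton paths.

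This is the genuine gap: the claim that ``the odd--odd assumption on $(a_1,a_2)$ supplies enough parity slack to locate a valid $w$'' is false, and the recursion as designed cannot close. The actual constructions in the cited references do not split on a single trailing symbol; they use more elaborate decompositions (and in some of these papers the route goes through auxiliary list structures rather than Hamilton paths on smaller $G(\cdot)$ instances), precisely to avoid producing non-Hamiltonian mixed-parity subgraphs. If you wish to pursue a recursion of the ``peel off the suffix'' flavour, you would need to peel off at least two symbols at a time so that some of the resulting pieces are again odd--odd, and then stitch the non-Hamiltonian pieces into the tour by a path cover argument rather than by a single Hamilton path; this requires substantially more bookkeeping than your sketch suggests.
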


Note that the vertices~$1^{a_1}2^{a_2}$ and~$2^{a_2}1^{a_1}$ mentioned in Theorem~\ref{thm:comb-adj} have degree~1 in~$G(\ba)$, so there is no Hamilton cycle in this case.
However, for $\ba$-permutations on at least three distinct values with at least two odd multiplicities~$a_i$, Stachowiak~\cite{MR1157583} proved there is a Hamilton cycle in~$G(\ba)$, apart from one exception.

\begin{theorem}[\cite{MR1157583}]
\label{thm:multi-adj}
Let $\ba=(a_1,\ldots,a_m)$, $m\ge 3$, be a composition of~$n$ such that at least two of the~$a_i$ are odd.
Then $G(\ba)$ has a Hamilton cycle, unless $m=3$ and $n$ is even and $\{a_1,a_2,a_3\}=\{n-2,1,1\}$.
\end{theorem}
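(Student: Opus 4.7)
The plan is to prove Theorem~\ref{thm:multi-adj} by induction on the number of distinct values~$m$, with Theorem~\ref{thm:comb-adj} serving as the essential two-symbol building block. The base case is $m=3$; the inductive step reduces $m$ by one by either peeling off one value or merging two values, then lifting the resulting Hamilton structure back to $G(\ba)$ along a placement graph.

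Base case ($m=3$). Let $\ba=(a_1,a_2,a_3)$ have at least two odd parts; without loss of generality $a_1$ and $a_2$ are odd. Partition $V(G(\ba))$ into fibres indexed by the $\binom{n}{a_3}$ placements $S\subseteq[n]$ of the value~$3$. The induced subgraph on each fibre is isomorphic to $G(a_1,a_2)$, so by Theorem~\ref{thm:comb-adj} it carries a Hamilton path between the sorted endpoints $1^{a_1}2^{a_2}$ and $2^{a_2}1^{a_1}$ (read along the ordered positions of $[n]\setminus S$). I would construct an auxiliary placement graph $H$ on $\binom{[n]}{a_3}$ whose edges correspond to sliding a single~$3$ across an adjacent $1$ or $2$ appearing at one of these sorted endpoints, exhibit a Hamilton cycle of $H$, and lift it to a Hamilton cycle of $G(\ba)$ by concatenating the fibre Hamilton paths (reversed where needed so that consecutive endpoints match). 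The exceptional family $\ba=(n-2,1,1)$ with $n$ even is ruled out directly: a straightforward degree count shows that the many degree-$2$ vertices of $G(\ba)$ force a proper sub-cycle that omits the two vertices $12\cdots\!\!3\cdots$ and $13\cdots\!\!2\cdots$ in which $2$ and $3$ are adjacent but separated from the long run of $1$s, an obstruction already visible at $n=4$.

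Inductive step ($m\ge 4$). The natural reduction is to fix an index~$j$, suppress the value~$j$ from the alphabet, and decompose $V(G(\ba))$ into fibres indexed by the $\binom{n}{a_j}$ placements of the~$j$'s; each fibre is isomorphic to $G(\ba')$ for $\ba'=(a_1,\ldots,\widehat{a_j},\ldots,a_m)$, and by induction carries a Hamilton cycle provided $\ba'$ satisfies the hypothesis of the theorem. To preserve the invariant one should peel off an even part whenever $\ba$ contains one (so that all odd parts of $\ba$ are retained in $\ba'$), and peel an arbitrary part otherwise (when all $m\ge 3$ parts of $\ba$ are odd, so that $m-1\ge 2$ odd parts remain). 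The inductive Hamilton cycles in the fibres are then stitched together along an edge set of a placement graph on the $\binom{n}{a_j}$ placements of value~$j$, whose edges correspond to sliding a single~$j$ past an adjacent value at a chosen transition vertex of each fibre cycle.

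Main obstacle. The principal technical difficulty is the endpoint-and-orientation alignment of the fibre Hamilton structures with the transitions provided by the placement graph. In the base case Theorem~\ref{thm:comb-adj} supplies fibre paths only between the two fixed sorted endpoints, so every crossover between consecutive fibres must be realized by one of a very restricted set of adjacent transpositions; in the inductive step the fibre Hamilton cycles admit only those crossovers that fall on their edges in a compatible orientation. I expect the heart of the proof to be an orientation-and-routing lemma that simultaneously produces a Hamilton cycle of the placement graph and assigns forward/reverse traversal to each fibre so that every crossover edge is realizable. A secondary source of work is a case analysis ensuring the peeled-down composition $\ba'$ never falls into the exceptional $(n-2,1,1)$ family, which may force an alternate choice of value to peel or, in a handful of small residual configurations, a direct ad hoc construction.
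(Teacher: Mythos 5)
This theorem is cited from Stachowiak~\cite{MR1157583} and the paper does not contain its own proof of it, so there is no in-paper argument to compare against. Assessing your sketch on its own terms, it rests on a claim that is false, and the failure is structural rather than a fixable technicality.

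The problem is the fibre decomposition. You fix a placement $S\subseteq[n]$ of the symbol~$j$ and claim that the subgraph of $G(\ba)$ induced by the fibre is isomorphic to $G(\ba')$, where $\ba'$ is obtained from $\ba$ by deleting~$a_j$. This is true only when $S$ is a contiguous block of positions (or split across the two ends of $[n]$). Edges of $G(\ba)$ are adjacent transpositions in the \emph{original} string of length~$n$, so a $j$ sitting between two non-$j$ symbols prevents them from ever being swapped inside the fibre. Thus $[n]\setminus S$ splits into blocks, the multiset of non-$j$ symbols inside each block is invariant under fibre edges, and the fibre is a disjoint union of Cartesian products of smaller $G(\cdot)$'s, which is typically disconnected. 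A concrete example: take $\ba=(3,1,1)$, $n=5$ (the hypotheses hold, $n$ is odd, not exceptional), and $S=\{3\}$. The fibre is $\{21311,\,12311,\,11321,\,11312\}$ with edges only within $\{21311,12311\}$ and within $\{11321,11312\}$, i.e.\ two disjoint copies of $K_2$, whereas $G(3,1)$ is a path on four vertices. Hence Theorem~\ref{thm:comb-adj} cannot be applied per fibre, and the proposed lift (concatenating fibre Hamilton paths between sorted endpoints along a Hamilton cycle of your placement graph) never gets started. The same objection applies verbatim to your inductive step, where you again assert ``each fibre is isomorphic to $G(\ba')$''.

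Your secondary concerns --- aligning endpoints and orientations across crossovers, and keeping the peeled composition out of the exceptional family --- are real but subordinate: they matter only once the decomposition is sound. Repairing the decomposition requires a genuinely different reduction (Stachowiak's actual argument is considerably more involved and does not rest on a clean one-symbol fibre structure of the kind you describe), so I would classify this as a missing idea rather than a gap that can be patched in place.
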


\subsection{Landau's function}
\label{sec:landau}

A \emph{partition of an integer} $n\ge 1$ is a sequence $\ba=(a_1,\ldots,a_m)$ of positive integers with $\sum_{i=1}^m a_i=n$ and $a_1\ge a_2\ge\cdots\ge a_m$.
The maximal order of an element of~$S_n$ is called \emph{Landau's function}~\cite{landau_1903}, and we denote it by~$\lambda(n)$.
It is determined by
\begin{equation}
\label{eq:landau-lcm}
\lambda(n)=\max_{a_1+\cdots+a_m=n} \lcm(a_1,\ldots,a_m),
\end{equation}
where the maximum ranges over all partitions of~$n$, and $\lcm(a_1,\ldots,a_m)$ denotes the least common multiple of~$a_1,\ldots,a_m$.
The first values of $\lambda(n)$ are shown in Table~\ref{tab:landau} (see also the appendix); this is OEIS sequence~A000793~\cite{oeis}.

Concerning the asymptotic growth of $\lambda(n)$, Landau showed that
\begin{equation}
\label{eq:landau-asymp}
\lambda(n)=e^{(1+o(1))\sqrt{n\ln n}}.
\end{equation}

For our arguments we will need two variants of Landau's function that we define in the following.
For any partition~$(a_1,\ldots,a_m)$ write $e(a_1,\ldots,a_m)$ for the number of even entries of the partition.
We define
\begin{subequations}
\label{eq:landau-even}
\begin{align}
\lambda_0(n)&:=\max_{\substack{a_1+\cdots+a_m=n \\ e(a_1,\ldots,a_m)=0}} \lcm(a_1,\ldots,a_m), \label{eq:landau0} \\
\lambda_2(n)&:=\max_{\substack{a_1+\cdots+a_m=n \\ e(a_1,\ldots,a_m)\in\{2,4,6,\ldots\}}} \lcm(a_1,\ldots,a_m). \label{eq:landau2}
\end{align}
\end{subequations}
The maximizations in~\eqref{eq:landau-even} are over all integer partitions of~$n$ that have 0 even parts (i.e., only odd parts), or exactly~$2,4,6,\ldots$ even parts, respectively.
The only difference of these definitions to~\eqref{eq:landau-lcm} are the additional requirements about the parity of the~$a_i$.
The sequences $(\lambda_0(n))_{n\ge 1}$ and $(\lambda_2(n))_{n\ge 1}$ appear not to have been studied before.

The first few values of~$\lambda_0(n)$ and~$\lambda_2(n)$ are shown in Table~\ref{tab:landau}, comparing them with the corresponding values for~$\lambda(n)$.
We clearly have $\lambda_0(n)\le \lambda(n)$ for all $n\ge 1$, with equality e.g.\ for~$n=1,3,8,15$.
Similarly, we have $\lambda_2(n)\le \lambda(n)$ for all $n\ge 1$, with equality e.g.\ for~$n=21,22,45,46,51,52,55,56,74,75,81,82,87,88,91,92,99,100$.
On the other hand, $\lambda_0(n)$ can be much smaller than $\lambda(n)$.
For example, for $n=19,20$ we have $\lambda(n)/\lambda_0(n)=28/11=2.54\ldots$, for $n=30,31$ we have $\lambda(n)/\lambda_0(n)=44/13=3.38\ldots$ and for $n=53,54$ we have $\lambda(n)/\lambda_0(n)=72/17=4.23\ldots$.
One can also see that $\lambda_0(n)\le \alpha(n)\le \lambda(n)$, where $\alpha(n)$ is the maximal order of an element of the alternating group~$A_n$ (OEIS sequence~A051593).
More numerical experiments about the Landau function and its variants are reported in the appendix.

\begin{table}[h!]
\makebox[0cm]{ 
\setlength{\tabcolsep}{2pt}
\tiny
\begin{tabular}{c|cccccccccccccccccccc}
  $n$ & 1 & 2 & 3 & 4 & 5 & 6 & 7 & 8 & 9 & 10 & 11 & 12 & 13 & 14 & 15 & 16 & 17 & 18 & 19 & 20 \\ \hline
  $\lambda(n)$ & 1 & 2 & 3 & 4 & 6 & 6 & 12 & 15 & 20 & 30 & 30 & 60 & 60 & 84 & 105 & 140 & 210 & 210 & 420 & 420 \\
  $\lcm(\cdot)$ & 1 & 2 & 3 & 4 & 3,2 & 3,2,1 & 4,3 & 5,3 & 5,4 & 5,3,2 & 5,3,2,1 & 5,4,3 & 5,4,3,1 & 7,4,3 & 7,5,3 & 7,5,4 & 7,5,3,2 & 7,5,3,2,1 & 7,5,4,3 & 7,5,4,3,1 \\ \hline
  $\lambda_0(n)$ & 1 & 1 & 3 & 3 & 5 & 5 & 7 & 15 & 15 & 21 & 21 & 35 & 35 & 45 & 105 & 105 & 105 & 105 & 165 & 165 \\
  $\lcm(\cdot)$ & 1 & 1,1 & 3 & 3,1 & 5 & 5,1 & 7 & 5,3 & 5,3,1 & 7,3 & 7,3,1 & 7,5 & 7,5,1 & 9,5 & 7,5,3 & 7,5,3,1 & 7,5,3,1,1 & 7,5,3,1,1,1 & 11,5,3 & 11,5,3,1 \\
  $\frac{\lambda(n)}{\lambda_0(n)}$ & 1 & 2 & 1 & 1.33.. & 1.2 & 1.2 & 1.71.. & 1 & 1.33.. & 1.42.. & 1.42.. & 1.71.. & 1.71.. & 1.86.. & 1 & 1.33.. & 2 & 2 & 2.54.. & 2.54.. \\ \hline
  $\lambda_2(n)$ & $-\infty$ & $-\infty$ & $-\infty$ & 2 & 2 & 4 & 6 & 6 & 12 & 12 & 20 & 30 & 30 & 60 & 60 & 84 & 84 & 140 & 210 & 210 \\
  $\lcm(\cdot)$ & -- & -- & -- & 2,2 & 2,2,1 & 4,2 & 3,2,2 & 3,2,2,1 & 4,3,2 & 4,3,2,1 & 5,4,2 & 5,3,2,2 & 5,3,2,2,1 & 5,4,3,2 & 5,4,3,2,1 & 7,4,3,2 & 7,4,3,2,1 & 7,5,4,2 & 7,5,3,2,2 & 7,5,3,2,2,1 \\
  $\frac{\lambda(n)}{\lambda_2(n)}$ & -- & -- & -- & 2 & 3 & 1.5 & 2 & 2.5 & 1.66.. & 2.5 & 1.5 & 2 & 2 & 1.4 & 1.75 & 1.66.. & 2.5 & 1.5 & 2 & 2 \\
\end{tabular}
}
\caption{First 20 values of $\lambda(n)$, $\lambda_0(n)$ and $\lambda_2(n)$.}
\label{tab:landau}
\end{table}

\begin{lemma}
\label{lem:landau-prop}
The functions~$\lambda(n)$, $\lambda_0(n)$ and $\lambda_2(n)$ have the following properties:
\begin{enumerate}[label=(\roman*),leftmargin=8mm, topsep=0mm, noitemsep]
\item The maximum in~\eqref{eq:landau-lcm} is attained for a partition of~$n$ into powers of distinct primes and~1s.
\item The maximum in~\eqref{eq:landau0} is attained for a partition of~$n$ into powers of distinct odd primes and~1s.
\item The maximum in~\eqref{eq:landau2} is attained for a partition of~$n$ into powers of distinct odd primes, a positive power of~2, a~2 and~1s.
For $n\ge 12$ this partition has at least~$m\ge 4$ parts.
\item We have $\max\{\lambda_0(n),\lambda_2(n)\}\ge \lambda(n)/2$.
\item We have $\lambda(n)/\lambda_0(n)\geq 4$ for $n\geq 739$ and $\lim_{n\rightarrow \infty}\lambda(n)/\lambda_0(n)=+\infty$, and we have $\lambda(n)/\lambda_2(n)\leq 2$ for $n\geq 18$.
Consequently, we have $2\lambda_0(n)\leq \lambda_2(n)$ for $n\geq 739$ and $\lim_{n\rightarrow \infty}2\lambda_0(n)/\lambda_2(n)=0$.
\item There are arbitrarily long intervals with $\lambda(n)=\lambda_2(n)$.
\end{enumerate}
\end{lemma}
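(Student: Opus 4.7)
The plan is to prove the six parts in order, relying on one structural reduction: the maximum in each variant of Landau's function is attained on a partition consisting of a few distinct prime powers padded by ones. Once this is established, parts~(iv)--(vi) amount to controlling the $2$-power in the optimal partition.

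For parts~(i)--(iii) I use the classical exchange argument. Given a maximizing partition $(a_1,\ldots,a_m)$ with $L=\lcm(a_1,\ldots,a_m)=\prod_{i=1}^k p_i^{e_i}$, each $p_i^{e_i}$ divides some $a_{j_i}$, so $a_{j_i}\ge p_i^{e_i}$; replacing the partition by $(p_1^{e_1},\ldots,p_k^{e_k})$ padded with $1$s preserves the LCM and keeps the sum $\le n$. Part~(ii) is this argument restricted to odd primes. For~(iii), among the even parts of a $\lambda_2$-partition only one contributes to the $2$-power of the LCM, so replacing the extra even parts by a single~$2$ is optimal; the count $m\ge 4$ when $n\ge 12$ then follows by a short direct check on the size of $2^a+2+\sum_i p_i^{e_i}$.

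Part~(iv) is immediate: take the canonical $\lambda(n)$-partition and write $L=2^kM$ with $M$ odd. If $k=0$ the partition lies already in the $\lambda_0$-domain, so $\lambda_0(n)\ge L$. If $k=1$, dropping the $2$ and padding with two extra $1$s gives $\lambda_0(n)\ge L/2$. If $k\ge 2$, splitting $2^k$ as $2^{k-1}+2^{k-1}$ preserves the sum, creates two even parts, and yields $\lambda_2(n)\ge L/2$. Part~(v) refines this. For $n\ge 18$ one first checks (using~(iii)) that the $\lambda$-optimal partition contains a positive $2$-power, and then completes a short case analysis on whether the exponent~$k$ is $1$ or $\ge 2$; the delicate case $k=1$ with fewer than two padding $1$s is handled by a local swap such as $(2,3)\mapsto(4,2)$, which costs only a factor~$2/3$ in the LCM. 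The remaining small cases are read off from Table~\ref{tab:landau}. The divergence $\lambda(n)/\lambda_0(n)\to\infty$ follows from Landau's asymptotic $\log\lambda(n)\sim\sqrt{n\log n}$ combined with the fact that the exponent of $2$ in the canonical partition grows without bound, and the explicit threshold $n\ge 739$ is obtained by pairing this with a finite verification.

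For part~(vi), given~$\ell$, let $r$ be smallest with $p_{r+1}>\ell$, and for each prime $p\le p_r$ let $e_p\ge 1$ be least with $p^{e_p}(p-1)>\ell$. Set $P=\{p^{e_p}:p\le p_r\}$, $L=\prod_p p^{e_p}$, and $n_0=\sum_p p^{e_p}$. Any partition of $n\le n_0+\ell$ with LCM exceeding~$L$ must either introduce a new prime $>p_r$ or raise some $p^{e_p}$ to $p^{e_p+1}$, and both moves cost more than~$\ell$ by the choice of~$r$ and the~$e_p$; hence $\lambda(n)=L$ on $[n_0,n_0+\ell]$. For $n\ge n_0+2$, swapping two padding $1$s for a single~$2$ yields a valid $\lambda_2$-partition with LCM~$L$, so $\lambda_2(n)=\lambda(n)$ on $[n_0+2,n_0+\ell]$, and its length $\ell-1$ can be made arbitrarily large. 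The main obstacle I expect is the quantitative statement in~(v): the effective threshold $n\ge 739$ for $\lambda/\lambda_0\ge 4$ requires sharper control on the $2$-exponent of the $\lambda$-optimal partition than the bare asymptotic $\sqrt{n\log n}$ supplies, together with a finite computation to cover the remaining range.
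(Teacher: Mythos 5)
Parts (i)--(iv) follow essentially the paper's route (an exchange argument to normalize the partitions, then a case analysis on the $2$-power of the $\lambda$-optimal partition), and are fine, though the claim $m\ge 4$ in part~(iii) is not a ``short direct check'': the paper verifies it for $12\le n\le 35$, uses tabulated values of $\lambda$ for $36\le n\le 675$, and for $n\ge 676$ builds a six-part partition with two even parts via Bertrand's postulate.

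The serious gap is the divergence $\lambda(n)/\lambda_0(n)\to\infty$ in part~(v). You assert it ``follows from Landau's asymptotic $\log\lambda(n)\sim\sqrt{n\log n}$ combined with the fact that the exponent of $2$ in the canonical partition grows without bound,'' but neither fact bounds $\lambda_0(n)$ from above relative to $\lambda(n)$. In fact $\lambda_0(n)$ satisfies the same asymptotic $\log\lambda_0(n)=(1+o(1))\sqrt{n\ln n}$, since the $2$-power contributes only a factor $\exp(O(\sqrt{n/\ln n}))=\exp(o(\sqrt{n\ln n}))$ to $\lambda(n)$, so the divergence of the ratio is invisible at this scale. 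The identity $\lambda_0(n-2^c)=\lambda(n)/2^c$ you implicitly rely on relates $\lambda_0$ at the index $n-2^c$, not $n$, and gives $\lambda_0(n)\ge\lambda_0(n-2^c)=\lambda(n)/2^c$ — an upper bound on the ratio, not a lower bound. The paper's actual argument is subtler: starting from a $\lambda_0$-optimal partition with $3$-exponent $a$ and largest prime power $p^e$, it performs two separate replacements (inserting a $2$-power in place of $3$s, and splitting $p^e$ into a $3$-power and a $2$-power), derives $\lambda(n)\ge 3^{a-2}\lambda_0(n)$ and $\lambda(n)\ge p^e 3^{-a}\lambda_0(n)/24$, multiplies and takes square roots to get $\lambda(n)\ge\tfrac{1}{6\sqrt6}\,p^{e/2}\lambda_0(n)$, and concludes from $p^e\to\infty$. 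You need an idea at that level. Two smaller points: your ``local swap'' $(2,3)\mapsto(4,2)$ in the $\lambda/\lambda_2$ bound does not preserve the sum and presumes a free $3$; the paper instead shows the $\lambda$-optimal partition contains $2^c$ with $c\ge 2$ for all large $n$, avoiding the $k=1$ case entirely. And in part~(vi) you give a self-contained construction where the paper simply cites Nicolas's theorem that $\lambda$ is constant on arbitrarily long intervals; your sketch is in the right spirit but leaves unaddressed the mixed substitutions (drop one prime power, boost another) that your cost accounting does not yet rule out.
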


For example, $\lambda(11)=30$ is attained by the partition~$(6,5)$ but also by the partition~$(5,3,2,1)$.
Similarly, $\lambda_2(10)=12$ is attained by the partition~$(6,4)$ but also by the partition~$(4,3,2,1)$.
The proof of Lemma~\ref{lem:landau-prop} is deferred to the appendix.

\subsection{Cayley graphs}
\label{sec:prelim-cayley}

For a group~$G$ and a generating set~$S\seq G$, we define the \emph{Cayley graph} $\Gamma(G,S)$ as the graph with vertex set~$G$ and undirected edges $\{x,y\}$ for all $x,y\in G$ and $s\in S$ with $y=xs$.
As the edges are undirected, the graphs $\Gamma(G,S)$ and $\Gamma(G,S\cup S^{-1})$ are the same, where $S^{-1}:=\{s^{-1}\mid s\in S\}$, so we can assume w.l.o.g.\ that if $s\in S$, then also $s^{-1}\in S$.
If $P=(x_1,\ldots,x_n)$ is a path in the Cayley graph~$\Gamma(G,S)$, then for any $g\in G$ the sequence $g P:=(g x_1,\ldots,g x_n)$ is also a path.

It has long been conjectured that all Cayley graphs admit a Hamilton cycle, but despite considerable effort and many partial results (see the surveys~\cite{MR762322,MR1405010,MR2548568,MR2548567}), this problem is still very much open in general.
On the other hand, for Cayley graphs of abelian groups several results are known.
A classical folklore result (see e.g.~\cite[Sec.~3]{MR762322}) states that every Cayley graph of an abelian group has a Hamilton cycle.

\begin{theorem}
\label{thm:abelian}
Let $G$ be an abelian group with $|G|\geq 3$ and $S\seq G$ a generating set.
Then the Cayley graph $\Gamma(G,S)$ has a Hamilton cycle.
\end{theorem}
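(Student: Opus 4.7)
The plan is to argue by induction on $|G|$, using the abelian structure to pass to a quotient of strictly smaller order. The base case and the cyclic case are immediate: if $G=\langle s\rangle$ for some $s\in S$, then $0,s,2s,\ldots,(|G|-1)s,0$ is already a Hamilton cycle in $\Gamma(G,S)$. I shall therefore assume that $G$ is non-cyclic, so $|G|\geq 4$.

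For the inductive step, I would pick $s\in S$ and set $H:=\langle s\rangle$, a proper subgroup of~$G$. Since $G$ is abelian, $H$ is normal and the quotient $G/H$ is again abelian of order strictly less than $|G|$, generated by the images $\bar S$ of~$S$ under the projection map. If $|G/H|\geq 3$, the inductive hypothesis yields a Hamilton cycle $\bar C=(\bar g_0,\bar g_1,\ldots,\bar g_{k-1},\bar g_0)$ in $\Gamma(G/H,\bar S)$ with $\bar g_{i+1}=\bar g_i+\bar t_i$ for suitable $t_i\in S$; the degenerate cases $|G/H|\in\{1,2\}$ are handled by a direct two-coset argument.

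To lift $\bar C$ to a Hamilton cycle of $\Gamma(G,S)$ I would use a coset-by-coset zigzag construction, analogous in spirit to the product path $P\sqcup Q$ defined in Section~\ref{sec:prelim-cubes}. Each coset $g_i+H$ is a $|H|$-cycle under the action of $\pm s$, so I traverse it completely with $\pm s$-edges, alternating the orientation across consecutive cosets, and stitch the cosets together using the jump elements~$t_i$ coming from the quotient cycle.

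The hard part will be closing the lifted walk into a cycle that returns to~$0$. A parity obstruction arises if both $|H|$ and $k=|G/H|$ are odd: the naive zigzag then terminates at a vertex of~$H$ distinct from~$0$. I would overcome this by two complementary means. First, one exploits the freedom of choosing~$s$: since $G$ is non-cyclic, multiple non-trivial elements of~$S$ are available, and at least one choice yields an even $|H|$ or an even~$k$, unless $|G|$ itself is odd. Second, when $|G|$ is odd and no such choice exists, one strengthens the inductive statement to guarantee Hamilton \emph{paths} in~$G/H$ between prescribed endpoints (in the spirit of the Chen--Quimpo theorem~\cite{MR641233} referenced later in the paper); this provides enough flexibility to steer the last coset segment so that its terminal vertex coincides with~$0$.
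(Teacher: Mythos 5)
The paper does not actually prove this theorem; it cites it as folklore with the reference~\cite[Sec.~3]{MR762322}. So there is no in-paper proof to compare against, and your blind attempt has to be judged on its own. Your high-level strategy (induct on $|G|$, quotient by a cyclic subgroup $H=\langle s\rangle$, lift a Hamilton cycle of $G/H$ by stitching together full traversals of cosets) is indeed the standard approach, essentially the factor group lemma in disguise (cf.\ Lemmas~\ref{lem:factor} and~\ref{lem:factor-comp}). However, there are two genuine gaps.

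First, the parity diagnosis is wrong. Whether the zigzag closes is governed not by the parities of $|H|$ and $k=|G/H|$ but by the \emph{voltage} $v:=t_0+\cdots+t_{k-1}\in H$ of the lifted quotient cycle: the zigzag terminates at $v-t_{k-1}$ when $k$ is even and at $v-s-t_{k-1}$ when $k$ is odd, so it closes iff $v=0$ (resp.\ $v=s$). The parity of $|H|$ does not enter at all, and having $|G|$ even does \emph{not} force a usable voltage. For instance with $G=\mathbb{Z}_3\oplus\mathbb{Z}_3$ and the canonical generating set, every $s\in S$ gives $|H|=k=3$, the quotient $G/H\cong\mathbb{Z}_3$ has a unique Hamilton cycle, and its voltage is $0\neq\pm s$, so neither your zigzag nor the factor group lemma applies directly for any choice of $s$. (The graph is nevertheless Hamiltonian; the correct proof either picks a different normal subgroup~$H$ that is not of the form $\langle s\rangle$ for $s\in S$, or works harder to engineer a quotient cycle of the right voltage.)

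Second, the fallback for the troublesome case is not a proof. Strengthening the inductive hypothesis to ``Hamilton paths between prescribed endpoints'' is precisely the Chen--Quimpo theorem, which is substantially harder than the statement being proved and is not established here. If you are willing to \emph{cite} Chen--Quimpo (Theorem~\ref{thm:chen-quimpo}), then Theorem~\ref{thm:abelian} is an immediate corollary (Hamilton-connected or Hamilton-laceable graphs are Hamiltonian; and the remaining degree-$2$ case is a cycle), making the whole induction unnecessary; but appealing to it only ``in spirit'' for one subcase leaves a hole exactly where the difficulty lies. The degenerate cases $|G/H|\in\{1,2\}$ are likewise waved at rather than handled. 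As written, the argument does not close.
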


In fact, this result can be generalized considerably using the following stronger notions of Hamiltonicity.
A graph is called \emph{Hamilton-connected} if for any two vertices there is a Hamilton path starting and ending at these two vertices.
Similarly, a bipartite graph is called \emph{Hamilton-laceable} if for any two vertices in different partition classes there is a Hamilton path joining these two vertices.
The following theorem of Chen and Quimpo asserts that Cayley graphs of abelian groups possess the strongest possible of these two Hamiltonicity notions.

\begin{theorem}[\cite{MR641233}]
\label{thm:chen-quimpo}
Let $G$ be an abelian group and $S\seq G$ a generating set.
If the Cayley graph $\Gamma=\Gamma(G,S)$ has minimum degree at least~3, then we have the following:
\begin{enumerate}[label=(\roman*),leftmargin=8mm, topsep=0mm, noitemsep]
\item If $\Gamma$ is bipartite, then $\Gamma$ is Hamilton-laceable.
\item If $\Gamma$ is not bipartite, then $\Gamma$ is Hamilton-connected.
\end{enumerate}
\end{theorem}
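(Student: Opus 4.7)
The plan is to prove both parts simultaneously by induction on $|G|$, using the structure theorem for finite abelian groups (Theorem~\ref{thm:abelian-struct}) to control the reduction step. The base cases are Cayley graphs of cyclic groups $\mathbb{Z}_n$, i.e., circulants: for these, Hamilton-connectedness and Hamilton-laceability under the min-degree hypothesis can be established by direct case analysis on $S$, falling back on classical constructions for $4$-regular circulants $\Gamma(\mathbb{Z}_n,\{\pm a,\pm b\})$ with $\gcd(a,b,n)=1$.

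For the inductive step, I would select a distinguished generator $s\in S$ so that $\langle s\rangle$ arises as a direct summand of $G$, which Theorem~\ref{thm:abelian-struct} guarantees is possible after reindexing. This yields a quotient $G':=G/\langle s\rangle$ whose Cayley graph $\Gamma'=\Gamma(G',S')$, with $S'$ the image of $S\setminus\{s\}$, is itself an abelian Cayley graph on a strictly smaller group. Given target vertices $u,v$ of~$\Gamma$ (of opposite parity in the bipartite case), the strategy is to invoke induction to obtain a Hamilton path $\pi'$ in~$\Gamma'$ between the appropriate cosets~$[u]$ and~$[v]$, and then \emph{lift} $\pi'$ to a Hamilton path in $\Gamma$ by traversing each coset $x\langle s\rangle$ along the cycle of $s$-edges and crossing between consecutive cosets via the $S'$-edges prescribed by~$\pi'$. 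This lifting pattern underlies the folklore proof of Theorem~\ref{thm:abelian} as well.

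The main obstacle is making the lifted path start precisely at $u$ and end precisely at $v$, rather than at arbitrary representatives of the end cosets. On each coset cycle one chooses a ``rotation'' — the point at which the $s$-cycle is entered and exited — and these rotations must be chosen coherently along the entire sequence of cosets in $\pi'$ so that consecutive entry/exit points are joined by the right $S'$-edge. Concretely, this reduces to solving a system of shift equations modulo $\ord(s)$ along the path~$\pi'$; the remaining freedom (which can always be provided by the existence of a second independent generator in $S'$, i.e., by the min-degree-$3$ hypothesis) is then used to align the final rotation with the target endpoint $v$. In the bipartite case an additional parity bookkeeping is needed: the $s$-cycles and the $S'$-crossings each contribute a controlled parity, and one must verify that the total parity matches the required parity of $v$ relative to~$u$.

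Finally, several delicate cases must be handled separately: when the quotient $\Gamma'$ fails the min-degree-$3$ hypothesis (so induction does not apply, and a direct construction in $G$ is needed, typically using two generators of $S$ as in the circulant base case); when $\ord(s)=2$, so the coset ``cycles'' degenerate into single edges and a different lifting must be used (pairing two generators of order~$2$, or switching to a generator of larger order); and when $G$ itself is small, handled by an explicit base-case table. The hardest bookkeeping is in the bipartite case, where all of parity, rotation consistency, and endpoint matching must be simultaneously satisfied.
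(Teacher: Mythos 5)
This theorem is cited from Chen and Quimpo~\cite{MR641233}; the paper gives no proof of its own, so there is no internal argument to compare your attempt against. Your proposal is a blind reconstruction, and its broad shape---induction on $|G|$ via a quotient $G/\langle s\rangle$ for a chosen $s\in S$, followed by a lifting step that unwinds cosets of $\langle s\rangle$ along their $s$-cycles---is indeed the general strategy used by Chen and Quimpo and by most treatments of this theorem.

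Nevertheless there is both a concrete error and a substantial gap. The error: you assert that Theorem~\ref{thm:abelian-struct} lets you ``select a distinguished generator $s\in S$ so that $\langle s\rangle$ arises as a direct summand of $G$.'' The structure theorem decomposes $G$ abstractly but says nothing about the elements of a given generating set. For instance, take $G=\mathbb{Z}_4\oplus\mathbb{Z}_9$ and $S=\{\pm(2,1),\pm(1,3)\}$; then $(2,1)+(1,3)=(3,4)$ has order $36$, so $S$ generates and $\Gamma(G,S)$ is $4$-regular, but $\langle(2,1)\rangle\cong\mathbb{Z}_{18}$ and $\langle(1,3)\rangle\cong\mathbb{Z}_{12}$ and neither is a direct summand, since $\mathbb{Z}_4\oplus\mathbb{Z}_9$ is isomorphic to neither $\mathbb{Z}_{18}\oplus\mathbb{Z}_2$ nor $\mathbb{Z}_{12}\oplus\mathbb{Z}_3$. (Luckily the direct-summand requirement is unnecessary: $G/\langle s\rangle$ is a smaller abelian group and the cosets of $\langle s\rangle$ are $s$-cycles regardless of whether $\langle s\rangle$ splits off.) The gap: what you have written is a plan rather than a proof. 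The substance of the Chen--Quimpo argument lives exactly in what you defer---the endpoint alignment along the lifted path (``rotation consistency''), the parity bookkeeping in the bipartite case, and the degenerate cases where the quotient loses min-degree~$3$ or where $\ord(s)=2$---and you also overlook the case where $u$ and $v$ lie in the same coset of $\langle s\rangle$, in which the inductive hypothesis hands you a Hamilton path of $\Gamma'$ between distinct vertices when the lifting would require a Hamilton cycle of $\Gamma'$ (or a switch of generator). As written, the proposal correctly identifies the inductive framework but does not establish the theorem.
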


We will also need the fact that Cayley graphs are highly connected.

\begin{lemma}[{\cite[Theorem 3]{MR266804}}]
\label{lem:connectivity}
If\/ $\Gamma$ is a connected $d$-regular Cayley graph, then the vertex-connectivity of\/~$\Gamma$ is at least~$2d/3$.
\end{lemma}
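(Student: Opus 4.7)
My plan is to use Watkins' fragment/atom technique, which in fact applies to every connected vertex-transitive $d$-regular graph and yields the slightly stronger bound $\kappa(\Gamma)\ge\lceil 2(d+1)/3\rceil$. Since left multiplication by group elements acts transitively on the vertices of a Cayley graph, $\Gamma$ is vertex-transitive, so the argument applies.

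The first step is a simple edge count. Assume for contradiction that $\kappa:=\kappa(\Gamma)<2d/3$, and fix a minimum vertex cut~$S$ with $|S|=\kappa$. Let $A$ be the vertex set of a smallest connected component of~$\Gamma-S$, and set $B:=V(\Gamma)\setminus(A\cup S)$, so that $|A|\le|B|$. Each $v\in A$ has at most $|A|-1$ neighbors inside~$A$, hence at least $d-|A|+1$ neighbors in~$S$. Double-counting edges between~$A$ and~$S$ yields
\[
|A|(d-|A|+1)\le e(A,S)\le d|S|<\tfrac{2}{3}d^2,
\]
which forces $|A|$ to be either rather small or rather close to $|S|$.

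The crucial structural step is an \emph{atom-disjointness property}: calling $A$ an \emph{atom} when it is a smallest fragment over all minimum cuts of~$\Gamma$, any two distinct atoms are vertex-disjoint. This follows from the submodularity of vertex-boundaries,
\[
|N(A\cap A')|+|N(A\cup A')|\le|N(A)|+|N(A')|=2\kappa,
\]
combined with the minimality of~$\kappa$: if $A\cap A'\ne\emptyset$ and $A\ne A'$, then $A\cap A'$ would be a strictly smaller fragment with boundary of size at most~$\kappa$, contradicting the choice of~$A$ as an atom.

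To finish, I would use vertex-transitivity to pick an automorphism~$g$ (left translation by a suitable group element) that takes some vertex of~$A$ into~$S$, so that $g(A)$ is an atom overlapping~$S$. Since $g(A)\ne A$, atom-disjointness forces $g(A)\cap A=\emptyset$; combined with $|A|\le|B|$ and the edge-count inequality from the first step, this produces the desired contradiction to $\kappa<2d/3$. The main obstacle is the atom-disjointness step, where the submodular interaction of vertex-boundaries must be handled carefully; once that is established, the precise constant~$2/3$ drops out of the elementary counting in the opening paragraph.
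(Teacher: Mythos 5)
The paper does not prove this lemma at all; it cites it directly from Watkins (\cite[Theorem~3]{MR266804}), so there is no in-paper proof to compare against. Your proposal correctly identifies the result as the Watkins/Mader connectivity bound for connected vertex-transitive graphs and the atom/fragment technique as the standard route, but as a sketch it has several real gaps. The opening edge count $|A|(d-|A|+1)\le d|S|$ is valid, but it never gets combined with a usable upper bound on~$|A|$, and the constant $2/3$ does not emerge from it alone. The submodularity step for atom-disjointness is the right idea, but one must also verify that $A\cap A'$ is a genuine fragment, i.e.\ that the ``far side'' $V\setminus\bigl((A\cup A')\cup N(A\cup A')\bigr)$ is nonempty; this is where $|A|\le|B|$ and a counting argument actually enter, and glossing over it leaves a hole. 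The final paragraph is the most serious shortfall: ``atom-disjointness forces $g(A)\cap A=\emptyset$; combined with the edge count this produces the desired contradiction'' omits the structural facts that Watkins' proof really uses, namely that an atom that meets a minimum cut is entirely contained in it, that by vertex-transitivity the atoms all have the same size $a$ and partition $V$ (so $a$ divides $\kappa$ and in particular $a\le\kappa$), and that one must combine the lower bound $a\ge d-\kappa+1$ (every vertex of $A$ has all $d$ neighbors in $A\cup S$) with a sharper upper bound on $a$ relative to $\kappa$ to extract $3\kappa\ge 2(d+1)$. Your closing remark that ``the precise constant $2/3$ drops out of the elementary counting in the opening paragraph'' once disjointness is known is therefore not accurate; substantially more structural work is required between those two points.
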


\section{Hypercubes}
\label{sec:cubes}

In this section we consider the family of hypercubes~$Q_n$ introduced in Section~\ref{sec:results} (recall also Section~\ref{sec:prelim-cubes}).
We first show that the binary reflected Gray code has constant compression~4.
We then establish a general linear (in $n$) upper bound for $\kappa(Q_n)$, and a matching lower bound construction, i.e., an automorphism and a Hamilton cycle in~$Q_n$ whose compression equals this upper bound.
Lastly, we apply our constructions to derive $t$-track Hamilton cycles in~$Q_n$.

\subsection{The binary reflected Gray code (BRGC)}

Recall the definition of the BRGC~$\Gamma_n$ given in Section~\ref{sec:results-cubes}.

\begin{proposition}
\label{prop:brgc}
The BRGC $\Gamma_n$ has compression $\kappa(Q_n,\Gamma_n)=4$ for $n\ge 2$.
\end{proposition}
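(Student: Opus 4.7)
Since $|V(Q_n)|=2^n$, the compression $\kappa(Q_n,\Gamma_n)$ must be a power of~$2$, and $k$-symmetry of a Hamilton cycle implies $k'$-symmetry for every divisor $k'\mid k$ (apply $f^{k/k'}$). Hence it suffices to show $\kappa(Q_n,\Gamma_n)\ge 4$ and $\kappa(Q_n,\Gamma_n)<8$. The case $n=2$ is immediate since $\Gamma_2=00,01,11,10$ is the full $4$-cycle, so I focus on $n\ge 3$.

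For the lower bound, I would unfold the BRGC recursion twice to obtain
\[
\Gamma_n \;=\; 00\,\Gamma_{n-2},\; 01\,\lvec{\Gamma_{n-2}},\; 11\,\Gamma_{n-2},\; 10\,\lvec{\Gamma_{n-2}},
\]
and use the following \emph{reflection identity}: the $j$-th and $(2^m{+}1{-}j)$-th entries of $\Gamma_m$ differ exactly in bit~$1$ (immediate from $\Gamma_m=0\Gamma_{m-1},1\lvec{\Gamma_{m-1}}$). Using~\eqref{eq:Qaut} I would then take the automorphism $f$ given by $\pi=(1,2)$ and $z\in\{0,1\}^n$ with $1$s exactly in positions~$2$ and~$3$, and verify quarter by quarter that $f$ cyclically shifts $\Gamma_n$ by $2^{n-2}$. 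Each of the four quarter-to-quarter checks reduces, after tracking how $f$ acts on the two prefix bits, to the reflection identity applied to~$\Gamma_{n-2}$. Since $f^2$ flips bits~$1$ and~$2$, $f$ has order~$4$, so this witnesses $\kappa(Q_n,\Gamma_n)\ge 4$.

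For the upper bound, I would pass to the \emph{bit-flip sequence} $b_1,\ldots,b_{2^n}$ of $\Gamma_n$, where $b_i\in[n]$ is the unique bit in which $x_i$ and $x_{i+1\bmod 2^n}$ differ. An easy induction on the BRGC recursion yields the ruler formula $b_i=n-\nu_2(i)$ for $1\le i<2^n$, together with $b_{2^n}=1$, where $\nu_2$ denotes the $2$-adic valuation. If $f=(\pi,z)$ as in~\eqref{eq:Qaut} induces the shift $x_i\mapsto x_{i+m}$ along~$\Gamma_n$, then since $x_i$ and $x_{i+1}$ differ in bit~$b_i$ their images under $f$ differ in bit $\pi^{-1}(b_i)$, so $b_{i+m}=\pi^{-1}(b_i)$ for every~$i$. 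Assuming $8$-symmetry, i.e.\ $m=2^{n-3}$, I would evaluate this at $i=2^{n-2}$ and $i'=2^{n-1}$: then $b_i=2\neq 1=b_{i'}$, but $\nu_2(i+m)=\nu_2(i'+m)=n-3$ gives $b_{i+m}=3=b_{i'+m}$, forcing the impossible equality $\pi^{-1}(1)=\pi^{-1}(2)$ and ruling out $8$-symmetry.

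The main obstacle is isolating the reflection identity and lining up the four quarter-reversing maps under $f$ in the lower bound; once this is done, both directions reduce to short bookkeeping arguments involving bits and $2$-adic valuations.
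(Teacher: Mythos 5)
Your proof is correct. The lower bound uses the same automorphism as the paper, namely $f(x_1x_2x_3\cdots x_n)=x_2\overline{x_1}\overline{x_3}x_4\cdots x_n$, i.e., $\pi=(1,2)$ and $z=0110^{n-3}$ in the notation of~\eqref{eq:Qaut}; you reach it via a two-fold unrolling of the recursion plus the reflection identity $\Gamma_m[j]\oplus\Gamma_m[2^m{+}1{-}j]=10^{m-1}$, whereas the paper unrolls three times and reads the shift directly off the eight blocks of~\eqref{eq:Gamma3}. These are just two presentations of the same verification.

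The upper bound is genuinely different. The paper observes that the $N/2$-shift complements the first two bits (from~\eqref{eq:Gamma2}), and then classifies for which indices~$i$ the 4-tuple $(x_i,x_{i+1},x_{i+N/2},x_{i+1+N/2})$ is a 4-cycle of~$Q_n$ — this happens exactly for $i\in\{N/4,N/2,3N/4,N\}$, and since 4-cycles must map to 4-cycles under any automorphism, this set must be invariant under the $N/k$-shift, which fails for $k\ge 8$. You instead pass to the bit-flip sequence, use the ruler formula $b_i=n-\nu_2(i)$, and exploit that a shift by $m$ forces $b_{i+m}=\pi^{-1}(b_i)$ for every~$i$, which forces the impossible collision $\pi^{-1}(1)=\pi^{-1}(2)$ when $m=2^{n-3}$. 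Both are short; the paper's argument is graph-theoretic (counting small cycles), yours is combinatorial/algebraic and perhaps more transparent about why the permutation part~$\pi$ of the automorphism is obstructed. Your approach also generalizes more readily to other listings whose transition sequences have explicit formulas. One small bookkeeping point worth making explicit if you write this up: the indices $i+m=3\cdot 2^{n-3}$ and $i'+m=5\cdot 2^{n-3}$ both lie strictly below $2^n$, so the ruler formula (rather than the exceptional $b_{2^n}=1$) applies to them.
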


The BRGC~$\Gamma_n$ is illustrated in Figure~\ref{fig:comp4}~(a) and Figure~\ref{fig:q8}~(a) for $n=4$ and $n=8$, respectively, and those two pictures indeed have 4-fold rotational symmetry.

\begin{proof}
For $n=2$ the graph~$Q_2$ is a 4-cycle, so the claim is trivially true.
For the rest of the proof we assume that $n\ge 3$.
Unrolling the inductive definition~$\Gamma_n=0\Gamma_{n-1},1\lvec{\Gamma_{n-1}}$ two more times gives
\begin{equation}
\label{eq:Gamma2}
\Gamma_n=00\Gamma_{n-2},01\lvec{\Gamma_{n-2}},11\Gamma_{n-2},10\lvec{\Gamma_{n-2}}
\end{equation}
and
\begin{equation}
\label{eq:Gamma3}
\begin{aligned}
\Gamma_n=&000\Gamma_{n-3},001\lvec\Gamma_{n-3}, \\
         &011\Gamma_{n-3},010\lvec\Gamma_{n-3}, \\
         &110\Gamma_{n-3},111\lvec\Gamma_{n-3}, \\
         &101\Gamma_{n-3},100\lvec\Gamma_{n-3}.
\end{aligned}
\end{equation}
From~\eqref{eq:Gamma3} we see that~$\Gamma_n$ has compression at least~4 under the automorphism $x_1 x_2 x_3 \mapsto x_2 \overline{x_1} \overline{x_3}$ (the bits $x_4,\ldots,x_n$ are not modified), which maps each line in~\eqref{eq:Gamma3} to the next line.

To show that the compression is at most~4, let $x_1,\ldots,x_N$, $N:=2^n$, be the sequence of bitstrings of~$\Gamma_n$.
From~\eqref{eq:Gamma2} we see that $x_i$ differs from $x_{i+N/2}$ by complementing the first two bits, for all $i=1,\ldots,N$, where indices are considered modulo~$N$.
It follows that if $x_i$ and~$x_{i+1}$ have the same first two bits, then $S_i:=(x_i,x_{i+1},x_{i+N/2},x_{i+1+N/2})$ is not a 4-cycle.
On the other hand, if $x_i$ and~$x_{i+1}$ differ in one of the first two bits, then $S_i$ is a 4-cycle.
From~\eqref{eq:Gamma2} we see that this happens precisely for $i=N/4,2N/4,3N/4,N$, proving that the compression is at most~4.
\end{proof}

\subsection{An upper bound}

Recall from Section~\ref{sec:prelim-cubes} that $\Aut(Q_n)\cong S_n \ltimes \mathbb{Z}_2^n$, so from~\eqref{eq:kappa-UB} we obtain $\kappa(Q_n)\le 2\lambda(n)$, where $\lambda(n)$ is Landau's function.
We now improve this upper bound drastically to a function that is linear in~$n$ (cf.~\eqref{eq:landau-asymp}).

\begin{lemma}
\label{lem:cube-ub}
Let $n\ge 3$.
If $Q_n$ has a $k$-symmetric Hamilton cycle, then $k=2^i<2n$ for some $i$.
\end{lemma}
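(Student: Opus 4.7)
The plan is to combine the explicit structure of $\Aut(Q_n)$ from~\eqref{eq:Qaut} with the bipartiteness of the hypercube. Let $f\in\Aut(Q_n)$ witness a $k$-symmetric Hamilton cycle, and write $f=(\pi,z)$ so that $f(x)_j=x_{\pi(j)}+z_j$. As observed in Section~\ref{sec:easy}, all orbits of $f$ have the same size $k$, and since they partition the $2^n$ vertices of $Q_n$, we have $k\mid 2^n$, hence $k=2^i$ for some $i\geq 0$.

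Next I would compute the order of $f$. Iterating~\eqref{eq:Qaut} yields $f^m(x)_j=x_{\pi^m(j)}+\sum_{\ell=0}^{m-1}z_{\pi^\ell(j)}$, and evaluating this along the cycles of $\pi$ gives the standard formula
\[
\ord(f)=\lcm\bigl(\{c:s_C=0\}\cup\{2c:s_C=1\}\bigr),
\]
where $C$ ranges over the cycles of $\pi$, $c=|C|$, and $s_C:=\sum_{j\in C}z_j\pmod 2$. Since $\ord(f)=k$ is a power of~$2$, every cycle length $c$ must be a power of~$2$. The lcm of powers of~$2$ equals their maximum, and since the $c$'s sum to $n$, that maximum is at most $2\max_C c\leq 2n$. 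So $k\leq 2n$.

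The hard part is ruling out the case $k=2n$. Suppose it holds. Then the lcm formula forces $\pi$ to be a single $n$-cycle $C$ with $s_C=1$, meaning that the total Hamming weight $|z|$ is odd, and moreover $n=2^{i-1}$ must be a power of~$2$. Here I invoke bipartiteness: $|f(x)|\equiv|x|+|z|\pmod 2$, so $f$ swaps the two parity classes of $Q_n$. Any Hamilton cycle of $Q_n$ alternates between these classes, hence the vertices $x_i$ and $x_{i+N/k}=f(x_i)$, where $N:=2^n$, must lie in different classes, which forces $N/k$ to be odd. But $N/k=2^{n-i}$, which is odd only when $n=i$; combined with $n=2^{i-1}$ this yields $i=2^{i-1}$, whose only integer solutions are $i\in\{1,2\}$, i.e.\ $n\leq 2$, contradicting $n\geq 3$. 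This last step is really the crux: $\Aut(Q_n)$ genuinely does contain elements of order $2n$ with all orbits of size $2n$ whenever $n$ is a power of~$2$ (an $n$-cycle of coordinates combined with an odd-weight translation), and it is the bipartite parity constraint coming from the Hamilton cycle that excludes them. Thus $k<2n$.
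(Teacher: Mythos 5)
Your proof is correct and follows essentially the same approach as the paper: it uses the structure of $\Aut(Q_n)$ from~\eqref{eq:Qaut}, bounds $k$ via the cycle lengths of $\pi$ being powers of $2$ summing to $n$, and rules out $k=2n$ by a bipartiteness/parity argument. The only cosmetic difference is in the endgame: the paper notes $|P|=2^{n-i}$ is even and derives an adjacency contradiction at the junction of $P$ and $f(P)$, while you observe $N/k=2^{n-i}$ must be odd and obtain the arithmetic contradiction $i=2^{i-1}$ with $n\ge 3$ -- two phrasings of the same parity obstruction.
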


\begin{proof}
Consider an automorphism~$f$ of~$Q_n$ and a path~$P$ such that
\begin{equation}
\label{eq:CP2}
C=P,f(P),f^2(P),\ldots,f^{k-1}(P)
\end{equation}
is a $k$-symmetric Hamilton cycle in~$Q_n$ for some~$k$ (recall~\eqref{eq:CP}).
It follows that $k$ must divide $2^n$, i.e., we have $k=2^i$ for some $i\le n$.
Let $\pi\in S_n$ and $z\in \mathbb{Z}_2^n$ be such that~\eqref{eq:Qaut} holds.
Suppose that $\pi$ is a product of cycles of lengths $a_1\ge \cdots \ge a_m$, and note that $\ord(\pi)=\lcm(a_1,\ldots,a_m)$.
As $\pi^k=\ide$, each $a_j$ divides $k=2^i$, implying that $\ord(\pi)=\lcm(a_1,\ldots,a_m)=a_1\le n$.
We conclude that $\ord(f)=k=2^i\le 2\ord(\pi)\le 2n$, in particular $i<n$ for $n\ge 3$.

To complete the proof, it remains to rule out the possibility~$k=2n$.
In this case, $\pi$ has just a single cycle of length $a_1=n=2^{i-1}>2$.
As $|P|=2^{n-i}$ is even (because of $i<n$), the first and last vertex of~$P$ have opposite parity.
Observe that $z$ must have odd parity, otherwise we would have $f^n=\ide$ (contradicting $\ord(f)=k=2n$), implying that the first vertices of~$P$ and~$f(P)$ have opposite parity.
Combining these observations shows that the last vertex of~$P$ and the first vertex of~$f(P)$ have the same parity, so they cannot be adjacent, contradicting~\eqref{eq:CP2}.
We conclude that $k=2^i<2n$, which completes the proof.
\end{proof}

\subsection{An optimal construction}

We consider the automorphism
\begin{equation}\label{eq:gdef}
g(x_1 x_2 \cdots x_n)=x_2 \cdots x_n\overline{x_1},
\end{equation}
of~$Q_n$, i.e., $g$ cyclically shifts all bits to the left by one position and then complements the last bit.
The mapping~$g$ is an auxiliary automorphism and the automorphism~$f$ that determines~$\kappa(Q_n)$ will be defined below in Lemma~\ref{lem:product}.
Clearly $g^{2n}=\ide$, so all orbits have size at most~$2n$.
We will see that if $n$ is a power of~2, i.e., $n=2^r$ for some $r\ge 1$, then all orbits have the same size~$2n$.

For every $n=2^r$, $r\ge 1$, we inductively define a set~$R_n$ of vertices of~$Q_n$ that are representatives of orbits of~$g$, i.e., from every orbit precisely one vertex is in~$R_n$.
First, we define a function that interleaves the bits of two bitstrings~$u=u_1\cdots u_n$ and~$v=v_1\cdots v_n$ of equal length by
\begin{equation}
\label{eq:intdef}
u \circ v:=u_1 v_1 u_2 v_2 \cdots u_n v_n,
\end{equation}
i.e., the result is a bitstring of length~$2n$ that alternately contains the bits of~$u$ and~$v$, starting by the first bit of~$u$.
Observe that
\begin{subequations}\label{eq:gint}
\begin{align}
g^{2i}(u \circ v) &= g^i(u) \circ g^i(v), \label{eq:ginteven}\\
g^{2i+1}(u \circ v) &= g^i(v) \circ g^{i+1}(u) \label{eq:gintodd}
\end{align}
\end{subequations}
for every $0\le i <2n$ and $u,v\in \{0,1\}^n$.
For $n=2^r$, $r\ge 1$, we define the set~$R_n$ inductively by
\begin{subequations}\label{eq:R}
\begin{align}
R_2 &:= \{00\}, \label{eq:R2} \\
R_{2n} &:= \{g^k(u) \circ v \mid u,v\in R_n \text{ and } 0\le k <n\}. \label{eq:R2n}
\end{align}
\end{subequations}
For example, we have $R_4=\{0000,0010\}$ and
\begin{table}[h!]
\centering
\begin{tabular}{|rccccl|r|}
\hline
  & \multicolumn{2}{c|}{$u=0000$} & \multicolumn{2}{c}{$u=0010$} & & \\ \hline
  & \multicolumn{1}{c|}{$v=0000$} & \multicolumn{1}{c|}{$v=0010$} & \multicolumn{1}{c|}{$v=0000$} & $v=0010$ & & \\ \hline
   $R_8=\{$ & $00000000$, & $00000100$, & $00001000$, & $00001100$,   &  & $k=0$ \\
            & $00000010$, & $00000110$, & $00100010$, & $00100110$,   &  & $k=1$ \\
            & $00001010$, & $00001110$, & $10001010$, & $10001110$,   &  & $k=2$ \\
            & $00101010$, & $00101110$, & $00101000$, & $00101100\phantom{,}$ & $\}.$ & $k=3$ \\ \hline
\end{tabular}
\label{table:R8}
\end{table}
\torsten{Make sure that this table appears in the correct place}

\begin{lemma}
\label{lem:gRn}
For every $n=2^r$, $r\ge 1$, $0\le j <2n$, and $x,y\in R_n$, we have $g^{j}(x) \ne y$ unless~$x=y$ and~$j=0$.
\end{lemma}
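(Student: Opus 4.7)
The plan is to argue by induction on $r$, where $n = 2^r$. For the base case $r = 1$, a direct computation shows that iterating $g$ on $00 \in R_2$ yields $01, 11, 10$ and then returns to $00$, so the single element of $R_2$ has full orbit of size $4 = 2n$, giving the conclusion. For the inductive step, assume the lemma for $n$ and consider two elements $x = g^k(u) \circ v$ and $y = g^{k'}(u') \circ v'$ of $R_{2n}$ with $u, v, u', v' \in R_n$ and $0 \le k, k' < n$. Suppose $g^j(x) = y$ for some $0 \le j < 4n$; we want to deduce $x = y$ and $j = 0$. The argument splits according to the parity of $j$, in each case using~\eqref{eq:gint} to disentangle the two interleaved halves.

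If $j = 2i$ is even with $0 \le i < 2n$, then~\eqref{eq:ginteven} gives $g^j(x) = g^{k+i}(u) \circ g^i(v)$. Reading off the odd- and even-indexed bits separately and equating with $y$ yields the pair of equations $g^i(v) = v'$ and $g^{k+i}(u) = g^{k'}(u')$. The first, together with the inductive hypothesis applied to $v, v' \in R_n$, forces $v = v'$ and $i = 0$. The second equation then becomes $g^k(u) = g^{k'}(u')$; after rewriting as $g^\ell(u) = u'$ with $\ell \in \{k-k',\, k-k'+2n\}$ chosen so that $0 \le \ell < 2n$, the inductive hypothesis applies. The choice $\ell = k-k'+2n > 0$ would contradict the hypothesis, so $k \ge k'$ and $\ell = k - k' \in [0, n)$, and induction yields $u = u'$ and $k = k'$. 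Thus $x = y$ and $j = 0$.

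If $j = 2i+1$ is odd with $0 \le i < 2n$, then~\eqref{eq:gintodd} gives $g^j(x) = g^i(v) \circ g^{i+k+1}(u)$, yielding $g^i(v) = g^{k'}(u')$ and $g^{i+k+1}(u) = v'$. A similar range-adjustment on the first equation (using $g^{2n} = \ide$ on $Q_n$ to bring the exponent into $[0, 2n)$) together with the inductive hypothesis forces $v = u'$ and $i = k'$. Substituting into the second equation produces $g^{k+k'+1}(u) = v'$ with $1 \le k + k' + 1 \le 2n - 1$, which contradicts the inductive hypothesis since the exponent is nonzero. Hence the odd case cannot occur, completing the induction.

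The main obstacle is the bookkeeping of exponents modulo $2n$: the inductive hypothesis applies only when the exponent lies in $\{0, 1, \ldots, 2n-1\}$, so shifts like $g^{k+i}$ or $g^{i-k'}$ must first be reduced into that window before invoking it. Fortunately the bounds $0 \le k, k' < n$ and $0 \le i < 2n$, combined with the fact that $g^{2n} = \ide$ holds on all of $Q_n$ (since $g$ is the composition of a cyclic shift by one position and a single bit flip, iterated), place each subcase cleanly in the correct range, so the inductive hypothesis either closes the case or produces the required contradiction without ambiguity.
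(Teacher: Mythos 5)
Your proof is correct and follows essentially the same route as the paper: induction on $r$, a case split on the parity of $j$ via the interleaving identities~\eqref{eq:ginteven} and~\eqref{eq:gintodd}, with the even case closing the argument and the odd case yielding the contradiction $k+k'+1 \equiv 0 \pmod{2n}$. You are a bit more explicit about reducing exponents into $[0,2n)$ before invoking the inductive hypothesis and about the base-case orbit computation, but the structure and key steps are the same.
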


\begin{proof}
We proceed by induction on~$r$.
The statement holds trivially for~$r=1$.
For the induction step from~$r$ to~$r+1$ let $0\le j <4n$ and $x,y \in R_{2n}$.
By~\eqref{eq:R2n}, $x=g^k(u) \circ v$ and $y=g^{k'}(u')\circ v'$ for some $u,v,u',v' \in R_{n}$ and $0\le k,k' < n$.
We distinguish two cases based on the parity of~$j$.

If $j=2i$ where $0\le i <2n$ then from~\eqref{eq:ginteven} we have
\begin{equation*}
g^j(x)=g^{2i}(g^k(u) \circ v)=g^{k+i}(u)\circ g^i(v),
\end{equation*}
which equals $y=g^{k'}(u')\circ v'$ only if
\begin{equation*}
g^{k+i}(u)=g^{k'}(u')\text{ and }g^i(v)=v'.
\end{equation*}
By the induction hypothesis, this holds only if
\begin{equation*}
u=u',\ k+i=k' \bmod 2n,\text{ and } v=v',\ i=0,
\end{equation*}
equivalently, $x=y$ and $j=0$.

Similarly, if $j=2i+1$ where $0\le i <2n$ then from~\eqref{eq:gintodd} we have
\begin{equation*}
g^j(x)=g^{2i+1}(g^k(u) \circ v)=g^{i}(v)\circ g^{k+i+1}(u),
\end{equation*}
which equals $y=g^{k'}(u')\circ v'$ only if
\begin{equation*}
g^{i}(v)=g^{k'}(u')\text{ and }g^{k+i+1}(u)=v'.
\end{equation*}
By the induction hypothesis, this holds only if
\begin{equation*}
v=u',\ i=k' \bmod 2n,\text{ and } u=v',\ k+i+1=0 \bmod 2n.
\end{equation*}
However, it cannot be that $k+k'+1=0 \bmod 2n$ since $0\le k,k'<n$.
So in this case, $g^j(x)\ne y$.
\end{proof}

\begin{lemma}
\label{lem:Rn-repr}
For every $n=2^r$, $r\ge 1$, all orbits of~$g$ have the same size~$2n$ and $R_n$ is a set of representatives for all ${2^n}/(2n)=2^{n-r-1}$ orbits.
\end{lemma}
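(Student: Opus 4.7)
The plan is to reduce everything to Lemma~\ref{lem:gRn} combined with a counting argument. I would extract two immediate consequences of Lemma~\ref{lem:gRn}: first, applying it with $x=y$ and $0<j<2n$ gives $g^j(x)\neq x$, so the $g$-orbit of every $x\in R_n$ has size exactly $2n$ (the orbit size must divide $\ord(g)=2n$ and it exceeds every smaller positive integer); second, applying it with $x\neq y$ shows that distinct elements of $R_n$ lie in distinct $g$-orbits. Together with the cardinality identity $|R_n|\cdot 2n=2^n$, this forces $R_n$ to meet every orbit exactly once, every orbit to have size~$2n$, and the total number of orbits to equal $2^n/(2n)=2^{n-r-1}$.

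So the only remaining task is to verify $|R_n|=2^{n-r-1}$, which I would do by induction on~$r$. The base case $|R_2|=1=2^0$ is immediate from~\eqref{eq:R2}. For the induction step from $n=2^r$ to $2n=2^{r+1}$, the construction~\eqref{eq:R2n} parameterizes $R_{2n}$ by triples $(u,v,k)\in R_n\times R_n\times\{0,1,\ldots,n-1\}$, so it suffices to show this parameterization is injective. Suppose $g^k(u)\circ v=g^{k'}(u')\circ v'$ with $u,v,u',v'\in R_n$ and $0\le k,k'<n$. Reading off the bits in odd and even positions separately (see~\eqref{eq:intdef}) gives $g^k(u)=g^{k'}(u')$ and $v=v'$. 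Assume without loss of generality that $k'\ge k$; applying $g^{-k}$ to the first identity yields $u=g^{k'-k}(u')$ with $0\le k'-k<n<2n$, so Lemma~\ref{lem:gRn} forces $u=u'$ and $k'-k=0$. Hence $|R_{2n}|=|R_n|^2\cdot n=2^{2(n-r-1)}\cdot 2^r=2^{2n-r-2}=2^{(2n)-(r+1)-1}$, completing the induction.

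The main obstacle is genuinely already resolved by Lemma~\ref{lem:gRn}; what remains is essentially bookkeeping around two points. The first is arranging the shift exponent to land in the allowed range $[0,2n)$ when one invokes Lemma~\ref{lem:gRn}, which is guaranteed by the constraint $0\le k,k'<n$ built into the definition of~$R_{2n}$. The second is that the final count exhausts all of $V(Q_n)$ precisely because each orbit has full size $\ord(g)=2n$ rather than a proper divisor; this is what makes the inductive value $|R_n|=2^{n-r-1}$ line up with $|V(Q_n)|=2^n$ on the nose, so that no further case analysis on orbit sizes is needed.
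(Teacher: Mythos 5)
Your proof is correct and follows essentially the same route as the paper: combine Lemma~\ref{lem:gRn} (distinct orbits, each of full size~$2n$) with the count $|R_n|=2^{n-r-1}$, proved by induction via $|R_{2n}|=|R_n|^2\cdot n$. The only difference is that you make explicit the injectivity of the parameterization $(u,v,k)\mapsto g^k(u)\circ v$ underlying that product formula, which the paper asserts without comment.
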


\begin{proof}
Lemma~\ref{lem:gRn} shows that no two elements of~$R_n$ are in the same orbit, and that every such orbit has size~$2n$.
It remains to argue that there is no other orbit, simply because $|R_n|\cdot 2n=2^n$, which is equivalent to $|R_n|=2^{n-r-1}$.
Indeed from~\eqref{eq:R} we have $|R_2|=1=2^{n-r-1}$ for $r=1$ and
\begin{align*}
|R_{2n}| &= |R_n|^2\cdot n=(2^{n-r-1})^2\cdot 2^r=2^{2n-(r+1)-1}
\end{align*}
by induction for all $r\ge 1$.
\end{proof}

\begin{lemma}
\label{lem:PRn}
For every $n=2^r$, $r\ge 2$, there is a path~$P_n$ in~$Q_n$ that visits all vertices~$R_n$ and that starts in~$0^n$ and ends in~$0^{n-2}10$.
\end{lemma}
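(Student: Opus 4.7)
My plan is to prove this by induction on $r$, where $n = 2^r$. The base case $r = 2$ is immediate: $R_4 = \{0^4, 0^2 10\}$ consists of only two vertices, and since they differ in a single bit of $Q_4$, the path $P_4 := (0^4, 0^2 10)$ already meets all requirements.

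For the inductive step I would assume $P_n = (w_0, w_1, \ldots, w_{L-1})$ is given, with $L = |R_n|$, $w_0 = 0^n$, and $w_{L-1} = 0^{n-2} 10$, and construct $P_{2n}$ in $Q_{2n}$. By Lemma~\ref{lem:Rn-repr} together with~\eqref{eq:R2n}, the map $\phi(k, u, v) := g^k(u) \circ v$ is a bijection from $\{0, \ldots, n-1\} \times R_n \times R_n$ onto $R_{2n}$, and the required endpoints satisfy $0^{2n} = \phi(0, w_0, w_0)$ and $0^{2n-2} 10 = \phi(1, w_0, w_0)$. First I would identify three families of single-bit-flip adjacencies among $R_{2n}$-vertices in $Q_{2n}$, which serve as the building blocks of the traversal: (A) $\phi(k, u, v) \sim \phi(k, u', v)$ for consecutive $u, u' \in P_n$, since $g^k$ is a graph automorphism and interleaving preserves single-bit differences in the $u$-half; (B) $\phi(k, u, v) \sim \phi(k, u, v')$ analogously for consecutive $v, v' \in P_n$; and (C) $\phi(k, w_0, v) \sim \phi(k{+}1, w_0, v)$ for $0 \le k \le n-2$, exploiting the explicit formula $g^k(0^n) = 0^{n-k} 1^k$.

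Inside each $k$-slice, adjacencies (A) and (B) let the zigzag Hamilton path $P_n \sqcup P_n$ from Section~\ref{sec:prelim-cubes} visit all $L^2$ cells; since $L = 2^{n-r-1}$ is even for $r \ge 2$, the zigzag both starts and ends in the column $u = w_0$ (at $\phi(k, w_0, w_0)$ and $\phi(k, w_0, w_{L-1})$, or vice versa when run in reverse), which is precisely the column where adjacency (C) lets us move to a neighbouring slice. Chaining such per-slice zigzags via (C)-edges with alternating directions then gives a snake traversal of the whole $R_{2n}$ grid.

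The hard part will be hitting the prescribed endpoint $\phi(1, w_0, w_0)$: a direct snake through $k = 0, 1, \ldots, n{-}1$ ends at slice $n{-}1$ rather than at slice~$1$. To fix this, my plan is to split slice~$1$ into two pieces separated by a detour through slices $2, \ldots, n-1$: after visiting slice~$0$ and all of slice~$1$ except the target cell $\phi(1, w_0, w_0)$, I would use an additional single-bit-flip edge of $Q_{2n}$ (a ``type~(D)'' edge jumping from some $\phi(1, u^*, v^*)$ to $\phi(k^*, u^{**}, v^*)$ with $k^* > 1$), then traverse slices $2, 3, \ldots, n-1$ with another snake, and finally re-enter slice~$1$ via a (C)-edge at $u = w_0$ to land on the missing cell $\phi(1, w_0, w_0)$. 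The existence of such a bridging type-(D) edge rests on the algebraic observation that $g^k(u)$ and $g^{k'}(u')$ can coincide in all but one bit for certain orbit representatives $u, u' \in R_n$ (for instance, already at $n = 4$ the edge from $\phi(1, 0010, 0^4)$ to $\phi(3, 0^4, 0^4)$ in $Q_8$ is realized by a single flip at bit~$5$). Proving in general that such a shortcut is available at every inductive step, and then verifying that the resulting route visits each of the $nL^2$ vertices of $R_{2n}$ exactly once, is the most delicate step of the argument.
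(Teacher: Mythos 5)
Your base case, your identification of the bijection $\phi(k,u,v)=g^k(u)\circ v$, and your catalogue of type-(A), (B), (C) adjacencies are all correct, and your own diagnosis of the obstruction — a straightforward slice-by-slice snake over $k=0,1,\ldots,n-1$ terminates in slice $n-1$, not slice~$1$ — is also correct. But your proposed repair via a ``type-(D)'' bridging edge is a genuine gap, not just a delicate step. You verify the existence of such an edge only for $n=4$, you do not prove it exists in general, and even granting its existence you would still need: a Hamilton path of slice~$1$ minus the single cell $\phi(1,w_0,w_0)$ with prescribed endpoints compatible with both the incoming (C)-edge and the outgoing (D)-edge; a traversal of slices $2,\ldots,n-1$ that starts at the (D)-edge's landing cell and ends at $\phi(2,w_0,w_0)$; and a verification that this traversal does not revisit any vertex. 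None of this is established, and it is not at all clear it can be made to work for all $r$.

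The paper avoids this entire difficulty by reorganizing the iteration. Rather than treating $k$ as the outer loop, it first builds two \emph{paths inside $Q_n$}: $P':=P_n,g^2(P_n),\ldots,g^{n-2}(P_n)$ running through all even slices, and $P'':=g(P_n),g^3(P_n),\ldots,g^{n-1}(P_n)$ through all odd ones. The glue that makes $P'$ a single path is the observation that the terminal vertex $0^{n-2}10$ of $P_n$ is adjacent to $g^2(0^n)=0^{n-2}11$, the initial vertex of $g^2(P_n)$; this is precisely the adjacency $u\sim g^2(u')$ with $u=w_{L-1}$, $u'=w_0$ that you would have needed but did not exploit. It then zigzags $P'$ against $P_n$ (and $P''$ against $\lvec{P_n}$) — crucially with $v$ as the \emph{outer} variable, not $k$. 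Because $|P_n|=L$ is even, the zigzag $P'\sqcup P_n$ automatically returns to the head of $P'$ (slice~$0$, column $w_0$) at the end, paired with $v=w_{L-1}$; a single type-(C) flip to slice~$1$ then hands off to $P''\sqcup\lvec{P_n}$, which for the same parity reason terminates at $\phi(1,w_0,w_0)=0^{2n-2}10$. So the endpoint lands in the right slice for free, with no detour and no extra edge type. Your slice-first decomposition gives up this parity trick, and that is exactly what creates the hole you were unable to close.
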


\begin{proof}
For $r=2$ we set $P_4:=(0000,0010)$.
It is straightforward to verify from~\eqref{eq:R} that $0^n,0^{n-2}10 \in R_n$.
Note that $g^2$ is an automorphism of~$Q_n$ with orbits of size~$n$, and $0^{n-2}10$ is adjacent in~$Q_n$ to $g^2(0^n)=0^{n-2}11$.
For the induction step $r\rightarrow r+1$ we construct the path~$P_{2n}$ recursively from~$P_n$.
Specifically, by the observations from before
\begin{subequations}
\label{eq:P'P''}
\begin{align}
P'&:=P_n,g^2(P_n),\ldots,g^{n-2}(P_n), \\
P''&:=g(P_n),g^3(P_n),\ldots,g^{n-1}(P_n)
\end{align}
\end{subequations}
are vertex-disjoint paths in~$Q_n$ that start in~$0^{n}$ and~$0^{n-1}1$, respectively.
Consider the path
\begin{equation}
\label{eq:P2n}
P_{2n}:=h(P'\sqcup P_n), h(P'' \sqcup \lvec P_n)
\end{equation}
in $Q_{2n}=Q_n\boxprod Q_n$, where $\sqcup$ is defined in~\eqref{eq:zigzag} and $h(u,v):=u\circ v$, for $u,v\in\{0,1\}^n$, is the interleaving function defined in~\eqref{eq:intdef}, applied to all vertices along the paths.

As $|P_n|=|R_n|=2^{n-r-1}$ is even and the first vertices of~$P'$ and~$P''$ differ in a single bit, the transition between the two halves of~\eqref{eq:P2n} flips a single bit, as desired.
Specifically, $P'\sqcup P_n$ ends in $0^n \circ 0^{n-2}10$ and $P'' \sqcup \lvec P_n$ starts in $0^{n-1}1 \circ 0^{n-2}10$.
Observe furthermore that $P_{2n}$ starts in $0^n \circ 0^n=0^{2n}$ and ends in $0^{n-1}1\circ 0^n=0^{2n-2}10$.
By induction, we know that~$P_n$ visits every vertex of~$R_n$ exactly one.
Using this with~\eqref{eq:R2n} and~\eqref{eq:P'P''} shows that~$P_{2n}$ visits every vertex of~$R_{2n}$ exactly once.

To illustrate the construction, for $n=4$ we have $P_4=(0000,0010)$, so
\begin{align*}
P'&=\phantom{g(}P_4\phantom{)},g^2(P_4)= (0000,0010,0011,1011), \\
P''&=g(P_4),g^3(P_4)=(0001,0101,0111,0110)
\end{align*}
and
\begin{table}[h!]
\centering
\begin{tabular}{|rccccl|l|l|}
\hline
   $P_8=($ & $00000000$, & $00001000$, & $00001010$, & $10001010$,   &  & $h(P',0000)$ & $h(P'\sqcup P_4)$\\
           & $10001110$, & $00001110$, & $00001100$, & $00000100$,   &  & $h(P',0010)$ & \\
           & $00000110$, & $00100110$, & $00101110$, & $00101100$,   &  & $h(P'',0010)$ & $h(P'' \sqcup \lvec P_4)$ \\
           & $00101000$, & $00101010$, & $00100010$, & $00000010\phantom{,}$ & $).$ & $h(P'',0000)$  & \\ \hline
\end{tabular}
\end{table}

\end{proof}

Note that the end vertex~$0^{n-2}10$ of~$P_n$ is not adjacent to $g(0^n)=0^{n-1}1$ in~$Q_n$, so $P_n$ with~$g$ does not directly produce a $2n$-symmetric Hamilton cycle in~$Q_n$ (recall~\eqref{eq:CP}).
However, in the following we show that $P_n$ can produce a $2n$-symmetric Hamilton cycle in~$Q_{n+m}$ for any~$m\ge 1$.

\begin{figure}
\makebox[0cm]{ 
\begin{tabular}{ccc}
\raisebox{37mm}{(a)} &
\includegraphics[scale=0.85]{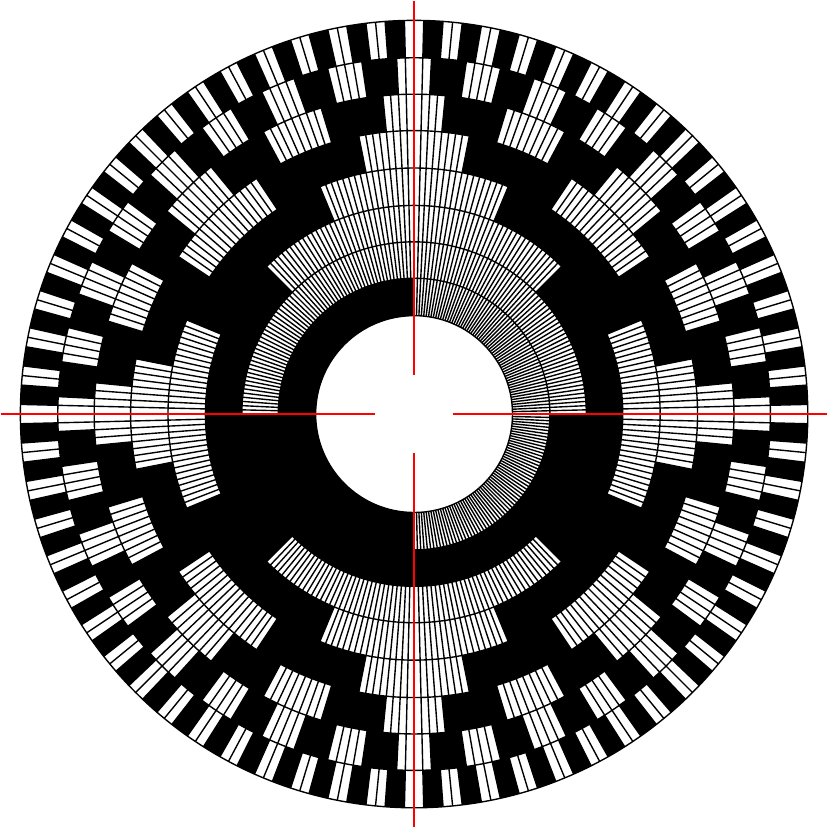} &
\includegraphics[scale=0.85]{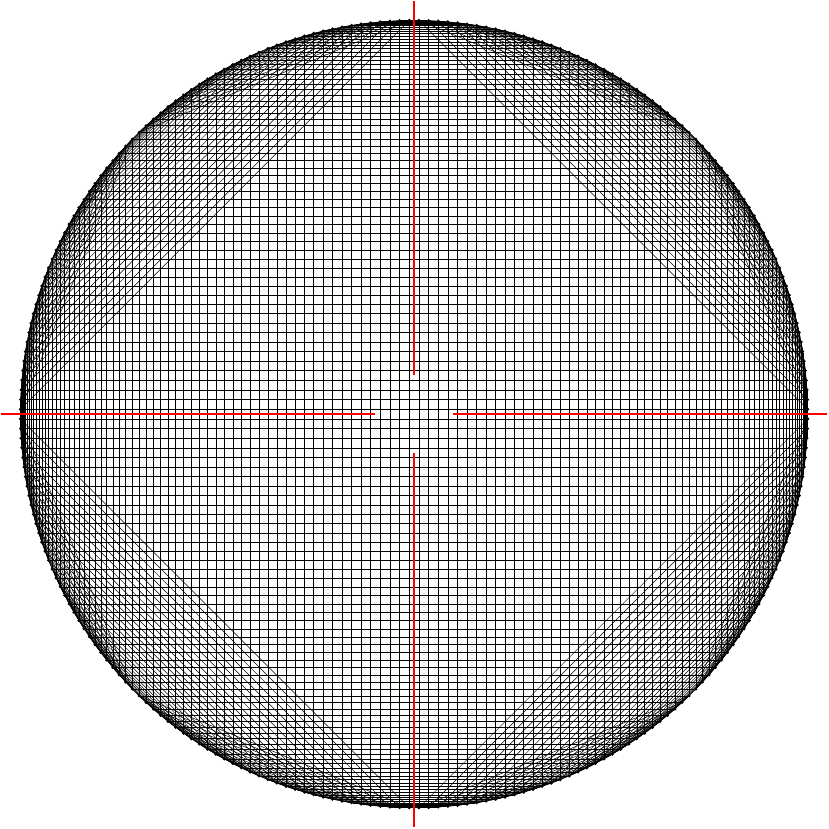} \\
\raisebox{37mm}{(b)} &
\includegraphics[scale=0.85]{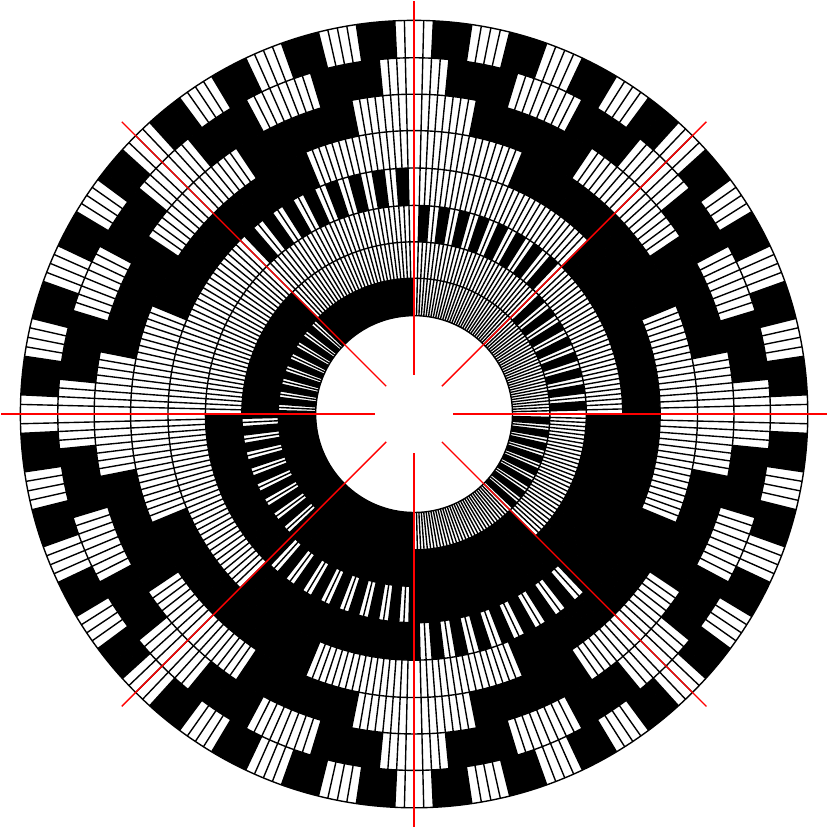} &
\includegraphics[scale=0.85]{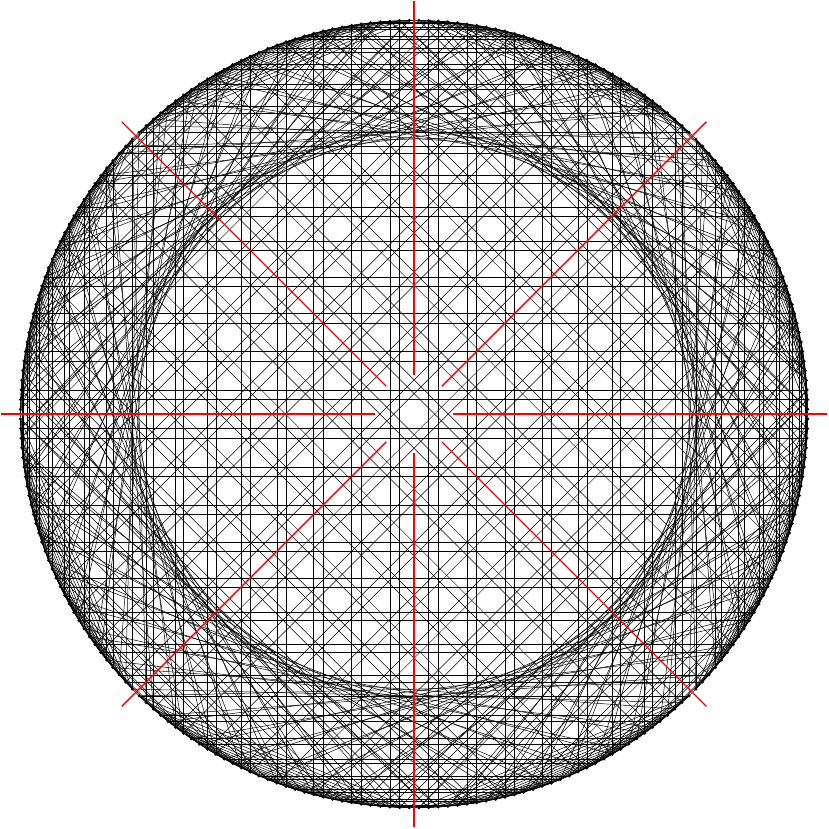} \\
\raisebox{37mm}{(c)} &
\includegraphics[scale=0.85]{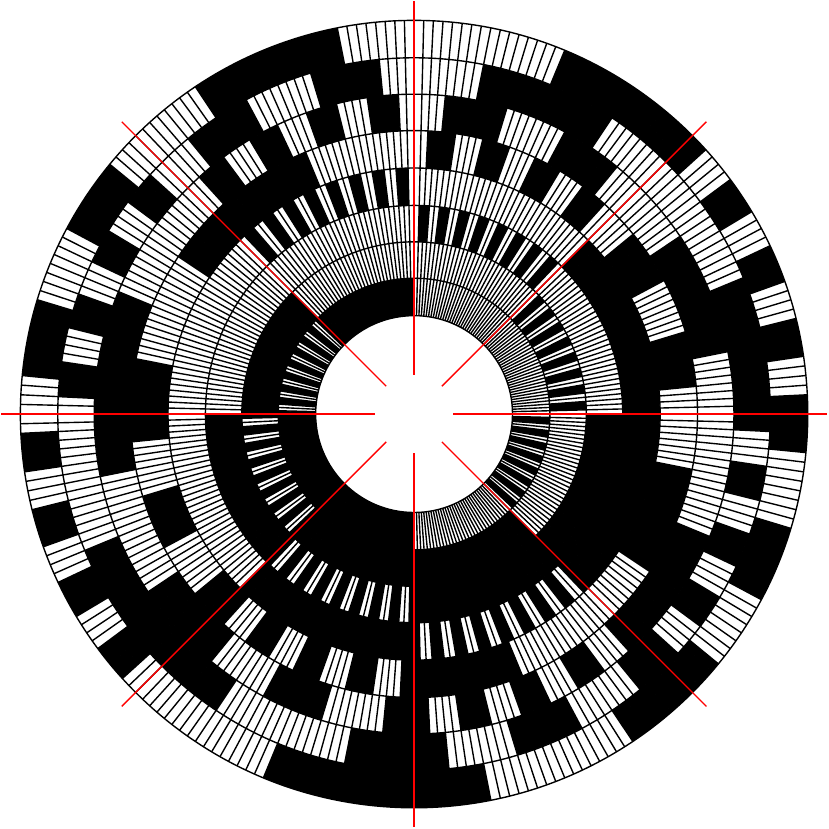} &
\includegraphics[scale=0.85]{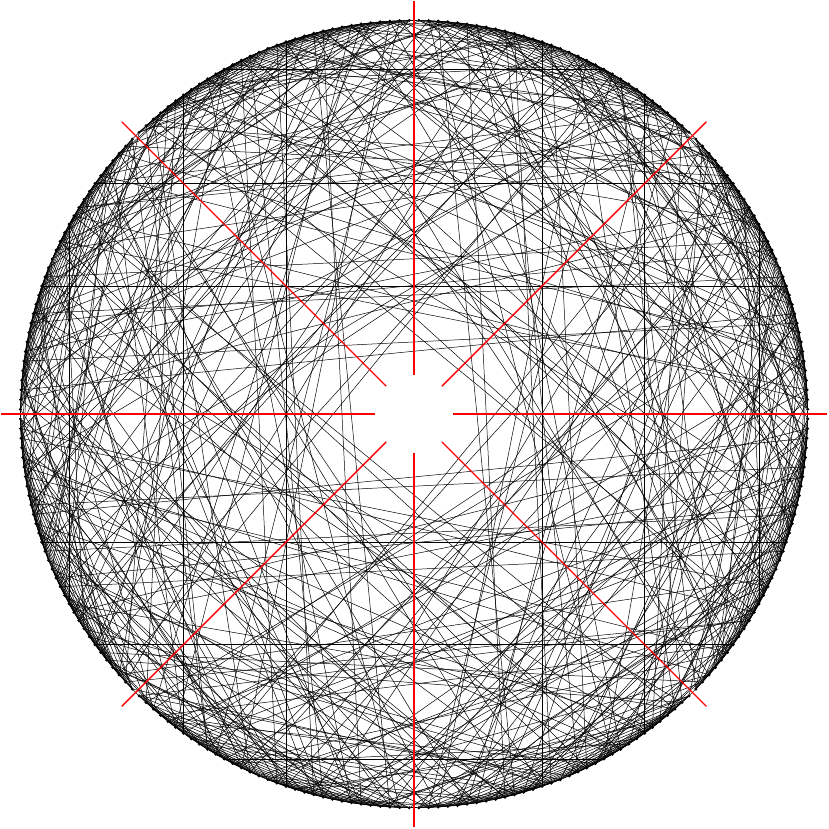} \\
\end{tabular}
}
\caption{Symmetric Hamilton cycles in~$Q_8$.
Cycles are on the left (0=white, 1=black), with the first and last bit on the inner and outer track, respectively.
The full graph~$Q_8$ is on the right, with vertices arranged in cycle order and edges drawn as straight lines.
(a)~Binary reflected Gray code~$\Gamma_8$ with compression~4;
(b)~Hamilton cycle with compression~8 from Theorem~\ref{thm:Qnm} ($h$ is complementation of the first bit and $Q=\Gamma_4$);
(c)~2-track Hamilton cycle with compression~8 from Theorem~\ref{thm:2track} ($h=g$ and $Q=\Gamma_4'$, i.e., $\Gamma_4$ with every bitstring reversed).}
\label{fig:q8}
\end{figure}

\begin{lemma}
\label{lem:product}
Let $g$ be an automorphism of a graph~$G$ with orbits of the same size $k\ge 2$ and let $P$ be a path in~$G$ on orbit representatives~$R$ starting at some vertex~$u$ that is adjacent to~$g(u)$ in~$G$.
Let $h$ be an automorphism of a graph~$H$ on an even number of vertices such that $\ord(h)$ divides~$k$ and let $Q$ be a Hamilton path of~$H$ between~$v$ and~$h(v)$ for some vertex~$v$.
Then the Cartesian product $G\boxprod H$ has a $k$-symmetric Hamilton cycle
\begin{equation}
\label{eq:CPQ}
C:=P \sqcup Q,f(P \sqcup Q),f^2(P \sqcup Q),\ldots,f^{k-1}(P \sqcup Q)
\end{equation}
where $f:=(g,h)$ is the (product) automorphism of~$G\boxprod H$.
\end{lemma}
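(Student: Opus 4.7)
The plan is to verify that $C$ is a Hamilton cycle in $G\boxprod H$ and then observe that $k$-symmetry is immediate from the presentation in~\eqref{eq:CPQ}. First I would unpack what the zigzag $P\sqcup Q$ looks like. Writing $P=(u_1,\ldots,u_r)$ with $u_1=u$ and $Q=(v_1,\ldots,v_s)$ with $v_1=v$, $v_s=h(v)$, the hypothesis that $|V(H)|$ is even makes $s$ even, so the construction in~\eqref{eq:zigzag} produces a path that begins at $(u,v)$ and, because its final segment is a reversed copy of $P$ laid over $v_s=h(v)$, ends at $(u,h(v))$; its vertex set is $R\times V(H)$. Applying the product automorphism $f=(g,h)$ of $G\boxprod H$, the translated piece $f^i(P\sqcup Q)$ is a path from $(g^i(u),h^i(v))$ to $(g^i(u),h^{i+1}(v))$ whose vertex set is $g^i(R)\times V(H)$.

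Next I would glue the pieces together. The transition from $f^i(P\sqcup Q)$ to $f^{i+1}(P\sqcup Q)$ connects $(g^i(u),h^{i+1}(v))$ with $(g^{i+1}(u),h^{i+1}(v))$; these differ only in the first coordinate, and because $u$ is adjacent to $g(u)$ and $g$ is an automorphism, $g^i(u)$ is adjacent to $g^{i+1}(u)$ in $G$, so these two vertices are adjacent in $G\boxprod H$. For the closing edge, the endpoint of $f^{k-1}(P\sqcup Q)$ is $(g^{k-1}(u),h^k(v))$. Since all orbits of $g$ have size $k$ we have $\ord(g)=k$, giving $g^{k-1}(u)=g^{-1}(u)$, and since $\ord(h)$ divides $k$ we have $h^k(v)=v$; applying $g^{-1}$ to $u\sim g(u)$ yields $g^{-1}(u)\sim u$, so the endpoint is adjacent to the starting vertex $(u,v)$ of $P\sqcup Q$.

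It remains to see that $C$ visits every vertex exactly once. Because $R$ is a set of orbit representatives for $\langle g\rangle$ acting on $V(G)$ and each orbit has size $k$, the sets $g^0(R),g^1(R),\ldots,g^{k-1}(R)$ partition $V(G)$; consequently the sets $g^i(R)\times V(H)$ form a partition of $V(G\boxprod H)$, and so the pieces $f^i(P\sqcup Q)$ are pairwise vertex-disjoint and together cover every vertex. For the $k$-symmetry statement, the cyclic shift of $C$ by $|P\sqcup Q|=|V(G\boxprod H)|/k$ positions sends $f^i(P\sqcup Q)$ to $f^{i+1}(P\sqcup Q)$ coordinatewise, which is precisely the action of $f$, and $f$ is an automorphism of $G\boxprod H$ because both coordinates $g$ and $h$ are automorphisms of the respective factors.

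The only place where the algebraic hypotheses really enter is the closure step: everything else is routine bookkeeping using the definition of the zigzag and the fact that $f$ is a product automorphism. I expect that step to be the main (albeit modest) obstacle, since it is where the two divisibility conditions---orbits of $g$ all of size exactly $k$, and $\ord(h)\mid k$---must conspire so that $f^k$ acts as the identity on the starting vertex, producing an edge back to $(u,v)$ rather than merely another edge somewhere in $G\boxprod H$.
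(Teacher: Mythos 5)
Your proof is correct and follows essentially the same route as the paper's: both verify that $R\times V(H)$ supplies one vertex from each orbit of $f$ (equivalently, that the translates $g^i(R)\times V(H)$ partition $V(G\boxprod H)$), and both use the zigzag structure plus $\ord(h)\mid k$ to close the cycle. The only organizational difference is that you check the gluing edges and the closing edge one by one, whereas the paper verifies only the single edge from the end of $P\sqcup Q$ to the start of $f(P\sqcup Q)$ and lets the $k$-fold rotation by $f$ supply the rest implicitly.
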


\begin{proof}
Since $f^k=\ide$, all orbits of~$f$ have size at most~$k$.
Furthermore, for any $(u,v)$ and $(u',v')$ from the set
\begin{equation*}
A:=R \times V(H) = \{(u,v) \mid u\in R, v\in V(H)\}
\end{equation*}
and any $0\le i,i' <k$ we have that $f^i(u,v)=f^{i'}(u',v')$ only if
\begin{equation*}
g^i(u)=g^{i'}(u') \text{ and } h^i(v)=h^{i'}(v'),
\end{equation*}
which holds only if $u=u'$, $i=i'$, and $v=v'$ since $u,u'$ are orbit representatives for~$g$.
Thus no two elements of~$A$ are from the same orbit of~$f$ and since
\begin{equation*}
|A|=|R|\cdot |V(H)|=|V(G)|/k\cdot |V(H)|=|V(G\boxprod H)|/k,
\end{equation*}
the set~$A$ contains representatives of all orbits of~$f$ and they all have the same size~$k$.

As $Q$ has even length, the path $P\sqcup Q$ starts in~$(u,v)$, ends in~$(u,h(v))$, and it contains exactly the vertices of~$A$.
Furthermore, $(u,h(v))$ is adjacent to $f(u,v)=(g(u),h(v))$, so $C$ defined in~\eqref{eq:CPQ} is a $k$-symmetric Hamilton cycle in~$G\boxprod H$.
\end{proof}

\begin{theorem}
\label{thm:Qnm}
For every $n=2^r$, $r\ge 2$, and $m\ge 1$, the hypercube $Q_{n+m}$ has a $2n$-symmetric Hamilton cycle.
\end{theorem}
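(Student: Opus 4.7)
The plan is to assemble the pieces already developed and invoke Lemma~\ref{lem:product} with $G=Q_n$, $H=Q_m$, the automorphism $g$ from~\eqref{eq:gdef}, and the path $P=P_n$ from Lemma~\ref{lem:PRn}. The cube $Q_{n+m}$ is the Cartesian product $Q_n\boxprod Q_m$ (Section~\ref{sec:prelim-cubes}), so it suffices to check the hypotheses of Lemma~\ref{lem:product} for a suitable choice of the automorphism $h$ of $Q_m$ and a Hamilton path $Q$ of $Q_m$.

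First I would verify the $G$-side hypotheses. By Lemma~\ref{lem:Rn-repr} all orbits of $g$ on $V(Q_n)$ have the same size $k:=2n$, and by Lemma~\ref{lem:PRn} the path $P_n$ visits the complete set of orbit representatives $R_n$. Its starting vertex $u:=0^n$ is adjacent in $Q_n$ to $g(u)=0^{n-1}1$, since these differ in exactly one bit, so the required adjacency $u\sim g(u)$ holds.

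Next I would choose $h$ and $Q$ on the $H$-side. Let $h\in\Aut(Q_m)$ complement the first bit, i.e.\ $h(y_1 y_2\cdots y_m):=\overline{y_1}y_2\cdots y_m$. Then $\ord(h)=2$, and since $n=2^r$ with $r\ge 2$, $\ord(h)$ divides $k=2n$. For $Q$ take the binary reflected Gray code $\Gamma_m$, a Hamilton path in $Q_m$; unfolding the recursion $\Gamma_m=0\Gamma_{m-1},1\lvec\Gamma_{m-1}$ shows that $\Gamma_m$ starts at $v:=0^m$ and ends at $10^{m-1}=h(v)$, as needed. Moreover $|V(Q_m)|=2^m$ is even for all $m\ge 1$, so Lemma~\ref{lem:product} applies.

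Invoking Lemma~\ref{lem:product} then produces a $k$-symmetric Hamilton cycle in $Q_n\boxprod Q_m=Q_{n+m}$ under the automorphism $f=(g,h)$, with $k=2n$, which is the claim; see Figure~\ref{fig:q8}(b) for the case $n=m=4$. In effect, almost all the heavy lifting has been done already in Lemmas~\ref{lem:gRn}, \ref{lem:Rn-repr}, \ref{lem:PRn} and~\ref{lem:product}, and the only ingredients specific to the theorem are the choice $(h,Q)=(\text{first-bit flip},\Gamma_m)$ and the check that $\ord(h)=2\mid 2n$, which is immediate from $r\ge 2$. Thus I do not anticipate any genuine obstacle at this stage; the potentially delicate point would merely be ensuring that $Q$ links $v$ with $h(v)$ and that $|Q|$ is even so that $P\sqcup Q$ indeed starts and ends in the two corners required by Lemma~\ref{lem:product}, both of which are handled by taking the BRGC.
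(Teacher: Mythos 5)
Your proof is correct and is essentially identical to the paper's: both apply Lemma~\ref{lem:product} with $G=Q_n$, the automorphism $g$ from~\eqref{eq:gdef}, the path $P_n$ from Lemma~\ref{lem:PRn}, the first-bit-flip automorphism $h$ of $Q_m$, and the BRGC $\Gamma_m$ as the Hamilton path from $0^m$ to $10^{m-1}$. One very small point: you attribute the check $\ord(h)=2\mid 2n$ to $r\ge 2$, but $2\mid 2n$ is automatic; $r\ge 2$ is instead needed so that Lemma~\ref{lem:PRn} applies.
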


The 8-symmetric Hamilton cycle in~$Q_8$ obtained from Theorem~\ref{thm:Qnm} for $n=m=4$ is illustrated in Figure~\ref{fig:q8}~(b).

\begin{proof}
Let $g$ be the automorphism of~$Q_n$ given by~\eqref{eq:gdef}.
All orbits of~$g$ have the same size $k:=2n$ and by Lemmas~\ref{lem:Rn-repr} and~\ref{lem:PRn}, there is a path~$P_n$ on orbit representatives~$R_n$ that starts in~$u:=0^n$, which is adjacent to $g(u)=0^{n-1}1$.
Let $h$ be the automorphism of~$Q_m$ that flips the first bit.
Clearly, $\ord(h)=2$ divides $k$, and there is a Hamilton path in~$Q_m$ between $v:=0^m$ and $h(v)=10^{m-1}$; take for example the BRGC~$\Gamma_m$ defined in Section~\ref{sec:results-cubes}.
Applying Lemma~\ref{lem:product}, we obtain that the graph $Q_{n+m}=Q_n\boxprod Q_m$ has the $2n$-symmetric Hamilton cycle defined in~\eqref{eq:CPQ}.
\end{proof}

Combining the previous results, we obtain the following closed formula for the Hamilton compression of~$Q_n$.

\begin{theorem}
\label{thm:kappa-Qn}
We have $\kappa(Q_2)=4$ and $\kappa(Q_n)=2^{\lceil \log_2 n \rceil}$ for all $n\ge 3$.
\end{theorem}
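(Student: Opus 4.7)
The strategy is to bolt together the upper bound from Lemma~\ref{lem:cube-ub} and the construction from Theorem~\ref{thm:Qnm}, using the BRGC only to handle the small dimensions for which Theorem~\ref{thm:Qnm} does not apply. The case $n=2$ is trivial: $Q_2$ is a $4$-cycle, which is $4$-symmetric as a Hamilton cycle of itself, and $\kappa(Q_2)\le 4$ since $|V(Q_2)|=4$.

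For $n\ge 3$, set $r:=\lceil \log_2 n\rceil$. For the upper bound I would simply unpack Lemma~\ref{lem:cube-ub}: it gives $\kappa(Q_n)=2^i$ for some $i$ with $2^i<2n$. When $n=2^r$ this yields $2^i<2^{r+1}$, hence $i\le r$; when $2^{r-1}<n<2^r$ we have $2^i<2n<2^{r+1}$, again giving $i\le r$. Either way $\kappa(Q_n)\le 2^r=2^{\lceil \log_2 n\rceil}$.

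For the matching lower bound I would split on the size of $n$. When $n\in\{3,4\}$ we have $r=2$, and Proposition~\ref{prop:brgc} already supplies a $4$-symmetric Hamilton cycle, namely the BRGC~$\Gamma_n$. When $n\ge 5$ we have $r\ge 3$, so $n':=2^{r-1}\ge 4$ is a power of~$2$ satisfying the hypothesis $r\ge 2$ of Theorem~\ref{thm:Qnm}, and $m:=n-n'$ satisfies $1\le m\le n'$ since $2^{r-1}<n\le 2^r$. Applying Theorem~\ref{thm:Qnm} with these parameters produces a $2n'=2^r$-symmetric Hamilton cycle in $Q_{n'+m}=Q_n$, matching the upper bound.

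I do not expect a genuine obstacle here: all of the technical work has been done in Lemma~\ref{lem:cube-ub} and Theorem~\ref{thm:Qnm} (which in turn rests on Lemmas~\ref{lem:PRn} and~\ref{lem:product}), and the argument above amounts to case bookkeeping across $n=2$, $n\in\{3,4\}$, and $n\ge 5$, together with a one-line conversion of \emph{the largest power of~$2$ strictly less than $2n$} into the closed form $2^{\lceil\log_2 n\rceil}$. The only minor thing to watch is that Theorem~\ref{thm:Qnm} requires $r\ge 2$ in its first argument, which forces the separate treatment of $n\in\{3,4\}$.
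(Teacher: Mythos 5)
Your proposal is correct and takes essentially the same route as the paper: combine Lemma~\ref{lem:cube-ub} for the upper bound, Proposition~\ref{prop:brgc} for $n\in\{3,4\}$, and Theorem~\ref{thm:Qnm} with $n'=2^{\lceil\log_2 n\rceil-1}$ and $m=n-n'$ for $n\ge 5$; the only difference from the paper's write-up is a shift by one in the notational choice of~$r$.
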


Note that $n\le \kappa(Q_n)<2n$ for $n\ge 2$, in particular $\kappa(Q_n)=\Theta(n)$.

\begin{proof}
$Q_2$ is a 4-cycle, which has optimal compression $\kappa(Q_2)=4$.
Note that $2^{\lceil \log_2 n \rceil}$ is the largest power of~2 that is less than~$2n$.
By Lemma~\ref{lem:cube-ub}, this is a valid upper bound for~$\kappa(Q_n)$ for all $n\ge 3$.
For $n=3$ and $n=4$, this upper bound is~4, and it is attained by the BRGC~$\Gamma_n$, which has compression $\kappa(Q_n,\Gamma_n)=4$ by Proposition~\ref{prop:brgc}.
For any $n\ge 5$ we define $r:=\lceil \log_2 n\rceil-1$, $n':=2^r$, and $m:=n-2^r$, and Theorem~\ref{thm:Qnm} yields a $2n'$-symmetric Hamilton cycle in~$Q_{n'+m}=Q_n$, and since $2n'=2^{r+1}=2^{\lceil\log_2 n\rceil}$, this matches the upper bound, so it is best possible.
\end{proof}

\subsection{Application to $t$-track Gray codes}

Recall the definition of $t$-track Hamilton cycles given in Section~\ref{sec:track}.
As discussed before, there is no 1-track Hamilton cycle in~$Q_n$.
We now provide a construction of a 2-track Hamilton cycle for every~$n$ that is a sum of two powers of~2.

\begin{theorem}
\label{thm:2track}
For every $n=2^r$ and $m=2^s$, where $r\ge 2$ and $r\ge s\ge 0$, there is a $2n$-symmetric Hamilton cycle in~$Q_{n+m}$ that has 2~tracks.
\end{theorem}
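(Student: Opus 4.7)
The plan is to reuse the framework of Lemma~\ref{lem:product} that was applied in Theorem~\ref{thm:Qnm}, but with the second factor's automorphism $h$ chosen to match the form of $g$ from~\eqref{eq:gdef}, so that both the first $n$ coordinates and the last $m$ coordinates organize themselves into one cyclic-shift track each. Concretely, I take $g$ as in~\eqref{eq:gdef} on $Q_n$, and I define $h$ on $Q_m$ by the analogous rule $h(y_1 \cdots y_m) = y_2 \cdots y_m \overline{y_1}$. Since $n = 2^r$ and $m = 2^s$ with $s \le r$, we have $\ord(h) = 2m = 2^{s+1}$, which divides $2^{r+1} = 2n$, verifying the divisibility hypothesis of Lemma~\ref{lem:product}. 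Lemmas~\ref{lem:Rn-repr} and~\ref{lem:PRn} supply a path $P_n$ on a set of orbit representatives of $g$ starting at $u = 0^n$, which is adjacent to $g(u) = 0^{n-1}1$, and all orbits of $g$ have the common size $k = 2n$.

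Next I supply a Hamilton path $Q$ in $Q_m$ from $v := 0^m$ to $h(v) = 0^{m-1}1$. For $m = 1$ take $Q = (0,1)$; for $m \ge 2$ apply bit-reversal to the BRGC $\Gamma_m$ (which runs from $0^m$ to $10^{m-1}$), obtaining a Hamilton path from $0^m$ to $0^{m-1}1$. Lemma~\ref{lem:product} then produces a $2n$-symmetric Hamilton cycle $C$ in $Q_n \boxprod Q_m = Q_{n+m}$ under the product automorphism $f := (g, h)$.

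To verify that $C$ has $2$ tracks, let $c_j$ denote the $j$-th column of $C$ viewed as a cyclic binary sequence of length $N := 2^{n+m}$, and set $L := |P_n \sqcup Q| = N/(2n)$. By construction $x_{i+L} = f(x_i)$ for all indices $i$ modulo $N$. Reading this in coordinate $j$ with $1 \le j < n$ yields $c_{j+1}(i) = c_j(i+L)$, so each $c_{j+1}$ is the cyclic shift of $c_j$ by $L$; inductively $c_1, \ldots, c_n$ are all cyclic shifts of the single column $c_1$. The consistency check at the boundary uses that $g^n$ is global bit complementation, giving $c_n(i+L) = \overline{c_1(i)}$, equivalently $c_1$ shifted by $nL = N/2$ equals its complement --- a constraint automatically satisfied. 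An identical argument on the last $m$ coordinates, with $h$ in place of $g$, shows that $c_{n+1}, \ldots, c_{n+m}$ are cyclic shifts of $c_{n+1}$. Hence every column of $C$ is a cyclic shift of one of the two master columns $c_1$ and $c_{n+1}$, so $C$ has $2$ tracks. The only mildly subtle point is ensuring the endpoints of $Q$ are the correct pair $(v, h(v))$, which relies on the bipartite laceability of $Q_m$ together with the explicit choice $v = 0^m$ so that $v$ and $h(v)$ lie in opposite parity classes.
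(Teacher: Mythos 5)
Your proof is correct and follows essentially the same approach as the paper: choose $h$ on $Q_m$ to be the same cyclic-shift-and-complement automorphism as $g$ on $Q_n$, take the bit-reversed BRGC as the Hamilton path $Q$ in $Q_m$, apply Lemma~\ref{lem:product}, and observe that since both $g$ and $h$ act by cyclically shifting positions, the first $n$ columns are cyclic shifts of one another and the last $m$ columns are cyclic shifts of one another. Your column-shift verification and the closing remark invoking laceability are more detailed than needed (the explicit bit-reversed $\Gamma_m$ already pins down the endpoints, making the laceability appeal redundant, and it would in any case need care at $m=2$ where $Q_m$ has minimum degree $2$), but these do not affect correctness.
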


The 2-track Hamilton cycle in~$Q_8$ obtained from Theorem~\ref{thm:2track} for $n=m=4$ is illustrated in Figure~\ref{fig:q8}~(c).

\begin{proof}
Since $2m$ divides~$2n$, we may apply Lemma~\ref{lem:product} with the automorphism~$g$ of~$Q_n$ given by~\eqref{eq:gdef} and the automorphism~$h$ of~$Q_m$ given by $h(x):=g(x)$.
Furthermore, as a Hamilton path~$Q$ in~$Q_m$ between~$v:=0^m$ and~$h(v)=0^{m-1}1$ we can take the listing~$\Gamma_m'$ obtained from the BRGC~$\Gamma_m$ by reversing every bitstring.
In this way, we obtain a $2n$-symmetric Hamilton cycle~$C$ of~$Q_{n+m}$.
Furthermore, as the automorphism~$f$ from Lemma~\ref{lem:product} is the product of~$g$ and~$h$, and both $g$ and~$h$ cyclically shift positions, we obtain that in the matrix corresponding to~$C$, the first $n$ columns are cyclic shifts of each other, and the last $m$ columns are cyclic shifts of each other.
\end{proof}

Theorem~\ref{thm:2track} can be generalized immediately, yielding a $t$-track Hamilton cycle for every~$n$ that is a sum of $t\ge 2$ powers of~2.
This shows in particular that every dimension~$n\ge 5$ admits a Hamilton cycle with at most $\lfloor\log_2(n+1)\rfloor$ many tracks.

\begin{theorem}
\label{thm:ttrack}
For every $n=2^r$ and $(m_1,\ldots,m_{t-1})=(2^{s_1},\ldots,2^{s_{t-1}})$, where $r,t\ge 2$ and $r\ge s_1\ge \cdots\ge s_{t-1}\ge 0$, there is a $2n$-symmetric Hamilton cycle in~$Q_{n+m_1+\cdots+m_{t-1}}$ that has $t$~tracks.
\end{theorem}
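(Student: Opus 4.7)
The plan is to apply Lemma~\ref{lem:product} with $G=Q_n$, $g$ the shift-complement from~\eqref{eq:gdef}, $P=P_n$, and $u=0^n$, exactly as in the proof of Theorem~\ref{thm:2track}, but with the secondary factor $H := Q_{m_1}\boxprod\cdots\boxprod Q_{m_{t-1}}$ and a product automorphism $h=(h_1,\ldots,h_{t-1})$ where each $h_i\in\Aut(Q_{m_i})$ will be chosen from three options: the shift-complement on $Q_{m_i}$, the plain cyclic shift (with no complement), or the identity. Each such $h_i$ has order dividing $2m_i$, and since $m_i$ is a power of $2$ not exceeding $n$, we deduce that $\ord(h)$ divides $2n$, verifying the main arithmetic hypothesis of the lemma.

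Setting $M := m_1+\cdots+m_{t-1}$ and $v:=0^M$, the delicate point is that the lemma needs a Hamilton path $Q$ in $H=Q_M$ from $v$ to $h(v)$. Since $Q_M$ is bipartite and Hamilton-laceable (the $M=1$ case is trivial), such a $Q$ exists iff $v$ and $h(v)$ have opposite parity, i.e., iff their Hamming distance is odd. Only shift-complements contribute to this distance (each flips one bit of $0^{m_i}$), so the distance equals the number of indices $i$ with $h_i$ a shift-complement; we want this count to be odd. If $t$ is even, setting $h_i$ to be the shift-complement on $Q_{m_i}$ for every $i$ gives the odd count $t-1$. If $t$ is odd, we instead take $h_i$ to be the shift-complement for $i=1,\ldots,t-2$, and choose $h_{t-1}$ to be the plain cyclic shift on $Q_{m_{t-1}}$ when $m_{t-1}\ge 2$, or the identity when $m_{t-1}=1$; the count is then $t-2$, again odd.

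A direct computation shows that with these choices every block contributes exactly one track: the $Q_n$-block by the argument already given in the proof of Theorem~\ref{thm:2track}, each block whose $h_i$ is a shift-complement or plain cyclic shift because the $m_i$ corresponding columns are cyclic shifts of each other by multiples of $N/(2n)$ positions (regardless of the possible wraparound complement in the shift-complement case), and the single column of a one-column block with the identity trivially. Applying Lemma~\ref{lem:product} then yields a $2n$-symmetric Hamilton cycle in $Q_{n+M}$ with exactly $t$ tracks. The main obstacle is the parity issue for odd $t$: naively iterating the proof of Theorem~\ref{thm:2track} with shift-complement on every $Q_{m_i}$ would place $v$ and $h(v)$ in the same partition class of $Q_M$, precluding the Hamilton path $Q$; swapping one shift-complement for a non-parity-changing automorphism restores the parity while preserving all other hypotheses and the one-track-per-block property.
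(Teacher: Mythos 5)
Your proof is correct and uses the same approach as the paper: decompose $Q_{n+M}$ as $Q_n\boxprod Q_M$ with $M=m_1+\cdots+m_{t-1}$, apply Lemma~\ref{lem:product} with the same $g$, $P_n$, $u$, and a block-structured automorphism $h$ on $Q_M$ that cyclically shifts each $Q_{m_i}$ factor (so each block collapses to one track). The only difference is the choice of $h$: the paper applies the shift-complement to the first block alone and plain cyclic shifts to the other blocks, so $v$ and $h(v)$ always differ in exactly one bit and the parity obstruction you identified never arises, whereas you use many shift-complements and compensate with a case split on the parity of~$t$. Both are valid; the paper's uniform choice just avoids the case analysis.
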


\begin{proof}
The proof is analogous to the proof of Theorem~\ref{thm:2track}, using the automorphism $h$ of~$Q_{m_1+\cdots+m_{t-1}}$ that complements the first bit and then cyclically shifts groups of bits of sizes $m_1,\ldots,m_{t-1}$, each group one position to the left, and using any Hamilton path in $Q_{m_1+\cdots+m_{t-1}}$ between $v:=0^{m_1,\ldots,m_{t-1}}$ and $h(v)=0^{m_1-1}10^{m_2,\ldots,m_{t-1}}$.
\end{proof}

\section{Johnson graphs and relatives}
\label{sec:johnson}

In this section we consider Johnson graphs and middle levels graphs introduced in Section~\ref{sec:results} (recall also Section~\ref{sec:prelim-johnson}).
We first derive some upper bounds for their Hamilton compression, and then provide corresponding lower bound constructions.
For Johnson graphs, the classical construction of a Hamilton cycle is to consider the sublist of the BRGC~$\Gamma_n$ obtained by restricting to bitstrings with fixed Hamming weight~$k$ (see~\cite{MR0349274}).
The resulting cycle in~$J_{n,k}$ is only 1-symmetric in general, so we did not analyze it further (unlike the BRGC and the SJT cycles in the previous and next section, respectively, which have compression factors~$>1$).

\subsection{An upper bound}

Recall from Section~\ref{sec:prelim-johnson} that $\Aut(J_{n,k})\cong S_n$ if $n\neq 2k$ and $\Aut(J_{n,k})\cong S_n\times \mathbb{Z}_2$ if $n=2k$, so from~\eqref{eq:kappa-UB} we obtain $\kappa(J_{n,k})\le \lambda(n)$ or $\kappa(J_{n,k})\le 2\lambda(n)$, respectively, where $\lambda(n)$ is Landau's function.
We now improve these bounds drastically to linear functions (cf.~\eqref{eq:landau-asymp}).

\begin{lemma}
\label{lem:johnson-ub}
If $n\neq 2k$, then we have $\kappa(J_{n,k})\le n$.
If $n=2k$, then we have $\kappa(J_{n,k})\le 2n$.
\end{lemma}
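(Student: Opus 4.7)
The plan is to exploit the fact that an $f$-symmetric Hamilton cycle in $J_{n,k}$ requires every $\langle f\rangle$-orbit on $V(J_{n,k})=\binom{[n]}{k}$ to have common size $\kappa=\ord(f)$; equivalently, every $k$-subset $S$ must have trivial stabilizer in $\langle f\rangle$. Both upper bounds are then established by contradiction: assuming $\kappa>n$ (resp.\ $\kappa>2n$), I would exhibit a $k$-subset with nontrivial stabilizer.

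For $n\neq 2k$, we have $\Aut(J_{n,k})\cong S_n$, so $f=\pi$ with cycle lengths $a_1\ge a_2\ge\cdots\ge a_m$ summing to $n$ and $\kappa=\lcm(a_1,\ldots,a_m)$. First I would invoke the isomorphism $J_{n,k}\cong J_{n,n-k}$ (via bit-complementation, valid since $n\neq 2k$) to reduce to $k\le n/2$. If $k\le a_1$, then any $k$-subset $S\seq C_1$ satisfies $\pi(S)\seq C_1$, so its $\pi$-orbit stays inside $C_1$ and has size dividing $a_1$; requiring this to equal $\kappa\ge a_1$ forces $\kappa=a_1\le n$. If instead $k>a_1$ (which combined with $k\le n/2$ forces $m\ge 3$), the strategy is to construct a $\pi^t$-invariant $k$-subset for some $t=\kappa/p$ with $p\mid\kappa$ prime, producing the desired contradiction. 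Since $\ord(\pi^t)=p$, every cycle of $\pi^t$ has length $1$ or $p$; writing $F_p:=\sum_{j:v_p(a_j)<v_p(\kappa)}a_j$ for its number of fixed points, a $\pi^t$-invariant $k$-subset of the form ``$s$ fixed points plus $j$ whole $p$-cycles'' exists whenever $k\bmod p\le F_p$. To guarantee a suitable prime exists, I would use the double-counting inequality $\sum_{p\mid\kappa}F_p\ge n$, which follows from the fact that $\kappa>n$ implies every $a_j$ is a proper divisor of $\kappa$ and hence contributes $a_j$ to $F_p$ for at least one $p$; combined with $\kappa>n$ this forces some $p$ to satisfy $F_p\ge k\bmod p$.

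For $n=2k$ we have $\Aut(J_{n,k})\cong S_n\times\mathbb{Z}_2$ with $\mathbb{Z}_2\cong\{\ide,\cpl\}$, so $f=(\pi,\alpha)$. If $\alpha=\ide$, the previous case applies giving $\kappa=\ord(\pi)\le n\le 2n$. If $\alpha=\cpl$, writing $r=\ord(\pi)$ and $\kappa=\lcm(r,2)$, one computes $f^i(S)=\pi^i(S)$ for even $i$ and $f^i(S)=[n]\setminus\pi^i(S)$ for odd $i$. Restricting the trivial-stabilizer condition to even $i\in(0,\kappa)$: if $r$ is odd, then (using $\gcd(2,r)=1$) every $\pi$-orbit on $\binom{[n]}{k}$ must have size $r$, so applying the $n\neq 2k$ argument to $\pi$ yields $r\le n$, whence $\kappa=2r\le 2n$; if $r$ is even, then every $\pi^2$-orbit must have size $\ord(\pi^2)=r/2$, so applying the $n\neq 2k$ argument to $\pi^2$ yields $r/2\le n$, whence $\kappa=r\le 2n$.

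The main obstacle is the sub-case $k>a_1$ of the $n\neq 2k$ argument: verifying that under the hypothesis $\kappa>n$ some prime $p\mid\kappa$ really does satisfy $F_p\ge k\bmod p$. The counting inequality $\sum_{p\mid\kappa}F_p\ge n$ is the main tool, but turning this average lower bound into a pointwise one requires controlling the number and sizes of prime factors of $\kappa$ relative to $n$, which may need additional case analysis on the cycle-length structure.
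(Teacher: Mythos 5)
Your overall strategy---argue that every orbit of the automorphism must have size equal to the compression factor, and then exhibit a $k$-subset with a nontrivial stabilizer whenever that factor exceeds the bound---is sound, and matches the paper's strategy in spirit. However, the paper closes this out with a single uniform construction, with no case split and no number theory: given the cycle decomposition $C_1,\ldots,C_m$ of $\pi$, flatten it into a linear order $\rho$ of $[n]$ and let $x$ be the vertex with $x_{\rho(i)}=1$ for $i\le k$ and $x_{\rho(i)}=0$ for $i>k$. Then at most one cycle $C_s$ is ``mixed'' (contains both $0$s and $1$s), while every other cycle is monochromatic and therefore fixed setwise by every power of $\pi$; hence $\pi^{|C_s|}$ stabilizes $x$, the orbit of $x$ has size at most $|C_s|\le n$, and the bound follows immediately. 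This handles $k\le a_1$ and $k>a_1$ uniformly.

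By contrast, your proposal splits on whether $k\le a_1$ or $k>a_1$, and in the second case reduces to a number-theoretic claim: that for some prime $p\mid\kappa$ one has $F_p\ge k\bmod p$, where $F_p$ is the number of fixed points of $\pi^{\kappa/p}$. You derive the averaged inequality $\sum_{p\mid\kappa}F_p\ge n$, but---as you yourself note---this does not yield the needed pointwise bound without further analysis of the prime-factor structure of $\kappa$ relative to the cycle lengths. This is a genuine gap: the claim may well be provable, but it is not established by what you have written, and it requires precisely the kind of delicate argument that the paper's greedy-filling construction is designed to avoid. (Note also that the $\kappa>n$ hypothesis alone does not immediately control the number of distinct prime factors of $\kappa$, so averaging alone cannot close the gap.) The remainder of your argument---the reduction to $k\le n/2$ via $J_{n,k}\cong J_{n,n-k}$, the $k\le a_1$ subcase, and the handling of complementation in the $n=2k$ case by restricting to even powers of $f$---is correct, but it all ultimately funnels back into the unresolved subcase.
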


\begin{proof}
We first consider the case $n\neq 2k$.
Let $f=\pi$ be any automorphism of~$J_{n,k}$, and consider a fixed cycle decomposition~$C_1,\ldots,C_m$ of the permutation~$\pi$.
Consider the permutation~$\rho$ of~$[n]$ obtained by `flattening' the lists~$C_1,\ldots,C_m$.
For example, if $n=9$ and $\pi=(1,5,4)(2,9,7,8)(3,6)$ we have $\rho=154297836$.
Let $x=x_1\cdots x_n$ be the vertex of~$J_{n,k}$ defined by $x_{\rho(i)}:=1$ for $i=1,\ldots,k$ and $x_{\rho(i)}:=0$ for $i=k+1,\ldots,n$.
By definition, for all $i=1,\ldots,m$ except possibly one index $i=s$, we have that the entries of~$x$ on the indices of~$C_i$ are all the same (either all~1s or all~0s), whereas on the indices of~$C_s$ we see both~1s and~0s in~$x$.
It follows that the size of the orbit of~$x$ under~$\pi$ is at most~$|C_s|\le n$ (if there is no exceptional cycle~$C_s$, then the orbit has size~1).
This proves the first part of the lemma.

The proof of the second part is analogous, and here the additional factor of~2 comes from the possible complementation operation.
\end{proof}

As the automorphism group of the middle levels graph is also~$\Aut(J_{n,k})\cong S_n\times \mathbb{Z}_2$, the same proof idea immediately gives an analogous upper bound of twice the length of the bitstrings.

\begin{lemma}
\label{lem:middle-ub}
For all $n\ge 1$ we have $\kappa(M_{2n+1})\le 2(2n+1)$.
\end{lemma}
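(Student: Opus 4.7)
The plan is to directly adapt the proof of Lemma~\ref{lem:johnson-ub} to the middle levels graph, exploiting the fact that $\Aut(M_{2n+1}) \cong S_{2n+1} \times \mathbb{Z}_2$ has exactly the same structure as $\Aut(J_{2k,k})$. Writing any automorphism as $f = (\pi, \alpha)$ with $\pi \in S_{2n+1}$ and $\alpha \in \{\ide, \cpl\}$, and using the observation from Section~\ref{sec:easy} that in a $k$-symmetric cycle all orbits of $f$ have the same size, it suffices to exhibit \emph{one} vertex $x \in V(M_{2n+1})$ whose $f$-orbit has size at most $2(2n+1)$, which then forces $k \le 2(2n+1)$.

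First I would construct the witness vertex exactly as in Lemma~\ref{lem:johnson-ub}: fix a cycle decomposition $C_1,\ldots,C_m$ of $\pi$, flatten it into a permutation $\rho$ of $[2n+1]$, and define $x$ by setting $x_{\rho(i)} := 1$ for $i = 1,\ldots,n$ and $x_{\rho(i)} := 0$ for $i = n+1,\ldots,2n+1$. Since $x$ has Hamming weight $n$, we have $x \in V(M_{2n+1})$. Each cycle $C_i$ corresponds to a consecutive block of indices in $\rho$, so the entries of $x$ on the indices of $C_i$ are all equal, with the possible exception of at most one cycle $C_s$ that straddles the boundary between positions $n$ and $n+1$ of $\rho$. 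Consequently $|\langle x\rangle_\pi| \le |C_s| \le 2n+1$.

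Next I would separate the two possibilities for $\alpha$. If $\alpha = \ide$, then $f = \pi$ and immediately $|\langle x\rangle_f| = |\langle x\rangle_\pi| \le 2n+1$. The case $\alpha = \cpl$ is where the extra factor of $2$ enters. The key observation is that complementation commutes with any permutation of positions and is an involution, so $f^2 = \pi^2$. Therefore the $f$-orbit of $x$ splits into even and odd iterates, with the even iterates $f^{2j}(x) = \pi^{2j}(x)$ forming the set $\langle x\rangle_{\pi^2}$ and the odd iterates $f^{2j+1}(x) = \cpl(\pi^{2j+1}(x))$ forming $\cpl(\langle \pi(x)\rangle_{\pi^2})$, which has the same cardinality as $\langle x\rangle_{\pi^2}$. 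Since a $\pi$-orbit either remains a single $\pi^2$-orbit (odd length) or splits into two equal halves (even length), we have $|\langle x\rangle_{\pi^2}| \le |\langle x\rangle_\pi|$, and hence $|\langle x\rangle_f| \le 2|\langle x\rangle_\pi| \le 2(2n+1)$, as required.

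The potential pitfall is keeping the bookkeeping for the $\alpha = \cpl$ case clean; in particular one must verify that the odd iterates contribute at most $|\langle x\rangle_{\pi^2}|$ vertices and use the structural fact about how a $\pi$-orbit decomposes under $\pi^2$. No construction is needed beyond the flattened-cycle trick of Lemma~\ref{lem:johnson-ub}, so modulo this routine check the whole argument is essentially a three-line corollary of the Johnson case.
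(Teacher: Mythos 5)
Your proposal is correct and follows exactly the route the paper intends: the paper's ``proof'' of this lemma is literally the one-sentence remark that the automorphism group of $M_{2n+1}$ has the same form $S_{2n+1}\times\mathbb{Z}_2$ as that of $J_{2k,k}$, so the argument of Lemma~\ref{lem:johnson-ub} carries over with an extra factor of~$2$. You have carried out precisely that adaptation, and in fact supplied more detail than the paper does anywhere: the flattened-cycle witness vertex with Hamming weight~$n$ lands in the correct level, the single straddling cycle $C_s$ bounds $|\langle x\rangle_\pi|\le 2n+1$, and the $\alpha=\cpl$ case is handled cleanly via $f^2=\pi^2$ (complementation commutes with coordinate permutation and is an involution), whence $|\langle x\rangle_f|\le 2|\langle x\rangle_{\pi^2}|\le 2|\langle x\rangle_\pi|\le 2(2n+1)$. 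Since in a $k$-symmetric Hamilton cycle all $f$-orbits have equal size $|V|/k$, exhibiting one small orbit indeed forces $k\le 2(2n+1)$. No gaps.
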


\subsection{Known construction for middle levels graphs}

The following is the main result of~\cite{MR4262479}.

\begin{theorem}[\cite{MR4262479}]
\label{thm:knuth}
For all $n\ge 1$, the graph~$M_{2n+1}$ has a $(2n+1)$-symmetric Hamilton cycle.
\end{theorem}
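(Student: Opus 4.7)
My approach would start from the cyclic left-shift automorphism $\sigma\colon x_1 x_2 \cdots x_{2n+1} \mapsto x_2 \cdots x_{2n+1} x_1$ of $M_{2n+1}$. Since $\gcd(2n+1, n) = \gcd(2n+1, n+1) = 1$, no bitstring of weight $n$ or $n+1$ has a nontrivial period dividing $2n+1$, so every orbit of $\sigma$ has the full size $2n+1$. By the reformulation~\eqref{eq:CP}, constructing a $(2n+1)$-symmetric Hamilton cycle under $\sigma$ is therefore equivalent to constructing a path $P = (u_1, \ldots, u_N)$ with $N = 2\binom{2n+1}{n}/(2n+1)$ in $M_{2n+1}$ that visits exactly one vertex of each necklace and satisfies $u_N \sim \sigma(u_1)$ in $M_{2n+1}$. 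Equivalently, we need a Hamilton cycle in the quotient graph $M_{2n+1}/\sigma$ on necklaces that lifts consistently at a chosen closing edge.

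I would attempt to produce such a path by induction on $n$, verifying small cases by hand (cf.\ Figures~\ref{fig:comp4}~(b) and~\ref{fig:mlc7}~(a)). For the inductive step I would partition the necklaces into blocks according to a canonical representative, for instance via the classical bijection between necklaces of weight $n+1$ in $\{0,1\}^{2n+1}$ and rooted plane trees with $n+1$ edges (or equivalently with Dyck paths / lattice paths). Each block would inherit a smaller middle-levels-type structure to which the induction hypothesis, together with the necklace Hamilton path result of Wang–Savage (Theorem~\ref{thm:necklace}) on the two individual levels, could be applied to produce "piece paths" through each block. The overall path $P$ would then be assembled by concatenating these piece paths along designated bridge edges crossing between weight $n$ and weight $n+1$; because each block has a well-understood cyclic structure, one can arrange the bridges so that the visited representatives form a transversal of all necklaces.

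The main obstacle is the rigid endpoint condition $u_N \sim \sigma(u_1)$. Whenever one splices pieces together, one loses control over at least one endpoint, so hitting a single prescribed adjacency at the very end of the induction is very delicate; this is precisely the source of Knuth's difficulty rating of $49/50$. To overcome it one needs to strengthen the induction hypothesis, producing not just one path but a whole parametrized family of paths between a rich set of prescribed endpoint pairs on each block, so that the closing bridge can always be chosen compatibly. Establishing the existence of this family---which requires careful tracking of the boundary necklaces of each block, a number of auxiliary "flip" and "splice" lemmas, and an ad-hoc handling of several small residual cases---is where essentially all of the technical work resides, and is the reason why, in contrast to the upper bound from Lemma~\ref{lem:middle-ub}, the matching lower bound here is a deep theorem rather than a routine construction.
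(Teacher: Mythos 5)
The paper does not prove this theorem; it imports it as a black box, citing~\cite{MR4262479}, which is the published solution to Knuth's Problem~56.  There is therefore no in-paper proof to compare against.  What you can fairly be measured against is whether your sketch would reconstruct that external argument, and it does not.

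Your reduction is correct and is exactly the framing the paper uses elsewhere: since $\gcd(2n+1,n)=\gcd(2n+1,n+1)=1$, all orbits of the cyclic shift $\sigma$ have full length $2n+1$, so a $(2n+1)$-symmetric Hamilton cycle is equivalent (via~\eqref{eq:CP}) to a path $P=(u_1,\dots,u_N)$ hitting each necklace exactly once with the closing adjacency $u_N\sim\sigma(u_1)$.  From there, though, your proposal is a plan rather than a proof.  The step ``partition the necklaces into blocks indexed by plane trees, apply Wang--Savage and the induction hypothesis inside each block, splice by bridges, and strengthen the induction to a family of paths with flexible endpoints'' names plausible ingredients but proves none of them, and you concede that this is where ``essentially all of the technical work resides.''  Two concrete issues: (1) Theorem~\ref{thm:necklace} (Wang--Savage) lives in a single Johnson graph $J_{n,k}$; $M_{2n+1}$ straddles two adjacent levels, and the necklaces of the two levels do not admit one common plane-tree indexing, so the ``blocks'' are not as cleanly defined as the sketch suggests.  (2) The published proof does not proceed by induction on $n$ at all; it adapts the 2-factor-plus-gluing machinery from the middle levels theorem, constructing a shift-invariant 2-factor whose cycles are indexed by rooted trees and then merging those cycles via a spanning-tree argument on a tree-rotation graph, with the closing condition handled inside that framework rather than at the end of an induction.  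So your proposal correctly identifies the quotient reformulation and the locus of difficulty, but it is not a proof, and the route it gestures at is different from the one actually taken in~\cite{MR4262479}.
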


Cycles obtained from Theorem~\ref{thm:knuth} are shown in Figure~\ref{fig:comp4}~(b) and Figure~\ref{fig:mlc7}~(a).
These cycles also have the 1-track property and they are balanced (as the underlying automorphism is cyclic rotation of all bits).
From this we can determine the Hamilton compression of~$M_{2n+1}$ up to a factor of~2.

\begin{theorem}
\label{thm:kappa-middle}
For all $n\ge 1$ we have $2n+1\le \kappa(M_{2n+1})\le 2(2n+1)$.
\end{theorem}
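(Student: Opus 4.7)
The statement is an immediate combination of two results already established earlier in the paper, so the plan is essentially to assemble them rather than to do any new work.

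For the upper bound, I would simply cite Lemma~\ref{lem:middle-ub}, which directly gives $\kappa(M_{2n+1})\le 2(2n+1)$. The proof of that lemma argues via the automorphism group $\Aut(M_{2n+1})\cong S_{2n+1}\times\mathbb{Z}_2$ and the orbit-size analysis used in Lemma~\ref{lem:johnson-ub}: any automorphism $f=(\pi,\alpha)$ has some vertex $x$ whose orbit size is bounded by the length of a single cycle of $\pi$ (times at most~$2$ for the complementation $\alpha$), and since $|V(M_{2n+1})|$ must be $\kappa$ times a common orbit size, the compression is at most $2(2n+1)$.

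For the lower bound, I would appeal to Theorem~\ref{thm:knuth}, which asserts that $M_{2n+1}$ admits a $(2n+1)$-symmetric Hamilton cycle under the cyclic left-shift automorphism of bitstrings. By the definition of Hamilton compression in~\eqref{eq:kappa}, the existence of such a cycle immediately yields $\kappa(M_{2n+1})\ge 2n+1$.

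Concatenating these two chains of inequalities gives the desired bounds $2n+1\le \kappa(M_{2n+1})\le 2(2n+1)$. There is no real obstacle here, since the hard work — both the nontrivial existence result in~\cite{MR4262479} and the group-theoretic orbit argument bounding automorphism orders — has already been done. The only thing to verify is that the $(2n+1)$-symmetric cycle constructed in Theorem~\ref{thm:knuth} is genuinely a Hamilton cycle of $M_{2n+1}$ (not just of some quotient), which is exactly what that theorem asserts.
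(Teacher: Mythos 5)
Your proof is correct and matches the paper's proof exactly: both the upper bound via Lemma~\ref{lem:middle-ub} and the lower bound via Theorem~\ref{thm:knuth} are precisely the two ingredients the paper combines. Nothing further is needed.
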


\begin{proof}
The upper bound comes from Lemma~\ref{lem:middle-ub}, and the lower bound from Theorem~\ref{thm:knuth}.
\end{proof}

Interestingly, both bounds in Theorem~\ref{thm:kappa-middle} can sometimes be improved.
For example, in dimension~7 we can take the automorphism~$f$ defined by $x_1\cdots x_7\mapsto\overline{x_1 x_2 x_4 x_5 x_6 x_7 x_3}$, which fixes the first two bits, cyclically left-shifts the remaining five bits by one position, and then complements all bits.
A 10-symmetric Hamilton cycle under this~$f$ is shown in Figure~\ref{fig:mlc7}~(b), whereas the lower and upper bounds are~7 and~14, respectively.
In fact, computer experiments show that $\kappa(M_7)=10$.

\begin{figure}
\makebox[0cm]{ 
\begin{tabular}{ccc}
\raisebox{39mm}{(a)} &
\includegraphics{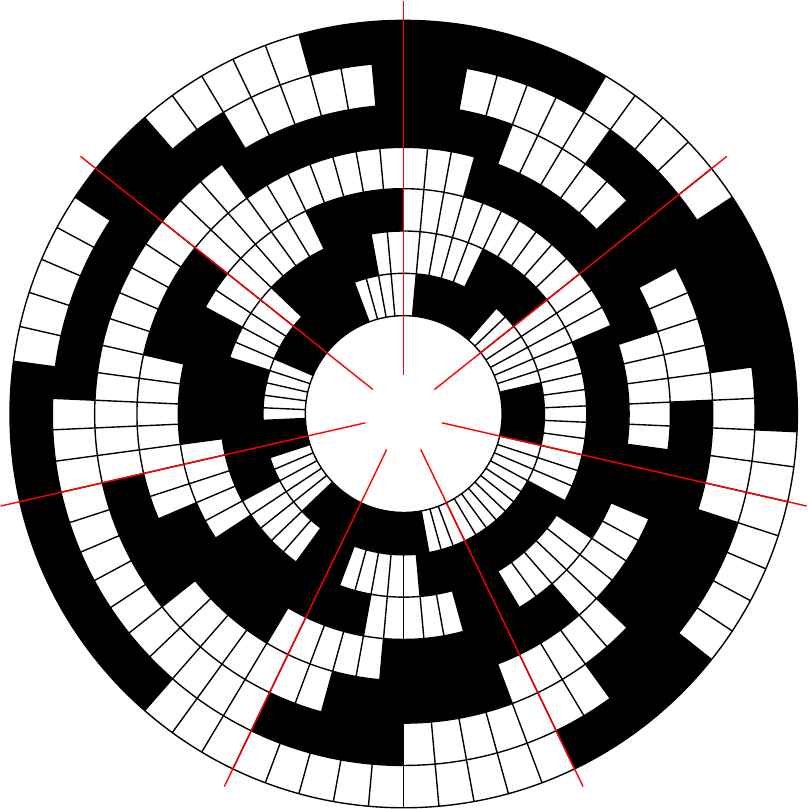} &
\includegraphics{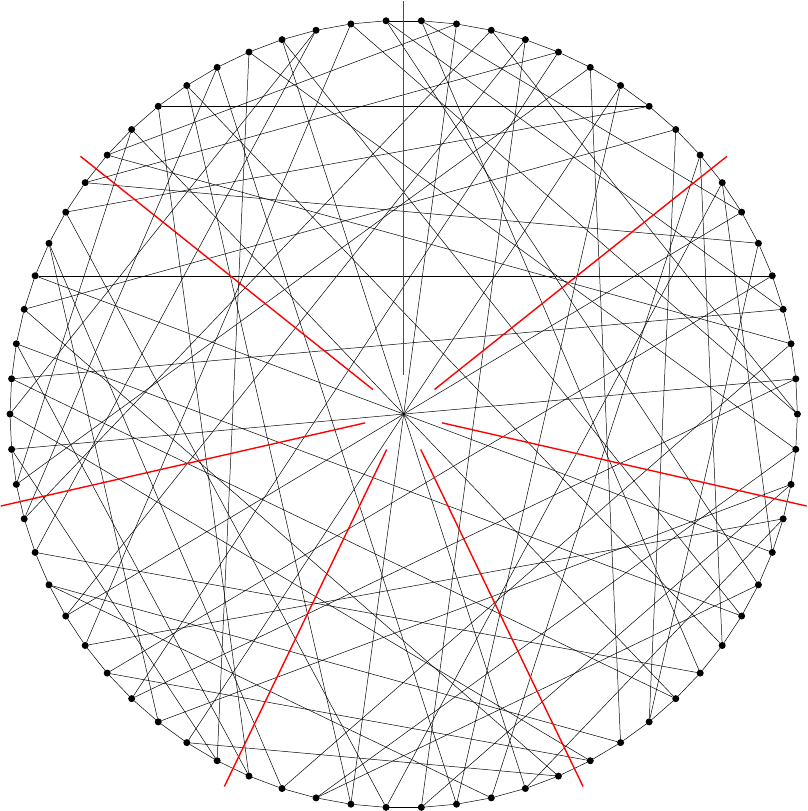} \\
\raisebox{39mm}{(b)} &
\includegraphics{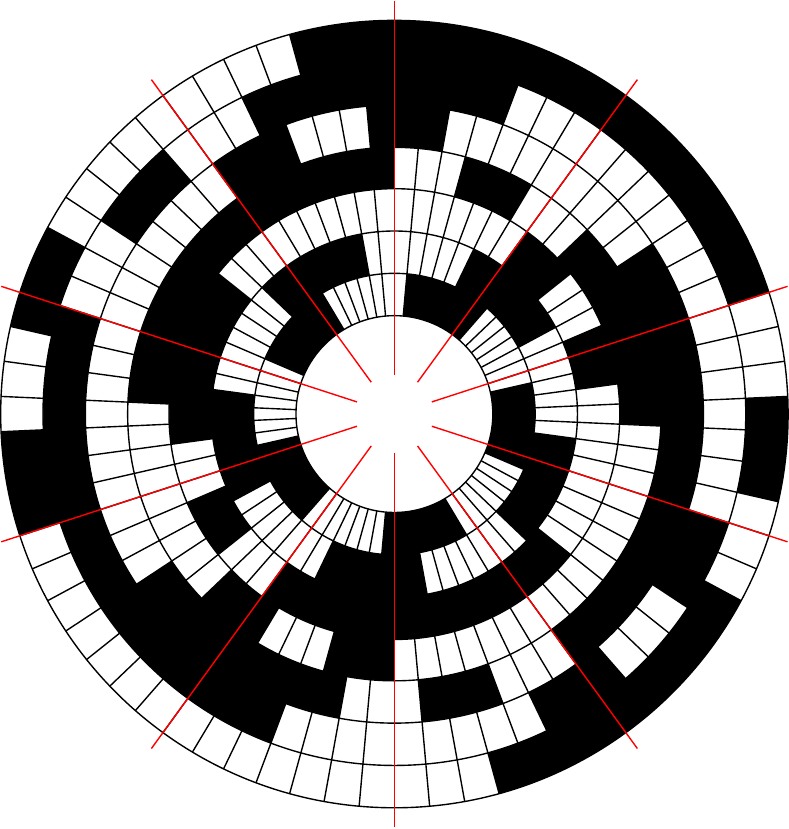} &
\includegraphics{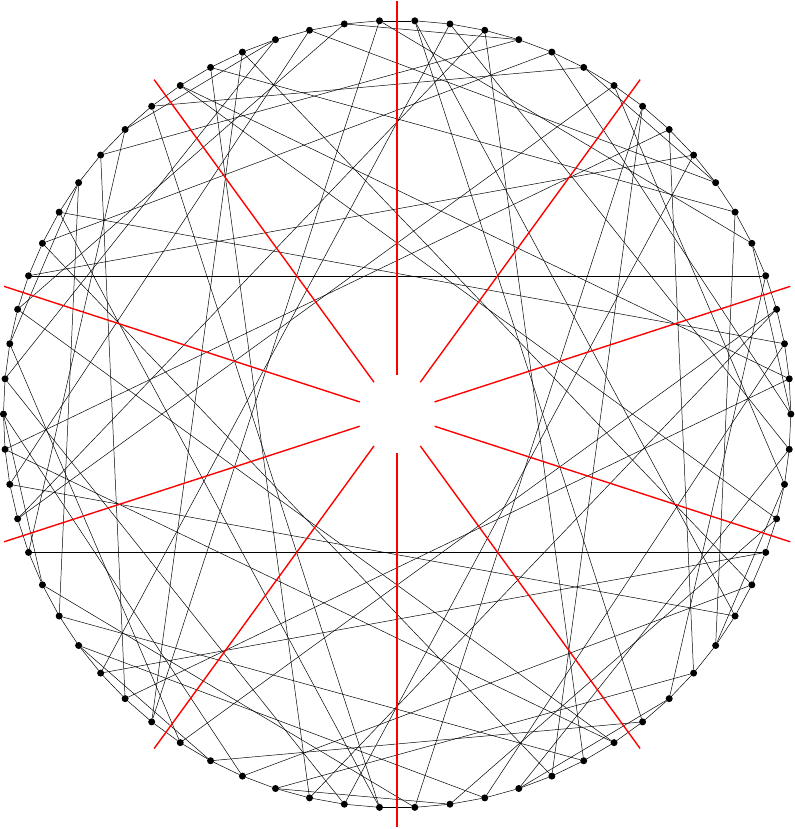} \\
\end{tabular}
}
\caption{Symmetric Hamilton cycles in the middle levels graph~$M_7$:
(a)~A solution to Knuth's problem with compression~7 for $f$ being cyclic left-shift;
(b)~Hamilton cycle with compression~10 for $f$ being left-shift of the last 5 bits and complementation of all bits.}
\label{fig:mlc7}
\end{figure}

\subsection{A near-optimal construction for Johnson graphs}

Our next result provides Hamilton cycles in~$J_{n,k}$ with optimal compression for the case when $n$ and~$k$ are coprime (this in particular means that $n\neq 2k$).

\begin{theorem}
\label{thm:johnson-coprime}
Let $n>k>0$ be such that $n$ and~$k$ are coprime.
Then $J_{n,k}$ has an $n$-symmetric Hamilton cycle that has 1~track and is balanced, i.e., each bit is flipped equally often ($\binom{n}{k}/n$ many times).
\end{theorem}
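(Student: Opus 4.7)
The plan is to apply Theorem~\ref{thm:necklace} of Wang and Savage together with the cyclic left-shift automorphism $f:x_1\cdots x_n\mapsto x_2\cdots x_n x_1$ of $J_{n,k}$. Because $\gcd(n,k)=1$, every orbit (necklace) of $f$ has size exactly $n$, as remarked in Section~\ref{sec:prelim-johnson}, so there are $\binom{n}{k}/n$ orbits in total and any set of orbit representatives has the same cardinality.

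Assume first the nontrivial range $1<k<n-1$. Theorem~\ref{thm:necklace} furnishes a path $P$ in $J_{n,k}$ from $x:=1^k0^{n-k}$ to $y:=1^{k-1}010^{n-k-1}$ that visits exactly one vertex in each necklace, so $|P|=\binom{n}{k}/n$. Following~\eqref{eq:CP} I would set
\[
C:=P,f(P),f^2(P),\ldots,f^{n-1}(P).
\]
Since the orbits of $f$ partition $V(J_{n,k})$ and $P$ hits each orbit exactly once, the sequence $C$ visits every vertex of $J_{n,k}$ exactly once. The only thing left to check is that consecutive segments join up correctly, i.e.\ that the last vertex $f^i(y)$ of $f^i(P)$ is adjacent to the first vertex $f^{i+1}(x)$ of $f^{i+1}(P)$. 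Because $f\in\Aut(J_{n,k})$, this reduces to verifying that $y$ is adjacent to $f(x)$. A direct inspection gives
\[
y=1^{k-1}\,0\,1\,0^{n-k-1} \qquad \text{and} \qquad f(x)=1^{k-1}\,0^{n-k}\,1,
\]
which agree on positions $1,\dots,k$ and $k{+}2,\dots,n{-}1$ and differ precisely on positions $k{+}1$ and $n$ by a transposition of a $0$ and a $1$; hence they are adjacent in $J_{n,k}$. This makes $C$ an $n$-symmetric Hamilton cycle. The boundary cases $k\in\{1,n-1\}$ are trivial since $J_{n,k}\cong K_n$; taking $P$ to be any single orbit representative, the sequence $(f^0(P),\dots,f^{n-1}(P))$ is already an $n$-symmetric Hamilton cycle in $K_n$.

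Finally, the 1-track and balance properties are automatic consequences of the construction. Laying out $C$ as a $\binom{n}{k}\times n$ matrix row by row, the $(j{+}1)$-st column is a cyclic downward shift of the $j$-th column by $\binom{n}{k}/n$ positions (because $f$ cyclically shifts bit positions), so all $n$ columns form a single track. Similarly, since $f$ permutes the bit positions transitively, the $n$-fold symmetry of $C$ forces every bit position to play an identical role. Each edge of $J_{n,k}$ swaps a $0$ and a $1$, so it contributes exactly one $0\to 1$ transition; the total number of such transitions along $C$ is $\binom{n}{k}$, split equally among the $n$ positions, yielding $\binom{n}{k}/n$ per bit as required. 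The only nontrivial input is Theorem~\ref{thm:necklace}; once that is granted, the rest is just the adjacency check between $y$ and $f(x)$ and these symmetry bookkeeping remarks.
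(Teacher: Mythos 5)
Your proof is correct and follows essentially the same route as the paper's: you invoke the Wang--Savage necklace path (Theorem~\ref{thm:necklace}), use the cyclic left-shift automorphism~$f$, verify that $y=1^{k-1}010^{n-k-1}$ is adjacent to $f(x)=1^{k-1}0^{n-k}1$, and conclude the $1$-track and balance properties from the fact that $f$ shifts columns. The paper handles the boundary cases $k\in\{1,n-1\}$ and the balance claim with the same observations, just stated more tersely.
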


This construction is illustrated in Figure~\ref{fig:jnk}~(a) for~$J_{11,3}$.

\begin{proof}
For $k=1$ and $k=n-1$ the Johnson graph~$J_{n,k}$ is the complete graph~$K_n$, so the statement is trivial.
For the rest of the proof we therefore assume that~$n-1>k>1$.

We use the automorphism~$f$ that cyclically left-shifts all bits by one position.
The orbits of~$f$ are necklaces, and as $n$ and $k$ are coprime, every necklace has the same size~$n$.
Let~$P$ be the path in~$J_{n,k}$ guaranteed by Theorem~\ref{thm:necklace} from~$x:=1^k0^{n-k}$ to~$y:=1^{k-1}010^{n-k-1}$.
Note that $y$ is adjacent to $f(x)=1^{k-1}0^{n-k}1$.
Consequently, $C:=P,f(P),f^2(P),\ldots,f^{n-1}(P)$ is an $n$-symmetric Hamilton cycle in~$J_{n,k}$.
Furthermore, any two columns of the $|C|\times n$ matrix corresponding to~$C$ are cyclic shifts of each other, so $C$ has the 1-track property.
This immediately implies that every bit is flipped equally often.
\end{proof}

\begin{figure}
\makebox[0cm]{ 
\begin{tabular}{cc}
\includegraphics{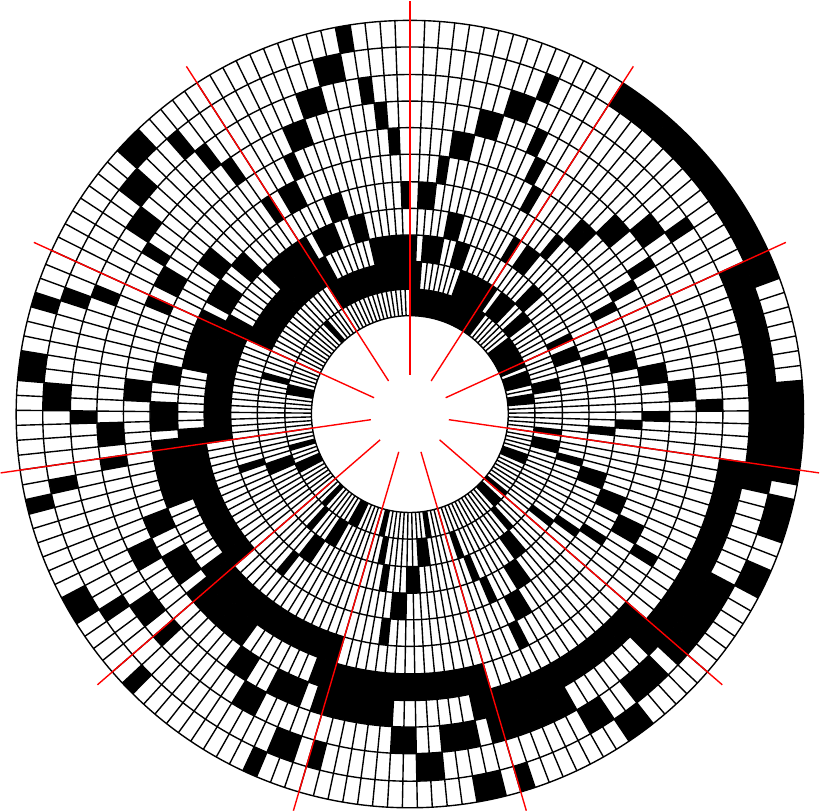} &
\includegraphics{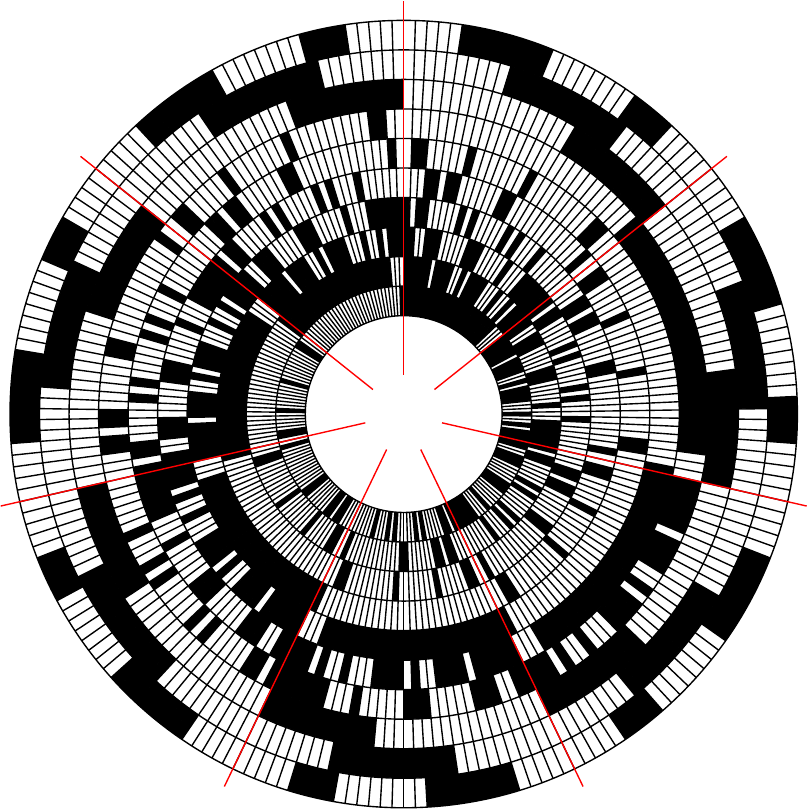} \\
(a) & (b)
\end{tabular}
}
\caption{Symmetric Hamilton cycles in Johnson graphs:
(a)~Balanced 1-track Hamilton cycle in~$J_{11,3}$ with compression~$n=11$; the automorphism left-shifts all $n$ bits;
(b)~4-track Hamilton cycle in~$J_{10,4}$ with compression~$q=7$; the automorphism left-shifts the first $q$ bits.}
\label{fig:jnk}
\end{figure}

When $n$ and $k$ are not coprime, we can slightly modify the automorphism used to prove the previous theorem.
Instead of cyclically shifting all $n$ bits, we now shift only the first $q$ bits, leaving the last $n-q$ bits unchanged.
The parameter $q$ is chosen as close to $n$ as possible, but it has to satisfy certain coprimality conditions that are needed so that all necklaces have the same size~$q$.
This construction is illustrated in Figure~\ref{fig:jnk}~(b) for~$J_{10,4}$ and $q=7$.

\begin{theorem}
\label{thm:johnson-other}
Let $q<n$ be such that $q>\max\{k,n-k\}$ and $q$ and~$\ell$ are coprime for all $\ell=k-(n-q),\ldots,k$.
Then $J_{n,k}$ has a $q$-symmetric Hamilton cycle that has $1+n-q$ tracks.
\end{theorem}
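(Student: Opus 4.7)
The plan is to adapt the proof of Theorem~\ref{thm:johnson-coprime} by choosing as $f$ the automorphism of $J_{n,k}$ that cyclically left-shifts the first $q$ bits and fixes the remaining $n-q$ positions. The $(1+n-q)$-track property is then immediate: the first $q$ columns of the cycle matrix are cyclic shifts of one column, while each of the last $n-q$ positions forms its own singleton track. I would first verify that every orbit of $f$ has size exactly $q$ by parameterizing the orbits via pairs $(\ell,y)$, where $y\in\{0,1\}^{n-q}$ is the suffix and $\ell=k-\mathrm{wt}(y)$ is the weight of the first $q$ bits. The assumption $q>\max\{k,n-k\}$ forces $\ell\in\{k-(n-q),\ldots,k\}\subseteq\{1,\ldots,q-1\}$, and the coprimality hypothesis $\gcd(q,\ell)=1$ for every such $\ell$ ensures that every necklace on the first $q$ bits has full period $q$; in particular $q\mid\binom{n}{k}$.

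The key step is to build a path $P$ visiting exactly one representative from each orbit and whose final vertex is adjacent in $J_{n,k}$ to the $f$-image of its initial vertex, since then $C:=P,f(P),\ldots,f^{q-1}(P)$ is the desired Hamilton cycle. For every slice $S_{\ell,y}$, Theorem~\ref{thm:necklace} applied to $J_{q,\ell}$ on the first $q$ coordinates (with the suffix $y$ appended) produces a sub-path $P_{\ell,y}$ from $1^{\ell}0^{q-\ell}y$ to $1^{\ell-1}010^{q-\ell-1}y$ that visits every necklace of that slice exactly once. Within a fixed layer $\ell$ I would then zigzag these sub-paths along a Hamilton path $y_1,\ldots,y_s$ of $J_{n-q,k-\ell}$: because consecutive $y_i,y_{i+1}$ differ by a swap confined to the last $n-q$ positions, both endpoints of $P_{\ell,y_i}$ are adjacent to the corresponding endpoints of $P_{\ell,y_{i+1}}$ in $J_{n,k}$, so the concatenation $P_{\ell,y_1},\lvec P_{\ell,y_2},P_{\ell,y_3},\ldots$ traverses exactly one representative of every orbit of weight $\ell$ in a single path. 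Degenerate cases in which $J_{q,\ell}$ or $J_{n-q,k-\ell}$ is trivial (so that the layer contains a single orbit) are handled directly.

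The main obstacle will be stitching consecutive layers $\ell$ and $\ell+1$ together, since the bridging edge must move a single bit across the boundary between positions $q$ and $q+1$ and hence simultaneously alter both the first-block weight and the suffix. To handle this, I would observe that from a ``start-type'' endpoint $1^{\ell}0^{q-\ell}y_{\mathrm{end}}$ of the layer-$\ell$ zigzag, swapping a 0 from the first $q$ positions with a 1 from $y_{\mathrm{end}}$ produces a vertex of the form $1^{\ell+1}0^{q-\ell-1}y_{\mathrm{start}}$, which is precisely the start of $P_{\ell+1,y_{\mathrm{start}}}$. Using the freedom to $f$-rotate each intra-slice sub-path (which shifts its endpoints through their common orbit) together with the Hamilton-connectedness of the Johnson graphs $J_{n-q,k-\ell}$ for the various $\ell$, I would inductively choose compatible start and end suffixes for each layer so that every inter-layer transition is a valid edge of $J_{n,k}$. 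The global cycle-closing condition---that the last vertex of $P$ be adjacent to the $f$-image of its first---is then enforced by the same endpoint-matching between the outermost layers $\ell=k-(n-q)$ and $\ell=k$; this bookkeeping across all $n-q+1$ layers is the most delicate part of the argument.
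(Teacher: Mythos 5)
Your high-level setup (the shift automorphism on the first $q$ bits, the orbit parameterization by $(\ell,y)$ with $\ell=k-w(y)$, the track count) matches the paper. However, your construction of the orbit-representative path~$P$ follows a genuinely different decomposition, and the difference is exactly where your argument has a gap.

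You propose to organize the slices \emph{layer by layer}: for each fixed $\ell$, zigzag the necklace sub-paths $P_{\ell,y}$ along a Hamilton path of~$J_{n-q,\,k-\ell}$ on the suffixes of that weight, and then stitch consecutive layers together. You yourself flag the stitching (and cycle closing) as ``the most delicate part'' and only sketch a strategy based on $f$-rotating sub-paths and Hamilton-connectedness of the suffix Johnson graphs. This is precisely where the proof is incomplete. Concretely: (a) within a layer the zigzag ends at a ``start-type'' or ``end-type'' vertex depending on the parity of $\binom{n-q}{k-\ell}$, which you do not control; (b) your claim that swapping a $0$ in the prefix with a $1$ in the suffix takes $1^{\ell}0^{q-\ell}y_{\mathrm{end}}$ to $1^{\ell+1}0^{q-\ell-1}y_{\mathrm{start}}$ is false as stated (it produces $1^{\ell}0^{q-\ell-1}1\,y'$, not a prefix of the form $1^{\ell+1}0^{q-\ell-1}$), so you genuinely need the $f$-rotation freedom, and verifying that compatible rotations and suffix endpoints can always be chosen simultaneously for all $n-q+1$ layers while also closing the cycle is a nontrivial combinatorial argument that is not supplied; (c) after all this you must still arrange that the last vertex of~$P$ is adjacent to $f(\text{first vertex of }P)$, which adds one more constraint to the already over-constrained bookkeeping.

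The paper sidesteps all of this by \emph{not} grouping the suffixes by weight. Instead, it takes a single Hamilton cycle $Q=(x_1,\ldots,x_{2^{n-q}})$ of the hypercube $Q_{n-q}$ (e.g.\ the BRGC, starting at $x_1=0^{n-q}$) and traverses the suffixes in that order, inserting the necklace path $R_{\ell_i}$ (with $\ell_i=k-w(x_i)$) on the prefix for each suffix. Because consecutive Gray-code suffixes differ in one bit, the prefix weight changes by exactly $\pm 1$ at each transition, and the fixed endpoints of the necklace path from Theorem~\ref{thm:necklace} ($1^{\ell}0^{q-\ell}$ and $1^{\ell-1}010^{q-\ell-1}$) are chosen so that the prefix change at the seam is a single bit, making the seam a transposition in~$J_{n,k}$ automatically. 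The last segment $R_N'$ is a reversed-and-shifted copy chosen so that the final vertex is adjacent to $f(1^k0^{q-k}0^{n-q})$, closing the cycle. This one idea — drive the traversal by a hypercube Gray code on the suffix block rather than by weight layers — eliminates the parity issues, the $f$-rotation bookkeeping, and the inter-layer stitching that your plan leaves open.
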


The number of tracks could be reduced to at most $1+\lfloor\log_2(n-q+1)\rfloor$ tracks with the help of Theorem~\ref{thm:ttrack}.

\begin{proof}
Let $r:=n-q$.
We use the automorphism~$f$ that cyclically left-shifts the first~$q$ bits by one position and leaves the last $r$ bits unchanged, i.e., $f(x_1\cdots x_n)=x_2x_3\cdots x_q x_1 x_{q+1}\cdots x_n$.
By the assumption $q>\max\{k,n-k\}$, every $(n,k)$-combination has both 0-bits and 1-bits among the first $q$ bits, and as $q$ and $\ell$ are coprime for all $\ell=k-r,\ldots,k$ by the assumptions in the lemma, all orbits of~$f$ have the same size~$q$ (independent of~$\ell$).

The different values of~$\ell$ specify the number of 1-bits among the first $q$ positions.
Specifically, for $\ell=k-r,\ldots,k$, let $R_\ell$ be the path on necklaces for $(q,\ell)$-combinations guaranteed by Theorem~\ref{thm:necklace}, i.e., $R_\ell$ starts at $1^\ell0^{q-\ell}$ and ends at~$1^{\ell-1}010^{q-\ell-1}$.
For the special cases $\ell=1$ (i.e., $q=n-k+1$), or $\ell=k$ and $q=k+1$ the sequence~$R_\ell$ is just the single vertex~$10^{q-1}$ or $1^k0$, respectively.
Furthermore, let $Q=(x_1,\ldots,x_N)$, $N:=2^r$, a Hamilton cycle in~$Q_r$, for example the BRGC $Q=\Gamma_r$, which satisfies $x_1=0^r$.

In the following we write $w(x_i)$ for the Hamming weight of~$x_i$.
We now define a path
\begin{equation*}
P:=R_1 x_1,R_2 x_2,\ldots,R_{N-1}x_{N-1}R_N'x_N
\end{equation*}
where $R_i:=R_\ell$ with $\ell:={k-w(x_i)}$ for $i=1,\ldots,N-1$, and $R_N'$ is obtained from~$R_\ell$, $\ell:=k-w(x_N)$, by reversing all bitstrings and cyclically shifting them by $q-\ell$ positions to the left so that $R_N'$ starts at $1^\ell 0^{q-\ell}$ and ends at $01^{\ell-1}0^{q-\ell-1}1$.
By construction, for $i=1,\ldots,N-1$ the last vertex of~$R_i x_i$ differs from the first vertex of~$R_{i+1}x_{i+1}$ by a transposition of~0 and~1.
Furthermore, by our choice of~$x_1=0^r$, the first vertex of~$P$ is $x:=1^k0^{q-k}x_1=1^k0^{q-k}0^r$ and as $w(x_N)=1$ the last vertex of~$P$ is $y:=01^{k-2}0^{q-k}1x_N$, i.e., $y$ is connected to $f(x)=1^{k-1}0^{q-k}10^r$.

We conclude that $C:=P,f(P),f^2(P),\ldots,f^{q-1}(P)$ is a $q$-symmetric Hamilton cycle in~$J_{n,k}$.
In the corresponding $|C|\times n$ matrix, the first $q$ columns are cyclic shifts of each other, so they form one track, and each of the remaining $r$ columns is its own track.
The total number of tracks is therefore $1+r=1+n-q$.
\end{proof}

The next lemma provides the good news that an integer~$q$ satisfying the conditions of Theorem~\ref{thm:johnson-other} exists for all~$n$ and~$k$.

\begin{lemma}
\label{lem:q-exists}
For all $n>k>0$ there is an integer~$q\le n$ such that $q>\max\{k,n-k\}$ and $q$ and~$\ell$ are coprime for all $\ell=k-(n-q),\ldots,k$.
\end{lemma}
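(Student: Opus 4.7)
Write $m := \max\{k, n-k\}$ and $s := n - m = \min\{k, n-k\}$, so the candidate values for $q$ form the interval $(m, n]$ of exactly $s$ consecutive integers. Equivalently, setting $r := n - q \in \{0, 1, \ldots, s-1\}$, we must exhibit $r$ with $\gcd(n-r, k-j) = 1$ for every $j \in \{0, 1, \ldots, r\}$.

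The key observation is that any prime $p$ lying in the interval $(m, n]$ does the job: since $p > m \ge k$, such a prime is automatically coprime to every integer in $\{1, 2, \ldots, k\}$, and in particular to each $\ell \in [k-(n-q), k]$. Thus my first step would be to produce a prime in $(m, n]$. Bertrand's postulate supplies a prime $p \in (n/2, n]$, which lies in $(m, n]$ whenever $m \le n/2$; since $m \ge n/2$ always holds, this covers exactly the balanced case $k = n-k = n/2$. For the regime $m > n/2$, I would invoke sharper prime-gap results (e.g.\ Nagura's theorem, which yields a prime in $(m, 6m/5]$ for $m \ge 25$), noting that $6m/5 \le n$ holds precisely when $s \ge m/5$, and handling the complementary ``very narrow'' regime $s < m/5$ separately.

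In the remaining regime — where $(m, n]$ contains no prime — I would argue by a direct counting argument. For each prime $p$, call $q \in (m, n]$ $p$-\emph{bad} if $p \mid q$ and $p \mid \ell$ for some $\ell \in [k-(n-q), k]$. A short computation identifies the second condition with $k \bmod p \le n - q$, so the $p$-bad $q$'s are precisely the multiples of $p$ in $(m, n - (k \bmod p)]$, yielding the bound $|B_p| \le \lceil (s - (k \bmod p))/p \rceil$ when $k \bmod p < s$, and $|B_p| = 0$ otherwise. Proving $\bigl|\bigcup_p B_p\bigr| < s$ then delivers a good $q$.

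The main obstacle I expect is this last step: a crude sum $\sum_p |B_p|$ is too loose in general, so one needs either inclusion–exclusion, or a size-based split of the primes (primes $p > s$ contribute at most one multiple each to $(m, n]$, while primes $p \le s$ require tighter control via Mertens-type estimates on $\sum 1/p$). Finally, the smallest cases fall out by direct inspection: for $s = 1$ one is forced to take $q = n$, and then either $k = 1$ (so $\gcd(n, 1) = 1$) or $n = k+1$ (so $\gcd(k+1, k) = 1$), both of which work automatically; tiny values of $k$ (say $k \le 4$) can be checked by noting that any $s$ consecutive integers contain one coprime to $k!$.
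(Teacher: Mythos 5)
Your opening observation is correct and is exactly the natural first idea (the paper remarks on it too): any prime $p\in(m,n]$ works, since $p>m\ge k\ge\ell$ for all $\ell$ in the required range, so $p$ is coprime to all of them. But the paper explicitly notes this fails in general because prime-free gaps can be arbitrarily long, and the interval $(m,n]$ of candidates can be as short as $2,3,\ldots$ for arbitrarily large $n$. Your proposal acknowledges this, but the fallback is where the genuine gap lies.

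Specifically, the ``$p$-bad'' counting argument is left unfinished — you explicitly flag that $\sum_p |B_p|$ is too loose and that one would ``need either inclusion–exclusion, or a size-based split'' — and it is not clear that such a count can be closed, because for every prime $p\mid k$ with $p\le k$ the set $B_p$ already has size roughly $s/p$, so the union bound accumulates $s\sum 1/p$, which exceeds $s$. Worse, the shortcut for tiny $k$ (``any $s$ consecutive integers contain one coprime to $k!$'') is too strong a sufficient condition and is in fact false once $k\ge 5$: for example the five consecutive integers $24,25,26,27,28$ all share a factor with $5!$. What rescues such cases is that the admissible $\ell$-interval $[k-(n-q),k]$ shrinks as $q$ grows toward $n$, and this asymmetry is precisely what your reduction to ``$q$ coprime to $k!$'' discards.

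The paper's proof is structurally different and does not rely on prime gaps or sieve counts at all. It first reduces by symmetry (replacing $\ell$ by $k'-\ell$ with $k'=n-k$) to the case $n\ge 2k$, so that the candidate window always has exactly $k$ integers, and then argues by induction on $k$. With $r$ chosen so that $p_r\le k<p_{r+1}$, if every candidate $q_i=n-i$ ($i=0,\ldots,k-1$) fails, then all $k$ of these integers are divisible by a prime $\le p_r$; the auxiliary sieve lemma (Lemma~B.1 in the appendix) classifies exactly which runs of length $\ge k$ of such integers can occur, forces the run into one of the extremal blocks $I_c$ or $I_c'$, and after translating by the primorial the problem reduces to a strictly smaller instance with $k'=d-j-1<k$, closing the induction. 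So to complete your proof you would need either to actually carry out the sieve count (hard, and not what the authors do), or to switch to this inductive structural argument. As it stands, the proposal has a genuine missing step.
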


One natural idea would be to take $q$ as a prime number with~$k<q\le n$, this would automatically guarantee that $q$ is coprime to all $\ell=k-(n-q),\ldots,k$.
However, as the integers contain arbitrarily long intervals of non-primes, such a choice is not always possible, so we need to argue differently.
It turns out that the proof of Lemma~\ref{lem:q-exists} requires some delicate number-theoretic reasoning, so we defer it to the appendix.
For $n>k>0$ we write $q(n,k)$ for the largest integer~$q$ satisfying the conditions of Lemma~\ref{lem:q-exists}.
The function $n-q(n,k)$ is visualized in the appendix for small values of~$n$ and~$k$.
This is the difference of the compression factor~$q$ achievable by Theorems~\ref{thm:johnson-coprime} and~\ref{thm:johnson-other} and the upper bound~$n$ provided by Lemma~\ref{lem:johnson-ub}.

We now combine the results of this section to determine the Hamilton compression of~$J_{n,k}$ exactly or almost exactly.

\begin{theorem}
\label{thm:kappa-Jnk}
The Hamilton compression of the Johnson graph~$J_{n,k}$, where $n>k>0$, has the following properties:
\begin{enumerate}[label=(\roman*),leftmargin=8mm, topsep=0mm, noitemsep]
\item If $n$ and $k$ are coprime, we have $\kappa(J_{n,k})=n$.
\item If $n$ and $k$ are not coprime and $n\neq 2k$, we have $n/2\!<\!\max\{k,n-k\}\!<\!q(n,k)\!\le\! \kappa(J_{n,k})\!\le\! n$.
\item If $n$ and $k$ are not coprime and $n=2k$, we have $n/2<\kappa(J_{n,k})\le 2n$.
\item For any $\varepsilon>0$ there is an~$n_0$ such that for all $n>n_0$ with $n\neq 2k$ we have $(1-\varepsilon)n\le \kappa(J_{n,k})\le n$.
In particular, we have $\kappa(J_{n,k})=(1-o(1))n$ for $n\neq 2k$.
\end{enumerate}
\end{theorem}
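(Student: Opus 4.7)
The plan is to assemble the four parts by pairing the upper bound from Lemma~\ref{lem:johnson-ub} with the lower bound constructions of Theorems~\ref{thm:johnson-coprime} and~\ref{thm:johnson-other}, together with the existence result Lemma~\ref{lem:q-exists}. For part~(i), coprimality of $n$ and~$k$ excludes $n=2k$ (apart from the trivial case $n=2$, $k=1$), so Lemma~\ref{lem:johnson-ub} gives $\kappa(J_{n,k})\le n$, and the matching lower bound is exactly the content of Theorem~\ref{thm:johnson-coprime}. For part~(ii), the upper bound is again Lemma~\ref{lem:johnson-ub}; the inequality $n/2<\max\{k,n-k\}$ holds because $n\neq 2k$ forces $k\neq n/2$; the middle inequality $\max\{k,n-k\}<q(n,k)$ is immediate from the definition of $q(n,k)$ together with Lemma~\ref{lem:q-exists}; and $q(n,k)\le\kappa(J_{n,k})$ comes from applying Theorem~\ref{thm:johnson-other} with $q:=q(n,k)$.

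For part~(iii), where $n=2k$, Lemma~\ref{lem:johnson-ub} now gives the weaker upper bound $\kappa(J_{n,k})\le 2n$ (because of the additional complementation automorphism). Lemma~\ref{lem:q-exists} still applies and produces some $q\le n$ with $q>\max\{k,n-k\}=k=n/2$ satisfying the coprimality conditions of Theorem~\ref{thm:johnson-other}, which then yields $\kappa(J_{n,k})\ge q>n/2$.

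For part~(iv), I would split the problem. The coprime case is settled by part~(i), which gives the exact value $n\ge(1-\varepsilon)n$. In case~(ii) I would use the isomorphism $J_{n,k}\cong J_{n,n-k}$ to assume $k\le n/2$, so that $\max\{k,n-k\}=n-k$, and then distinguish two regimes. If $k<\varepsilon n$, then the mere existence statement of Lemma~\ref{lem:q-exists} already gives $q(n,k)>n-k>(1-\varepsilon)n$, so Theorem~\ref{thm:johnson-other} closes this regime unconditionally. If $k\ge\varepsilon n$, then I would invoke the prime number theorem: for $n$ sufficiently large (in terms of $\varepsilon$) there is a prime $q\in((1-\varepsilon/2)n,n]$. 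Such a $q$ satisfies $q>(1-\varepsilon/2)n>n-k=\max\{k,n-k\}$, and since $q$ is prime and exceeds $k\le n/2<q$, it is automatically coprime to every positive integer $\le k$, in particular to every $\ell\in\{k-(n-q),\dots,k\}$ (which are all positive because $q>n-k$). Hence this $q$ verifies the hypotheses of Lemma~\ref{lem:q-exists}, so $q(n,k)\ge q>(1-\varepsilon)n$, and Theorem~\ref{thm:johnson-other} gives $\kappa(J_{n,k})\ge q(n,k)>(1-\varepsilon)n$.

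The potential obstacle I expect is a uniformity worry in part~(iv): one might fear that small $k$ with $\gcd(n,k)>1$ requires delicate prime-gap estimates (such as Baker--Harman--Pintz) to find a valid $q$ in the very short interval $(n-k,n]$. The point of the dichotomy above is precisely that this is unnecessary: the qualitative existence of $q(n,k)>n-k$, which is already built into Lemma~\ref{lem:q-exists}, is stronger than the quantitative bound $q(n,k)>(1-\varepsilon)n$ exactly when $k<\varepsilon n$. The prime number theorem is then only needed in the complementary regime $k\ge\varepsilon n$, where the target interval $((1-\varepsilon/2)n,n]$ has length linear in~$n$ and contains a prime for all sufficiently large~$n$ independently of~$k$.
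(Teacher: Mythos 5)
Your proposal for parts (i)--(iii) matches the paper's proof exactly: the upper bounds come from Lemma~\ref{lem:johnson-ub}, and the lower bounds come from Theorems~\ref{thm:johnson-coprime} and~\ref{thm:johnson-other} via Lemma~\ref{lem:q-exists}. For part~(iv) the paper uses the same key ingredient (a prime in $[(1-\varepsilon)n,n]$ via the prime number theorem) but states the conclusion in one line; your explicit dichotomy on $k<\varepsilon n$ versus $k\ge\varepsilon n$ is a correct and worthwhile clarification, since a prime in $[(1-\varepsilon)n,n]$ need not exceed $\max\{k,n-k\}$ in the first regime, where instead the qualitative bound $q(n,k)>\max\{k,n-k\}>(1-\varepsilon)n$ built into Lemma~\ref{lem:q-exists} already suffices --- so the argument is essentially the paper's, with a gap in the exposition carefully closed.
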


\begin{proof}
The upper bounds in~(i)--(iii) are from Lemma~\ref{lem:johnson-ub}.
The lower bound in~(i) is from Theorem~\ref{thm:johnson-coprime}.
The lower bound in~(ii)+(iii) is from Theorem~\ref{thm:johnson-other}, using Lemma~\ref{lem:q-exists}.

The last part~(iv) follows from~(ii).
Specifically, the prime number theorem guarantees that for any $\varepsilon$ there is an~$n_0$ such that for all $n>n_0$ there is a prime number in the interval~$[(1-\varepsilon)n,n]$, which shows that $q(n,k)\geq (1-\varepsilon)n$.
We conclude that $q(n,k)\geq (1-o(1))n$.
\end{proof}

For quantitative bounds on the dependence of~$n_0$ on~$\varepsilon$ in Theorem~\ref{thm:kappa-Jnk}~(iv), see e.g.~\cite{MR3745073}.

\section{Permutahedra}
\label{sec:perm}

We now consider the family of permutahedra~$\Pi_n$ introduced in Section~\ref{sec:results} (recall also Sections~\ref{sec:prelim-perm}--\ref{sec:landau}).
We first show that the Steinhaus-Johnson-Trotter cycle has constant compression (6 or 3).
We then establish a mildly exponential upper bound for~$\kappa(\Pi_n)$ that involves the Landau function~$\lambda(n)$ and its variants, and we provide a near-optimal lower bound construction.
Lastly, we apply our constructions to derive a balanced 1-track Gray code for permutations of odd length that uses cyclically adjacent transpositions.

\subsection{The Steinhaus-Johnson-Trotter (SJT) cycle}

Recall the definition of the SJT cycle~$\Lambda_n$ given in Section~\ref{sec:results-perm}.

\begin{proposition}
\label{prop:perm}
The SJT cycle~$\Lambda_n$ has compression
\begin{equation*}
\kappa(\Pi_n,\Lambda_n)=\begin{cases} 6 & n=3,4, \\ 3 & n\ge 5. \end{cases}
\end{equation*}
\end{proposition}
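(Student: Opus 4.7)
The plan is to analyze the sequence $d=(d_1,\ldots,d_{n!})$ of adjacent-transposition indices along $\Lambda_n$ (so $\Lambda_n[i+1]=t_{d_i}\Lambda_n[i]$) and translate $k$-symmetry into a combinatorial condition on $d$. Unrolling the SJT recursion yields the shape
\[ d = B,\, t^1,\, B',\, t^2,\, B,\, t^3,\, B',\, t^4,\, \ldots,\, B',\, t^{(n-1)!}, \]
where the length-$n$ ``big blocks'' correspond to sliding the value $n$ across the entries of a permutation of $[n-1]$: the internal part of the $i$-th big block equals $B=(n-1,n-2,\ldots,1)$ for odd $i$ (right-to-left insertion) and $B'=(1,2,\ldots,n-1)$ for even $i$, while the transition $t^i$ between big blocks $i$ and $i+1$ equals $d_{\Lambda_{n-1}}[i]+1$ when $i$ is odd (so that $n$ stays at position $1$) and $d_{\Lambda_{n-1}}[i]$ when $i$ is even (so that $n$ stays at position $n$). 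By~\eqref{eq:f-alpha-pi}, any automorphism $f=(\alpha,\pi)\in\Aut(\Pi_n)$ that shifts $\Lambda_n$ cyclically by $\delta:=n!/k$ positions induces $d_{i+\delta}=d_i$ for all $i$ if $\alpha=\ide$, and $d_{i+\delta}=n-d_i$ for all $i$ if $\alpha=\rev$.

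For the lower bound, $n=3$ is immediate from $\Pi_3\cong C_6$. For $n=4$ I will verify directly that $f=(\rev,(1,2,3))$ has order $6$ and shifts $\Lambda_4$ by four positions. For every $n\ge 4$, the automorphism $f=(\ide,(1,3,2))$ that fixes the values $4,\ldots,n$ has order $3$ and shifts $\Lambda_n$ by exactly $n!/3$ positions; since $\pi$ fixes $n$ it commutes with the insertion of $n$, so a shift of $(n-1)!/3$ big blocks in $\Lambda_{n-1}$ lifts to a shift of $n!/3$ positions in $\Lambda_n$, and the claim follows by induction.

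For the upper bound with $\alpha=\ide$, I will prove by induction on $n$ that the minimal period of $d_{\Lambda_n}$ equals $n!/3$ for $n\ge 4$, forcing $k\le 3$. The base $n=4$ is a direct check. In the inductive step, given a period $s$ of $d_{\Lambda_n}$, a local comparison inside a single big block---exploiting that $B$ is strictly decreasing, $B'$ strictly increasing, and the single transition value sits at a different local position---forces $n\mid s$ (any nonzero residue $b\in\{1,\ldots,n-1\}$ yields a contradiction within at most three successive positions). Writing $s=ns'$, the alternation $B,B'$ forces $s'$ to be even, and the induced periodicity of $t^1,t^2,\ldots$ translates into $d_{\Lambda_{n-1}}$ having period $s'$, so $s'\ge (n-1)!/3$ by induction and hence $s\ge n!/3$.

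For the upper bound with $\alpha=\rev$, the same local analysis forces any flip shift of $\Lambda_n$ to have the form $\delta=nm$ with $m$ odd (the involution $d\mapsto n-d$ swaps $B$ and $B'$, so consecutive odd and even big blocks must swap under the shift), and the transition equation then reduces to $d_{\Lambda_{n-1}}[i+m]=(n-1)-d_{\Lambda_{n-1}}[i]$ for all $i$; equivalently, $\Lambda_{n-1}$ has an \emph{odd} flip shift $m$. Iterating this reduction, any flip shift of $\Lambda_n$ with $n\ge 5$ descends through $\Lambda_{n-1},\Lambda_{n-2},\ldots$ and eventually requires $\Lambda_4$ to possess an odd flip shift; but the same base analysis applied to $\Lambda_4$ shows that its flip shifts are all multiples of $4$, a contradiction. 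This rules out $\alpha=\rev$ for $n\ge 5$ and completes the proof. The main obstacle I anticipate is the ``$n\mid s$''/``$\delta=nm$ with $m$ odd'' local step, which requires a careful but finite case analysis of what a shift does inside a single big block and across the following transition; the remainder is a clean recursion on $n$.
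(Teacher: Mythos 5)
Your approach is correct and takes a genuinely different route than the paper's. The paper tracks the movement of the \emph{value} $n$: since $n$ is the unique symbol involved in more than two consecutive transpositions in $\Lambda_n$, any automorphism preserving $\Lambda_n$ must satisfy $\pi(n)=n$, and recursing on the remaining $\Lambda_{n-1}$-structure gives $\pi(i)=i$ for all $i\ge 4$; a compression of $6$ is then ruled out by a single spot-check that $4$ and $5$ are adjacent in $x_1=12345\cdots n$ but not in $x_{1+N/6}=41325\cdots n$. You instead encode $\Lambda_n$ by its sequence $d$ of transposition \emph{positions}, observe that $\alpha=\ide$ forces $d$ to have period $\delta$ whereas $\alpha=\rev$ forces the flip relation $d_{i+\delta}=n-d_i$, and then determine the minimal period and all flip shifts of $d$ by a clean recursion on $n$ through the big-block decomposition $d=B,t^1,B',t^2,\ldots$. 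This completely decouples the two automorphism types and turns the upper bound into parity arithmetic on $d$, and your lower-bound induction --- using that $(n-1)!/3$ is even for $n\ge 4$, so that the block parity is preserved under the lift --- makes explicit a step the paper only implies. Two remarks on the sketch as written: the ``$n\mid s$ within three successive positions'' local analysis does check out as you anticipate (comparing $d_i=n-i$ for $i=1,2,3$ against the shifted entries, using that the transitions satisfy $t^j\ge 2$ at odd $j$ and $t^j\le n-2$ at even $j$, rules out every residue $b\in\{1,\dots,n-1\}$ once $n\ge 4$); and in the $\alpha=\rev$ descent the contradiction already arises after one or two levels, since the ``$m$ odd'' condition at consecutive levels would force both $n-1$ and $n-2$ to be odd, so the chain need not literally reach $\Lambda_4$ for $n\ge 7$ --- but the parity obstruction you identify is exactly the right one, and the argument is sound as a chain of necessary conditions.
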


The SJT cycle~$\Lambda_n$ is illustrated in Figure~\ref{fig:p4}~(a) and Figure~\ref{fig:p5}~(a) for $n=4$ and $n=5$, respectively, and those two pictures indeed have 6-fold or 3-fold rotational symmetry.
While the latter might seem to have 6-fold symmetry at first glance, a careful inspection of the short chords shows that this is not the case.

\begin{proof}
The cycle~$\Lambda_3=123,132,312,321,231,213$ is a 6-cycle, and the automorphism $f=(\alpha,\pi)$ of~$\Pi_3$ with $\alpha=\rev$ and $\pi=231$ shows that it is 6-symmetric (recall~\eqref{eq:f-alpha-pi}).
The cycle~$\Lambda_4$ is shown in Figure~\ref{fig:p4}~(a), and the automorphism $f=(\alpha,\pi)$ of~$\Pi_4$ with $\alpha=\rev$ and $\pi=2314$ shows that it is 6-symmetric.

Note that $f^2=(\ide,3124)$ is another automorphism of~$\Pi_4$, which shows that $\Lambda_4$ is 3-symmetric, and we will now generalize this automorphism to show that~$\Lambda_n$, $n\ge 5$, has compression at least~3.
Note that $(n-1)!/3$ is even for $n\ge 5$, so by the definition of~$\Lambda_n$ the value~$n$ is at the rightmost position in the permutations of~$\Lambda_n$ at positions $1,N/3+1,2N/3+1$, where $N:=n!$.
Consequently, for $n\ge 5$ the automorphism~$f=(\ide,31245\cdots n)$ of~$\Pi_n$ shows that~$\Lambda_n$ has compression at least~3.

It remains to show that the compression of~$\Lambda_n$ is at most~3 for $n\ge 5$.
Let $f=(\alpha,\pi)$ be any automorphism of~$\Lambda_n$ for~$n\ge 4$, and consider the movement of the largest value~$n$ in the sequence~$\Lambda_n$.
By definition of~$\Lambda_n$, we alternately see the following two patterns: $n-1$ transpositions involving~$n$ and a smaller value, followed by a transposition involving two smaller values.
In particular, every value~$i<n$ is involved in at most~2 adjacent transpositions consecutively.
It follows that for $n\ge 4$, the permutation~$\pi$ must map $i\mapsto i$ for all $i=n,n-1,\ldots,4$.
Note that in~$\Lambda_4$ the value~4 reaches the leftmost and rightmost position precisely 3 times each (and stays at this position for one step each time).
Consequently, to complete the proof it suffices to show that~$f$ does not yield the mapping $x_i\mapsto x_{i+N/6}$ for all $i=1,\ldots,N$ when $n\ge 5$.
To see this note that~4 and~5 are adjacent in~$x_1=12345\cdots n$, but non-adjacent in $x_{1+N/6}=41325\cdots n$ (and $\pi$ preserves both values).
\end{proof}

\subsection{An upper bound}

Recall from Section~\ref{sec:prelim-perm} that $\Aut(\Pi_n) \cong \mathbb{Z}_2 \ltimes S_n$, so from~\eqref{eq:kappa-UB} we obtain $\kappa(\Pi_n)\le 2\lambda(n)$, where $\lambda(n)$ is Landau's function.
The next lemma improves this bound, based on a parity argument.
Recall the definitions of~$\lambda_0(n)$ and~$\lambda_2(n)$ from~\eqref{eq:landau-even}.

\begin{lemma}
\label{lem:perm-ub}
Let $n\ge 4$.
If $n\equiv 0,1 \bmod 4$, then we have $\kappa(\Pi_n)\le \max\{2\lambda_0(n),\lambda_2(n)\}$.
If $n\equiv 2,3 \bmod 4$, then we have $\kappa(\Pi_n)\le \lambda(n)$.
\end{lemma}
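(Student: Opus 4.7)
The plan is to exploit the bipartiteness of~$\Pi_n$ together with the explicit direct-product structure of $\Aut(\Pi_n)$ under the action~\eqref{eq:f-alpha-pi}. I write any automorphism as $f=(\alpha,\pi)\in\{\ide,\rev\}\times S_n$, and let $(a_1,\ldots,a_m)$ denote the cycle type of~$\pi$, so that $\ord(f)=\lcm(\ord(\alpha),\ord(\pi))$ and $\ord(\pi)=\lcm(a_1,\ldots,a_m)$. Let $\sigma(\tau)\in\{0,1\}$ denote the parity of a permutation~$\tau$. Since $\Pi_n$ is bipartite with classes given by~$\sigma$, every $k$-symmetric Hamilton cycle $C=(x_1,\ldots,x_N)$ with $N=n!$ satisfies $\sigma(f(x_i))-\sigma(x_i)\equiv (N/k)\pmod 2$. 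From $f(x)=\alpha x\pi$ (composition applied left to right), this parity shift equals $\sigma(\alpha)+\sigma(\pi)\pmod 2$. Here $\sigma(\pi)\equiv e(\pi)\pmod 2$, where $e(\pi)$ counts even-length cycles of~$\pi$, and $\sigma(\rev)\equiv\binom{n}{2}\pmod 2$, which is~$0$ for $n\equiv 0,1\pmod 4$ and~$1$ for $n\equiv 2,3\pmod 4$.

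The crux of the argument is to show that $N/k$ is always even when $n\ge 4$, so the parity shift is forced to~$0$. Since $k\mid N$, $N/k$ is odd iff $v_2(k)=v_2(n!)$. Because every $a_i\le n$, we have $v_2(\ord(\pi))=\max_i v_2(a_i)\le \lfloor\log_2 n\rfloor$, and $v_2(\ord(\alpha))\le 1$, giving $v_2(k)\le\lfloor\log_2 n\rfloor$. A short direct check (using $v_2(n!)\ge\lfloor n/2\rfloor+\lfloor n/4\rfloor$) shows $v_2(n!)>\lfloor\log_2 n\rfloor$ for all $n\ge 4$, ruling out $N/k$ odd.

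With the parity shift pinned to~$0$, the two bounds drop out by case analysis. For $n\equiv 0,1\pmod 4$ we have $\sigma(\alpha)=0$, forcing $e(\pi)$ to be even. If $e(\pi)=0$ then all $a_i$ are odd, so $\ord(\pi)\le\lambda_0(n)$ and $k=\lcm(\ord(\alpha),\ord(\pi))\le 2\lambda_0(n)$; if $e(\pi)\ge 2$, then $\ord(\pi)$ is even with $\ord(\pi)\le\lambda_2(n)$, so $k=\lcm(2,\ord(\pi))=\ord(\pi)\le\lambda_2(n)$, yielding $\kappa(\Pi_n)\le\max\{2\lambda_0(n),\lambda_2(n)\}$. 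For $n\equiv 2,3\pmod 4$, the choice $\alpha=\ide$ forces $e(\pi)$ even and $k=\ord(\pi)\le\lambda(n)$; the choice $\alpha=\rev$ forces $e(\pi)$ odd, hence $\ord(\pi)$ even, so $k=\lcm(2,\ord(\pi))=\ord(\pi)\le\lambda(n)$.

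The one subtle step is the $2$-adic estimate $v_2(n!)>\lfloor\log_2 n\rfloor$ for $n\ge 4$ that rules out the $N/k$-odd case; once this is in hand, everything else is bookkeeping from the definitions of $\lambda_0,\lambda_2,\lambda$ and the direct-product expression for $\ord(f)$.
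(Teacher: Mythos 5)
Your proof is correct and follows essentially the same approach as the paper: use bipartiteness of $\Pi_n$ to force $f$ to preserve parity (which is equivalent to showing $N/k$ is even), then split on $n\bmod 4$ and on whether $\pi$ has $0$ vs.\ $\ge 2$ even cycles to relate $\ord(f)$ to $\lambda_0$, $\lambda_2$, or $\lambda$. The only real difference is your uniform $2$-adic valuation bound $v_2(k)\le\lfloor\log_2 n\rfloor < v_2(n!)$ to conclude $N/k$ even, which is cleaner than the paper's casework on $n\ge 6$ vs.\ $n\in\{4,5\}$.
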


The relation between $2\lambda_0(n)$ and $\lambda_2(n)$ is somewhat unclear.
While for~$n\le 37$ we have $2\lambda_0(n)>\lambda_2(n)$, for increasing values of~$n$ it seems that more and more values satisfy $2\lambda_0(n)<\lambda_2(n)$, for example $n=38,49,50,51,52,53,54,55,66,67,68,69,70,71,72,73,74,80,85,86,87,88,\allowbreak 89,90,91,92,93,94,95,96,97,99,100$ (see the appendix).

\begin{proof}
Consider an automorphism~$f=(\alpha,\pi)$ of~$\Pi_n$ and a path~$P=(x_1,\ldots,x_\ell)$, $\ell:=n!/k$, such that $C=P,f(P),f^2(P),\ldots,f^{k-1}(P)$ is a $k$-symmetric Hamilton cycle in~$\Pi_n$, i.e., $k=\ord(f)$.

We clearly have $\ord(f)\in\{\ord(\pi),2\ord(\pi)\}$.
Furthermore, note that if $\ord(f)=2\ord(\pi)$, i.e., $\alpha=\rev$, then $\ord(\pi)$ must be odd, otherwise we would have $f^{k/2}=f^{\ord(\pi)}=\pi^{\ord(\pi)}=\ide$, a contradiction.

We first show that $\ell$ is even.
Let $2^i$ be the largest power of~2 in~$\ord(\pi)$.
Note that $2^i$ also divides the length of one of the cycles of~$\pi$, so~$2^i\le n$.
Consequently, if $n\ge 6$, then $n!/2^{i+1}$ is even, as the product~$n!$ contains the factor~$2^i$ and two additional even factors.
This implies that $\ell=n!/k=n!/\ord(f)$ is even, as $\ord(f)\in\{\ord(\pi),2\ord(\pi)\}$.
For $n=4,5$ the product~$n!$ contains the factor~$2^i$ and one additional even factor, implying that $n!/2^i$ is even and therefore $\ell$ is even unless $\ord(f)=2\ord(\pi)$.
However, in the latter case $\ord(\pi)$ must be odd, i.e., we have $i=0$, showing that $n!/2^{i+1}=n!/2$ and therefore $\ell$ is even.

As $\ell$ is even and $x_i$ has opposite parity to~$x_{i+1}$ for all $i=1,\ldots,\ell$, we obtain that $x_1$ and~$x_\ell$ have opposite parity, and $x_1$ and~$x_{\ell+1}=f(x_1)$ have the same parity (recall~\eqref{eq:CP}), showing that the mapping $f(x)=\alpha x \pi$ preserves parity (recall~\eqref{eq:f-alpha-pi}).

If $n\equiv 0,1 \bmod 4$, then $\binom{n}{2}$ is even, so $\alpha\in\{\ide,\rev\}$ is an even permutation.
Therefore $\pi$ must be an even permutation, i.e., the number of even cycles of~$\pi$ is even.
If $\pi$ consists of only odd cycles, then $k\le 2\ord(\pi)\le 2\lambda_0(n)$ by the definition~\ref{eq:landau0}.
Otherwise, $\ord(\pi)$ will be even, and therefore $k=\ord(\pi)\le\lambda_2(n)$ by our initial observations about the parity of~$\ord(\pi)$ and by the definition~\ref{eq:landau2}.
The maximum of the bounds obtained in both cases yields the claimed bound.

If $n\equiv 2,3 \bmod 4$, then $\binom{n}{2}$ is odd, so $\rev$ is an odd permutation.
If $\alpha=\ide$, then $k=\ord(\pi)\le \lambda(n)$.
If $\alpha=\rev$, then $\pi$ must be an odd permutation, i.e., the number of even cycles of~$\pi$ is odd.
In particular, there is at least one even cycle, which implies that $\ord(\pi)$ is even, so $k=\ord(\pi)\le\lambda(n)$ as well.
\end{proof}

\subsection{A near-optimal construction}

Our aim is to use an automorphism of~$\Pi_n$ whose order is close to Landau's function~$\lambda(n)$.
Let $\ba=(a_1,\ldots,a_m)$ be a composition of~$n$ and let $A_1 \cup \cdots \cup A_m$ be its associated partition of~$[n]$ as defined in Section~\ref{sec:mperm}.
By definition, we have $A_i=\{b_i+1, \ldots, b_i+a_i\}$ where $b_i:=\sum_{j=1}^{i-1}a_j$.
So $f_i:=(b_i+1,\ldots,b_i+a_i)$ is a cyclic permutation of~$A_i$.
Its $(|A_i|-1)!$ orbits can be represented by fixing a value from~$A_i$ on a particular position and permuting the remaining values in the remaining positions in all possible ways.
For example, if $f=(1,2,3)$, then by fixing the value~3 on the last position we have representatives~$123, 213$, by fixing the value~1 on the first position we have representatives~$123,132$, and in both cases these are representatives of the two orbits $\langle 123\rangle_f=\{123,231,312\}$ and $\langle 213 \rangle_f=\langle 132 \rangle_f=\{213,321,132\}$.
Let us define an automorphism $f_\ba$ of $\Pi_n$ as a product of the cyclic permutations $f_i$ applied on values (without reversal of positions), i.e.,
\begin{equation}\label{eq:fa}
  f_\ba:=(\ide,f_1 f_2 \cdots f_m).
\end{equation}
First we need to consider the orbits of~$f_\ba$.
For the following characterization recall the definitions from Sections~\ref{sec:prelim-graphs} and~\ref{sec:mperm}.
This lemma follows immediately from the Chinese remainder theorem.

\begin{lemma}
\label{lem:fa-orbits}
Let $\ba=(a_1,\ldots,a_m)$ be a composition of~$n$ and let $f_\ba$ be as in~\eqref{eq:fa}.
\begin{enumerate}[label=(\roman*),leftmargin=8mm, topsep=0mm, noitemsep]
\item
If $a_1,\ldots,a_m$ are pairwise coprime, then the orbits of~$f_\ba$ are
\begin{equation}
\label{eq:fa-orb1}
\Big\{u\otimes \big(X_1\times \cdots \times X_m\big) \mid u\in {\textstyle\binom{[n]}{\ba}} \text{ and } (X_1,\ldots,X_m)\in O(f_1)\times \cdots \times O(f_m) \Big\}.
\end{equation}
\item
If $m\ge 2$, $a_1=2$, $a_2=2^c$ for some $c\ge 1$, and $a_2,\ldots, a_m$ are pairwise coprime, then the orbits of~$f_\ba$ are
\begin{equation}
\label{eq:fa-orb2}
\begin{split}
&\Big\{u\otimes \big(B\times X_3\times \cdots \times X_m\big),u\otimes \big(B'\times X_3\times \cdots \times X_m\big) \mid u\in {\textstyle\binom{[n]}{\ba}} \text{ and } \\ &\hspace{60mm}(X_3,\ldots,X_m)\in O(f_3)\times \cdots \times O(f_m) \Big\},
\end{split}
\end{equation}
where $B:=\{12\}\times X_2^0\,\cup\, \{21\}\times X_2^1$, $B':=\{12\}\times X_2^1\,\cup\, \{21\}\times X_2^0$, and $X_2^0\cup X_2^1$ is the partition of~$X_2=:\{x,f_2(x),f_2^2(x),\ldots,f_2^{a_2-1}(x)\}$ defined by $X_2^0:=\{f_2^{2i}(x)\mid i=0,\ldots,(a_2-2)/2\}$ and $X_2^1:=\{f_2^{2i-1}(x)\mid i=1,\ldots,a_2/2\}$, respectively.
\end{enumerate}
\end{lemma}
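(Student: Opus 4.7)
The plan is to unwind how $f_\ba$ acts on permutations and then read off the orbits using the Chinese remainder theorem. By~\eqref{eq:fa} and~\eqref{eq:f-alpha-pi}, $f_\ba$ simply applies $\pi = f_1 f_2 \cdots f_m$ to every entry of a permutation $x \in S_n$ (positions are not touched). Since the supports $A_1, \ldots, A_m$ of the cycles $f_1, \ldots, f_m$ are pairwise disjoint, $\pi$ restricted to $A_i$ equals $f_i$. Hence, writing $x = u \otimes (y^1, \ldots, y^m)$ with pattern $u \in \binom{[n]}{\ba}$ and block-permutations $y^i \in S_{A_i}$, the values in each block $A_i$ get shuffled among themselves by $f_i$, so
\begin{equation*}
f_\ba^j(x) = u \otimes \bigl( f_1^j(y^1_1)\cdots f_1^j(y^1_{a_1}),\, \ldots,\, f_m^j(y^m_1)\cdots f_m^j(y^m_{a_m}) \bigr)
\end{equation*}
for every $j \geq 0$. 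In particular the pattern $u$ is preserved, so the orbit of $x$ equals
\begin{equation*}
\langle x \rangle_{f_\ba} = \bigl\{\, u \otimes (f_1^j(y^1), \ldots, f_m^j(y^m)) \,:\, j \geq 0 \,\bigr\},
\end{equation*}
and everything reduces to describing the set $T := \{(f_1^j(y^1), \ldots, f_m^j(y^m)) : j \geq 0\}$.

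For part~(i), the pairwise coprimality of $a_1, \ldots, a_m$ together with $\ord(f_i) = a_i$ is precisely the setting of the Chinese remainder theorem: the map $j \mapsto (j \bmod a_1, \ldots, j \bmod a_m)$ is a bijection from $\mathbb{Z}_{a_1 \cdots a_m}$ onto $\mathbb{Z}_{a_1} \times \cdots \times \mathbb{Z}_{a_m}$. Consequently $T = X_1 \times \cdots \times X_m$ with $X_i = \langle y^i \rangle_{f_i}$, and substituting into the formula above gives exactly~\eqref{eq:fa-orb1}. A quick count (each orbit has size $a_1 \cdots a_m = \ord(f_\ba)$, and the number of parameter tuples is $\binom{n}{a_1,\ldots,a_m} \prod_i (a_i - 1)! = n!/(a_1 \cdots a_m)$) confirms that these orbits partition $S_n$.

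For part~(ii), the key point is that $a_1 = 2$ divides $a_2 = 2^c$, so the relevant exponents are coupled and CRT cannot be used for all moduli at once. Applying CRT to the pairwise coprime moduli $a_2, \ldots, a_m$ shows that as $j$ varies, $(f_2^j(y^2), \ldots, f_m^j(y^m))$ runs through all of $X_2 \times X_3 \times \cdots \times X_m$. However, $f_1^j(y^1)$ is determined by $j \bmod 2$, and $j \bmod 2 = (j \bmod a_2) \bmod 2$ since $2 \mid a_2$. Writing $X_2 = \{x, f_2(x), \ldots, f_2^{a_2-1}(x)\}$ and $y^2 = f_2^i(x)$, the condition $f_2^j(y^2) \in X_2^0$ (i.e.\ $i + j$ is even, relative to $x$) corresponds to a specific parity of $j$, which in turn fixes $f_1^j(y^1)$. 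Therefore the orbit of $x$ covers in its first two coordinates either $\{y^1\} \times X_2^0 \cup \{f_1(y^1)\} \times X_2^1$ or the complementary set, depending on the parity of $i$; under the identification $S_{A_1} = \{12, 21\}$ these are precisely $B$ and $B'$ from~\eqref{eq:fa-orb2}, while the remaining coordinates independently sweep out $X_3 \times \cdots \times X_m$. A counting check (each orbit has size $a_2 \cdots a_m = \ord(f_\ba)$, and the parameter count doubles to account for $B$ and $B'$) confirms that these exhaust all orbits.

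The main obstacle I expect is the bookkeeping in part~(ii): tracking the parity coupling between $j \bmod 2$ and $j \bmod 2^c$, and verifying well-definedness of the partition $X_2^0 \cup X_2^1$---choosing a different representative of $X_2$ interchanges $X_2^0$ and $X_2^1$ and hence $B$ and $B'$, leaving the unordered pair of orbits invariant.
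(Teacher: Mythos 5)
Your proof is correct and follows exactly the argument the paper has in mind: the paper does not spell out a proof, stating only that the lemma ``follows immediately from the Chinese remainder theorem,'' and your write-up is precisely that CRT argument made explicit, including the extra parity bookkeeping needed in part~(ii) when the moduli $a_1=2$ and $a_2=2^c$ are not coprime. The only cosmetic imprecision is the remark that replacing the representative $x$ of $X_2$ ``interchanges'' $X_2^0$ and $X_2^1$ --- it does so only when the shift has odd parity, but the unordered pair $\{B,B'\}$ is well-defined in either case, so the conclusion stands.
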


To illustrate part~(i) of the lemma with an example, for $\ba=(2,3)$ we have
\begin{equation*}
{\textstyle\binom{[n]}{\ba}}=\{11222,12122,12212,12221,21122,21212,21221,22112,22121,22211\},
\end{equation*}
$O(f_1)=\{\{12,21\}\}$ and $O(f_2)=\{\{345,453,534\},\{543,435,354\}\}$ and hence the orbits of~$f_\ba$ are
\begin{equation*}
\Big\{u\otimes \big(\{12,21\}\times \{345,453,534\}\big) \mid u\in {\textstyle\binom{[n]}{\ba}} \Big\}
\;\cup\; \Big\{u\otimes \big(\{12,21\}\times \{543,435,354\}\big) \mid u\in {\textstyle\binom{[n]}{\ba}} \Big\}.
\end{equation*}
The first two of those 20 orbits, namely those corresponding to $u=11222$ are $\{12345,21453,12534,\allowbreak 21345,12453,21534\}$ and $\{12543,21453,12534,21543,12435,21354\}$.

Using Lemma~\ref{lem:fa-orbits}, we now build a path in~$\Pi_n$ on representatives of orbits where each permutation~$f_i$ of values~$A_i$ acts on the positions~$A_i$.
That is, the orbits are obtained from the orbits of the~$f_i$ by mixing with the identity permutation~$\ide(\ba)$.

\begin{lemma}
\label{lem:id-mix}
Let $\ba=(a_1,\ldots,a_m)$ be a composition of~$n$ and let $f_\ba$ be as in~\eqref{eq:fa}.
\begin{enumerate}[label=(\roman*),leftmargin=8mm, topsep=0mm, noitemsep]
\item
If $a_1,\ldots,a_m$ are pairwise coprime and odd, and $a_1\ge 3$, then $\Pi_n$ contains a path~$Q$ that starts at~$\ide=1\cdots n$, ends at a neighbor of~$f_\ba(\ide)$ and that visits each orbit of~$f_\ba$ from~\eqref{eq:fa-orb1} with $u=\ide(\ba)$ exactly once.
\item
If $m\ge 2$, $a_1=2$, $a_2=2^c$ for some $c\ge 1$, and $a_2,\ldots, a_m$ are pairwise coprime, then the same conclusion holds for the orbits from~\eqref{eq:fa-orb2} with $u=\ide(\ba)$.
\end{enumerate}
\end{lemma}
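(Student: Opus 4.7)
The plan is to construct $Q$ within the subgraph $H$ of $\Pi_n$ induced by permutations of the form $\ide(\ba)\otimes(x^1,\ldots,x^m)$ with $x^i\in S_{A_i}$. This subgraph is canonically isomorphic to the Cartesian product $\Pi_{A_1}\boxprod\cdots\boxprod\Pi_{A_m}$, and $f_\ba$ acts on~$H$ as the product $f_1\times\cdots\times f_m$. Under pairwise coprimality of the~$a_i$ in part~(i), the Chinese remainder theorem makes the orbits factor, yielding $\prod_i(a_i-1)!$ orbits on~$H$. I fix the representative set $V^\ast:=R_1\times\cdots\times R_m$ where $R_i:=\{x\in\Pi_{A_i}:x_{a_i}=b_i+a_i\}$, so that $H[V^\ast]\cong\Pi_{a_1-1}\boxprod\cdots\boxprod\Pi_{a_m-1}$ and $\ide\in V^\ast$.

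For the base case $m=1$ of part~(i), I let $v_1:=(b_1+2)(b_1+3)\cdots(b_1+a_1-1)(b_1+1)(b_1+a_1)\in R_1$, obtained by swapping the last two positions of $f_1(\ide_{A_1})$, so that $v_1$ is a neighbor of $f_1(\ide_{A_1})$ in $\Pi_{A_1}$. Reading $\ide_{A_1}$ and~$v_1$ as permutations of the first $a_1-1$ positions, their parities are $0$ and $a_1-2$, hence opposite since $a_1$ is odd. Since $a_1-1\in\{2,4,6,\ldots\}$, Theorem~\ref{thm:lace} produces a Hamilton path in $R_1\cong\Pi_{a_1-1}$ from $\ide_{A_1}$ to~$v_1$, which is the desired~$Q$.

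For $m\geq 2$ in part~(i), I would introduce an $(m-1)$-step chain that exits $V^\ast$ at the very end of the path to reach a neighbor of $f_\ba(\ide)$. Starting from $(v_1,v_2,\ldots,v_m)\in V^\ast$, successive within-block swaps of the last two positions in blocks $m,m-1,\ldots,2$ produce a sequence of $m-1$ vertices in $H\setminus V^\ast$, each adjacent to its predecessor in~$\Pi_n$, terminating at $y:=\ide(\ba)\otimes(v_1,f_2(\ide_{A_2}),\ldots,f_m(\ide_{A_m}))$, which is a neighbor of $f_\ba(\ide)$ via the within-block-$1$ swap of the last two positions. Each chain vertex lies in a distinct $f_\ba$-orbit whose canonical representative in~$V^\ast$ is obtained by replacing each block of the form $f_j(\ide_{A_j})$ in the chain vertex by $\ide_{A_j}$. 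Building~$Q$ thus reduces to finding a Hamilton path in $\Pi_{a_1-1}\boxprod\cdots\boxprod\Pi_{a_m-1}$ with the $m-1$ canonical chain representatives removed, from $\ide$ to $(v_1,\ldots,v_m)$. A parity count verifies that the removed vertices keep the bipartition balanced for such a path (from $\ide$ of even parity to $(v_1,\ldots,v_m)$ of parity $m\bmod 2$); I would then establish its existence by combining the Hamilton-laceability of Cartesian products of bipartite Hamilton-laceable graphs (obtained factorwise from Theorem~\ref{thm:lace}) with a vertex-removal argument.

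Part~(ii) differs in the first two factors, where $\gcd(a_1,a_2)=2$ causes orbits on $\Pi_{A_1}\boxprod\Pi_{A_2}$ to couple into the $B$- and $B'$-type pairs of~\eqref{eq:fa-orb2}. The plan is to construct a base path spanning these coupled orbits inside $\Pi_{A_1}\boxprod\Pi_{A_2}$ using Theorem~\ref{thm:multi-adj} (or Theorem~\ref{thm:comb-adj} in degenerate cases) applied to the associated multiset-permutation graph, and then to graft the chain-deflection construction from part~(i) onto the remaining pairwise-coprime odd factors $a_3,\ldots,a_m$. The main obstacle throughout is the Hamilton path in the vertex-deleted Cartesian product, since a naive zigzag does not end at a neighbor of $f_\ba(\ide)$; the chain construction circumvents this at the cost of requiring a non-trivial vertex-removal Hamilton-laceability argument, for which the deletion pattern is designed precisely to preserve the parity balance.
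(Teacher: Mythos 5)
Your base case $m=1$ of part (i) is essentially the paper's: you fix the value $a_1$ at the last position, note that the target (one adjacent transposition away from $f_1(\ide_{A_1})$, still with the largest value last) has parity opposite to $\ide$ because $a_1$ is odd, and invoke Theorem~\ref{thm:lace}. That much is fine.

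The general step is where you diverge, and where the proposal has a genuine gap. You keep a single canonical representative set $V^\ast = R_1\times\cdots\times R_m$ throughout, run a Hamilton path inside $V^\ast$ minus $m-1$ deleted vertices, and then ``deflect'' out of $V^\ast$ with a chain of $m-1$ swaps to land next to $f_\ba(\ide)$. The orbit bookkeeping for the chain is correct (by CRT each chain vertex's canonical representative in $V^\ast$ is the one you describe), and the vertex count matches. But the whole argument then hinges on the existence of a Hamilton path in $\Pi_{a_1-1}\boxprod\cdots\boxprod\Pi_{a_m-1}$ between $\ide$ and $(v_1,\ldots,v_m)$ after deleting $m-1$ prescribed vertices. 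You acknowledge this but only gesture at it: parity balance of the two bipartition classes is necessary but far from sufficient, there is no general ``delete $m-1$ vertices from a Cartesian product of Hamilton-laceable bipartite graphs and retain Hamilton-laceability'' theorem, and the deleted set here is a structured but not obviously benign pattern (the deleted vertices are not even pairwise adjacent). Without establishing this fault-tolerant Hamilton-laceability, the construction does not go through.

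The paper avoids this obstacle entirely with a strict induction on $m$ and a different trick at the last factor: it observes that $O(f_m)$ has two convenient complete systems of representatives (those with value $n$ at the last position, and those with value $n'+2$ at the first position), and runs the zigzag over the first representative system for $z_1,\ldots,z_{\ell-1}$ and then switches to the second system only on the final segment $z_\ell$. Because $yn=(n'+2)y'$ and the last segment ends at $z_\ell(n'+2)x'$, this lands adjacent to $f_\ba(\ide)$ with no vertex deletion and no auxiliary laceability claim. In short: the paper stays entirely inside a union of two representative sets per factor via induction, while your plan requires leaving the representative set at the end and then repairing the hole, a step you have not proved exists. Your sketch for part~(ii) has the same unresolved dependency (and also reaches for Theorems~\ref{thm:multi-adj}/\ref{thm:comb-adj}, which the paper uses only in the proof of Theorem~\ref{thm:Pin}, not here; the paper's base case for part~(ii) also just uses Theorem~\ref{thm:lace} with an explicit two-piece path $12Pn,21P'n$).
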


Note that in part~(i) of the lemma, the $a_i$ are not only required to be pairwise coprime, but also odd (unlike in part~(i) of Lemma~\ref{lem:fa-orbits}).
In part~(ii), all $a_i$, $i=3,\ldots,m$, are odd by the assumption that they are coprime to~$a_2=2^c$.

\begin{proof}
We prove both statements by induction on~$m$, the only difference being the base case.

We first consider the base case~$m=1$ for part~(i).
We have $\ba=(n)$, $f_\ba=f_1=(1,\ldots,n)$, and $\ide(\ba)=1^n$ where $n\ge 3$ is odd.
As orbit representatives we choose permutations that have the value~$n$ fixed in the last position, using that $O(f_1)=\{\langle xn \rangle_{f_1} \mid x\in S_{n-1}\}$.
By Theorem~\ref{thm:lace}, there is a Hamilton path~$P$ in~$\Pi_{n-1}$ from $x:=1\cdots (n-1)$ to $y:=2\cdots (n-1)1$ (note that $y$ is odd as $n$ is odd).
Thus $\Pi_n$ contains the path~$Q:=Pn$ from~$xn=\ide$ to~$yn=2\cdots (n-1)1n$, which is adjacent to $f_\ba(\ide)=2\cdots (n-1)n1$ by a transposition of the last two entries.
Moreover, $Q$ visits every orbit in $\bigcup_{X_1\in O(f_1)}\ide(\ba)\otimes X_1=O(f_1)$ exactly once.

We now consider the base case~$m=2$ for part~(ii).
We have $\ba=(2,2^c)$, $f_\ba=f_1f_2=(1,2)(3,\ldots,n)$ for $n:=2+2^c$, and $\ide(\ba)=1^22^{n-2}$.
As before, we choose orbit representatives with value~$n$ at the last position, using that $O(f_2)=\{\langle xn\rangle_{f_2} \mid x\in S_{\{3,\ldots,n-1\}}\}$.
If $c=1$, then $n=4$, and in this case $Q:=(1234,2134)$ is the desired path in~$\Pi_n$ since $2134$ is adjacent to $f_\ba(1234)=2143$.
Otherwise, we have $c\ge 2$ and therefore $n=2+2^c\ge 6$.
By Theorem~\ref{thm:lace}, there are Hamilton paths~$P,P'$ in $\Pi_{\{3,\ldots,n-1\}}$ from~$x:=34\cdots (n-1)$ to~$y:=435\cdots (n-1)$ and from~$y':=435\cdots(n-1)$ to~$x':=4\cdots (n-1)3$, respectively (note that $x'$ is even as $n$ is even).
Thus $\Pi_n$ contains the path $Q:=12Pn,21P'n$ from~$12xn=\ide$ to~$21x'n=214\cdots (n-1)3n$, which is adjacent to $f_\ba(\ide)=214\cdots (n-1)n3$ by a transposition of the last two entries.
Moreover, $Q$ visits every orbit in~$\{\ide(\ba)\otimes B\}\cup \{\ide(\ba)\otimes B'\}$ exactly once ($B$ and~$B'$ are as defined after~\eqref{eq:fa-orb2}).

For the induction step consider a composition~$(a_1,\ldots,a_m)$ of~$n$ as in the lemma, with $m\ge 2$ in part~(i) and $m\ge 3$ in part~(ii), and define $\ba':=(a_1,\ldots,a_{m-1})$ and $n':=n-a_m$.
By induction there is a path~$Q'=(z_1,\ldots,z_\ell)$ in~$\Pi_{n'}$ from $z_1=\ide':=1\cdots n'$ to a neighbor~$z_\ell$ of $f_{\ba'}(\ide')$ that visits each orbit of~$f_{\ba'}$ exactly once.
If $a_m=1$ then $Q:=Q'n$ is the desired path in~$\Pi_n$ from~$z_1n=\ide$ to~$z_\ell n$, which is a neighbor of $f_\ba(\ide)=f_{\ba'}(\ide')n$, and we are done.
Otherwise, we have $a_m\ge 3$ and we use two sets of representatives of orbits of~$f_m$, namely
\begin{equation*}
O(f_m)=\{\langle xn \rangle_{f_m} \mid x\in S_{\{n'+1,\ldots,n-1\}}\}=\{\langle(n'+2)x\rangle_{f_m} \mid x\in S_{\{n'+1,n'+3,\ldots,n\}}\}.
\end{equation*}
By Theorem~\ref{thm:lace}, there is a Hamilton path $P$ in~$\Pi_{\{n'+1,\ldots,n-1\}}$ from $x:=(n'+1)\cdots(n-1)$ to $y:=(n'+2)(n'+1)(n'+3)\cdots(n-1)$ and a Hamilton path~$P'$ in~$\Pi_{\{n'+1,n'+3,\ldots,n\}}$ from $y':=(n'+1)(n'+3)\cdots n$ to $x':=(n'+3)\cdots n(n'+1)$.
Using that $yn=(n'+2)y'$ we obtain that
\begin{equation}
\label{eq:QPP'}
Q:=\big(z_1Pn,z_2\lvec Pn, \ldots,z_{\ell-3}Pn,z_{\ell-2}\lvec Pn,z_{\ell-1}Pn,z_\ell(n'+2)P'\big)
\end{equation}
is a path in~$\Pi_n$ from $z_1xn=\ide$ to $z_\ell (n'+2)x'=z_\ell (n'+2)(n'+3)\cdots n(n'+1)$, which is adjacent to $f_\ba(\ide)=f_{\ba'}(\ide')(n'+2)(n'+3)\cdots n(n'+1)$, as $z_\ell$ is adjacent to $f_{\ba'}(\ide')$ in~$\Pi_{n'}$.
Note that~\eqref{eq:QPP'} requires $\ell$ to be even, which is satisfied as the number of orbits of~$f_1$ is even in part~(i) and the number of orbits of~$f_1f_2$ is even in part~(ii).
By this construction and the induction hypothesis, the path~$Q$ visits every orbit from~\eqref{eq:fa-orb1} or~\eqref{eq:fa-orb2} with $u=\ide(\ba)$ exactly once.
\end{proof}

\begin{theorem}
\label{thm:Pin}
Let $\ba=(a_1,\ldots,a_m)$ be a composition of~$n\ge 3$ and let $f_\ba$ be as in~\eqref{eq:fa}.
If
\begin{enumerate}[label=(\roman*),leftmargin=8mm, topsep=0mm, noitemsep]
\item
$a_1,\ldots,a_m$ are pairwise coprime and odd, and $a_1\ge 3$, or
\item
$m\ge 4$, $a_1=2$, $a_2=2^c$ for some $c\ge 1$, and $a_2,\ldots,a_m$ are pairwise coprime,
\end{enumerate}
then $\Pi_n$ has a $k$-symmetric Hamilton cycle for $f_\ba$ for $k:=\lcm(\ba)$.
\end{theorem}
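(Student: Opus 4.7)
The plan is to build, for each choice of~$\ba$ satisfying (i) or (ii), an explicit path~$P$ in~$\Pi_n$ of length~$n!/k$ that visits exactly one vertex from each orbit of~$f_\ba$ and whose final vertex is adjacent to~$f_\ba$ applied to its initial vertex. Once such a $P$ is in hand, the sequence $C:=P,f_\ba(P),\ldots,f_\ba^{k-1}(P)$ is automatically a $k$-symmetric Hamilton cycle, in view of~\eqref{eq:CP}.

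First I would confirm, using Lemma~\ref{lem:fa-orbits}, that $\ord(f_\ba)=\lcm(\ba)=k$ and that every $f_\ba$-orbit has size exactly~$k$, so that the total number of orbits is $n!/k$. The orbits are naturally indexed by their type $u\in\binom{[n]}{\ba}$ (which is preserved by~$f_\ba$, since $f_\ba$ acts only on values within each $A_i$) and, inside a fixed type, by the tuple of orbits of the individual $f_i$'s (case~(i)) or by the modified structure described in Lemma~\ref{lem:fa-orbits}(ii) (case~(ii)). Thus the task decomposes into, for each type~$u$, producing a local sub-path~$Q_u$ of the appropriate length that visits each orbit of type~$u$ exactly once, and then stitching these sub-paths together by type-changing adjacent transpositions.

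The scaffolding for the stitching would be a Hamilton cycle $H=(u^{(1)},u^{(2)},\ldots,u^{(N)})$ in the multiset permutation graph~$G(\ba)$ with $u^{(1)}=\ide(\ba)$. Such an~$H$ exists: in case~(i) with $m=1$ there is only one type and Lemma~\ref{lem:id-mix}(i) already yields the desired~$P$ directly; in case~(i) with $m=2$ (where both $a_1,a_2$ are odd) Theorem~\ref{thm:comb-adj} gives a Hamilton path in~$G(\ba)$ between $1^{a_1}2^{a_2}$ and $2^{a_2}1^{a_1}$; in case~(i) with $m\ge 3$ all~$a_i$ are odd so $n$ is odd, ruling out the exception and letting Theorem~\ref{thm:multi-adj} supply a Hamilton cycle; in case~(ii) we have $m\ge 4$ and $a_3,\ldots,a_m$ are all odd and pairwise coprime to~$a_2=2^c$, so at least two of the~$a_i$'s are odd and Theorem~\ref{thm:multi-adj} again applies (with $m\ge 4$ the exception cannot occur).

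The base block~$Q_{\ide(\ba)}$ is exactly the path furnished by Lemma~\ref{lem:id-mix}, starting at~$\ide$ and ending at a neighbor of~$f_\ba(\ide)$. For a general~$u\in\binom{[n]}{\ba}$ the positions of values in~$A_j$ need not be consecutive, so not every adjacent transposition of the factors $x^j$ lifts to an adjacent transposition in~$\Pi_n$; still, the type-preserving subgraph above~$u$ contains a subgraph of the Cartesian product $\Pi_{a_1}\boxprod\cdots\boxprod\Pi_{a_m}$, and one can build~$Q_u$ from paths in the factor permutahedra by applying Lemma~\ref{lem:id-mix} (or its building blocks Theorem~\ref{thm:lace}) in the components one at a time, choosing carefully which component to modify at each step so that the consecutive transpositions involve positions that are actually adjacent in~$u$. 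The stitching between $Q_{u^{(j)}}$ and $Q_{u^{(j+1)}}$ is performed along the single type-changing adjacent transposition prescribed by the edge of~$H$, so one must arrange the endpoints of each~$Q_{u^{(j)}}$ to be compatible with both the incoming and outgoing type-changing swaps, and to close the whole concatenation up to an $f_\ba$-shift of its start. The main obstacle, and what would take most of the work, is exactly this endpoint-matching combinatorics: showing that the boundary vertices of the local paths~$Q_{u^{(j)}}$ can be chosen coherently along the entire Hamilton cycle~$H$ of~$G(\ba)$, which in practice means picking in which $A_j$ the last `free' transposition of~$Q_{u^{(j)}}$ acts so that the subsequent type-changing swap with $u^{(j+1)}$ remains available. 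This is analogous to the zigzag combining trick of Lemma~\ref{lem:product}, but one level up, with~$G(\ba)$ playing the role of the second factor and the type-fiber paths playing the role of the first.
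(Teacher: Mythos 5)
Your high-level inventory of ingredients is right (Lemma~\ref{lem:id-mix} for the base fiber, Theorems~\ref{thm:comb-adj} and~\ref{thm:multi-adj} for the scaffold in~$G(\ba)$, and a zigzag in the spirit of Lemma~\ref{lem:product}), but the decomposition you propose is inverted relative to the paper's and, as written, cannot be carried out. You want to build, for \emph{each} type~$u\in\binom{[n]}{\ba}$, a local path~$Q_u$ that visits all orbit representatives of type~$u$, and then stitch the~$Q_u$'s together along a Hamilton cycle~$H$ of~$G(\ba)$, i.e.\ you put $G(\ba)$ in the ``slow'' role and the type-fiber in the ``fast'' role of Lemma~\ref{lem:product}. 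The trouble is that for a generic~$u$ the type-preserving subgraph above~$u$ is far too sparse to contain any such path. A type-preserving adjacent transposition exists only at pairs of consecutive positions of~$u$ that carry the same symbol, and most $\ba$-permutations have very few such pairs. Concretely, take $\ba=(3,5)$ (which satisfies~(i)) and $u=21212122$: the only consecutive pair of equal symbols is at positions~$7,8$, so the fiber above~$u$, restricted to orbit representatives, is essentially a perfect matching, yet $Q_u$ would have to be a path on $(a_1-1)!\,(a_2-1)!=48$ vertices. Your hedge of ``choosing carefully which component to modify'' cannot help: for this~$u$ the $A_1$-component cannot be modified at all by an adjacent transposition.

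The paper dodges this by exchanging the roles of the two factors. It takes the single path~$Q=(x_1,\dots,x_\ell)$ from Lemma~\ref{lem:id-mix}, living entirely over $u=\ide(\ba)$ (where the $A_j$-blocks are consecutive, so the fiber is the full Cartesian product), and uses it as the \emph{slow} factor; between consecutive steps~$y_i\to y_{i+1}$ it traverses an entire Hamilton path~$R$ (or~$R'$) of~$G(\ba)$ ``mixed'' with the fixed~$y_i$, i.e.\ $P:=(R\otimes y_1,\lvec R\otimes y_2,\dots)$. Each copy $R\otimes y_i$ is automatically a path in~$\Pi_n$, since edges of~$G(\ba)$ are adjacent transpositions of distinct values regardless of~$y_i$; and the type-preserving swaps needed to move from~$y_i$ to~$y_{i+1}$ occur only at the two (or three) specific $\ba$-permutations~$u,v$ (or~$u,v,v'$) that are endpoints of~$R$ (and~$R'$). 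The whole combinatorial content is then to check that these few anchors also have the required block structure, which is what the paper verifies (trivially for $m=2$, and via a short argument using $a_i\in\{1\}\cup\{3,4,\dots\}$ for $m\geq 3$ to show at least one of $v,v'$ works). Without this inversion, the endpoint-matching problem you rightly flag as the crux does not merely take ``most of the work'' --- it has no solution.
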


The 5-symmetric Hamilton cycle in~$\Pi_5$ obtained from Theorem~\ref{thm:Pin}~(i) for $\ba=(5)$ is illustrated in Figure~\ref{fig:p5}~(b).

\begin{figure}
\makebox[0cm]{ 
\begin{tabular}{ccc}
\raisebox{37mm}{(a)} &
\includegraphics[scale=0.9]{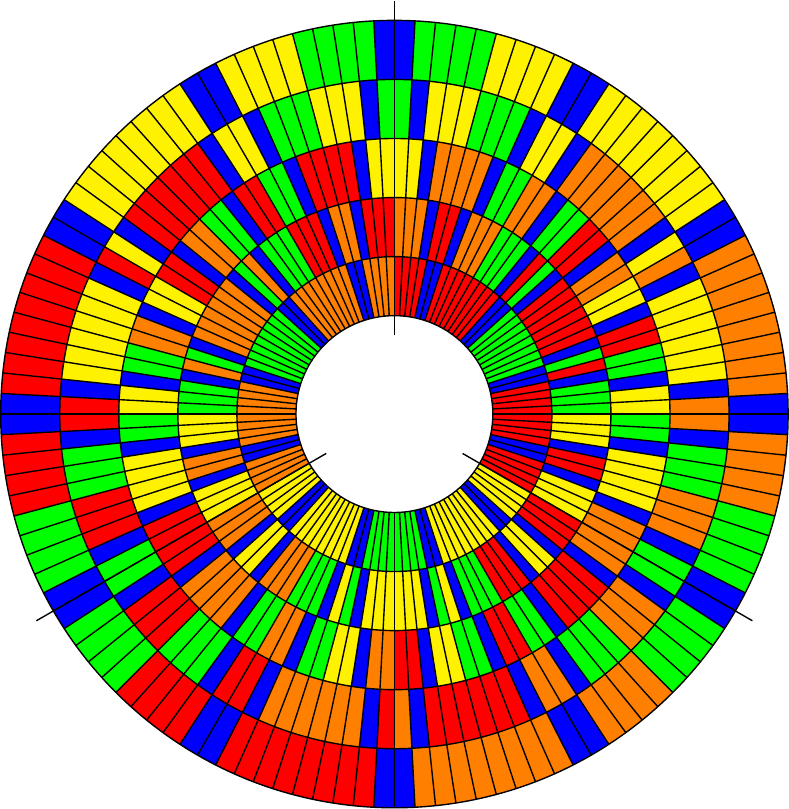} &
\includegraphics[scale=0.9]{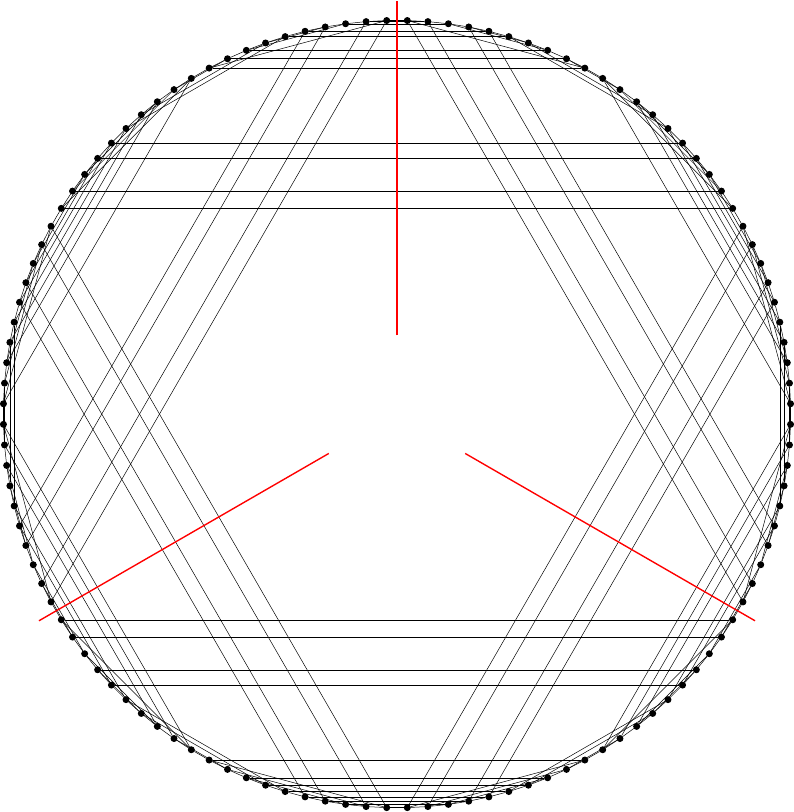} \\
\raisebox{37mm}{(b)} &
\includegraphics[scale=0.9]{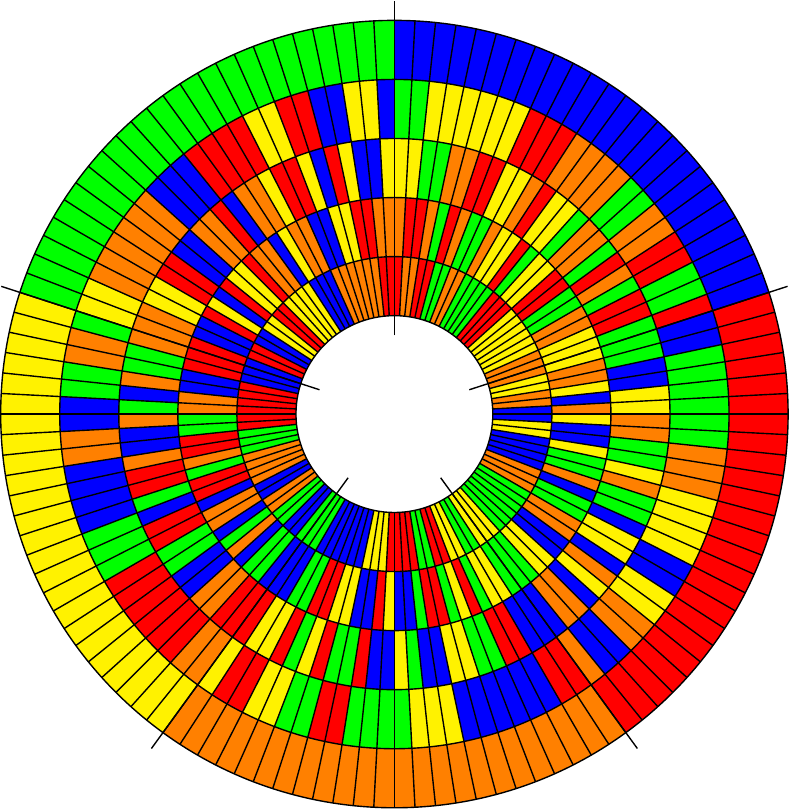} &
\includegraphics[scale=0.9]{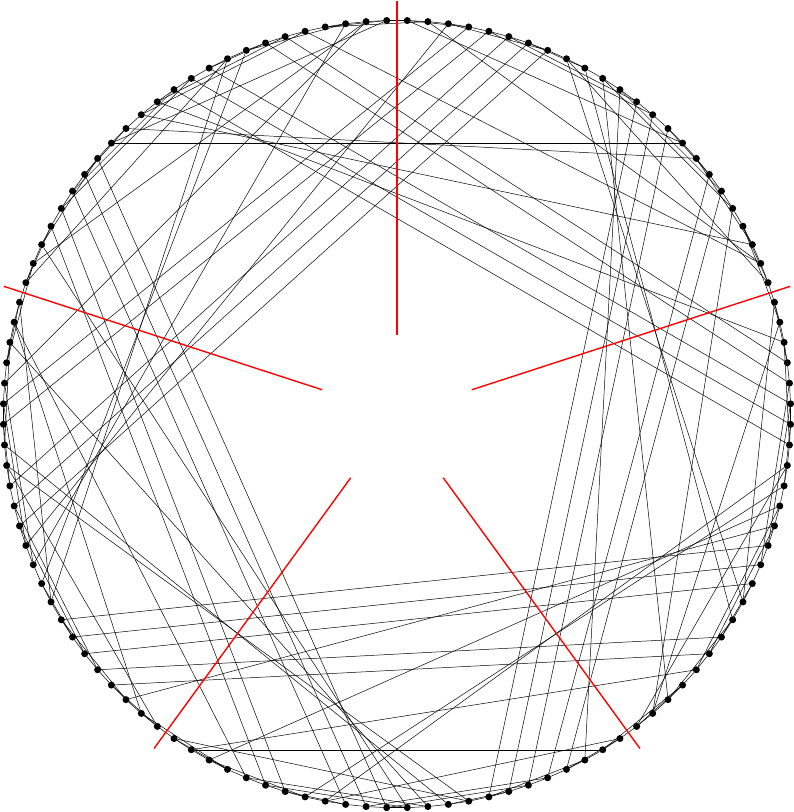} \\
\raisebox{37mm}{(c)} &
\includegraphics[scale=0.9]{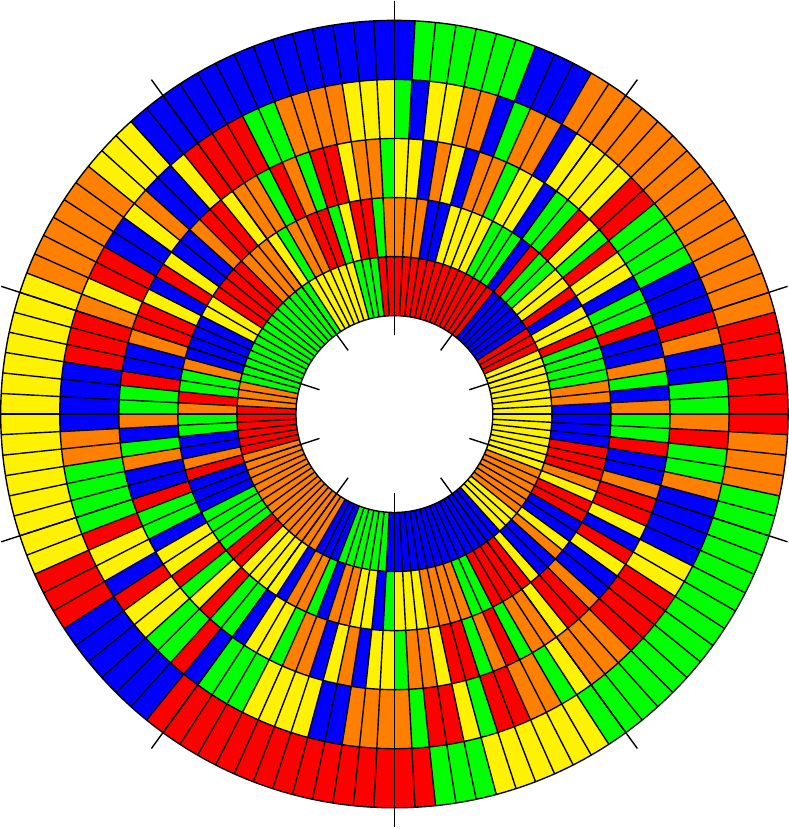} &
\includegraphics[scale=0.9]{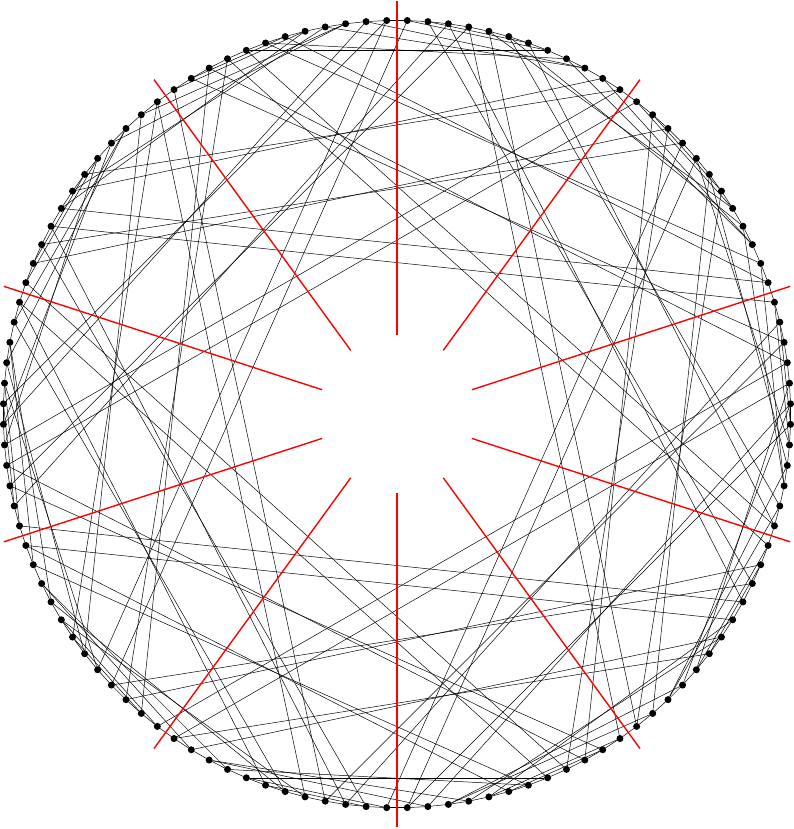} \\
\end{tabular}
}
\caption{Symmetric Hamilton cycles in~$\Pi_5$ (1=red, 2=orange, 3=yellow, 4=green, 5=blue):
(a)~Steinhaus-Johnson-Trotter cycle~$\Lambda_5$ with compression~3;
(b)~Cycle with compression~5 from Theorem~\ref{thm:Pin} for $f=(\ide,23451)$;
(c)~Cycle with compression~10 for $f=(\rev,23451)$.}
\label{fig:p5}
\end{figure}

\begin{proof}
Note that $k=\ord(f_\ba)$.
From Lemma~\ref{lem:id-mix} we obtain a path $Q=(x_1,\ldots,x_\ell)$ in~$\Pi_n$ that starts at $x_1=\ide$, ends at a neighbor~$x_\ell$ of~$f_\ba(\ide)$ and that visits each orbit of~$f_\ba$ from~\eqref{eq:fa-orb1} or~\eqref{eq:fa-orb2} with $u=\ide(\ba)$ exactly once.
Note that $\ell$ is even as some $f_1$ has an even number of orbits in part~(i) and $f_1f_2$ has an even number of orbits in part~(ii).
For $i=1,\ldots,\ell$, let $y_i$ be the $m$-tuple of permutations of the sets~$A_j$ whose concatenation gives~$x_i$, i.e., $x_i=u\otimes y_i$.

In part~(i) and $m=1$, we have $k=n$ and we are done as $Q$ already visits all orbits, so
\begin{equation*}
Q,f_\ba(Q),\ldots,f_\ba^{k-1}(Q)
\end{equation*}
is a $k$-symmetric Hamilton cycle in~$\Pi_n$.

In part~(i) and $m=2$, we apply Theorem~\ref{thm:comb-adj} to obtain a Hamilton path~$R$ in~$G(\ba)$ from the identity $\ba$-permutation $u:=\ide(\ba)=1^{a_1}2^{a_2}$ to $v:=2^{a_2}1^{a_1}$.

For any two consecutive vertices~$x_i=u \otimes y_i$ and $x_{i+1}=u \otimes y_{i+1}$ on the path~$Q$, we have that $v \otimes y_i$ and $v \otimes y_{i+1}$ are also adjacent.
It follows that
\begin{equation*}
P:=\big(R \otimes y_1, \lvec R \otimes y_2, \ldots, R \otimes y_{\ell-1}, \lvec R \otimes y_\ell\big)
\end{equation*}
is a path in~$\Pi_n$ from $x_1=u \otimes y_1=\ide$ to $x_\ell=u \otimes y_\ell$, which is a neighbor of~$f_\ba(\ide)$.
Moreover, $P$ visits all orbits of~$f_\ba$ exactly once.
Therefore,
\begin{equation*}
P,f_\ba(P),\ldots,f_\ba^{k-1}(P)
\end{equation*}
is a $k$-symmetric Hamilton cycle in~$\Pi_n$.

In part~(i) and $m\ge 3$ and in part~(ii) with $c\ge 2$ we apply Theorem~\ref{thm:multi-adj} to obtain a Hamilton~$C$ in~$G(\ba)$ (in part~(i), all $a_i$ are odd, which rules out the exceptional case $(n-2,1,1)$ with even~$n$, and in part~(ii), we assumed $m\ge 4$, which guarantees at least two odd~$a_i$).
We consider the $\ba$-permutation~$u:=\ide(\ba)$, and let $v,v'\in \binom{[n]}{\ba}$ be the neighbors of $u$ on~$C$.
Furthermore, let~$R,R'$ denote the Hamilton paths of~$G(\ba)$ from~$u$ to~$v$ or~$v'$, respectively, obtained from~$C$ by removing the corresponding edge.
Clearly, $v$ and~$v'$ differ from~$u$ by adjacent transpositions of a different pair of values, and as all $a_i$, $i=2,\ldots,m$ are either~1 or $\ge 3$, we have that for any pair of positions with same values in~$u$, at least one of the transpositions to reach~$v$ or~$v'$ from~$u$ does not involve any of these two positions.
It follows that for any two consecutive vertices~$x_i=u \otimes y_i$ and $x_{i+1}= u \otimes y_{i+1}$ on the path~$Q$, we have that $v \otimes y_i$ and $v \otimes y_{i+1}$ are adjacent, or $v' \otimes y_i$ and $v' \otimes y_{i+1}$ are adjacent.
In the former case, we define $R_i:=R$, whereas in the latter case we define $R_i:=R'$.
With these definitions
\begin{equation*}
P:=\big(R_1 \otimes y_1, \lvec R_1 \otimes y_2, \ldots, R_{\ell-1} \otimes y_{\ell-1}, \lvec R_{\ell-1} \otimes y_\ell\big)
\end{equation*}
is a path in~$\Pi_n$ from~$x_1=u\otimes y_1=\ide$ to~$x_\ell=u \otimes y_\ell$, which is a neighbor of $f_\ba(\ide)$.
Moreover, $P$ visits all orbits of~$f_\ba$ exactly once.
Therefore,
\begin{equation*}
P,f_\ba(P),\ldots,f_\ba^{k-1}(P)
\end{equation*}
is a $k$-symmetric Hamilton cycle in~$\Pi_n$.

The only case that has to be treated separately is part~(ii) with $c=1$, i.e., $a_1=a_2=2$.
In this case we use $u:=1^2 3^{a_3}4^{a_4}\cdots m^{a_m} 2^2$ in the argument above, and we obtain a path~$P$ in~$\Pi_n$ from~$u\otimes y_1$ to~$u\otimes y_\ell$, which is a neighbor of $f_\ba(u\otimes y_1)$, yielding a $k$-symmetric Hamilton cycle in~$\Pi_n$.
This completes the proof.
\end{proof}

Combining our previous results, we obtain the following near-optimal bounds for the Hamilton compression of~$\Pi_n$, which are tight in infinitely many cases.

\begin{theorem}
\label{thm:kappa-Pin}
The Hamilton compression of the permutahedron~$\Pi_n$ has the following properties:
\begin{enumerate}[label=(\roman*),leftmargin=8mm, topsep=0mm, noitemsep]
\item We have $\kappa(\Pi_3)=6$, $\kappa(\Pi_4)=6$ and $\kappa(\Pi_5)=10$.
\item For $n\ge 3$ we have $\kappa(\Pi_n)\ge \max\{\lambda_0(n),\lambda_2(n)\}\ge \lambda(n)/2$.
\item For $n\ge 4$, if $n\equiv 0,1 \bmod 4$, then we have $\kappa(\Pi_n)\le \max\{2\lambda_0(n),\lambda_2(n)\}$, and if $n\equiv 2,3 \bmod 4$, then we have $\kappa(\Pi_n)\le \lambda(n)$.
\item Let $n\ge 4$.
If $n\equiv 0,1 \bmod 4$ and $2\lambda_0(n)\le \lambda_2(n)$, then we have $\kappa(\Pi_n)=\lambda_2(n)$, and the second condition holds for all~$n\geq 739$.
If $n\equiv 2,3 \bmod 4$ and $\lambda(n)\in\{\lambda_0(n),\lambda_2(n)\}$, then we have $\kappa(\Pi_n)=\lambda(n)$, and the condition $\lambda(n)=\lambda_2(n)$ holds for arbitrarily large intervals.
\item The lower and upper bounds in~(ii) and~(iii) differ at most by a factor of~2.
In particular, we have $\kappa(\Pi_n)=\Theta(\lambda(n))=e^{(1+o(1))\sqrt{n \ln n}}$.
\end{enumerate}
\end{theorem}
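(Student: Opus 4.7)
The plan is to stitch together Theorem~\ref{thm:Pin} with the structural results about Landau's function and its variants collected in Lemma~\ref{lem:landau-prop}, handling parts~(ii) and~(iii) first, the exceptional small cases in~(i) separately, and obtaining~(iv)--(v) as immediate corollaries.

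For part~(iii) there is nothing to do: it is exactly the content of Lemma~\ref{lem:perm-ub}. For part~(ii), I would prove the two lower bounds $\kappa(\Pi_n)\ge \lambda_0(n)$ and $\kappa(\Pi_n)\ge\lambda_2(n)$ separately. For the first, take the partition $\ba$ realizing the maximum in~\eqref{eq:landau0} supplied by Lemma~\ref{lem:landau-prop}(ii); its parts are powers of distinct odd primes and~1s, hence pairwise coprime and odd, and since $\lambda_0(n)\ge 3$ for $n\ge 3$ we can sort so that $a_1\ge 3$. Then Theorem~\ref{thm:Pin}(i) applies and yields a $\lambda_0(n)$-symmetric Hamilton cycle. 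For the second bound, take the partition realizing~$\lambda_2(n)$ from Lemma~\ref{lem:landau-prop}(iii); its shape (a positive power of~2, a~2, powers of distinct odd primes, and~1s) is precisely the hypothesis of Theorem~\ref{thm:Pin}(ii), and the required $m\ge 4$ condition holds for $n\ge 12$ by the same lemma. For the remaining range $4\le n\le 11$, a direct inspection of Table~\ref{tab:landau} shows $\lambda_2(n)\le \lambda_0(n)$, so the bound $\kappa(\Pi_n)\ge \lambda_2(n)$ is absorbed by the already proven $\kappa(\Pi_n)\ge \lambda_0(n)$. The final inequality $\max\{\lambda_0(n),\lambda_2(n)\}\ge \lambda(n)/2$ is Lemma~\ref{lem:landau-prop}(iv).

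For part~(i), I would treat the three small dimensions individually. The graph~$\Pi_3$ is a 6-cycle, so trivially $\kappa(\Pi_3)=6$. For $\Pi_4$ the lower bound $\kappa(\Pi_4)\ge 6$ is the compression of the SJT cycle from Proposition~\ref{prop:perm}, while the matching upper bound comes from Lemma~\ref{lem:perm-ub} with $n=4\equiv 0\bmod 4$, giving $\max\{2\lambda_0(4),\lambda_2(4)\}=\max\{6,2\}=6$. For $\Pi_5$, Lemma~\ref{lem:perm-ub} with $n=5\equiv 1\bmod 4$ gives the upper bound $\max\{2\lambda_0(5),\lambda_2(5)\}=\max\{10,2\}=10$; the matching lower bound is realized by the explicit $10$-symmetric Hamilton cycle displayed in Figure~\ref{fig:p5}(c) under the automorphism $(\rev,23451)$, which has order~$10$ because $\rev$ is an even permutation on $5$ positions and commutes with the odd-order cyclic permutation of values.

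Finally, parts~(iv) and~(v) are essentially bookkeeping on top of~(ii) and~(iii). For~(iv), when $n\equiv 0,1\bmod 4$ and $2\lambda_0(n)\le\lambda_2(n)$, the upper bound from~(iii) collapses to $\lambda_2(n)$, which already matches the lower bound in~(ii); Lemma~\ref{lem:landau-prop}(v) supplies the hypothesis $2\lambda_0(n)\le\lambda_2(n)$ for $n\ge 739$. When $n\equiv 2,3\bmod 4$ and $\lambda(n)\in\{\lambda_0(n),\lambda_2(n)\}$, the upper bound from~(iii) is $\lambda(n)$ and the lower bound from~(ii) also attains $\lambda(n)$, and Lemma~\ref{lem:landau-prop}(vi) yields arbitrarily large intervals on which $\lambda(n)=\lambda_2(n)$. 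For~(v), the upper bound from~(iii) is at most $2\lambda(n)$ in either parity case and the lower bound from~(ii) is at least $\lambda(n)/2$, so $\kappa(\Pi_n)=\Theta(\lambda(n))$, and the asymptotic $e^{(1+o(1))\sqrt{n\ln n}}$ is then immediate from Landau's formula~\eqref{eq:landau-asymp}. The main obstacle is not in this assembly step but already buried in Theorem~\ref{thm:Pin}, whose hypotheses must be matched to the specific partitions produced by Lemma~\ref{lem:landau-prop}; the delicate points are the $m\ge 4$ constraint (forcing the small-$n$ table inspection) and the ad hoc $\Pi_5$ construction, which doubles the compression via reversal and is not captured by Theorem~\ref{thm:Pin}.
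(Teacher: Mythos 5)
Your proposal follows essentially the same route as the paper's proof: part (iii) is Lemma~\ref{lem:perm-ub}, part (ii) feeds the partitions produced by Lemma~\ref{lem:landau-prop}(ii) and (iii) into Theorem~\ref{thm:Pin}(i) and (ii) respectively (with the small-$n$ table check absorbing $\lambda_2$ into $\lambda_0$ for $n\le 11$), and parts (iv)--(v) are assembled from (ii), (iii) and Lemma~\ref{lem:landau-prop}(iv)--(vi). Where you depart slightly is part~(i): the paper simply cites a computer search for all three values $n=3,4,5$, whereas you derive the upper bounds for $n=4,5$ cleanly from Lemma~\ref{lem:perm-ub} and the lower bounds from Proposition~\ref{prop:perm} and Figure~\ref{fig:p5}(c); this is a small but genuine tightening in rigor, though the $\Pi_5$ lower bound still ultimately rests on the exhibited cycle.

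One minor imprecision to flag: in part~(v) you argue via ``upper bound $\le 2\lambda(n)$ and lower bound $\ge \lambda(n)/2$'', which only yields a factor-of-$4$ gap, not the claimed factor of~$2$. To get the sharp factor, compare the actual bound expressions directly: for $n\equiv 0,1\bmod 4$ the bounds are $\max\{\lambda_0,\lambda_2\}$ and $\max\{2\lambda_0,\lambda_2\}$, which differ by at most a factor of~$2$; for $n\equiv 2,3\bmod 4$ the upper bound $\lambda(n)$ exceeds the lower bound $\max\{\lambda_0,\lambda_2\}$ by at most a factor of~$2$ by Lemma~\ref{lem:landau-prop}(iv). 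Your Lemma~\ref{lem:landau-prop}(iv) citation already supplies the needed ingredient, so this is a matter of stating the comparison more carefully rather than a missing idea.
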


From part~(iv) of the theorem we obtain exact results for the following values~$n\le 100$:
We have $\kappa(\Pi_n)=\lambda_0(n)$ for $n=15$ and $\kappa(\Pi_n)=\lambda_2(n)$ for $n=22,46,49,51,52,53,55,68,69,72,73,74,75,\allowbreak 80,82,85,87,88,89,91,92,93,96,97,99,100$.
See the appendix for more exact results and the corresponding values of~$\kappa(\Pi_n)$.
A 10-symmetric Hamilton cycle in~$\Pi_5$ obtained by computer search is shown in Figure~\ref{fig:p5}~(c).

\begin{proof}
The exact values for $n=3,4,5$ stated in~(i) were obtained by computer search.

We now prove~(ii).
Let $n\ge 3$.
By Lemma~\ref{lem:landau-prop}~(ii) there is a partition~$\ba=(a_1,\ldots,a_m)$ of~$n$ into powers of distinct odd primes and~1s such that~$\lcm(\ba)=\lambda_0(n)$, and we clearly have $a_1\ge 3$.
In particular, $a_1,\ldots,a_m$ are pairwise coprime and odd, so by Theorem~\ref{thm:Pin}~(i) there is a $k$-symmetric Hamilton cycle in~$\Pi_n$ where $k:=\lcm(\ba)=\lambda_0(n)$.
This shows that $\kappa(\Pi_n)\ge \lambda_0(n)$ for $n\ge 3$.

Let $n\ge 12$.
By Lemma~\ref{lem:landau-prop}~(iii) there is a partition~$\ba=(a_1,\ldots,a_m)$ of~$n$ into powers of distinct odd primes, $2^c$ for some $c\ge 1$, 2 and 1s such that $\lcm(\ba)=\lambda_2(n)$, and moreover $m\ge 4$.
In particular, all the $a_i$ except~2 and $2^c$ are pairwise coprime, so by Theorem~\ref{thm:Pin}~(ii) there is a $k$-symmetric Hamilton cycle in~$\Pi_n$ where $k:=\lcm(\ba)=\lambda_2(n)$.
This shows that $\kappa(\Pi_n)\ge \lambda_2(n)$ for $n\ge 12$.

For $n=3,\ldots,11$ we can verify from Table~\ref{tab:landau} that $\lambda_0(n)\ge \lambda_2(n)$.

Combining the three observations from before shows that $\kappa(\Pi_n)\ge \max\{\lambda_0(n),\lambda_2(n)\}$ for $n\ge 3$, and the lower bound $\lambda(n)/2$ for the maximum follows from Lemma~\ref{lem:landau-prop}~(iv).

The upper bounds stated in~(iii) are from Lemma~\ref{lem:perm-ub}.

The statements in~(iv) are an immediate consequence of~(ii) and~(iii), also using Lemma~\ref{lem:landau-prop}~(v)+(vi).

The statements in~(v) follow by observing that $\max\{a,b\}$ and $\max\{2a,b\}$ differ by at most a factor of~2, and by using~\eqref{eq:landau-asymp}.
\end{proof}

\subsection{Application to balanced 1-track Gray codes}

We write $\Pi_n^+$ for the graph obtained from~$\Pi_n$ by adding edges that correspond to transpositions of the first and last entry of a permutation, i.e., we allow cyclically adjacent transpositions.

\begin{theorem}
\label{thm:1track}
For every odd~$n\geq 3$ there is an $n$-symmetric Hamilton cycle in~$\Pi_n^+$ that has 1~track and is balanced, i.e., each of the $n$ transpositions is used equally often ($(n-1)!$ many times).
\end{theorem}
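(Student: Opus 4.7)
My plan is to exploit the automorphism $f$ of $\Pi_n^+$ given by cyclic left-shift of positions, $f(x_1 \cdots x_n) := x_2 x_3 \cdots x_n x_1$. This is readily checked to be an automorphism of $\Pi_n^+$ since it sends a cyclically adjacent transposition $t_i$ connecting two vertices to $t_{i-1\bmod n}$. Because no non-identity cyclic shift can fix a permutation (all entries are distinct), every orbit of $f$ has size exactly $n$, giving $(n-1)!$ orbits; I would use $R_n := \{x \in S_n : x_n = n\}$ as a system of orbit representatives.

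The key structural observation is that the subgraph of $\Pi_n^+$ induced on $R_n$ is isomorphic to $\Pi_{n-1}$: any edge within $R_n$ must preserve the last entry $n$ and therefore cannot correspond to $t_{n-1}$ or to the cyclic transposition $t_n$. I would then apply Theorem~\ref{thm:lace} to obtain a Hamilton path $P$ in $R_n$ from $\ide = 12\cdots(n-1)n$ to $\sigma := 23\cdots(n-1)1n$; for $n=3$ this is simply the single edge in $\Pi_2$, while for $n \geq 5$ the laceability clause (which requires $n-1 \geq 4$) applies \emph{provided} that $\ide$ and $\sigma$ lie in opposite parity classes of $S_{n-1}$. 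The permutation turning $\ide$ into $\sigma$ on positions $1,\ldots,n-1$ is the $(n-1)$-cycle $(1,2,\ldots,n-1)$, whose sign is $(-1)^{n-2}$, and this equals $-1$ precisely when $n$ is odd, so the parity check succeeds. Since $\sigma$ is adjacent in $\Pi_n^+$ to $f(\ide) = 23\cdots(n-1)n1$ via the transposition $t_{n-1}$, the concatenation $C := P, f(P), \ldots, f^{n-1}(P)$ closes into an $n$-symmetric Hamilton cycle.

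The 1-track and balancing properties then come for free from the structure of $f$. Since $f$ cyclically shifts positions, the column at index $j$ in the matrix of $f^k(P)$ coincides with the column at index $j+k\bmod n$ in the matrix of $P$, so all $n$ columns of the matrix of $C$ are cyclic shifts of a single column, yielding the single-track property. For balancedness, a short check gives $f^k(t_i x) = t_{i-k\bmod n}\, f^k(x)$, so each internal edge labelled $t_i$ in $P$ contributes to $f^k(P)$ an edge labelled $t_{i-k\bmod n}$; summing over $k = 0,\ldots,n-1$ yields exactly one appearance of each of the $n$ transpositions per internal edge of $P$, for a total of $(n-1)!-1$ uses per transposition on internal edges. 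The $n$ bridging edges between consecutive $f^k(P)$ form a single $f$-orbit of the edge $\sigma \to f(\ide)$ labelled $t_{n-1}$, so they contribute each transposition exactly once, bringing the total to $(n-1)!$ uses per transposition, as required.

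The main (and really only) obstacle is the parity check, which is precisely what forces $n$ to be odd: when $n$ is even, the cycle $(1,2,\ldots,n-1)$ has even length and is therefore an even permutation, so $\ide$ and $\sigma$ land in the same partition class of the bipartite graph $\Pi_{n-1}$ and no Hamilton path between them exists, causing this orbit-covering approach to collapse.
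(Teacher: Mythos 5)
Your proof is correct and follows essentially the same approach as the paper's: the cyclic left-shift automorphism, orbit representatives with $n$ fixed in the last position, a Hamilton path between $\ide$ and $23\cdots(n-1)1n$ supplied by Theorem~\ref{thm:lace}, and the same observations for the 1-track and balancedness properties. You spell out details the paper treats briefly (the parity of the $(n-1)$-cycle and the explicit edge-orbit count), but the underlying argument is identical.
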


\begin{figure}[b!]
\makebox[0cm]{ 
\begin{tabular}{cc}
\includegraphics{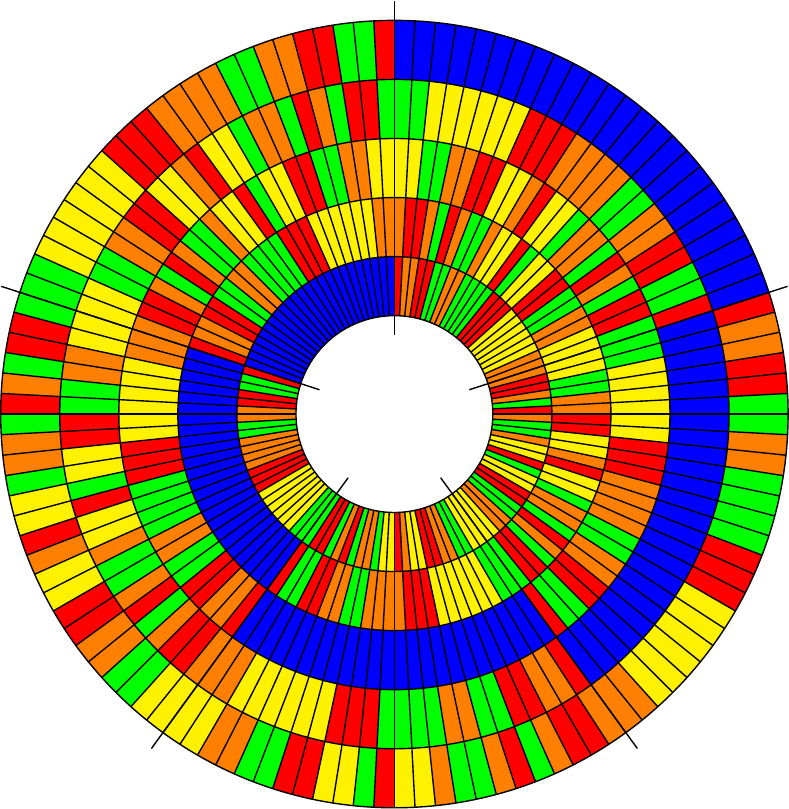} &
\includegraphics{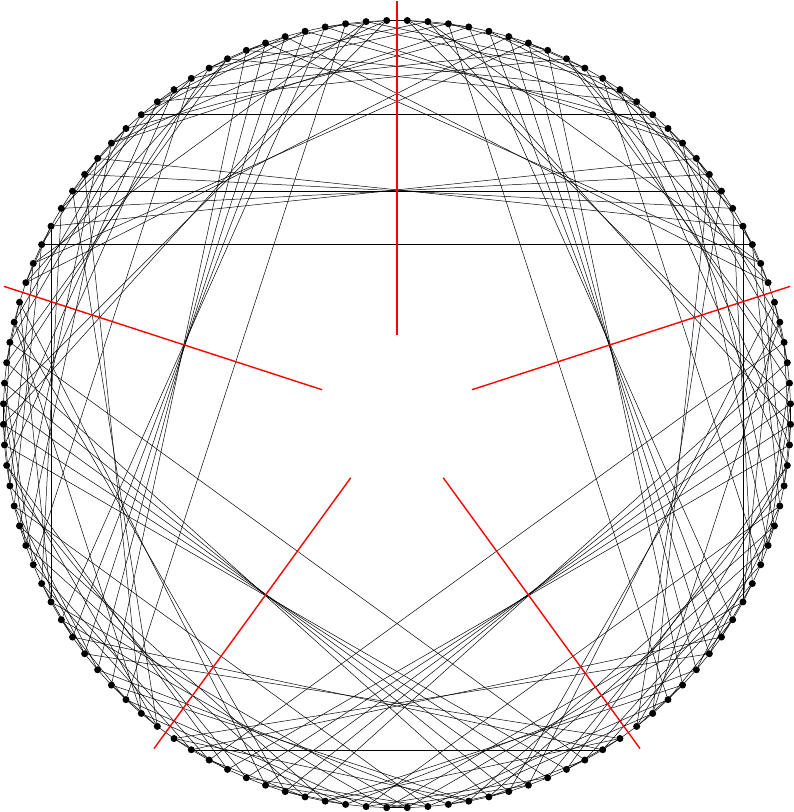} \\
\end{tabular}
}
\caption{Balanced 1-track Hamilton cycle in~$\Pi_5^+$ with compression~5 from Theorem~\ref{thm:1track} (cyclically adjacent transpositions).}
\label{fig:p5p}
\end{figure}

\begin{proof}
We consider the automorphism~$f$ of~$\Pi_n^+$ that cyclically shifts all entries one position to the left, i.e., $f(x_1\cdots x_n)=x_2\cdots x_nx_1$.
We choose permutations that have the symbol~$n$ fixed at the last position as representatives.
By Theorem~\ref{thm:lace} there is a Hamilton path~$Q$ in~$\Pi_{n-1}$ from $x:=1\cdots (n-1)$ to~$y:=2\cdots (n-1)1$ (note that $y$ is odd as $n$ is odd).
Thus $\Pi_n^+$ contains the path $P:=Qn$ from $xn=\ide$ to $yn=2\cdots(n-1)1n$, which is adjacent to $f(\ide)=2\cdots (n-1)n1$ by a transposition of the last two entries.
Consequently, $C:=P,f(P),f^2(P),\ldots,f^{n-1}(P)$ is an $n$-symmetric Hamilton cycle of~$\Pi_n^+$.
Furthermore, in the matrix corresponding to~$C$, any two columns are cyclic shifts of each other, so $C$ has the 1-track property.
Lastly, note that if~$P$ applies a transposition~$(i,i+1)$, then this becomes a transposition~$(i-j,i+1-j)$ (modulo~$n$) in~$f^j(P)$, which implies the balancedness property.
\end{proof}

\section{Abelian Cayley graphs}
\label{sec:cayley}

In this section we consider the Hamilton compression of Cayley graphs of abelian groups, introduced in Section~\ref{sec:prelim-cayley} (recall also Section~\ref{sec:prelim-graphs}).
After deriving a stronger version of the well-known factor group lemma, we construct an infinite family of abelian Cayley graphs that have Hamilton compression~1.
We complement this by showing that all other abelian Cayley graphs have Hamilton compression at least~2.

\subsection{The factor group lemma}

An important tool for proving Hamiltonicity in Cayley graphs is the so-called \emph{factor group lemma}.

\begin{lemma}[Factor group lemma, {\cite[Sec.~2.2]{MR762322}}]
\label{lem:factor}
Let $G$ be a group, $S\seq G$ a generating set, and $N$ a normal subgroup of $G$.
If $s_1,\ldots, s_{|G/N|}$ is a sequence of elements in~$S$ such that
\begin{enumerate}[label=(\roman*),leftmargin=8mm, topsep=0mm, noitemsep]
\item $Ns_1,Ns_1s_2,\ldots,Ns_1\cdots s_{|G/N|}$ is a Hamilton cycle of $\Gamma(G/N,\{Ns\mid s \in S\})$,
\item $N = \langle s_1\cdots s_{|G/N|} \rangle$,
\end{enumerate}
then the Cayley graph $\Gamma(G,S)$ has a Hamilton cycle.
\end{lemma}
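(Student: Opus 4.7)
The plan is to lift the quotient Hamilton cycle to a Hamilton cycle in $\Gamma(G,S)$ by concatenating $|N|$ shifted copies of a single lift path. Set $\pi := s_1 s_2 \cdots s_{|G/N|}$. By hypothesis~(ii) we have $\langle \pi \rangle = N$, hence $\ord(\pi) = |N|$; this will be the engine of the construction.

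First I would write the candidate cycle $C$ explicitly as the concatenation $B_0, B_1, \ldots, B_{|N|-1}$, where for each $j \in \{0,1,\ldots,|N|-1\}$ the block
\begin{equation*}
B_j := \bigl(\pi^j,\; \pi^j s_1,\; \pi^j s_1 s_2,\; \ldots,\; \pi^j s_1 \cdots s_{|G/N|-1}\bigr)
\end{equation*}
is the $\pi^j$-translate of the natural lift of the quotient cycle. Clearly $|C| = |N|\cdot |G/N| = |G|$, so the length is right.

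Next, I would verify that consecutive vertices of $C$ are joined by an edge in $\Gamma(G,S)$. Within a block this is immediate: successive vertices differ by right-multiplication by some $s_i \in S$. At the seam between $B_j$ and $B_{j+1 \bmod |N|}$ the step goes from $\pi^j s_1 \cdots s_{|G/N|-1}$ to $\pi^{j+1}$, which by the definition of $\pi$ is again right-multiplication by $s_{|G/N|} \in S$. Thus $C$ is a closed walk in $\Gamma(G,S)$.

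Finally, I would show $C$ visits every element of $G$ exactly once. Projecting the typical vertex $\pi^j s_1 \cdots s_i$ into $G/N$ gives the coset $N s_1 \cdots s_i$, since $\pi^j \in N$; by hypothesis~(i) these $|G/N|$ cosets are pairwise distinct as $i$ ranges over $\{0,\ldots,|G/N|-1\}$. For fixed $i$ and varying $j \in \{0,\ldots,|N|-1\}$, the $|N|$ elements $\pi^j s_1 \cdots s_i$ are distinct members of the single coset $N\cdot s_1\cdots s_i$ (distinct because $\ord(\pi) = |N|$) and hence exhaust it. Combining the two coordinates, $C$ hits each of the $|G|$ vertices exactly once, so $C$ is a Hamilton cycle. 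There is no real obstacle here: the proof is pure bookkeeping, and the only thing to keep straight is the division of labor between the hypotheses — condition~(i) ensures that a single block sweeps through all cosets of $N$, while condition~(ii) ensures that the $|N|$ translates by powers of $\pi$ fill each coset completely. Dropping either hypothesis breaks exactly one of these two properties.
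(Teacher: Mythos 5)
Your proof is correct and uses the same lifting construction as the paper: with $P := (e, s_1, s_1 s_2, \ldots, s_1\cdots s_{|G/N|-1})$ your blocks are exactly $B_j = \pi^j P$, so the cycle $B_0,\ldots,B_{|N|-1}$ coincides with the cycle $P, gP, \ldots, g^{k-1}P$ built in the proof of Lemma~\ref{lem:factor-comp}, specialized to $g=\pi=s_1\cdots s_{|G/N|}$. The paper simply packages this construction as the more general compression version of the factor group lemma (Lemma~\ref{lem:factor-comp}) and derives the present statement as a corollary, whereas you spell out the same argument directly.
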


It turns out that while proving the factor group lemma one can also guarantee some symmetry in the Hamilton cycle obtained.
Thus, we state and prove the following compression version of the factor group lemma.

\begin{lemma}
\label{lem:factor-comp}
Let $G$ be a group, $S\seq G$ a generating set, $g \in G$ and~$s\in S$.
If there exists a path in the Cayley graph $\Gamma=\Gamma(G,S)$ from~$e$ to~$g s$ that intersects every right coset of $\langle g \rangle$ exactly once, then $\kappa(\Gamma)\ge \ord(g)$.
\end{lemma}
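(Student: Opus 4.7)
The plan is to exhibit the desired $k$-symmetric Hamilton cycle directly, with $k := \ord(g)$, using left-multiplication by $g$ as the defining automorphism. First I would observe that the map $f : \Gamma \to \Gamma$ defined by $f(x) := gx$ is indeed an automorphism of the Cayley graph $\Gamma = \Gamma(G,S)$, because every edge of~$\Gamma$ has the form $\{x, xs'\}$ with $s' \in S$, and under $f$ this edge becomes $\{gx, gxs'\}$, which is again an edge of~$\Gamma$. Since $\ord(f) = \ord(g) = k$, the orbits of~$f$ on the vertex set~$G$ are exactly the sets $\langle g \rangle x$, i.e., the right cosets of $\langle g \rangle$, each of size~$k$.

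Let $P = (x_1, x_2, \ldots, x_m)$ be the hypothesized path in~$\Gamma$ with $x_1 = e$ and $x_m = gs$, where $m = |G|/k$ since $P$ visits each right coset of~$\langle g \rangle$ exactly once. I would then define
\begin{equation*}
C := P, f(P), f^2(P), \ldots, f^{k-1}(P),
\end{equation*}
and verify that $C$ is a $k$-symmetric Hamilton cycle in~$\Gamma$ under~$f$, which by the definition of~$\kappa$ implies $\kappa(\Gamma) \ge k = \ord(g)$.

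The verification splits into three routine checks. For vertex-coverage, the translate $f^i(P) = (g^i x_1, \ldots, g^i x_m)$ picks out, for each right coset $\langle g \rangle x_j$, the element $g^i x_j$; as $i$ ranges over $\{0,1,\ldots,k-1\}$ and $j$ over $\{1,\ldots,m\}$, every element of~$G$ appears exactly once in~$C$. For adjacency between $f^i(P)$ and $f^{i+1}(P)$ (for $0 \le i \le k-2$), the last vertex of $f^i(P)$ is $g^i \cdot gs = g^{i+1}s$ and the first vertex of $f^{i+1}(P)$ is $g^{i+1} \cdot e = g^{i+1}$, which are joined by an $s$-edge of~$\Gamma$ since $s \in S$. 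For closure of the cycle, the last vertex of $f^{k-1}(P)$ is $g^{k-1} \cdot gs = g^k s = s$, which is adjacent to $x_1 = e$ via the same generator~$s$.

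There is no real obstacle here: the argument is a packaging of the standard factor-group-lemma idea, with left multiplication by~$g$ supplying the rotational automorphism for free. The only thing to be mildly careful about is the coset convention (making sure that the orbits of $x \mapsto gx$ match what the statement calls ``right cosets of $\langle g \rangle$''), and that the hypothesis ``$P$ ends at $gs$'' is exactly what is needed to glue the translates $f^i(P)$ together by $s$-edges and to close the cycle at the end.
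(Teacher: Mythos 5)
Your proof is correct and follows essentially the same approach as the paper: left multiplication by $g$ supplies the automorphism, the orbits are the right cosets of $\langle g\rangle$, and the translates $f^i(P)$ are glued together by $s$-edges because $P$ ends at $gs$. The paper words the disjointness of the translates slightly differently (as a short contradiction argument), but the substance is identical.
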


Note that Lemma~\ref{lem:factor-comp} implies Lemma~\ref{lem:factor} by setting $g:=s_1\cdots s_{|G/N|}$, yielding a lower bound for the Hamilton compression of~$\kappa(\Gamma(G,S))\geq |N|$.

\begin{proof}
We begin by noting that $f:G \to G$ defined by $f(x)=g x$ is an automorphism of~$\Gamma$.
We consider the subgroup~$\langle g \rangle$ generated by~$g$, and we define $k:=\ord(g)$.
By the orbit-stabilizer theorem, the number of right cosets of $\langle g\rangle$ is $\ell:=|G|/k$.
Let $P=(x_1,\ldots,x_\ell)$ be a path in~$\Gamma$ from~$e$ to~$g s$ that intersects every right coset of $\langle g\rangle$ exactly once.
We claim that the paths $P,g P,\ldots,g^{k-1}P$ are pairwise vertex-disjoint.
Suppose for the sake of contradiction that there exists $i,j \in \{0,\ldots,k-1\}$, $i\neq j$, such that $g^i P \cap g^j P\neq \emptyset$.
Then there are $a,b\in [\ell]$ such that $g^i x_a=g^j x_b$, which means that $x_a$ and $x_b$ lie in the same coset of~$\langle g\rangle$.
As both are on~$P$, we obtain that $a=b$, and therefore $i=j$, a contradiction, so the claim is proved.
Consequently, we have $\big|\bigcup_{i=0}^{k-1} g^i P \big|=\ell k=|G|$.
Furthermore, since $\{g^is,g^i\}$ is an edge of~$\Gamma$ for all $i \in \{0,\ldots,k-1\}$, we obtain that
\begin{equation*}
C:=P,f(P),\ldots, f^{k-1}(P)=P,g P,\ldots,g^{k-1}P
\end{equation*}
is a $k$-symmetric Hamilton cycle in~$\Gamma$; see Figure~\ref{fig:factor-group}.
It follows that $\kappa(\Gamma)\ge k=\ord(g)$.
\end{proof}

\begin{figure}
\includegraphics[page=1]{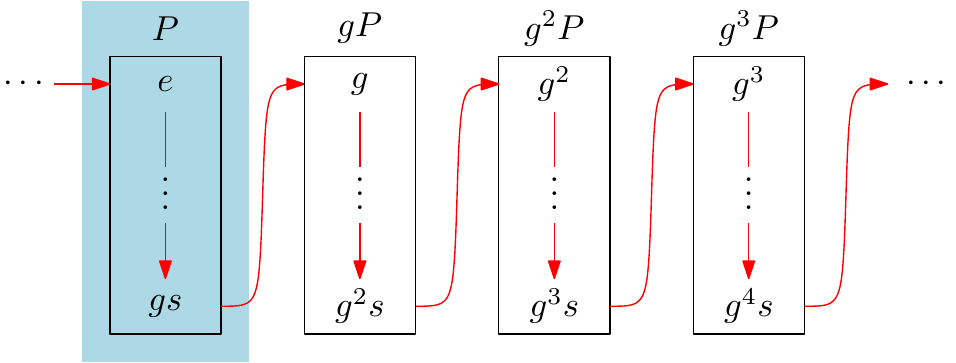}
\caption{Illustration of the proof of Lemma~\ref{lem:factor-comp}.}
\label{fig:factor-group}
\end{figure}

We can also phrase Lemma~\ref{lem:factor-comp} in the language of graph covers.
Let $G$ be a group, $S\seq G$ a generating set, and $N$ a normal subgroup of~$G$.
We define the quotient Cayley graph $\Gamma^N := \Gamma (G/N, \{Ns \mid s\in S\})$.
The \emph{voltage} of a cycle $C = Ns_1,\dots, Ns_1\cdots s_k$ in $\Gamma^N$ is given by the product $s_1\cdots s_k$.
A direct application of Lemma~\ref{lem:factor-comp} gives the following lemma.

\begin{lemma}\label{lem:voltage}
Let $G$ be a group, $S \seq G$ a generating set, $\Gamma = \Gamma(G,S)$ its Cayley graph, and $N$ a normal subgroup of~$G$.
If there is a Hamilton cycle in~$\Gamma^N$ with voltage~$v$ such that $\langle v \rangle = N$, then $\kappa(\Gamma) \geq |N|$.
\end{lemma}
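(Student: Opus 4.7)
My plan is to obtain Lemma~\ref{lem:voltage} as an immediate corollary of Lemma~\ref{lem:factor-comp}, by lifting the given Hamilton cycle in~$\Gamma^N$ to a suitable path in~$\Gamma$.

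First, I would cyclically reindex the Hamilton cycle in~$\Gamma^N$ so that it begins at the identity coset~$N$. Writing the generators traversed in order as $s_1,\ldots,s_k\in S$ with $k:=|G/N|$, the cycle then visits the cosets $N, Ns_1, Ns_1s_2,\ldots, Ns_1\cdots s_{k-1}$, which are pairwise distinct by the Hamilton property, and the voltage around the cycle is $v=s_1\cdots s_k\in N$. Cyclic reindexing only replaces $v$ by a conjugate, and normality of~$N$ in~$G$ then ensures that this conjugate still generates~$N$, so the hypothesis $\langle v\rangle=N$ is preserved by the shift.

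Next, I would lift the reindexed quotient cycle to the path $P:=(e,\, s_1,\, s_1s_2,\, \ldots,\, s_1\cdots s_{k-1})$ in~$\Gamma$. Its $i$-th vertex lies in the coset $N\cdot s_1\cdots s_{i-1}$, so, by the Hamilton property in~$\Gamma^N$, the $k$ vertices of~$P$ meet every right coset of $\langle v\rangle=N$ in~$G$ exactly once. Moreover, $P$ starts at~$e$ and ends at $s_1\cdots s_{k-1}=v\cdot s_k^{-1}$, where $s_k^{-1}\in S$ since the generating set is assumed symmetric (Section~\ref{sec:prelim-cayley}).

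Finally, I would invoke Lemma~\ref{lem:factor-comp} with $g:=v$ and $s:=s_k^{-1}$, which yields $\kappa(\Gamma)\ge\ord(v)=|\langle v\rangle|=|N|$, as desired. The proof is essentially a translation from quotient to cover; no serious obstacle arises. The only technical point worth stating carefully is that the cyclic shift of the starting coset does not destroy the hypothesis $\langle v\rangle=N$, which follows from the normality of~$N$.
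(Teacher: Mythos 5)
Your proposal is correct and follows exactly the route the paper indicates: the paper states that Lemma~\ref{lem:voltage} is ``a direct application of Lemma~\ref{lem:factor-comp},'' and your argument spells out that application by lifting the quotient Hamilton cycle to the path $P=(e,s_1,s_1s_2,\ldots,s_1\cdots s_{k-1})$ in~$\Gamma$ from $e$ to $vs_k^{-1}$ meeting each coset of $N=\langle v\rangle$ once. The extra remark that cyclic reindexing only conjugates the voltage, and that normality of~$N$ preserves the hypothesis $\langle v\rangle=N$, is a correct and worthwhile detail that the paper leaves implicit.
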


\begin{remark}
\label{rem:inclusion}
For a group~$G$ and a generating set~$S$, automorphisms of~$\Gamma(G,S)$ in the \emph{left regular representation} of the group are the automorphisms of the form $x \mapsto gx$ for some $g\in G$.
In particular, note that the automorphisms in Lemma~\ref{lem:voltage} are in the left regular representation.
Such mappings are automorphisms of~$\Gamma(G,S)$ \emph{independently} of the generating set~$S$.
The same observation holds for mappings~$f:G \to G$ that satisfy
\begin{equation}
\label{eq:fyfx}
f(y)^{-1}f(x)=x^{-1}y
\end{equation}
for all~$x,y \in G$, as they are also automorphisms of~$\Gamma(G,S)$ independently of~$S$.
All automorphisms considered in this section are either in the left regular representation or they satisfy~\eqref{eq:fyfx}, so we may assume without loss of generality that the generating set~$S$ is inclusion-minimal.
\end{remark}

\subsection{Odd order}

In this section we consider Cayley graphs of abelian groups~$G$ for which the order $n:=|G|$ is odd.
We will distinguish two regimes: when $n$ is square-free and when $n$ has a square divisor.
Note that when $n$ is square-free, by Theorem~\ref{thm:abelian-struct}, $G$ is a direct sum of cyclic groups of prime order, i.e., $G=\bigoplus_{i=1}^\ell \mathbb{Z}_{p_i}$ for distinct odd primes $p_1,\ldots, p_\ell$.
In this case, we say that a generating set~$S\seq G$ is \emph{canonical} if $S = \{s^1,\dots, s^\ell\}$ and for every $i,j \in [\ell]$ we have that $s_i^j = 0$ if and only if $i\neq j$.
The main result of this section is that, in the odd order case, the Cayley graphs are incompressible if and only if $n$ is composite and square-free, and $S$ is a canonical generating set.

\begin{theorem}
\label{thm:odd-order}
Let $G$ be an abelian group of odd order $n$, $S\seq G$ a generating set and $\Gamma = \Gamma(G,S)$ its Cayley graph.
\begin{enumerate}[label=(\roman*),leftmargin=8mm, topsep=0mm, noitemsep]
\item If $n$ is composite and square-free, and $S$ is a canonical generating set, then $\kappa(\Gamma) = 1$.
\item Otherwise, there is a prime~$p$ in the decomposition of~$n$ such that $\kappa(\Gamma)\geq p$.
\end{enumerate}
\end{theorem}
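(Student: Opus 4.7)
I would prove part~(i) by contradiction. Suppose $\kappa(\Gamma) \geq k$ for some $k \geq 2$, witnessed by an automorphism $f$ and a $k$-symmetric Hamilton cycle $C = P, f(P), \ldots, f^{k-1}(P)$. Since $n = \prod_{i=1}^\ell p_i$ is odd and $k \mid n$, the integer $k$ is odd. With $S$ canonical, $\Gamma$ is isomorphic to the Cartesian product $C_{p_1} \boxprod \cdots \boxprod C_{p_\ell}$, so Lemma~\ref{lem:product-auto} gives $\Aut(\Gamma) \cong \prod_i D_{p_i}$. Any element of odd order in each dihedral factor $D_{p_i}$ must be a rotation, so $f$ is addition by some $\delta \in G$, with $I := \{i : \delta_i \neq 0\}$ and $k = \ord(\delta) = \prod_{i \in I} p_i$. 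The path $P$ has length $n/k$ and visits each coset of $\langle \delta\rangle$ exactly once; the $n/k$ generator labels along $P$ together with the closing edge joining $x_{n/k}$ to $f(x_1) = x_1 + \delta$ must vector-sum to~$\delta$. Each internal edge of $P$ must change the coset (else two vertices of $P$ would share a coset), forcing its label $\pm e_j$ to satisfy $j \notin I$. Restricting the vector equation to coordinates in $I$ shows that the closing edge alone must carry all of $\delta\vert_I$. Since each $\pm e_j$ has support on a single coordinate, this forces $|I| = 1$, say $I = \{i_0\}$, and the closing edge equal to $\pm e_{i_0}$. But then this edge projects to $0$ in $G/\langle \delta\rangle$, contradicting that it connects the distinct cosets containing $x_{n/k}$ and $x_1$ (distinct because $n/k = \prod_{i \neq i_0} p_i \geq 3$ under $\ell \geq 2$). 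This yields part~(i).

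For part~(ii) I would split into three cases. If $n$ is prime, then $G \cong \mathbb{Z}_n$ and any $s \in S$ has $\ord(s) = n$, so $(0, s, 2s, \ldots, (n-1)s)$ is an $n$-symmetric Hamilton cycle under $x \mapsto x + s$, giving $\kappa(\Gamma) \geq n$. Otherwise, either $n$ has a square divisor, or $n$ is composite and square-free with $S$ non-canonical. In both remaining cases I would apply the compression factor group lemma (Lemma~\ref{lem:voltage}) with a subgroup $N = \langle g\rangle$ of prime order $p \mid n$. If $S$ contains an element of order $n$ (possible in particular in Case C when $|S| = 1$ after minimization via Remark~\ref{rem:inclusion}), the direct cyclic construction applies. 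Otherwise, by the structure assumption there is an $s \in S$ whose order is divisible by some prime $p$ that also divides $|G/\langle s\rangle|$ (this uses $p^2 \mid n$ in Case B; and in Case C the non-canonicity of~$S$ forces some $s$ with support across at least two of the prime summands $\mathbb{Z}_{p_i}$ of $G \cong \mathbb{Z}_n$). I then set $g := (\ord(s)/p)\,s \in G$ of order $p$, making $N := \langle g\rangle$ a subgroup of order $p$.

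It remains to exhibit a Hamilton cycle in the quotient $\Gamma^N$ whose voltage lies in $N \setminus \{0\}$ (and hence generates $N$, since $|N|=p$ is prime). Existence of some Hamilton cycle in $\Gamma^N$ is immediate from Theorem~\ref{thm:abelian}; the point is to arrange non-zero voltage. For this I would exploit the flexibility given by the Chen-Quimpo theorem (Theorem~\ref{thm:chen-quimpo}): the quotient Cayley graph is a connected abelian Cayley graph, and assuming sufficient degree (which holds after reducing $S$ to an inclusion-minimal generating set by Remark~\ref{rem:inclusion}, since by Lemma~\ref{lem:connectivity} the relevant connectivity is ample) it is Hamilton-connected or Hamilton-laceable. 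The idea is to choose $g$ so that two elements $s, s' \in S \cup S^{-1}$ have $\bar s = \bar s'$ with $s - s' \in N \setminus \{0\}$; then a Hamilton cycle using the edge $\bar s$ can be locally modified to use $\bar s'$ instead, changing the voltage by the generator $s - s'$ of $N$, so at least one of the two resulting cycles has voltage generating~$N$. Lemma~\ref{lem:voltage} then delivers the required $p$-symmetric Hamilton cycle in~$\Gamma$.

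The main obstacle is precisely this voltage control in part~(ii): abelian Cayley graphs have abundant Hamilton cycles, but the voltage in~$N$ may vanish for some (or even all) of the natural constructions, as happens for $G = \mathbb{Z}_3 \oplus \mathbb{Z}_3$ with the coordinate subgroups~$N$. The delicate step is therefore the coordinated choice of the prime $p$, of~$g$, and of two generators in $S \cup S^{-1}$ whose difference is~$g$; the square-divisor assumption in Case~B and the non-canonicity in Case~C provide, after a small case analysis, exactly the structural room to make this choice. Once the choice is made, the swap-and-modify argument using Chen-Quimpo is the technical heart of the proof.
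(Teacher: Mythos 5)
Your proof of part~(i) is essentially correct and follows the same approach as the paper: factor $\Gamma$ as a Cartesian product of prime cycles, use Lemma~\ref{lem:product-auto} to pin down $\Aut(\Gamma)$ as a product of dihedral groups, deduce that an odd-order automorphism must be a translation, and derive a contradiction from coset counting along the closing edge. The paper first reduces to $p_i$-symmetry (for a prime $p_i$ dividing the compression factor) before running the coset argument, which is slightly cleaner than your direct handling of general~$k$, but both are sound.

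Your proof of part~(ii) takes a genuinely different route and has a real gap. The paper's argument does \emph{not} use the edge-relabelling swap you propose. Instead it uses $2$-connectivity (Lemma~\ref{lem:connectivity}) to produce \emph{some} cycle in the quotient $\Gamma^N$ with nonzero voltage, and then invokes the cycle-decomposition result for odd-order abelian Cayley graphs (Lemma~\ref{lem:cycle_decomp}) to deduce that at least one Hamilton cycle has nonzero voltage. Your plan to replace this with Chen--Quimpo plus a swap between two generators $s,s'\in S\cup S^{-1}$ with $\bar s=\bar s'$ and $s-s'\in N\setminus\{0\}$ mirrors what the paper does in the \emph{even} case (Theorem~\ref{thm:comp2}, Case~2, where $s$ and $-s$ always collide modulo $\langle 2s\rangle$), but this collision simply does not occur in general when $|G|$ is odd. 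Concretely, take $G=\ZZ_{15}\oplus\ZZ_7$ with the inclusion-minimal set $S=\{(1,0),(0,1)\}$; here $n=105$ is square-free and $S$ is non-canonical (the first generator has composite order $15$), so this falls under your Case~C. Following your recipe, $s=(1,0)$, $p=3$, $g=5s=(5,0)$, $N=\langle g\rangle$. Then the four elements of $S\cup S^{-1}$ project to four \emph{distinct} elements of $G/N\cong\ZZ_5\oplus\ZZ_7$, so no two of them differ by a nonzero element of~$N$. Every edge of the quotient graph has a unique label, no swap is available, and your voltage-control step cannot get started. The "small case analysis" you gesture at does not close this gap; the paper avoids it entirely by going through Lemma~\ref{lem:cycle_decomp}, which is the genuinely nontrivial input in the odd case.
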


A particularly useful lemma in the odd order case is the following.

\begin{lemma}[{\cite[consequence of Theorem 2.1]{MR1057481}}]
\label{lem:cycle_decomp}
Let $G$ be an abelian group of odd order, $S\seq G$ a generating set, and $\Gamma = \Gamma(G,S)$ its Cayley graph.
If $C$ is a cycle in~$\Gamma$, then there are Hamilton cycles $H_1,\dots, H_r$ in~$\Gamma$ such that $C = H_1 \symdif \cdots \symdif H_r$, i.e., $C$ is the symmetric difference of $H_1,\ldots,H_r$.
\end{lemma}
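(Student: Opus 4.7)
The plan is to work in the cycle space $\mathcal{Z}(\Gamma)\subseteq \mathbb{F}_2^{E(\Gamma)}$, where addition is symmetric difference. Since every cycle of $\Gamma$ lies in $\mathcal{Z}(\Gamma)$, the lemma is equivalent to the statement that the set of Hamilton cycles spans $\mathcal{Z}(\Gamma)$ as an $\mathbb{F}_2$-vector space. I would therefore first reduce to a convenient spanning set of $\mathcal{Z}(\Gamma)$. For a Cayley graph $\Gamma(G,S)$ of an abelian group, a classical computation of the fundamental group shows that $\mathcal{Z}(\Gamma)$ is generated (as an $\mathbb{F}_2$-space) by translates of two types of elementary cycles: commutation squares $Q_{a,b}(x):=(x,x+a,x+a+b,x+b)$ for distinct $a,b\in S$ with $b\neq -a$, and torsion cycles $T_a(x):=(x,x+a,x+2a,\ldots,x+(\ord(a)-1)a)$ for $a\in S$ with $\ord(a)\geq 3$. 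So it suffices to exhibit each such elementary cycle as a symmetric difference of Hamilton cycles.

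The main tool is Chen--Quimpo (Theorem~\ref{thm:chen-quimpo}). Because $|G|$ is odd, $\Gamma$ cannot be bipartite (equal-size bipartition would force $|G|$ even), so provided the minimum degree is at least $3$, $\Gamma$ is Hamilton-connected; when $|S|\leq 2$ the graph $\Gamma$ is itself a single cycle whose cycle space is one-dimensional and spanned by that unique Hamilton cycle, so we may assume $|S|\geq 3$. From this I would derive a switching lemma for a square $Q=Q_{a,b}(x)$: using Hamilton-connectedness together with the high connectivity from Lemma~\ref{lem:connectivity} to pick two internally disjoint paths $A_1$ (from $x+a$ to $x+b$) and $A_2$ (from $x+a+b$ to $x$) whose union covers every vertex of $G\setminus\{x,x+a,x+b,x+a+b\}$, one obtains a Hamilton cycle
\[
H_1 \;=\; \{x,x+a\}\cup A_1\cup \{x+b,x+a+b\}\cup A_2.
\]
Swapping the two opposite edges of $Q$ used and reversing $A_1$ gives a second Hamilton cycle
\[
H_2 \;=\; \{x,x+b\}\cup \lvec{A_1}\cup\{x+a,x+a+b\}\cup A_2,
\]
and by construction $H_1\symdif H_2=Q$.

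The main obstacle is the analogous step for a torsion cycle $T_a(x)$ of length $k\geq 5$: no two Hamilton cycles can differ by a $k$-gon via a single swap, since the edges of $T_a(x)$ do not form two perfect matchings on four vertices. I would handle this by induction on $k$, using the identity
\[
T_a(x) \;=\; \Delta \;\symdif\; T_a'(x),
\]
where $\Delta$ is a triangle or a square formed by introducing an auxiliary chord generated by $S$ (one such chord always exists because $|S|\geq 3$ and $\Gamma$ is vertex-transitive), and $T_a'(x)$ is a torsion cycle of strictly smaller length. Each of $\Delta$ and $T_a'(x)$ is a symmetric difference of Hamilton cycles by the square case or the inductive hypothesis. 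Combining these reductions shows that every generator of $\mathcal{Z}(\Gamma)$, and hence every cycle of $\Gamma$, is a symmetric difference of Hamilton cycles. The finitely many small-order exceptional cases of Chen--Quimpo would be checked directly by enumeration.
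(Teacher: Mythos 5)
The paper offers no proof of this lemma at all: it is imported verbatim from the cited reference~\cite{MR1057481} as a consequence of Theorem~2.1 there. So the only question is whether your argument stands on its own, and it does not. The skeleton (Hamilton cycles span $\mathcal{Z}(\Gamma)$ if and only if every member of some generating set of $\mathcal{Z}(\Gamma)$ is a sum of Hamilton cycles) is fine, but each of your three key steps has a gap. (1)~That translates of commutation squares and torsion cycles span $\mathcal{Z}(\Gamma)$ is not a ``classical computation of the fundamental group'' valid for all abelian groups: that computation shows that the quotient of $H_1(\Gamma;\mathbb{F}_2)$ by the span of these cycles is $K\otimes\mathbb{F}_2$, where $K$ is the kernel of the natural surjection from $\bigoplus_{s\in S}\mathbb{Z}_{\ord(s)}$ onto $G$. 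This vanishes only because $|G|$, and hence $|K|$, is odd --- a hypothesis you never invoke at this point --- and the claim is false for, say, $\mathbb{Z}_2\oplus\mathbb{Z}_4$ with $S=\{(1,1),(0,1)\}$. (2)~The square switching step requires two vertex-disjoint paths with prescribed endpoint pairs $(x+a,x+b)$ and $(x+a+b,x)$ that jointly span $V(G)$. This spanning $2$-path-system property is strictly stronger than Hamilton-connectedness; neither Theorem~\ref{thm:chen-quimpo} nor Lemma~\ref{lem:connectivity} yields it, and you give no construction.

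(3)~The torsion-cycle induction is broken. The auxiliary chord must be an edge of $\Gamma$ joining two vertices of $T_a(x)$, i.e.\ some $s\in S\setminus\{a,-a\}$ with $s\in\langle a\rangle$; vertex-transitivity is irrelevant here, and such an $s$ need not exist: in $\Gamma\bigl(\mathbb{Z}_3\oplus\mathbb{Z}_5,\{\pm(1,0),\pm(0,1)\}\bigr)$ the coset $x+\langle(0,1)\rangle$ induces a chordless $5$-cycle, so $T_{(0,1)}(x)$ admits no such reduction. There is also a parity obstruction that your scheme cannot circumvent: $|H_1\symdif\cdots\symdif H_r|\equiv r|G|\equiv r \pmod 2$, so any odd cycle --- every torsion cycle (of odd length $\ord(a)$) and every triangle $\Delta$ --- requires an odd number $r\ge 3$ of Hamilton cycles, which the two-cycle square mechanism never produces. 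Finally, after cutting off $\Delta$ the remaining cycle is no longer of the form $T_{a'}(x')$, so the induction does not stay within your chosen generating set. The honest course is to keep this lemma as a citation to~\cite{MR1057481}, or to reproduce the argument given there.
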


We are now ready to prove Theorem~\ref{thm:odd-order}.

\begin{proof}[Proof of Theorem~\ref{thm:odd-order}]
We begin by proving~(i).
Let $n = p_1 \cdots p_\ell$ with $p_i$ prime for $i \in [\ell]$.
By Theorem~\ref{thm:abelian-struct}, we have that $G = \bigoplus_{i=1}^\ell \ZZ_{p_i}$.
Since $S$ is a canonical generating set, we have that $S=\{s_1,\dots, s_\ell\}$ with $\ord(s_i)=p_i$ for every $i\in [\ell]$.
As a consequence, $\Gamma = C_{p_1} \boxprod \dots \boxprod C_{p_\ell}$; i.e., $\Gamma$ is a Cartesian product of \emph{prime} graphs, i.e., graphs that are not products of two non-trivial graphs.
Thus, by~{\cite[Theorem 6.13]{MR2817074}} the automorphism group of~$\Gamma$ is the product of the automorphism groups of each prime graph, i.e., $\Aut(\Gamma)$ is the direct product of $D_{2p_i}$ the dihedral groups of order $2p_i$ for $i \in [\ell]$.
Suppose there is a Hamilton cycle~$C$ with $\kappa(C)=m$ for some divisor~$m$ of~$n$.
Then, $C$ is $p$-symmetric for every prime~$p$ dividing~$m$; in particular, there is an $i\in[\ell]$ such that $C$ is $p_i$-symmetric, and there is an automorphism $f \in \Aut(\Gamma)$ of order~$p_i$ such that $C$ is $f$-invariant.
Since $\langle s_i \rangle$ is a normal subgroup of $D_{2p_i}$, it is also a normal subgroup of $\Aut(\Gamma)$, which implies that $f(x) = ks_i+x$ for some $k \in [p_i-1]$.
Hence, there is a path $P = x_1, \dots, x_{n/p_i}$ hitting every coset of $\langle s_i \rangle$ exactly once, and
\[C = P,f(P), \dots, f^{p_i-1}(P).\]
Thus, there is an edge labelled~$s_i$ or $-s_i$ between~$x_{n/p_{i}}$ and~$ks_i+x_{1}$; i.e., $x_{n/p_{i}}-(ks_i+x_{1})$ equals either $s_i$ or $-s_i$.
However, this would imply that $x_{n/p_{i}}+ \langle s_i \rangle= x_1+\langle s_i\rangle$, a contradiction.

We now prove part~(ii) of the theorem.
We consider three subcases.
\begin{itemize}
\item \textbf{Case 1:} $n$ is prime.
In this case $\Gamma = C_p$ and therefore $\kappa (\Gamma) = p$.

\item \textbf{Case 2:} $n$ is composite and square-free, $S$ is not canonical.
Let $n = p_1\dots p_\ell$ with $p_i$ prime for all~$i \in [\ell]$.
Since $S$ is not canonical, there exist an $s \in S$ of composite order $p_im$ for some $i \in [\ell]$ and $m \in \NN$.
Let $h = ms$, and consider the graph $\Gamma^{\langle h \rangle}$.
By Lemma~\ref{lem:connectivity}, $\Gamma$ is 2-connected, which implies that there is a cycle containing two vertices that lie in the same coset of $\langle h\rangle$, and consequently we obtain a cycle $C$ in $\Gamma^{\langle h \rangle}$ with nonzero voltage.
Additionally, since $G/\langle h\rangle$ is an abelian group of odd order, by Lemma~\ref{lem:cycle_decomp} we have that there are $r$ Hamilton cycles $H_1, \dots, H_r$ in $\Gamma^{\langle  h \rangle}$ such that $C = H_1 \symdif \cdots \symdif H_r$.
Furthermore, since $C$ has nonzero voltage, at least one of $H_1,\ldots,H_r$ must have a nonzero voltage.
By Lemma~\ref{lem:voltage} we conclude that $\kappa(\Gamma)\geq |\langle h \rangle |= p_i$.

\item \textbf{Case 3:} $n$ is divisible by $p^2$ for some prime $p$.
We may assume without loss of generality that $S$ is inclusion-minimal (recall Remark~\ref{rem:inclusion}).
We claim that there exists an element of~$G\setminus S$ of order~$p$.
Note that a Sylow $p$-subgroup $P$ of~$G$ contains either~$\ZZ_{p^2}$ or~$\ZZ_{p}^2$ as a subgroup~$H$.
In the first case, we take a generator~$a$ of $H$, and we let $h:=pa$.
By the minimality of~$S$, we have that $h \notin S$.
In the second case, we note that $H=\langle a \rangle \oplus \langle b \rangle$ for some $a,b \in G$.
By the minimality of~$S$, we have that $\{a,b,a+b\}\not\seq S$ and let $h \in  \{a,b,a+b\}\setminus S$.
Thus, in both cases, we have that $G/\langle h \rangle$ is an abelian group of odd order, and we can proceed as in case~2 before.\qedhere
\end{itemize}
\end{proof}

\subsection{Even order}

In the previous section, we saw that if $|G|$ is odd, then the Cayley graph $\Gamma(G,S)$ has trivial Hamilton compression if and only if $|G|$ is composite and square-free, and $S$ is a canonical generating set.
The main objective of this section is to show that if $|G|$ is even, then the Cayley graph for any generating set has non-trivial compression.

\begin{theorem}
\label{thm:comp2}
Let $G$ be an abelian group and~$S\seq G$ a generating set.
If $|G|\geq 4$ is even, then the Cayley graph~$\Gamma=\Gamma(G,S)$ has compression $\kappa(\Gamma)\ge 2$.
\end{theorem}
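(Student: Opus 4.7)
The plan is to invoke Lemma~\ref{lem:voltage}, the compression variant of the factor group lemma. Since $|G|$ is even, Cauchy's theorem supplies an element $h\in G$ of order~$2$; set $N:=\langle h\rangle$. The left-translation $f(x)=x+h$ is an order-$2$ automorphism of $\Gamma$ in the left regular representation, so it suffices to find a Hamilton cycle of the quotient Cayley graph $\bar\Gamma := \Gamma^N$ whose voltage equals $h$, since Lemma~\ref{lem:voltage} then yields $\kappa(\Gamma)\geq |N|=2$.

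By Theorem~\ref{thm:abelian}, $\bar\Gamma$ has a Hamilton cycle $\bar C$, whose voltage lies in $\{0,h\}$. If the voltage equals $h$, we are done. Otherwise, two types of modification can be used to change the voltage. The first is \emph{label flexibility}: if an edge of $\bar C$ is labeled $\pi(s)$ (where $\pi: G \to G/N$ is the projection) for some $s\in S$ such that $s+h\in S$ as well, then relabeling that edge by $s+h$ instead of $s$ changes the voltage by exactly $h$, yielding the desired cycle. The second is \emph{structural flexibility}: replacing an edge of $\bar C$ by a different edge (a local rotation in $\bar\Gamma$) changes the voltage by an element of $N$, and one can choose the exchange so that this element equals $h$. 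The required local exchanges are provided by the strong Hamiltonicity of $\bar\Gamma$ guaranteed by Chen--Quimpo (Theorem~\ref{thm:chen-quimpo}) whenever $\bar\Gamma$ satisfies the minimum-degree hypothesis.

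The main obstacle is the rigid regime in which neither flexibility applies, which occurs essentially when $\bar\Gamma$ is a cycle (so $|\pi(S)\setminus\{0\}|=2$) and the labels are forced. When $h\notin S$, a direct computation resolves this case: $S=\{s,-s\}$ forces $G=\langle s\rangle$ to be cyclic of even order, and the voltage $\tfrac{|G|}{2}\,s$ of the unique Hamilton cycle of $\bar\Gamma$ is the unique element of order $2$ in $G$, which is exactly $h$. The subtlest remaining sub-case is $h\in S$ with $\bar\Gamma$ a cycle; here the translation-by-$h$ automorphism provably cannot produce a $2$-symmetric Hamilton cycle (as one sees already for the prism $C_n\boxprod K_2$ with $n$ odd, where every path across the three cosets lives in a single copy of $C_n$ and fails to close). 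In this final case I would abandon translation and use the involution $f(x)=-x+g$ for some $g\in G\setminus 2G$; such a $g$ exists because $|G|$ is even, and $f$ is always an automorphism of $\Gamma$ (as $S=-S$), has order~$2$, and is fixed-point-free, so all its orbits have size~$2$. A $2$-symmetric Hamilton cycle under this $f$ is then built by applying Chen--Quimpo (Theorem~\ref{thm:chen-quimpo}) to find a suitable Hamilton path on one transversal of the orbits of $f$. The very small case $|G|=4$ (where $G\in\{\mathbb{Z}_4,\mathbb{Z}_2^2\}$) is verified by inspection.
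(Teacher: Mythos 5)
The overall framework (voltage lemma plus a fallback involution $x\mapsto -x+g$) is in the right spirit, but your argument has a genuine gap: the ``structural flexibility'' step is asserted without proof and is in fact false for some admissible choices of~$h$. Concretely, take $G=\mathbb{Z}_4\oplus\mathbb{Z}_2$ with $S=\{(\pm 1,0),(2,1)\}$ (inclusion-minimal, $S=-S$), and choose $h=(0,1)$, an element of order~2 with $h\notin S$. Then $N=\{(0,0),(0,1)\}$, $G/N\cong\mathbb{Z}_4$, and $\pi(S)=\{\bar 1,\bar 2,\bar 3\}$, so $\Gamma^N\cong K_4$ has degree~3. Label flexibility is unavailable (no $s\in S$ has $s+h\in S$), so every edge of $\Gamma^N$ has a forced label, and one can check directly that each of the three Hamilton cycles of $K_4$ has voltage $(0,0)$: e.g.\ $(\bar0,\bar1,\bar2,\bar3)$ gives $4(1,0)=(0,0)$, and $(\bar0,\bar1,\bar3,\bar2)$ and $(\bar0,\bar2,\bar1,\bar3)$ both give $(1,0)+(2,1)+(3,0)+(2,1)=(0,0)$. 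So no Hamilton cycle of $\Gamma^N$ has voltage $h$, and neither flexibility applies even though $\Gamma^N$ is far from a cycle; your characterization of the ``rigid regime'' is therefore also off. The underlying problem is that picking an \emph{arbitrary} involution $h$ via Cauchy and only then trying to hit voltage~$h$ does not work: the choice of $h$ must be coordinated with $S$.

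The paper avoids this entirely by first choosing $s\in S$ of \emph{even order} (which must exist since $|G|$ is even) and then branching on $\ord(s)$. If $\ord(s)\geq 4$, it sets $h:=2s$, so that $s$ and $-s$ land in the same coset of $\langle h\rangle$; the number $K$ of $\pm s$-labeled edges on a Hamilton cycle of $\Gamma^{\langle h\rangle}$ is positive (by minimality of $S$) and even (it is a cycle), and freely flipping $+s\leftrightarrow -s$ realizes voltage $2s=h$ — this is precisely the ``label flexibility'' you describe, but with $h$ chosen so that it is guaranteed to be available, and it even yields $\kappa(\Gamma)\geq\ord(s)/2$. If $\ord(s)=2$, the paper does not go through the quotient at all but constructs the $2$-symmetric cycle directly from a Hamilton path in $\Gamma(\langle S'\rangle,S')$ and the automorphism $(x,y)\mapsto(s+x,v-y)$, which is exactly your $x\mapsto -x+g$ with $g=(s,v)$ in the decomposition $G\cong\mathbb{Z}_2\oplus\langle S'\rangle$ (available by minimality of~$S$). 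Your final rigid-case plan is therefore the right idea, but the verification that a suitable transversal path exists and closes up under $f$ — which your proposal gestures at via Chen--Quimpo — is precisely the content the paper supplies and you leave unproved (and Chen--Quimpo does not straightforwardly apply to an induced subgraph on a transversal of orbits). In short: the fallback involution idea is correct but undeveloped, the structural-flexibility claim is false as stated, and the proof cannot be repaired without essentially adopting the paper's ``choose $s$, set $h=2s$ or go direct'' organization in place of ``choose an arbitrary order-$2$ element $h$.''
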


\begin{proof}
If $G = \langle s \rangle$ for some $s\in S$, then we have $\kappa (\Gamma) = \kappa(C_{|G|})= |G| \geq 4$ and the theorem holds.
Thus, from now on we assume that $G\neq \langle s \rangle$ for every $s\in S$.
We begin by noting that $S$ must have an element~$s\in S$ of even order.
Furthermore, we may assume without loss of generality that $S$ is inclusion-minimal (recall Remark~\ref{rem:inclusion}).
We consider two cases.
\begin{itemize}
\item \textbf{Case 1:} $\ord(s)=2$.
Let $S' := S \setminus \{s,-s\}$.
In this case $G \cong \langle s \rangle \oplus \langle S' \rangle \cong \ZZ_2 \oplus \langle S' \rangle $, and in the rest of the proof we will write elements of~$G$ as pairs w.r.t.\ this direct sum representation.
Theorem~\ref{thm:abelian} yields a Hamilton path~$P$ in~$\Gamma(\langle S' \rangle, S')$ from~$0$ to $v$ for some $v\in\langle S'\rangle$.
It follows that $\hat P:=(0,P)$ is a path in~$\Gamma$ from $(0,0)$ to $(0,v)$.
Let us define a mapping $f:G \to G$ by $f(x,y)=(s+x,v-y)$, where $x\in \langle s\rangle$ and $y\in\langle S'\rangle$.
It is easy to check that $f$ is an automorphism of~$\Gamma$.
Furthermore, for every $(x,y),(x',y') \in G$ we have
\begin{align*}
  -f(x',y')+f(x,y)=(x-x',y'-y)=(x'-x,y'-y)=-(x,y)+(x',y')
\end{align*}
which implies that~\eqref{eq:fyfx} holds.

We claim that $C:=\hat P,f(\hat P)$ is a 2-symmetric Hamilton cycle.
Note that $\hat P$ and $f(\hat P)$ are disjoint paths, as $V(\hat P)=\{0\}\oplus \langle S'\rangle$ and $V(f(\hat P))=\{g\}\oplus \langle S'\rangle$.
The path~$\hat P$ starts at~$(0,0)$ and ends at~$(0,v)$, and the path~$f(\hat P)$ starts at~$(g,v)$ and ends~$(g,0)$, so $C$ is indeed a cycle in~$\Gamma$.
Its length is $2|\hat P|=2|\langle S'\rangle|=|G|$, so it is a Hamilton cycle.
The automorphism~$f$ shows that $C$ is 2-symmetric, proving that $\kappa(\Gamma)\ge 2$.

\item \textbf{Case 2:} $\ord(s)\geq 4$.
Let $h := s+s$ and consider the quotient graph $\Gamma^{\langle h\rangle}$.
Since $\Gamma^{\langle h \rangle}$ is the Cayley graph of an abelian group, by Theorem~\ref{thm:abelian}, it has a Hamilton cycle $C = x_1 + \langle h \rangle, x_2 + \langle h \rangle,\dots, x_{n/\ord(h)}+\langle h \rangle$.
We say that an edge is labelled $+s$ if it is of the form $(x,x+s)$ for some $x \in G$.
Similarly, we say that an edge is labelled $-s$ if it is of the form $(x,x-s)$ for some $x\in G$.
Let $K$ be the number of edges in~$C$ labelled $+s$ or $-s$.
Since $S$ is minimal, we have that $K>0$ and, since $C$ is a cycle, we have that $K$ is even.
Furthermore, since $s+\langle h\rangle =-s+ \langle h\rangle$, replacing an edge labelled $+s$ with an edge labelled $-s$ (and vice versa) gives another Hamilton cycle in $\Gamma^{\langle h \rangle}$.
In particular, for every $k,\ell \in \NN$ such that $k+\ell = K$, there is a Hamilton cycle $C^{k,\ell}$ of $\Gamma^{\langle h \rangle}$ that uses $k$ edges labelled $+s$ and $\ell$ edges labelled $-s$.
Since the voltage of $C^{k,\ell}$ is $(k-\ell)s$, there is a Hamilton cycle of $\Gamma^{\langle h \rangle}$ with voltage~$s+s$.
Thus, by Lemma~\ref{lem:voltage} we conclude that $\kappa(\Gamma) \geq |\langle h \rangle| = \frac{\ord(s)}{2}\geq 2$.\qedhere
\end{itemize}
\end{proof}

\section{Open questions}
\label{sec:open}

We conclude this paper with a number of interesting open questions.

\begin{enumerate}[label=(Q\arabic*), leftmargin=8mm, topsep=1mm, noitemsep]
\item
Can the Gray codes constructed in this paper be computed efficiently?
While our proofs translate straightforwardly into algorithms whose running time is polynomial in the size of the graph, a more ambitious goal would be algorithms whose running time per generated vertex is polynomial in the length of the vertex labels (bitstrings, combinations, permutations, etc.).
For the cycles in the hypercube with optimal Hamilton compression it should be possible to derive such an algorithm from our construction.
For Johnson graphs this should also be possible, as the path guaranteed by Theorem~\ref{thm:necklace} is efficiently computable.
For permutahedra, this task seems most complicated, as it would require efficiently computing the structures guaranteed by Theorems~\ref{thm:lace} and~\ref{thm:multi-adj}, for which no algorithms are known (unlike for Theorem~\ref{thm:comb-adj}).

\item
What is the Hamilton compression of the middle levels graph?
The best known bounds are $2n+1\le \kappa(M_{2n+1})\le 2(2n+1)$ (recall Theorem~\ref{thm:kappa-middle}).

\item
Odd graphs are another interesting class of vertex-transitive graphs with unknown Hamilton compression.
For any integer~$k\ge 1$, the odd graph~$O_k$ has as vertices all $(2k+1,k)$-combinations, and an edge between any two combinations that have no 1s in common.
Note that the odd graph~$O_k$ is the special Kneser graph~$K(2k+1,k)$.
Odd graphs~$O_k$, $k\ge 3$, were shown to have a Hamilton cycle in~\cite{MR4273468}, so $\kappa(O_k)\ge 1$.
Similarly to the middle levels graph, we can use cyclic shifts as the automorphism.
It is easy to see that $\kappa(O_k)\le 2k+1$, and since all necklaces have the same size~$2k+1$, there is hope to build a $(2k+1)$-symmetric Hamilton cycle.
We constructed such a solution for $k=4$, and we indeed conjecture that $\kappa(O_k)=2k+1$ for all~$k\ge 4$.

\item
What is the Hamilton compression of the associahedron, which has as automorphism group the dihedral group of a regular $n$-gon?
For $n=5,6,7,8$ we determined the values $5,2,7,2$ by computer, and we suspect that the primality of~$n$ plays a role.

\item
Instead of asking about the largest number $k=\kappa(G)$ such that $\Aut(G,C)$ (automorphisms of~$G$ that preserve~$C$) contains the cyclic subgroup of order~$k$ for some Hamilton cycle~$C$ in~$G$, we may ask for the dihedral subgroup of the largest order, which would allow not only for rotations of the drawings but also reflections.

\item
Is there a 1-track Hamilton cycle in~$\Pi_n$ (recall Theorem~\ref{thm:1track})?
In other words, can all $n!$ permutations be listed by adjacent transpositions so that every column is a cyclic shift of every other column?

\item
Is there a balanced Hamilton cycle in~$\Pi_n$?
In other words, can all $n!$ permutations be listed using each of the $n-1$ adjacent transpositions equally often?
Alternatively, can all $n!$ permutations be listed using each of the $\binom{n}{2}$ transpositions equally often (see~\cite{MR4046775})?
For $n=5$, we found orderings satisfying the constraints of both questions.

\item
Is there a balanced Gray code for listing permutations by star transpositions, i.e., transpositions~$(1,i)$ for $i=2,\ldots,n$?
One idea to build such a code is to use the automorphism~$f$ that cyclically left-shifts the last $n-1$ positions, leaving the first position unchanged, and to search any path~$P$ from~$\ide=123\cdots n$ to a neighbor of~$f(\ide)=134\cdots n2$ that visits every orbit exactly one.
By construction, the cycle $C:=P,f(P),\ldots,f^{n-2}(P)$ would be balanced.
We found such a solution for $n=6$ with computer help, and we believe it exists for all even~$n\ge 6$ (for odd~$n$ there are parity problems, and the Gray code has to be built differently).

\item
An automorphism of a graph in which all orbits have the same size is called \emph{semiregular}.
Maru\v{s}i\v{c}~\cite{MR621894} asked if every vertex-transitive digraph has a semiregular automorphism.
This question, independently raised by Jordan~\cite{MR979101}, is now known as `polycirculant conjecture'.
It was shown to hold in some special cases, but remains open in general.

\end{enumerate}

\torsten{Add those new sequences involving $\lambda(n)$ to the OEIS.}

\section*{Acknowledgements}

We thank Fedor Petrov for an idea of how to prove Lemma~\ref{lem:landau-prop}~(v).
We also thank Michal Kouck\'y for an idea that simplified the proof of Lemma~\ref{lem:cube-ub}.
Furthermore, we sincerely thank both anonymous reviewers for their helpful comments.
In particular, one reviewer suggested the current approach for Section~\ref{sec:cayley}, which solved one of the open problems in the conference version of this paper.

\bibliographystyle{alpha}
\bibliography{refs}

\begin{thebibliography}{FKMS20}

\bibitem[ABCC06]{tsp_book}
D.~L. Applegate, R.~E. Bixby, V.~Chv\'{a}tal, and W.~J. Cook.
\newblock {\em The Traveling Salesman Problem: A Computational Study}.
\newblock Princeton University Press, 2006.

\bibitem[Als89]{MR1020643}
B.~Alspach.
\newblock Lifting {H}amilton cycles of quotient graphs.
\newblock {\em Discrete Math.}, 78(1-2):25--36, 1989.

\bibitem[ALW90]{MR1057481}
B.~Alspach, Stephen~C. Locke, and D.~Witte.
\newblock The {H}amilton spaces of {C}ayley graphs on abelian groups.
\newblock {\em Discrete Math.}, 82(2):113--126, 1990.

\bibitem[BS96]{MR1410880}
G.~S. Bhat and C.~D. Savage.
\newblock Balanced {G}ray codes.
\newblock {\em Electron. J. Combin.}, 3(1):Paper 25, 11~pp., 1996.

\bibitem[BW84]{MR737262}
M.~Buck and D.~Wiedemann.
\newblock Gray codes with restricted density.
\newblock {\em Discrete Math.}, 48(2-3):163--171, 1984.

\bibitem[CG96]{MR1405010}
S.~J. Curran and J.~A. Gallian.
\newblock Hamiltonian cycles and paths in {C}ayley graphs and digraphs---a
  survey.
\newblock {\em Discrete Math.}, 156(1-3):1--18, 1996.

\bibitem[CQ81]{MR641233}
C.~C. Chen and N.~F. Quimpo.
\newblock On strongly {H}amiltonian abelian group graphs.
\newblock In {\em Combinatorial mathematics, {VIII} ({G}eelong, 1980)}, volume
  884 of {\em Lecture Notes in Math.}, pages 23--34. Springer, Berlin-New York,
  1981.

\bibitem[DF04]{MR2286236}
D.~S. Dummit and R.~M. Foote.
\newblock {\em Abstract algebra}.
\newblock John Wiley \& Sons, Inc., Hoboken, NJ, third edition, 2004.

\bibitem[DKM21]{MR4328721}
S.~Du, K.~Kutnar, and D.~Maru\v{s}i\v{c}.
\newblock Resolving the {H}amiltonian problem for vertex-transitive graphs of
  order a product of two primes.
\newblock {\em Combinatorica}, 41(4):507--543, 2021.

\bibitem[DNZ08]{MR2523311}
M.~Del\'{e}glise, J.-L. Nicolas, and P.~Zimmermann.
\newblock Landau's function for one million billions.
\newblock {\em J. Th\'{e}or. Nombres Bordeaux}, 20(3):625--671, 2008.

\bibitem[Dus18]{MR3745073}
P.~Dusart.
\newblock Explicit estimates of some functions over primes.
\newblock {\em Ramanujan J.}, 45(1):227--251, 2018.

\bibitem[EHR84]{MR821383}
P.~Eades, M.~Hickey, and R.~C. Read.
\newblock Some {H}amilton paths and a minimal change algorithm.
\newblock {\em J. Assoc. Comput. Mach.}, 31(1):19--29, 1984.

\bibitem[EP96]{MR1445874}
T.~Etzion and K.~G. Paterson.
\newblock Near optimal single-track {G}ray codes.
\newblock {\em IEEE Trans. Inform. Theory}, 42(3):779--789, 1996.

\bibitem[Fen06]{MR2185979}
Y.-Q. Feng.
\newblock Automorphism groups of {C}ayley graphs on symmetric groups with
  generating transposition sets.
\newblock {\em J. Combin. Theory Ser. B}, 96(1):67--72, 2006.

\bibitem[FKMS20]{MR4046775}
S.~Felsner, L.~Kleist, T.~M\"{u}tze, and L.~Sering.
\newblock Rainbow cycles in flip graphs.
\newblock {\em SIAM J. Discrete Math.}, 34(1):1--39, 2020.

\bibitem[Fru77]{MR463029}
R.~Frucht.
\newblock A canonical representation of trivalent {H}amiltonian graphs.
\newblock {\em J. Graph Theory}, 1(1):45--60, 1977.

\bibitem[Gan18]{MR3791054}
A.~Ganesan.
\newblock On the automorphism group of a {J}ohnson graph.
\newblock {\em Ars Combin.}, 136:391--396, 2018.

\bibitem[GJ79]{MR519066}
M.~R. Garey and D.~S. Johnson.
\newblock {\em Computers and intractability}.
\newblock A Series of Books in the Mathematical Sciences. W. H. Freeman and
  Co., San Francisco, Calif., 1979.
\newblock A guide to the theory of NP-completeness.

\bibitem[Gou91]{MR1106528}
R.~J. Gould.
\newblock Updating the {H}amiltonian problem---a survey.
\newblock {\em J. Graph Theory}, 15(2):121--157, 1991.

\bibitem[Gou03]{MR1974368}
R.~J. Gould.
\newblock Advances on the {H}amiltonian problem---a survey.
\newblock {\em Graphs Combin.}, 19(1):7--52, 2003.

\bibitem[Gou14]{MR3143857}
R.~J. Gould.
\newblock Recent advances on the {H}amiltonian problem: {S}urvey {III}.
\newblock {\em Graphs Combin.}, 30(1):1--46, 2014.

\bibitem[Gra53]{gray:patent}
F.~Gray.
\newblock Pulse code communication, 1953.
\newblock March 17, 1953 (filed Nov. 1947). U.S. Patent 2,632,058.

\bibitem[HIK11]{MR2817074}
R.~Hammack, W.~Imrich, and S.~Klav{\v z}ar.
\newblock {\em Handbook of product graphs}.
\newblock Discrete Mathematics and its Applications (Boca Raton). CRC Press,
  Boca Raton, FL, second edition, 2011.
\newblock With a foreword by Peter Winkler.

\bibitem[HPB96]{DBLP:journals/tit/HiltgenPB96}
A.~P. Hiltgen, K.~G. Paterson, and M.~Brandestini.
\newblock Single-track {G}ray codes.
\newblock {\em {IEEE} Trans. Inf. Theory}, 42(5):1555--1561, 1996.

\bibitem[Jon05]{MR2116180}
G.~A. Jones.
\newblock Automorphisms and regular embeddings of merged {J}ohnson graphs.
\newblock {\em European J. Combin.}, 26(3-4):417--435, 2005.

\bibitem[Jor88]{MR979101}
D.~Jordan.
\newblock Eine {S}ymmetrieeigenschaft von {G}raphen.
\newblock In {\em Graphentheorie und ihre {A}nwendungen ({S}tadt {W}ehlen,
  1988)}, volume~9 of {\em Dresdner Reihe Forsch.}, pages 17--20. P\"{a}d.
  Hochsch. Dresden, Dresden, 1988.

\bibitem[KM09]{MR2548567}
K.~Kutnar and D.~Maru{\v s}i{\v c}.
\newblock Hamilton cycles and paths in vertex-transitive graphs---current
  directions.
\newblock {\em Discrete Math.}, 309(17):5491--5500, 2009.

\bibitem[KMR23]{kutnar-marusic-raza:23}
K.~Kutnar, D.~Maru{\v{s}}i{\v{c}}, and A.~S. Razafimahatratra.
\newblock Infinite families of vertex-transitive graphs with prescribed
  {H}amilton compression.
\newblock {\it arXiv:2305.09465}, 2023.

\bibitem[Knu11]{MR3444818}
D.~E. Knuth.
\newblock {\em The Art of Computer Programming. {V}ol. 4{A}. {C}ombinatorial
  Algorithms. {P}art 1}.
\newblock Addison-Wesley, Upper Saddle River, NJ, 2011.

\bibitem[KO12]{MR2889513}
D.~K\"{u}hn and D.~Osthus.
\newblock A survey on {H}amilton cycles in directed graphs.
\newblock {\em European J. Combin.}, 33(5):750--766, 2012.

\bibitem[Lan03]{landau_1903}
E.~Landau.
\newblock {\"U}ber die {M}aximalordnung der {P}ermutationen gegebenen {G}rades.
\newblock In {\em Arch. Math. Phys. Ser. 3}, volume~5. Teubner, 1903.

\bibitem[Lov70]{MR0263646}
L.~Lov{\'a}sz.
\newblock Problem 11.
\newblock In {\em Combinatorial Structures and Their Applications (Proc.
  Calgary Internat. Conf., Calgary, AB, 1969)}. Gordon and Breach, New York,
  1970.

\bibitem[Mar81]{MR621894}
D.~Maru\v{s}i\v{c}.
\newblock On vertex symmetric digraphs.
\newblock {\em Discrete Math.}, 36(1):69--81, 1981.

\bibitem[MMM21]{MR4262479}
A.~Merino, O.~Mi\v{c}ka, and T.~M\"{u}tze.
\newblock On a combinatorial generation problem of {K}nuth.
\newblock In {\em Proceedings of the 2021 {ACM}-{SIAM} {S}ymposium on
  {D}iscrete {A}lgorithms ({SODA})}, pages 735--743. [Society for Industrial
  and Applied Mathematics (SIAM)], Philadelphia, PA, 2021.

\bibitem[MNW21]{MR4273468}
T.~M\"{u}tze, J.~Nummenpalo, and B.~Walczak.
\newblock Sparse {K}neser graphs are {H}amiltonian.
\newblock {\em J. Lond. Math. Soc. (2)}, 103(4):1253--1275, 2021.

\bibitem[M{\"u}t22]{muetze:22}
T.~M{\"u}tze.
\newblock Combinatorial {G}ray codes---an updated survey.
\newblock {\it arXiv:2202.01280}, 2022.

\bibitem[Nic69a]{MR0253514}
J.-L. Nicolas.
\newblock Calcul de l'ordre maximum d'un \'{e}l\'{e}ment du groupe
  sym\'{e}trique {$S_{n}$}.
\newblock {\em Rev. Francaise Informat. Recherche Op\'{e}rationnelle},
  3(S\'{e}r. R-2):43--50, 1969.

\bibitem[Nic69b]{MR254130}
J.-L. Nicolas.
\newblock Ordre maximal d'un \'{e}l\'{e}ment du groupe {$S_{n}$} des
  permutations et ``highly composite numbers''.
\newblock {\em Bull. Soc. Math. France}, 97:129--191, 1969.

\bibitem[{oei}22]{oeis}
{OEIS} {F}oundation {I}nc. {T}he {O}n-{L}ine {E}ncyclopedia of {I}nteger
  {S}equences, 2022.
\newblock \url{http://oeis.org}.

\bibitem[PR09]{MR2548568}
I.~Pak and R.~Radoi{\v c}i{\'c}.
\newblock Hamiltonian paths in {C}ayley graphs.
\newblock {\em Discrete Math.}, 309(17):5501--5508, 2009.

\bibitem[RD11]{MR2801228}
M.~Ramras and E.~Donovan.
\newblock The automorphism group of a {J}ohnson graph.
\newblock {\em SIAM J. Discrete Math.}, 25(1):267--270, 2011.

\bibitem[Rus88]{MR936104}
F.~Ruskey.
\newblock Adjacent interchange generation of combinations.
\newblock {\em J. Algorithms}, 9(2):162--180, 1988.

\bibitem[Sav97]{MR1491049}
C.~D. Savage.
\newblock A survey of combinatorial {G}ray codes.
\newblock {\em SIAM Rev.}, 39(4):605--629, 1997.

\bibitem[SE99]{MR1725126}
M.~Schwartz and T.~Etzion.
\newblock The structure of single-track {G}ray codes.
\newblock {\em IEEE Trans. Inform. Theory}, 45(7):2383--2396, 1999.

\bibitem[SSS09]{MR2548541}
I.~Shields, B.~Shields, and C.~D. Savage.
\newblock An update on the middle levels problem.
\newblock {\em Discrete Math.}, 309(17):5271--5277, 2009.

\bibitem[Sta92]{MR1157583}
G.~Stachowiak.
\newblock Hamilton paths in graphs of linear extensions for unions of posets.
\newblock {\em SIAM J. Discrete Math.}, 5(2):199--206, 1992.

\bibitem[Tch82]{MR683982}
M.~Tchuente.
\newblock Generation of permutations by graphical exchanges.
\newblock {\em Ars Combin.}, 14:115--122, 1982.

\bibitem[TL73]{MR0349274}
D.~Tang and C.~Liu.
\newblock Distance-{$2$} cyclic chaining of constant-weight codes.
\newblock {\em IEEE Trans. Comput.}, C-22:176--180, 1973.

\bibitem[Ued00]{MR1761724}
T.~Ueda.
\newblock Gray codes for necklaces.
\newblock {\em Discrete Math.}, 219(1-3):235--248, 2000.

\bibitem[Wat70]{MR266804}
M.~E. Watkins.
\newblock Connectivity of transitive graphs.
\newblock {\em J. Combinatorial Theory}, 8:23--29, 1970.

\bibitem[WG84]{MR762322}
D.~Witte and J.~A. Gallian.
\newblock A survey: {H}amiltonian cycles in {C}ayley graphs.
\newblock {\em Discrete Math.}, 51(3):293--304, 1984.

\bibitem[WS96]{MR1413286}
T.~M.~Y. Wang and Carla~D. Savage.
\newblock A {G}ray code for necklaces of fixed density.
\newblock {\em SIAM J. Discrete Math.}, 9(4):654--673, 1996.

\end{thebibliography}

\appendix

\section{Proof of Lemma~\ref{lem:landau-prop}}

\begin{proof}[Proof of Lemma~\ref{lem:landau-prop}]
For any finite sequence~$\ba$ of integers, we write $S(\ba)$ for the sum of its entries.
Let $x$ be an entry of~$\ba$ that has the prime factorization $x=\Pi_{i=1}^\ell p_i^{e_i}$.
Defining $s:=\sum_{i=1}^\ell p_i^{e_i}$, we clearly have $x\ge s$ and therefore $d:=x-s\ge 0$.
Let $\ba'$ be the sequence obtained from~$\ba$ by replacing the entry~$x$ by the subsequence~$(p_1^{e_1},\ldots,p_\ell^{e_\ell},1^d)$.
Observe that
\begin{equation}
\label{eq:lcmS}
\lcm(\ba)=\lcm(\ba') \text{ and } S(\ba)=S(\ba').
\end{equation}
Suppose that $\ba$ contains two prime powers~$x=p^a$ and~$x'=p^b$ with $1\le a\le b$, and let~$\ba'$ denote the sequence obtained from~$\ba$ by replacing~$x$ by the subsequence~$(p,1^d)$ where $d:={p^a-p}$.
Then we also have~\eqref{eq:lcmS}.
Similarly, let~$\ba'$ be the sequence obtained from~$\ba$ by replacing~$x$ by the subsequence~$1^x$.
Then we also have~\eqref{eq:lcmS}.

Applying these rules exhaustively shows that for any partition~$\ba$, there is a partition~$\ba'$ satisfying~\eqref{eq:lcmS} and the additional conditions stated in~(i), so the claim follows from~\eqref{eq:landau-lcm}.
Moreover, for any partition~$\ba$ into odd parts (i.e., 0 even parts), there is a partition~$\ba'$ satisfying~\eqref{eq:lcmS} and the additional conditions stated in~(ii), so the claim follows from~\eqref{eq:landau0}.
Lastly, for any partition~$\ba$ with $2,4,6,\ldots$ even parts, there is a partition~$\ba'$ satisfying~\eqref{eq:lcmS} and the additional conditions stated in~(iii), so the first part of the claim follows from~\eqref{eq:landau2}.

It remains to prove that for $n\ge 12$ the partition as in~(iii) has at least 4~parts.
For $n=12,\ldots,35$ this can be verified by direct computation of the relevant partitions~$\ba$ for which $\lcm(\ba)=\lambda_2(n)$.
For $n\ge 36$ we argue as follows:
Note that any partition~$\ba$ of~$n$ into 1,2 or 3 parts has $\lcm(\ba)\le f(n):=\max\{n,n^2/4,n^3/27\}=\Theta(n^3)$.
We now identify partitions~$\ba$ with $2,4,6,\ldots$ even parts and at least~4 parts in total that satisfy $\lcm(\ba)>f(n)$, and this construction is split into two cases.
First note that any partition~$\ba$ of $n-8$ can be turned into a partition~$\ba'$ of~$n$ with at $4,6,8,\ldots$ even parts and at least~5 parts in total by inserting either $2,2,2,2$ or $2,2,2,1,1$.
It follows that $\lambda_2(n)\ge \lambda(n-8)$, and $\lambda(n-8)>f(n)$ holds for $n=36,\ldots,675$, which can be shown by using the values of~$\lambda(n)$ tabulated on the OEIS (sequence~A000793).
For $n\ge 676$ we argue as follows:
There is a partition~$\ba=(p_1,p_2,p_3,p_4,2,2,1^{n-4-p_1-p_2-p_3-p_4})$ of~$n$ with two even parts and at least~6 parts in total with $\lcm(\ba)=2 p_1 p_2 p_3 p_4$ obtained as follows:
We define $a_0:=\lfloor (n-4)/2\rfloor$ and for $i=1,\ldots,4$ we define $a_i:=\lfloor a_{i-1}/2\rfloor$ and let $p_i$ be a prime number in the interval~$\left]a_i,a_{i-1}\right]$, which exists by Bertrand's postulate.
Clearly, we have $p_1+p_2+p_3+p_4\le (n-4)(1/2+1/4+1/8+1/16)\le n-4$ and $p_1 p_2 p_3 p_4\ge a_1 a_2 a_3 a_4=:g(n)=\Theta(n^4)$, and we have $g(n)>f(n)$ for all $n\ge 676$ (we also have $p_4\ge a_4>2$ for those~$n$).

We now prove~(iv).
Let $\ba$ be a partition of~$n$ into powers of distinct primes and~1s such that $\lcm(\ba)=\lambda(n)$.
If all entries of~$\ba$ are odd, then we have $\lambda_0(n)=\lambda(n)$.
If $\ba$ contains~2, then replacing~$2$ by $1,1$ yields a partition~$\ba'$ into powers of distinct odd primes and~1s that satisfies $\lcm(\ba')=\lcm(\ba)/2$, which shows that $\lambda_0(n)\ge \lambda(n)/2$.
If $\ba$ contains~$2^c$ for some $c\ge 2$, then replacing~$2^c$ by $2^{c-1},2^{c-1}$ yields a partition~$\ba'$ into powers of distinct odd primes, two powers of~2 and~1s that satisfies $\lcm(\ba')=\lcm(\ba)/2$, which shows that $\lambda_2(n)\ge \lambda(n)/2$.
Combining these three observations proves~(iv).

We now prove~(v).
Consider a partition~$\ba$ of~$n$ into powers of distinct odd primes and~1s such that $\lcm(\ba)=\lambda_0(n)$.
If $\ba$ contains a power of~3, then let $a\geq 1$ be the exponent of~3 in~$\ba$, otherwise let~$a:=0$.
Furthermore, let $p^e$ be the largest prime power in~$\ba$.
We consider two ways of modifying~$\ba$.
If $a\geq 2$, we let $c$ be such that $3^a/3\le 2^c\le 2\cdot 3^a/3$, and then replacing $3^a$ in~$\ba$ by $3^{a-1}$, $2^c$ and~1s yields a partition~$\ba'$ that satisfies $\lcm(\ba')=\lcm(\ba)/3\cdot 2^c\geq 3^{a-2}\lcm(\ba)$, which proves that
\begin{equation}
\label{eq:lambda1}
\lambda(n)\geq 3^{a-2}\cdot \lambda_0(n).
\end{equation}
Note that this inequality holds trivially also for $a=0$ and $a=1$.
Now let $b$ and $c$ be such that $p^e/6\le 3^b\le p^e/2$ and $p^e/4\le 2^c\le p^e/2$.
Replacing $p^e$ in~$\ba$ by $3^b$, $2^c$ and~1s yields a partition~$\ba'$ that satisfies $\lcm(\ba')=\lcm(\ba)/p^e\cdot 3^{b-a}\cdot 2^c\geq p^e 3^{-a}/24\cdot \lcm(\ba)$, which proves that
\begin{equation}
\label{eq:lambda2}
\lambda(n)\geq p^e 3^{-a}/24\cdot\lambda_0(n).
\end{equation}
Multiplying the inequalities~\eqref{eq:lambda1} and~\ref{eq:lambda2} and taking the square root yields
\begin{equation*}
\lambda(n)\geq \frac{1}{6\sqrt{6}} p^{e/2}\cdot \lambda_0(n).
\end{equation*}
As $p^e\rightarrow\infty$ as $n\rightarrow\infty$, we obtain $\lim_{n\rightarrow \infty}\lambda(n)/\lambda_0(n)=+\infty$.
In order to show that $\lambda(n)/\lambda_0(n)\geq 4$ for $n\geq 739$, we consider the inequality $\frac{1}{6\sqrt{6}} p^{e/2}\geq 4$, which can be rearranged to $p^e\geq 3456=:q$.
We consider the set $\{p_2,p_3,\ldots,p_{482}\}=\{3,5,\ldots,3449\}$ of all odd primes with value at most~$q$, and for $i=2,\ldots,482$ we let $e_i\geq 1$ be the largest exponent so that $p_i^{e_i}\leq q$.
For example, we have $e_2=7$, as $3^7\leq q$ but $3^8>q$, and $e_3=5$, as $5^5\leq q$ but $5^6>q$.
We also consider the first prime~$p_{483}=3457$ that is larger than~$q$.
Clearly, if $n\geq p_{483}+\sum_{i=2}^{482} p_i^{e_i}=788670=:s$, then for any partition~$\ba$ of~$n$ into powers of distinct odd primes and~1s such that $\lcm(\ba)=\lambda_0(n)$ we must have $p^e\geq q$.
Indeed, if we restrict the partition to using only distinct powers of primes that are at most~$q$, then the partition would contain more than $p_{483}$ many~1s, and those could be replaced by the prime~$p_{483}$, yielding a better partition, which is a contradiction.
We now show that $\lambda(n)/\lambda_0(n)\geq 4$ for $n=739,\ldots,s-1$, and this is done in two steps.
Let $\ba$ be a partition of~$n$ into powers of distinct primes and~1s such that $\lcm(\ba)=\lambda(n)$.
If $\ba$ contains~$2^c$ for some $c\ge 1$, then we have
\begin{equation}
\label{eq:lambda0-lambda}
\lambda_0(n-2^c)=\lambda(n)/2^c.
\end{equation}
Using one of the algorithms from~\cite{MR0253514,MR2523311}, we can compute~$\lambda(n)$ for all $n<s$.
From~\eqref{eq:lambda0-lambda} we can then compute exact values for~$\lambda_0(n)$ for about half of all values of~$n<s$, and for the remaining ones we can compute upper bounds via~$\lambda_0(n)\leq \lambda_0(n+1)$.
Doing this shows that $\lambda(n)/\lambda_0(n)\geq 4$ for all $n=4507,\ldots,s-1$.
For the remaining values $n=739,\ldots,4506$ we can use a simple dynamic program to compute~$\lambda_0(n)$ exactly, verifying that $\lambda(n)/\lambda_0(n)\geq 4$.

Consider a partition~$\ba$ of~$n$ into powers of distinct primes and~1s such that $\lcm(\ba)=\lambda(n)$.
By a similar replacement argument as before, we can argue that for all large enough~$n$ the partition~$\ba$ must contain~$2^c$ for some~$c\geq 2$.
Replacing $2^c$, by $2^{c-1}$, $2$ and 1s, yields a partition~$\ba'$ with two even parts that satisfies $\lcm(\ba')=\lcm(\ba)/2$.
We obtain that $\lambda_2(n)\geq \lambda(n)/2$, i.e., $\lambda(n)/\lambda_2(n)\leq 2$ for all large enough~$n$.
This argument can be made effective for $n\geq 78$ (again by verifying a finite number of small cases with computer help).
For the remaining cases $n=18,\ldots,77$ the inequality $\lambda(n)/\lambda_2(n)\leq 2$ can be verified by direct computation.
This proves the first part of~(v).
The second part of~(v) is an immediate consequence of the first part.

It remains to prove~(vi).
Nicolas~\cite{MR254130} showed that there are arbitrarily long intervals where~$\lambda(n)$ is constant.
Let $[n,n+\ell]$ be such an interval of length~$\ell+1\ge 3$, i.e., we have $\lambda(n)=\lambda(n+i)$ for all $i=0,\ldots,\ell$.
All numbers~$\lambda(n+i)$ have the same prime factorization, so the corresponding partition~$\ba_i$ of~$n+i$ into distinct prime powers and~1s with $\lcm(\ba_i)=\lambda(n+i)$ and $S(\ba)=n+i$ has at least~$i$ many~1s.
Moreover, this partition must contain a positive power of~2, otherwise we could replace $1,1$ in $\ba_2$ by~2, yielding a partition~$\ba_2'$ with $\lcm(\ba_2')>\lcm(\ba_2)$, which is impossible.
Consequently, replacing $1,1$ in~$\ba_i$ by~2 for $i=2,\ldots,\ell$ yields a partition~$\ba_i'$ with two even parts satisfying $\lcm(\ba_i')=\lcm(\ba_i)$ and therefore $\lambda_2(n+i)=\lambda(n+i)$ holds for all $i=2,\ldots,\ell$.

This completes the proof of the lemma.
\end{proof}

\section{Proof of Lemma~\ref{lem:q-exists}}

To prove Lemma~\ref{lem:q-exists}, we need the following notation.
For integers $a\le b$, we write $[a,b]:=\{a,a+1,\ldots,b\}$.
Furthermore, we write $p_i$ for the $i$th prime number, i.e., $p_1=2$, $p_2=3$, $p_3=5$ etc.
We use the following number-theoretic properties about the integers sieved by the first $r$ primes.

\begin{lemma}
\label{lem:sieve}
For any $r\ge 1$ consider the set $S_r:=\{p_1,\ldots,p_r\}$ of the first $r$ prime numbers.
The numbers from~$\mathbb{Z}$ that are divisible by at least one of the primes from~$S_r$ form maximal intervals whose lengths are in~$\{p_i-2\mid i=2,\ldots,r+1\}$, and the intervals of maximum length $p_{r+1}-2$ are exactly $I_c:=[2,(p_{r+1}-1)]+c\cdot p_1\cdots p_r$ and $I_c':=[-(p_{r+1}-1),-2]+c\cdot p_1\cdots p_r$ for any $c\in\mathbb{Z}$.

Furthermore, for any prime number~$p_{r+1}$ we have for all $j=0,\ldots,p_{r+1}-p_r-2$ that every prime factor of~$j+2$ is also a factor of~$p_{r+1}-1-j$.
\end{lemma}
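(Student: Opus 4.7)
The plan is to treat the two parts of Lemma~\ref{lem:sieve} separately, exploiting the periodicity and symmetry of the divisibility structure. First I would set $P := p_1 p_2 \cdots p_r$ and note that by the Chinese Remainder Theorem, the set of integers divisible by some prime in $S_r$ is periodic with period $P$ and invariant under $x \mapsto -x$ (since the set of integers coprime to $P$ has both properties). The canonical candidate $[2, p_{r+1} - 1]$ is immediately a maximal run of length $p_{r+1} - 2$: every integer in it is at least $2$ and strictly less than $p_{r+1}$, so its smallest prime factor is some $p_i$ with $i \leq r$, whereas the flanking integers $1$ and $p_{r+1}$ are coprime to $P$. Periodicity then yields the family $\{I_c\}$, and negation yields $\{I_c'\}$, all of length $p_{r+1} - 2$.

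To show that these are the only maximal intervals of length $p_{r+1} - 2$, consider a hypothetical alternative $[a, a + p_{r+1} - 3]$. Its endpoints $a - 1$ and $a + p_{r+1} - 2$ are then consecutive coprimes to $P$ separated by exactly $p_{r+1}$. Working modulo each $p_i \in S_r$, every one of the $p_{r+1} - 2$ interior integers must vanish modulo some $p_i$. A careful counting argument---using that the interval contains at most $\lceil (p_{r+1} - 2)/p_i \rceil$ multiples of $p_i$ and that the positions of these multiples are determined by $a \bmod p_i$---forces $a \bmod P$ to take exactly one of the two values corresponding to $I_c$ or $I_c'$. To establish that the possible run lengths all lie in $\{p_i - 2 : 2 \leq i \leq r+1\}$, I would apply the same analysis to shorter gaps between consecutive coprimes of $P$, reducing each case to the sieve problem for a smaller subset of $\{p_1, \ldots, p_r\}$ via induction on~$r$.

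For the second part, the key identity is $(j+2) + (p_{r+1} - 1 - j) = p_{r+1} + 1$. If a prime $p$ divides both $j + 2$ and $p_{r+1} + 1$, then $p$ automatically divides the difference $(p_{r+1} + 1) - (j+2) = p_{r+1} - 1 - j$. So the task reduces to showing that every prime factor of $j + 2$ divides $p_{r+1} + 1$. Here I would use that $j + 2 \leq p_{r+1} - p_r$ and $p_{r+1} - 1 - j \in [p_r + 1, p_{r+1} - 1]$, i.e., $p_{r+1} - 1 - j$ lies inside the canonical maximal interval of Part~1. This constrains its prime factorization, and combined with the modular identity above, it forces the required divisibility.

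The main obstacle I anticipate is the uniqueness claim in Part~1: ruling out spurious maximal intervals of length $p_{r+1} - 2$ requires a subtle simultaneous analysis of the residues modulo each $p_i$, and showing that the CRT system admits only the two expected solutions is likely to need either an inductive argument on $r$ or a careful bookkeeping of how the multiples of the different $p_i$ must interleave to cover the full interval. The second part, by contrast, should fall out quickly once Part~1 is in place, since the arithmetic identity does most of the work.
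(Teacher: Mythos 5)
The paper does not actually prove Lemma~\ref{lem:sieve}: it merely asserts that the lemma ``can be proved by straightforward induction on~$r$'' and omits all details, so there is no argument to compare yours against. Your write-up, in turn, is not yet a proof either. In Part~1 the observation that $[2,p_{r+1}-1]$ and its translates/negations are maximal runs of length $p_{r+1}-2$ is sound, but the uniqueness claim and the classification of possible run lengths are left to an unspecified ``careful counting argument'' and an unspecified ``induction on $r$.'' In Part~2 the reformulation via $(j+2)+(p_{r+1}-1-j)=p_{r+1}+1$ is correct and in fact an equivalence (a prime dividing $j+2$ divides one of the two summands iff it divides the other), but the concluding sentence --- that membership of $p_{r+1}-1-j$ in $[2,p_{r+1}-1]$ ``constrains its prime factorization'' and ``forces the required divisibility'' --- proves nothing: that membership only says $p_{r+1}-1-j$ is divisible by \emph{some} prime below $p_{r+1}$, not by the specific primes dividing $j+2$.

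The deeper issue is that both claims you are trying to push through appear to be \emph{false as stated}, so no counting or modular argument can close the gap. For Part~2, take $p_r=31$, $p_{r+1}=37$ (so $r=11$) and $j=1\leq p_{r+1}-p_r-2=4$: then $j+2=3$ but $p_{r+1}-1-j=35=5\cdot 7$, so $3\nmid 35$; equivalently, in your reduced form, $p_{r+1}+1=38=2\cdot 19$ and $3\nmid 38$. For Part~1, take $r=4$, so $S_4=\{2,3,5,7\}$ and $\{p_i-2\mid i=2,\dots,5\}=\{1,3,5,9\}$: the seven integers $90,\dots,96$ are each divisible by a prime in $S_4$ while $89$ and $97$ are coprime to $210=2\cdot 3\cdot 5\cdot 7$, giving a maximal interval of length $7\notin\{1,3,5,9\}$. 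So before attempting a proof you would need to repair the statement itself (enlarge the set of admissible lengths in Part~1 and weaken or rescope the divisibility claim in Part~2), and then check whether the repaired statement still supports the use made of it in the proof of Lemma~\ref{lem:q-exists}.
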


Lemma~\ref{lem:sieve} can be proved by straightforward induction on~$r$; we omit the details.

\begin{proof}[Proof of Lemma~\ref{lem:q-exists}]
The proof uses the following two basic number-theoretic facts about integers~$a,b\in\mathbb{Z}$ and $r\geq 1$:
\begin{enumerate}[label=(\roman*),leftmargin=8mm, topsep=0mm, noitemsep]
\item $a$ and $b$ are coprime if and only if $a$ and $a-b$ are coprime.
\item if $|b|<p_{r+1}$, then $a$ and $b$ are coprime if and only if $a+p_1\cdots p_r$ and $b$ are coprime.
\end{enumerate}

From~(i) we see that $q$ and~$\ell$ are coprime for all $\ell=k-(n-q),\ldots,k$ if and only if $q$ and $\ell'$ are coprime for all $\ell'=k'-(n-q),\ldots,k'$ where $k':=n-k$.
Consequently, it suffices to prove the lemma for $n\ge 2k$.

We argue by induction on~$k$.
If $n$ and $k$ are coprime, then the integer $q:=n$ satisfies the conditions of the lemma.
In particular, the lemma holds whenever $k=1$, which settles the base case for the induction.
For the induction step we consider some fixed value of~$k\ge 2$, and we prove that the lemma holds for this fixed~$k$ and all~$n\ge 2k$.

Let $r$ be such that $p_r\le k<p_{r+1}$, and define $S_r:=\{p_1,\ldots,p_r\}$ and $d:=p_{r+1}-k$.
We consider all $k$ numbers $q_i=n-i$, where $i=0,\ldots,k-1$, which satisfy the requirement $q_i=n-i>\max\{k,n-k\}=n-k$, and we need to show that for some $i\in\{0,\ldots,k-1\}$, the number~$q_i$ is coprime to all numbers in the set~$L_i:=[k-(n-q_i),k]=[k-i,k]$ (note that $|L_i|=i+1$).

If $q_i$ is not coprime some number in~$L_i$, then as $\max L_i=k<p_{r+1}$, the number~$q_i$ is divisible by a prime from~$S_r$.
We have to rule out that this happens to~$q_i$ for all $i=0,\ldots,k-1$ simultaneously.
For the sake of contradiction suppose that it does happen, then we have found an interval~$[q_{k-1},q_0]$ of $k$ integers that are all divisible by at least one of the primes from~$S_r$.
By the first part of Lemma~\ref{lem:sieve}, the possible intervals of such numbers have lengths in~$\{p_i-2\mid i=2,\ldots,r+1\}$, and all those lengths except~$p_{r+1}-2$ are strictly less than~$k$ (since $k\geq p_r$).
Furthermore, if $d=1$, i.e., $k=p_{r+1}-1$, then we also have $p_{r+1}-2<k$, i.e., we arrived at a contradiction.

It remains to investigate the situation where the interval~$[q_{k-1},q_0]$ is contained in an interval of length~$p_{r+1}-2$ and $d\geq 2$.
By the first part of Lemma~\ref{lem:sieve} this must be an interval~$I_c$ or~$I_c'$ as defined in the lemma for some $c\in\mathbb{Z}$.
If the interval~$[q_{k-1},q_0]$ is contained in~$I_c$, then $q_0=\max I_c-j$ for some $j\in\{0,\ldots,d-2\}$.
For the following investigations about coprimality, we can set $c:=0$ by~(ii).
Using the definition of~$I_0$, we obtain
\begin{equation}
\label{eq:qi}
\begin{alignedat}{2}
q_0 &= p_{r+1}-1-j=n, \\
q_1 &= p_{r+1}-1-(j+1), \\
&\vdots \\
q_{d-2-j} &= p_{r+1}-1-(j+(d-2-j)) = p_{r+1}-1-(d-2)=k+1.
\end{alignedat}
\end{equation}
The corresponding sets~$L_i$ are
\begin{equation}
\label{eq:Li}
\begin{alignedat}{2}
L_0&=[k,k]&&=[p_{r+1}-d,p_{r+1}-d], \\
L_1&=[k-1,k]&&=[p_{r+1}-d-1,p_{r+1}-d], \\
&\vdots \\
L_{d-2-j}&=[k-(d-2-j),k]&&=[p_{r+1}-d-(d-2-j),p_{r+1}-d].
\end{alignedat}
\end{equation}
We aim to show that there is a $q_i$ coprime to all elements of~$L_i$ for some $i\in\{0,\ldots,d-2-j\}$.
To prove this, we use~(i) and consider the sets $L_i':=q_i-L_i$, which are
\begin{align*}
L_0'&=[d-j-1,d-j-1], \\
L_1'&=[d-j-2,d-j-1], \\
&\vdots \\
L_{d-2-j}'&=[1,d-j-1].
\end{align*}
The fact that one of the~$q_i$, $i\in\{0,\ldots,d-2-j\}$, is coprime to all elements of~$L_i'$ now follows by induction for $k':=d-j-1$, observing that
\begin{equation*}
k'=d-j-1\le d-1=p_{r+1}-k-1\le p_{r+1}-p_r-1\le 2p_r-p_r-1=p_r-1<k,
\end{equation*}
where the estimate $p_{r+1}\le 2p_r$ follows from Bertrand's postulate.

It remains to consider the case that~$[q_{k-1},q_0]$ is contained in the interval~$I_c'$.
In this case we have $q_0=\max I_c'-j$ for some $j\in\{0,\ldots,d-2\}$, and again we can set $c:=0$ by~(ii).
The sets~$L_i$ are as in~\eqref{eq:Li}, and using the definition of~$I_0'$, the numbers~$q_i=:q_i'$ are now
\begin{equation}
\label{eq:qi2}
\begin{alignedat}{2}
q_0'=q_0&= -2-j=-(j+2), \\
q_1'=q_1&= -3-j=-(j+3), \\
&\vdots \\
q_{d-2-j}'=q_{d-2-j}&=(-2-(d-2-j))-j =-d, \\
\end{alignedat}
\end{equation}
Comparing~\eqref{eq:qi} and~\eqref{eq:qi2}, we see that we can apply the second part of Lemma~\ref{lem:sieve} (as $d-2\leq p_{r+1}-p_r-2$), which shows that every prime factor of~$q_i'$ is also a factor of~$q_i$.
We have already demonstrated that one of the~$q_i$ in~\eqref{eq:qi} is coprime to all numbers in~$L_i$, so this implies that~$q_i'$ is also coprime to all numbers in~$L_i$.
This completes the proof of the lemma.
\end{proof}

\begin{figure}[b!]
\includegraphics{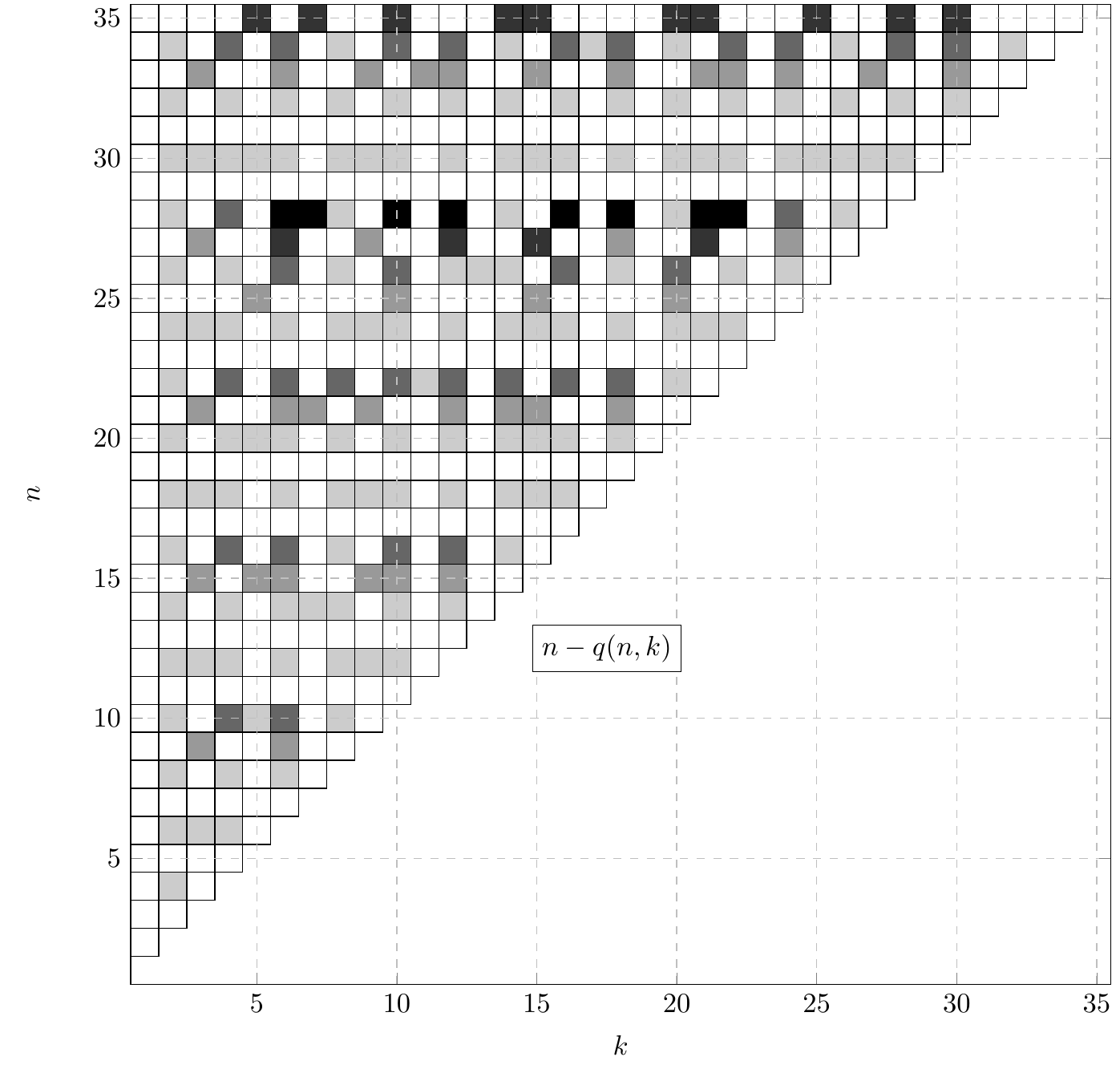}
\caption{The function~$n-q(n,k)$, which has values $0,\ldots,5$ in the depicted range, shaded by corresponding gray values (0=white, 5=black).}
\label{fig:qnk}
\end{figure}

Figure~\ref{fig:qnk} shows the function $n-q(n,k)$, with $q(n,k)$ as defined after Lemma~\ref{lem:q-exists}.
The white squares correspond to $n-q(n,k)=0$, i.e., these are the cases when $n$ and $k$ are coprime, yielding optimal compression.

\section{\texorpdfstring{Values of $\lambda(n)$, $\lambda_0(n)$ and $\lambda_2(n)$ for $n=1,\ldots,140$}{Values of lambda(n), lambda0(n) and lambda2(n) for n=1,...,140}}
\label{sec:landau02}

\definecolor{lightyellow}{rgb}{0.98,0.96,0.5}
\definecolor{lightorange}{rgb}{0.99,0.79,0.61}
\definecolor{lightgreen}{rgb}{0.61,0.99,0.44}

\pgfdeclarelayer{background}    
\pgfsetlayers{background,main}

\begin{table}
\setlength{\tabcolsep}{4pt}
\def\arraystretch{1.0}
\tiny
\makebox[0cm]{ 
\begin{tabular}{ccc}
\begin{tabular}{rrrrlll}
$n$ & $\lambda(n)$ & $\lambda_0(n)$ & $\lambda_2(n)$ & $\frac{\lambda(n)}{\lambda_0(n)}$ & $\frac{\lambda(n)}{\lambda_2(n)}$ & $\frac{2\lambda_0(n)}{\lambda_2(n)}$ \\ \hline
1 & 1 & 1 & $-\infty$ & 1 \cellcolor{lightorange} & -- & -- \\
2 & 2 & 1 & $-\infty$ & 2 & -- & -- \\
3 & 3 & 3 & $-\infty$ & 1 \cellcolor{lightorange} & -- & -- \\
4 & 4 & 3 & 2 & 1.33.. & 2 & 3 \\
5 & 6 & 5 & 2 & 1.2 & 3 & 5 \\
6 & 6 & 5 & 4 & 1.2 & 1.5 & 2.5 \\
7 & 12 & 7 & 6 & 1.71.. & 2 & 2.33.. \\
8 & 15 & 15 & 6 & 1 \cellcolor{lightorange} & 2.5 & 5 \\
9 & 20 & 15 & 12 & 1.33.. & 1.66.. & 2.5 \\
10 & 30 & 21 & 12 & 1.42.. & 2.5 & 3.5 \\
11 & 30 & 21 & 20 & 1.42.. & 1.5 & 2.1 \\
12 & 60 & 35 & 30 & 1.71.. & 2 & 2.33.. \\
13 & 60 & 35 & 30 & 1.71.. & 2 & 2.33.. \\
14 & 84 & 45 & 60 & 1.86.. & 1.4 & 1.5 \\
15 & 105 & \cellcolor{lightgreen} 105 & 60 & 1 \cellcolor{lightorange} & 1.75 & 3.5 \\
16 & 140 & 105 & 84 & 1.33.. & 1.66.. & 2.5 \\
17 & 210 & 105 & 84 & 2 & 2.5 & 2.5 \\
18 & 210 & 105 & 140 & 2 & 1.5 & 1.5 \\
19 & 420 & 165 & 210 & 2.54.. & 2 & 1.57.. \\
20 & 420 & 165 & 210 & 2.54.. & 2 & 1.57.. \\
21 & 420 & 315 & 420 & 1.33.. & 1 \cellcolor{lightorange}& 1.5 \\
22 & 420 & 315 & \cellcolor{lightgreen} 420 & 1.33.. & 1 \cellcolor{lightorange}& 1.5 \\
23 & 840 & 385 & 420 & 2.18.. & 2 & 1.83.. \\
24 & 840 & 385 & 420 & 2.18.. & 2 & 1.83.. \\
25 & 1,260 & 495 & 840 & 2.54.. & 1.5 & 1.17.. \\
26 & 1,260 & 1,155 & 840 & 1.09.. & 1.5 & 2.75 \\
27 & 1,540 & 1,155 & 1,260 & 1.33.. & 1.22.. & 1.83.. \\
28 & 2,310 & 1,365 & 1,260 & 1.69.. & 1.83.. & 2.16.. \\
29 & 2,520 & 1,365 & 1,540 & 1.84.. & 1.63.. & 1.77.. \\
30 & 4,620 & 1,365 & 2,310 & 3.38.. & 2 & 1.18.. \\
31 & 4,620 & 1,365 & 2,520 & 3.38.. & 1.83.. & 1.08.. \\
32 & 5,460 & 3,465 & 4,620 & 1.57.. & 1.18.. & 1.5 \\
33 & 5,460 & 3,465 & 4,620 & 1.57.. & 1.18.. & 1.5 \\
34 & 9,240 & 4,095 & 5,460 & 2.25.. & 1.69.. & 1.5 \\
35 & 9,240 & 4,095 & 5,460 & 2.25.. & 1.69.. & 1.5 \\
36 & 13,860 & 5,005 & 9,240 & 2.76.. & 1.5 & 1.08.. \\
37 & 13,860 & 5,005 & 9,240 & 2.76.. & 1.5 & 1.08.. \\
38 & 16,380 & 6,435 & 13,860 & 2.54.. & 1.18.. & 0.92.. \cellcolor{lightyellow} \\
39 & 16,380 & 15,015 & 13,860 & 1.09.. & 1.18.. & 2.16.. \\
40 & 27,720 & 15,015 & 16,380 & 1.84.. & 1.69.. & 1.83.. \\
41 & 30,030 & 15,015 & 16,380 & 2 & 1.83.. & 1.83.. \\
42 & 32,760 & 15,015 & 27,720 & 2.18.. & 1.18.. & 1.08.. \\
43 & 60,060 & 19,635 & 30,030 & 3.05.. & 2 & 1.30.. \\
44 & 60,060 & 19,635 & 32,760 & 3.05.. & 1.83.. & 1.19.. \\
45 & 60,060 & 45,045 & 60,060 & 1.33.. & 1 \cellcolor{lightorange}& 1.5 \\
46 & 60,060 & 45,045 & \cellcolor{lightgreen} 60,060 & 1.33.. & 1 \cellcolor{lightorange}& 1.5 \\
47 & 120,120 & 45,045 & 60,060 & 2.66.. & 2 & 1.5 \\
48 & 120,120 & 45,045 & 60,060 & 2.66.. & 2 & 1.5 \\
49 & 180,180 & 58,905 & \cellcolor{lightgreen} 120,120 & 3.05.. & 1.5 & 0.98.. \cellcolor{lightyellow} \\
50 & 180,180 & 58,905 & 120,120 & 3.05.. & 1.5 & 0.98.. \cellcolor{lightyellow} \\
51 & 180,180 & 69,615 & \cellcolor{lightgreen} 180,180 & 2.58.. & 1 \cellcolor{lightorange}& 0.77.. \cellcolor{lightyellow} \\
52 & 180,180 & 69,615 & \cellcolor{lightgreen} 180,180 & 2.58.. & 1 \cellcolor{lightorange}& 0.77.. \cellcolor{lightyellow} \\
53 & 360,360 & 85,085 & \cellcolor{lightgreen} 180,180 & 4.23.. & 2 & 0.94.. \cellcolor{lightyellow} \\
54 & 360,360 & 85,085 & 180,180 & 4.23.. & 2 & 0.94.. \cellcolor{lightyellow} \\
55 & 360,360 & 109,395 & \cellcolor{lightgreen} 360,360 & 3.29.. & 1 \cellcolor{lightorange}& 0.60.. \cellcolor{lightyellow} \\
56 & 360,360 & 255,255 & 360,360 & 1.41.. & 1 \cellcolor{lightorange}& 1.41.. \\
57 & 471,240 & 255,255 & 360,360 & 1.84.. & 1.30.. & 1.41.. \\
58 & 510,510 & 285,285 & 360,360 & 1.78.. & 1.41.. & 1.58.. \\
59 & 556,920 & 285,285 & 471,240 & 1.95.. & 1.18.. & 1.21.. \\
60 & 1,021,020 & 285,285 & 510,510 & 3.57.. & 2 & 1.11.. \\
61 & 1,021,020 & 285,285 & 556,920 & 3.57.. & 1.83.. & 1.02.. \\
62 & 1,141,140 & 765,765 & 1,021,020 & 1.49.. & 1.11.. & 1.5 \\
63 & 1,141,140 & 765,765 & 1,021,020 & 1.49.. & 1.11.. & 1.5 \\
64 & 2,042,040 & 855,855 & 1,141,140 & 2.38.. & 1.78.. & 1.5 \\
65 & 2,042,040 & 855,855 & 1,141,140 & 2.38.. & 1.78.. & 1.5 \\
66 & 3,063,060 & 855,855 & 2,042,040 & 3.57.. & 1.5 & 0.83.. \cellcolor{lightyellow} \\
67 & 3,063,060 & 855,855 & 2,042,040 & 3.57.. & 1.5 & 0.83.. \cellcolor{lightyellow} \\
68 & 3,423,420 & 1,119,195 & \cellcolor{lightgreen} 3,063,060 & 3.05.. & 1.11.. & 0.73.. \cellcolor{lightyellow} \\
69 & 3,423,420 & 1,119,195 & \cellcolor{lightgreen} 3,063,060 & 3.05.. & 1.11.. & 0.73.. \cellcolor{lightyellow} \\
70 & 6,126,120 & 1,322,685 & 3,423,420 & 4.63.. & 1.78.. & 0.77.. \cellcolor{lightyellow} \\
\end{tabular}
& &
\begin{tabular}{crrrlll}
$n$ & $\lambda(n)$ & $\lambda_0(n)$ & $\lambda_2(n)$ & $\frac{\lambda(n)}{\lambda_0(n)}$ & $\frac{\lambda(n)}{\lambda_2(n)}$ & $\frac{2\lambda_0(n)}{\lambda_2(n)}$ \\ \hline
71 & 6,126,120 & 1,322,685 & 3,423,420 & 4.63.. & 1.78.. & 0.77.. \cellcolor{lightyellow} \\
72 & 6,846,840 & 1,616,615 & \cellcolor{lightgreen} 6,126,120 & 4.23.. & 1.11.. & 0.52.. \cellcolor{lightyellow} \\
73 & 6,846,840 & 1,616,615 & \cellcolor{lightgreen} 6,126,120 & 4.23.. & 1.11.. & 0.52.. \cellcolor{lightyellow} \\
74 & 6,846,840 & 2,078,505 & \cellcolor{lightgreen} 6,846,840 & 3.29.. & 1 \cellcolor{lightorange}& 0.60.. \cellcolor{lightyellow} \\
75 & 6,846,840 & 4,849,845 & \cellcolor{lightgreen} 6,846,840 & 1.41.. & 1 \cellcolor{lightorange}& 1.41.. \\
76 & 8,953,560 & 4,849,845 & 6,846,840 & 1.84.. & 1.30.. & 1.41.. \\
77 & 9,699,690 & 4,849,845 & 6,846,840 & 2 & 1.41.. & 1.41.. \\
78 & 12,252,240 & 4,849,845 & 8,953,560 & 2.52.. & 1.36.. & 1.08.. \\
79 & 19,399,380 & 5,870,865 & 9,699,690 & 3.30.. & 2 & 1.21.. \\
80 & 19,399,380 & 5,870,865 & \cellcolor{lightgreen} 12,252,240 & 3.30.. & 1.58.. & 0.95.. \cellcolor{lightyellow} \\
81 & 19,399,380 & 14,549,535 & 19,399,380 & 1.33.. & 1 \cellcolor{lightorange}& 1.5 \\
82 & 19,399,380 & 14,549,535 & \cellcolor{lightgreen} 19,399,380 & 1.33.. & 1 \cellcolor{lightorange}& 1.5 \\
83 & 38,798,760 & 14,549,535 & 19,399,380 & 2.66.. & 2 & 1.5 \\
84 & 38,798,760 & 14,549,535 & 19,399,380 & 2.66.. & 2 & 1.5 \\
85 & 58,198,140 & 17,612,595 & \cellcolor{lightgreen} 38,798,760 & 3.30.. & 1.5 & 0.90.. \cellcolor{lightyellow} \\
86 & 58,198,140 & 17,612,595 & 38,798,760 & 3.30.. & 1.5 & 0.90.. \cellcolor{lightyellow} \\
87 & 58,198,140 & 19,684,665 & \cellcolor{lightgreen} 58,198,140 & 2.95.. & 1 \cellcolor{lightorange}& 0.67.. \cellcolor{lightyellow} \\
88 & 58,198,140 & 19,684,665 & \cellcolor{lightgreen} 58,198,140 & 2.95.. & 1 \cellcolor{lightorange}& 0.67.. \cellcolor{lightyellow} \\
89 & 116,396,280 & 19,684,665 & \cellcolor{lightgreen} 58,198,140 & 5.91.. & 2 & 0.67.. \cellcolor{lightyellow} \\
90 & 116,396,280 & 19,684,665 & 58,198,140 & 5.91.. & 2 & 0.67.. \cellcolor{lightyellow} \\
91 & 116,396,280 & 25,741,485 & \cellcolor{lightgreen} 116,396,280 & 4.52.. & 1 \cellcolor{lightorange}& 0.44.. \cellcolor{lightyellow} \\
92 & 116,396,280 & 25,741,485 & \cellcolor{lightgreen} 116,396,280 & 4.52.. & 1 \cellcolor{lightorange}& 0.44.. \cellcolor{lightyellow} \\
93 & 140,900,760 & 30,421,755 & \cellcolor{lightgreen} 116,396,280 & 4.63.. & 1.21.. & 0.52.. \cellcolor{lightyellow} \\
94 & 140,900,760 & 30,421,755 & 116,396,280 & 4.63.. & 1.21.. & 0.52.. \cellcolor{lightyellow} \\
95 & 157,477,320 & 37,182,145 & 140,900,760 & 4.23.. & 1.11.. & 0.52.. \cellcolor{lightyellow} \\
96 & 157,477,320 & 37,182,145 & \cellcolor{lightgreen} 140,900,760 & 4.23.. & 1.11.. & 0.52.. \cellcolor{lightyellow} \\
97 & 232,792,560 & 47,805,615 & \cellcolor{lightgreen} 157,477,320 & 4.86.. & 1.47.. & 0.60.. \cellcolor{lightyellow} \\
98 & 232,792,560 & 111,546,435 & 157,477,320 & 2.08.. & 1.47.. & 1.41.. \\
99 & 232,792,560 & 111,546,435 & \cellcolor{lightgreen} 232,792,560 & 2.08.. & 1 \cellcolor{lightorange}& 0.95.. \cellcolor{lightyellow} \\
100 & 232,792,560 & 111,546,435 & \cellcolor{lightgreen} 232,792,560 & 2.08.. & 1 \cellcolor{lightorange}& 0.95.. \cellcolor{lightyellow} \\
101 & 281,801,520 & 111,546,435 & \cellcolor{lightgreen} 232,792,560 & 2.52.. & 1.21.. & 0.95.. \cellcolor{lightyellow} \\
102 & 446,185,740 & 111,546,435 & 232,792,560 & 4 & 1.91.. & 0.95.. \cellcolor{lightyellow} \\
103 & 446,185,740 & 111,546,435 & 281,801,520 & 4 & 1.58.. & 0.79.. \cellcolor{lightyellow} \\
104 & 446,185,740 & 334,639,305 & 446,185,740 & 1.33.. & 1 \cellcolor{lightorange}& 1.5 \\
105 & 446,185,740 & 334,639,305 & 446,185,740 & 1.33.. & 1 \cellcolor{lightorange}& 1.5 \\
106 & 892,371,480 & 334,639,305 & 446,185,740 & 2.66.. & 2 & 1.5 \\
107 & 892,371,480 & 334,639,305 & 446,185,740 & 2.66.. & 2 & 1.5 \\
108 & 1,338,557,220 & 334,639,305 & \cellcolor{lightgreen} 892,371,480 & 4 & 1.5 & 0.75 \cellcolor{lightyellow} \\
109 & 1,338,557,220 & 334,639,305 & \cellcolor{lightgreen} 892,371,480 & 4 & 1.5 & 0.75 \cellcolor{lightyellow} \\
110 & 1,338,557,220 & 421,936,515 & \cellcolor{lightgreen} 1,338,557,220 & 3.17.. & 1 \cellcolor{lightorange}& 0.63.. \cellcolor{lightyellow} \\
111 & 1,338,557,220 & 421,936,515 & \cellcolor{lightgreen} 1,338,557,220 & 3.17.. & 1 \cellcolor{lightorange}& 0.63.. \cellcolor{lightyellow} \\
112 & 2,677,114,440 & 451,035,585 & \cellcolor{lightgreen} 1,338,557,220 & 5.93.. & 2 & 0.67.. \cellcolor{lightyellow} \\
113 & 2,677,114,440 & 451,035,585 & \cellcolor{lightgreen} 1,338,557,220 & 5.93.. & 2 & 0.67.. \cellcolor{lightyellow} \\
114 & 2,677,114,440 & 510,765,255 & \cellcolor{lightgreen} 2,677,114,440 & 5.24.. & 1 \cellcolor{lightorange}& 0.38.. \cellcolor{lightyellow} \\
115 & 2,677,114,440  & 510,765,255 & \cellcolor{lightgreen} 2,677,114,440 & 5.24.. & 1 \cellcolor{lightorange}& 0.38.. \cellcolor{lightyellow} \\
116 & 2,677,114,440 & 570,855,285 & \cellcolor{lightgreen} 2,677,114,440 & 4.68.. & 1 \cellcolor{lightorange}& 0.42.. \cellcolor{lightyellow} \\
117 & 2,677,114,440 & 570,855,285 & \cellcolor{lightgreen} 2,677,114,440 & 4.68.. & 1 \cellcolor{lightorange}& 0.42.. \cellcolor{lightyellow} \\
118 & 3,375,492,120 & 610,224,615 & 2,677,114,440 & 5.53.. & 1.26.. & 0.45.. \cellcolor{lightyellow} \\
119 & 3,375,492,120 & 610,224,615 & 2,677,114,440 & 5.53.. & 1.26.. & 0.45.. \cellcolor{lightyellow} \\
120 & 5,354,228,880 & 746,503,065 & \cellcolor{lightgreen} 3,375,492,120 & 7.17.. & 1.58.. & 0.44.. \cellcolor{lightyellow} \\
121 & 5,354,228,880 & 746,503,065 & \cellcolor{lightgreen} 3,375,492,120 & 7.17.. & 1.58.. & 0.44.. \cellcolor{lightyellow} \\
122 & 5,354,228,880 & 1,003,917,915 & \cellcolor{lightgreen} 5,354,228,880 & 5.33.. & 1 \cellcolor{lightorange}& 0.37.. \cellcolor{lightyellow} \\
123 & 5,354,228,880 & 1,003,917,915 & \cellcolor{lightgreen} 5,354,228,880 & 5.33.. & 1 \cellcolor{lightorange}& 0.37.. \cellcolor{lightyellow} \\
124 & 5,354,228,880 & 1,673,196,525 & \cellcolor{lightgreen} 5,354,228,880 & 3.2 & 1 \cellcolor{lightorange}& 0.62.. \cellcolor{lightyellow} \\
125 & 5,354,228,880 & 1,673,196,525 & \cellcolor{lightgreen} 5,354,228,880 & 3.2 & 1 \cellcolor{lightorange}& 0.62.. \cellcolor{lightyellow} \\
126 & 6,750,984,240 & 1,673,196,525 & 5,354,228,880 & 4.03.. & 1.26.. & 0.62.. \cellcolor{lightyellow} \\
127 & 6,750,984,240 & 3,234,846,615 & 5,354,228,880 & 2.08.. & 1.26.. & 1.20.. \\
128 & 7,216,569,360 & 3,234,846,615 & \cellcolor{lightgreen} 6,750,984,240 & 2.23.. & 1.06.. & 0.95.. \cellcolor{lightyellow} \\
129 & 7,216,569,360 & 3,457,939,485 & 6,750,984,240 & 2.08.. & 1.06.. & 1.02.. \\
130 & 8,172,244,080 & 3,457,939,485 & 7,216,569,360 & 2.36.. & 1.13.. & 0.95.. \cellcolor{lightyellow} \\
131 & 12,939,386,460 & 3,457,939,485 & 7,216,569,360 & 3.74.. & 1.79.. & 0.95.. \cellcolor{lightyellow} \\
132 & 13,385,572,200 & 3,457,939,485 & \cellcolor{lightgreen} 8,172,244,080 & 3.87.. & 1.63.. & 0.84.. \cellcolor{lightyellow} \\
133 & 13,831,757,940 & 9,704,539,845 & 12,939,386,460 & 1.42.. & 1.06.. & 1.5 \\
134 & 13,831,757,940 & 9,704,539,845 & 13,385,572,200 & 1.42.. & 1.03.. & 1.45 \\
135 & 25,878,772,920 & 10,373,818,455 & 13,831,757,940 & 2.49.. & 1.87.. & 1.5 \\
136 & 25,878,772,920 & 10,373,818,455 & 13,831,757,940 & 2.49.. & 1.87.. & 1.5 \\
137 & 38,818,159,380 & 10,373,818,455 & \cellcolor{lightgreen} 25,878,772,920 & 3.74.. & 1.5 & 0.80.. \cellcolor{lightyellow} \\
138 & 38,818,159,380 & 10,373,818,455 & 25,878,772,920 & 3.74.. & 1.5 & 0.80.. \cellcolor{lightyellow} \\
139 & 41,495,273,820 & 10,373,818,455 & 38,818,159,380 & 4 & 1.06.. & 0.53.. \cellcolor{lightyellow} \\
140 & 41,495,273,820 & 10,373,818,455 & \cellcolor{lightgreen} 38,818,159,380 & 4 & 1.06.. & 0.53.. \cellcolor{lightyellow} \\
\end{tabular}
\end{tabular}
}
\caption{Values of $\lambda(n)$, $\lambda_0(n)$ and $\lambda_2(n)$ for $n=1,\ldots,140$, and their ratios.
Columns~3+4: Exact values $\kappa(\Pi_n)=\lambda_0(n)$ or $\kappa(\Pi_n)=\lambda_2(n)$, respectively, obtained from Theorem~\ref{thm:kappa-Pin}~(iv) are highlighted.
Columns~5+6: Ratios $\lambda(n)/\lambda_0(n)=1$ or $\lambda(n)/\lambda_2(n)=1$ are highlighted.
Column~7: Ratios $2\lambda_0(n)/\lambda_2(n)\le 1$ are highlighted.
}
\end{table}

\begin{tikzpicture}
\begin{axis}[
    xlabel={$n$},
    ylabel={$\lambda(n)/\lambda_0(n)$},
    xmin=0, xmax=140,
    ymin=0.8, ymax=7.5,
    xtick={0,10,20,30,40,50,60,70,80,90,100,110,120,130,140},
    ytick={1,2,3,4,5,6,7},
    ymajorgrids=true,
    xmajorgrids=true,
    grid style=dashed,
    width=14cm,
    height=6cm
]
\addplot[
    only marks,
    color=blue,
    mark=*,
    mark size=0.9pt
    ]
    coordinates {
    (1,1.)(2,2.)(3,1.)(4,1.33333)(5,1.2)(6,1.2)(7,1.71429)(8,1.)(9,1.33333)(10,1.42857)(11,1.42857)(12,1.71429)(13,1.71429)(14,1.86667)(15,1.)(16,1.33333)(17,2.)(18,2.)(19,2.54545)(20,2.54545)(21,1.33333)(22,1.33333)(23,2.18182)(24,2.18182)(25,2.54545)(26,1.09091)(27,1.33333)(28,1.69231)(29,1.84615)(30,3.38462)(31,3.38462)(32,1.57576)(33,1.57576)(34,2.25641)(35,2.25641)(36,2.76923)(37,2.76923)(38,2.54545)(39,1.09091)(40,1.84615)(41,2.)(42,2.18182)(43,3.05882)(44,3.05882)(45,1.33333)(46,1.33333)(47,2.66667)(48,2.66667)(49,3.05882)(50,3.05882)(51,2.58824)(52,2.58824)(53,4.23529)(54,4.23529)(55,3.29412)(56,1.41176)(57,1.84615)(58,1.78947)(59,1.95215)(60,3.57895)(61,3.57895)(62,1.4902)(63,1.4902)(64,2.38596)(65,2.38596)(66,3.57895)(67,3.57895)(68,3.05882)(69,3.05882)(70,4.63158)(71,4.63158)(72,4.23529)(73,4.23529)(74,3.29412)(75,1.41176)(76,1.84615)(77,2.)(78,2.52632)(79,3.30435)(80,3.30435)(81,1.33333)(82,1.33333)(83,2.66667)(84,2.66667)(85,3.30435)(86,3.30435)(87,2.95652)(88,2.95652)(89,5.91304)(90,5.91304)(91,4.52174)(92,4.52174)(93,4.63158)(94,4.63158)(95,4.23529)(96,4.23529)(97,4.86957)(98,2.08696)(99,2.08696)(100,2.08696)(101,2.52632)(102,4.)(103,4.)(104,1.33333)(105,1.33333)(106,2.66667)(107,2.66667)(108,4.)(109,4.)(110,3.17241)(111,3.17241)(112,5.93548)(113,5.93548)(114,5.24138)(115,5.24138)(116,4.68966)(117,4.68966)(118,5.53156)(119,5.53156)(120,7.17241)(121,7.17241)(122,5.33333)(123,5.33333)(124,3.2)(125,3.2)(126,4.03478)(127,2.08696)(128,2.23088)(129,2.08696)(130,2.36333)(131,3.74194)(132,3.87097)(133,1.42529)(134,1.42529)(135,2.49462)(136,2.49462)(137,3.74194)(138,3.74194)(139,4.)(140,4.)
    };
\begin{pgfonlayer}{background}
\fill[lightorange] (0,16) rectangle (140,24);
\end{pgfonlayer}
\end{axis}
\end{tikzpicture}

\begin{tikzpicture}
\begin{axis}[
    xlabel={$n$},
    ylabel={$\lambda(n)/\lambda_2(n)$},
    xmin=0, xmax=140,
    ymin=0.8, ymax=3.2,
    xtick={0,10,20,30,40,50,60,70,80,90,100,110,120,130,140},
    ytick={1,1.5,2,2.5,3},
    ymajorgrids=true,
    xmajorgrids=true,
    grid style=dashed,
    width=14cm,
    height=6cm
]
\addplot[
    only marks,
    color=blue,
    mark=*,
    mark size=0.9pt
    ]
    coordinates {
    (4,2.)(5,3.)(6,1.5)(7,2.)(8,2.5)(9,1.66667)(10,2.5)(11,1.5)(12,2.)(13,2.)(14,1.4)(15,1.75)(16,1.66667)(17,2.5)(18,1.5)(19,2.)(20,2.)(21,1.)(22,1.)(23,2.)(24,2.)(25,1.5)(26,1.5)(27,1.22222)(28,1.83333)(29,1.63636)(30,2.)(31,1.83333)(32,1.18182)(33,1.18182)(34,1.69231)(35,1.69231)(36,1.5)(37,1.5)(38,1.18182)(39,1.18182)(40,1.69231)(41,1.83333)(42,1.18182)(43,2.)(44,1.83333)(45,1.)(46,1.)(47,2.)(48,2.)(49,1.5)(50,1.5)(51,1.)(52,1.)(53,2.)(54,2.)(55,1.)(56,1.)(57,1.30769)(58,1.41667)(59,1.18182)(60,2.)(61,1.83333)(62,1.11765)(63,1.11765)(64,1.78947)(65,1.78947)(66,1.5)(67,1.5)(68,1.11765)(69,1.11765)(70,1.78947)(71,1.78947)(72,1.11765)(73,1.11765)(74,1.)(75,1.)(76,1.30769)(77,1.41667)(78,1.36842)(79,2.)(80,1.58333)(81,1.)(82,1.)(83,2.)(84,2.)(85,1.5)(86,1.5)(87,1.)(88,1.)(89,2.)(90,2.)(91,1.)(92,1.)(93,1.21053)(94,1.21053)(95,1.11765)(96,1.11765)(97,1.47826)(98,1.47826)(99,1.)(100,1.)(101,1.21053)(102,1.91667)(103,1.58333)(104,1.)(105,1.)(106,2.)(107,2.)(108,1.5)(109,1.5)(110,1.)(111,1.)(112,2.)(113,2.)(114,1.)(115,1.)(116,1.)(117,1.)(118,1.26087)(119,1.26087)(120,1.58621)(121,1.58621)(122,1.)(123,1.)(124,1.)(125,1.)(126,1.26087)(127,1.26087)(128,1.06897)(129,1.06897)(130,1.13243)(131,1.79301)(132,1.63793)(133,1.06897)(134,1.03333)(135,1.87097)(136,1.87097)(137,1.5)(138,1.5)(139,1.06897)(140,1.06897)
    };
\begin{pgfonlayer}{background}
\fill[lightorange] (0,18.5) rectangle (140,21.5);
\end{pgfonlayer}
\end{axis}
\end{tikzpicture}

\begin{tikzpicture}
\begin{axis}[
    xlabel={$n$},
    ylabel={$2\lambda_0(n)/\lambda_2(n)$},
    xmin=0, xmax=140,
    ymin=0, ymax=3.2,
    xtick={0,10,20,30,40,50,60,70,80,90,100,110,120,130,140},
    ytick={0,0.5,1,1.5,2,2.5,3},
    ymajorgrids=true,
    xmajorgrids=true,
    grid style=dashed,
    width=14cm,
    height=6cm
]
\addplot[
    only marks,
    color=blue,
    mark=*,
    mark size=0.9pt
    ]
    coordinates {
    (4,3.)(5,5.)(6,2.5)(7,2.33333)(8,5.)(9,2.5)(10,3.5)(11,2.1)(12,2.33333)(13,2.33333)(14,1.5)(15,3.5)(16,2.5)(17,2.5)(18,1.5)(19,1.57143)(20,1.57143)(21,1.5)(22,1.5)(23,1.83333)(24,1.83333)(25,1.17857)(26,2.75)(27,1.83333)(28,2.16667)(29,1.77273)(30,1.18182)(31,1.08333)(32,1.5)(33,1.5)(34,1.5)(35,1.5)(36,1.08333)(37,1.08333)(38,0.928571)(39,2.16667)(40,1.83333)(41,1.83333)(42,1.08333)(43,1.30769)(44,1.19872)(45,1.5)(46,1.5)(47,1.5)(48,1.5)(49,0.980769)(50,0.980769)(51,0.772727)(52,0.772727)(53,0.944444)(54,0.944444)(55,0.607143)(56,1.41667)(57,1.41667)(58,1.58333)(59,1.21078)(60,1.11765)(61,1.02451)(62,1.5)(63,1.5)(64,1.5)(65,1.5)(66,0.838235)(67,0.838235)(68,0.730769)(69,0.730769)(70,0.772727)(71,0.772727)(72,0.527778)(73,0.527778)(74,0.607143)(75,1.41667)(76,1.41667)(77,1.41667)(78,1.08333)(79,1.21053)(80,0.958333)(81,1.5)(82,1.5)(83,1.5)(84,1.5)(85,0.907895)(86,0.907895)(87,0.676471)(88,0.676471)(89,0.676471)(90,0.676471)(91,0.442308)(92,0.442308)(93,0.522727)(94,0.522727)(95,0.527778)(96,0.527778)(97,0.607143)(98,1.41667)(99,0.958333)(100,0.958333)(101,0.958333)(102,0.958333)(103,0.791667)(104,1.5)(105,1.5)(106,1.5)(107,1.5)(108,0.75)(109,0.75)(110,0.630435)(111,0.630435)(112,0.673913)(113,0.673913)(114,0.381579)(115,0.381579)(116,0.426471)(117,0.426471)(118,0.455882)(119,0.455882)(120,0.442308)(121,0.442308)(122,0.375)(123,0.375)(124,0.625)(125,0.625)(126,0.625)(127,1.20833)(128,0.958333)(129,1.02443)(130,0.958333)(131,0.958333)(132,0.846264)(133,1.5)(134,1.45)(135,1.5)(136,1.5)(137,0.801724)(138,0.801724)(139,0.534483)(140,0.534483)
    };
\begin{pgfonlayer}{background}
\fill[lightyellow] (0,0) rectangle (140,100);
\end{pgfonlayer}
\end{axis}
\end{tikzpicture}

\end{document}